\newcommand{\LL}{\bm{L}}
\newcommand{\PP}{\bm{P}}
\newcommand{\EE}{\bm{E}}
\newcommand{\lz}{\mathsf{z}}
\newcommand{\lZ}{\mathsf{Z}}
\newcommand{\GFF}{\mathrm{GFF}}
\newcommand{\SG}{\mathrm{SG}}
\newcommand{\bu}{\mathbf{u}}
\newcommand{\ZDM}{{\rm Z}}
\newcommand{\exteps}{E_\epsilon}
\newcommand{\extepsI}{I_\epsilon}
\author{Roland Bauerschmidt\footnote{University of Cambridge, Statistical Laboratory, DPMMS. E-mail: {\tt rb812@cam.ac.uk} and {\tt mh901@cam.ac.uk}.} \and
  \and Michael Hofstetter$^*$}
\title{Maximum and coupling of the sine-Gordon field}
\date{\vspace*{-2em}}
\begin{document}
\maketitle
\begin{abstract}
  For $0<\beta<6\pi$, we prove that the distribution of the centred maximum of the $\epsilon$-regularised continuum sine-Gordon field on the two-dimensional torus
  converges to a randomly shifted Gumbel distribution as $\epsilon \to 0$.
  Our proof relies on a strong coupling at all scales
  of the sine-Gordon field with the Gaussian free field,
  of independent interest, and extensions of existing methods
  for the maximum of the lattice Gaussian free field.
\end{abstract}

\section{Introduction and main results}
\label{sec:intro}

\subsection{Main results}

We consider the (continuum) sine-Gordon field on the two-di\-men\-sion\-al torus $\Omega=\T^2$
with mass $m=1$ and coupling constants $z\in \R$ and $0<\beta<6\pi$.
Regularised using a lattice of mesh $\epsilon$ such that $1/\epsilon$ is an integer,
its distribution is the probability measure on $\varphi: \Omega_\epsilon \to \R$ given by
\begin{equation}   \label{eq:DefinitionSGMeasure}
  \nu^{\SG_\epsilon}(d\varphi) = 
  \frac{1}{Z_\epsilon}
  \exp\qa{-\epsilon^2\sum_{\Omega_\epsilon} \pa{\frac{1}{2}\varphi(-\Delta^\epsilon\varphi) + \frac12 \varphi^2 + 2 z \epsilon^{-\beta/4\pi} \cos(\sqrt{\beta}\varphi)}} \prod_{x\in\Omega_\epsilon} d\varphi(x)
\end{equation}
where $\Omega_\epsilon =\T^2 \cap \epsilon\Z^2$ is the discretised unit torus
and $\Delta^\epsilon$ is the discretised Laplacian, i.e.,
$\Delta^\epsilon f(x) = \epsilon^{-2} \sum_{y \sim x} (f(y)-f(x))$
with $x \sim y$ denoting that $x$ and $y$ are neighbours in $\epsilon\Z^2$.

Under suitable assumptions on $z$ and $\beta$,
it is known that the probability measure $\nu^{\SG_\epsilon}$ converges weakly as $\epsilon \downarrow 0$ to a non-Gaussian
probability measure $\nu^{\SG}$ on $H^{-\kappa}(\T^2)$.
For $\beta < 6\pi$ and $z\in \R$, this convergence
along with a strong coupling to the Gaussian free field (GFF)
is also 
a by-product of our results, see Theorem~\ref{thm:coupling-intro}.
The limiting sine-Gordon field is an example of a superrenormalisable (or subcritical) Euclidean field theory.
This means that its short-distance behaviour is that of the GFF (in a suitable sense).
In particular, the limiting field $\Phi^\SG$
is not a random function, but only a generalised function of negative regularity,
with short-distance correlations diverging logarithmically.
Such local regularity properties are by now relatively well understood in
a number of examples of superrenormalisable field theories.

In this article, we study more global probabilistic properties of the sine-Gordon field,
which go beyond weak convergence of the field.
In particular, in the following theorem,
we determine the distribution of the centred maximum of the regularised field
as $\epsilon \downarrow 0$.
Such extremal behaviour of log-correlated random fields
is a topic on which a lot of progress has been made in the last few years.
For Gaussian fields, a very complete understanding now exists,
but much less is known for non-Gaussian fields
and understanding the distributional properties of their extrema (as in our result below) is a mostly open problem.
For further discussion and references, see Section~\ref{sec:literature}.

\begin{theorem}
  \label{thm:convergence-to-Gumbel}
  Let $0<\beta<6\pi$ and $z\in \R$.
  Then the centred maximum of the $\epsilon$-regularised sine-Gordon field $\Phi^{\SG_\epsilon} \sim \nu^{\SG_\epsilon}$ converges in law
  to a randomly shifted Gumbel distribution:
  \begin{equation} \label{e:thm-max-convergence}
    \max_{\Omega_\epsilon} \Phi^{\SG_\epsilon}
    - \frac{1}{\sqrt{2\pi}} \pa {2\log \frac{1}{\epsilon} - \frac34 \log \log \frac{1}{\epsilon}+b}
    \to \frac{1}{\sqrt{8\pi}}X + \frac{1}{\sqrt{8\pi}}\log \ZDM^{\SG},
  \end{equation}
  where $\ZDM^\SG$ is a nontrivial positive random variable (depending on $\beta$ and $z$),
  $X$ is an independent standard Gumbel random variable,
  and $b$ is a deterministic constant.
\end{theorem}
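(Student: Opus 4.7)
The strategy is to use the strong coupling provided by Theorem~\ref{thm:coupling-intro} to reduce the sine-Gordon maximum to that of a Gaussian free field plus a continuous random perturbation, and then extend the now-classical theory of the GFF maximum to accommodate this perturbation.

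\emph{Coupling.} By Theorem~\ref{thm:coupling-intro}, we may couple $\Phi^{\SG_\epsilon}$ with a lattice GFF $\Phi^{\GFF_\epsilon}$ on $\Omega_\epsilon$ so that $R_\epsilon := \Phi^{\SG_\epsilon} - \Phi^{\GFF_\epsilon}$ is tight in a H\"older space $C^\alpha(\T^2)$ for some $\alpha>0$ (depending on $\beta$), and converges in law, jointly with $\Phi^{\GFF_\epsilon}$, to a H\"older-continuous random field $R$. In particular, with high probability, the fluctuations of $R_\epsilon$ over microscopic balls of radius $\epsilon^{1-\delta}$ are $o(1)$ uniformly in the location. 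This reduces the problem to analysing the maximum of a lattice GFF after a continuous random deformation.

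\emph{Extremal process and identification of the limit.} Extending existing work on the maximum of the lattice GFF on the torus (Bramson--Ding--Zeitouni, Biskup--Louidor, Ding--Roy--Zeitouni), the centred extremal process $\sum_x \delta_{(x,\Phi^{\GFF_\epsilon}(x)-m_\epsilon)}$ converges to a Cox process on $\T^2\times\R$ with intensity $\ZDM^{\GFF}(dx)\otimes e^{-\sqrt{8\pi}y}\,dy$, where $\ZDM^{\GFF}$ is the Liouville-type derivative martingale measure of the GFF on the torus. Because $R_\epsilon$ is essentially constant at microscopic scales, adding it to $\Phi^{\GFF_\epsilon}$ shifts each near-maximal value near $x$ by $R(x)+o(1)$; a joint tightness plus continuous-mapping type argument (using the coupling) then yields that the extremal process of $\Phi^{\SG_\epsilon}$ converges to a Cox process with intensity $e^{\sqrt{8\pi}R(x)}\,\ZDM^{\GFF}(dx)\otimes e^{-\sqrt{8\pi}y}\,dy$. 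Setting $\ZDM^{\SG} := \int_{\T^2} e^{\sqrt{8\pi}R(x)}\,\ZDM^{\GFF}(dx)$ (up to the overall normalisation absorbed into $b$), the maximum is distributed as $\frac{1}{\sqrt{8\pi}}(X+\log\ZDM^{\SG})$ with $X$ an independent standard Gumbel, which gives \eqref{e:thm-max-convergence}.

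\emph{Main obstacle.} The hardest step is making the extremal-process analysis rigorous in the presence of the non-Gaussian perturbation. The standard toolkit for the GFF (Slepian/Kahane comparisons, Dekking--Host coalescence, barrier estimates, modified second moment method) is intrinsically Gaussian and does not apply to $\Phi^{\SG_\epsilon}$ directly. The plan is to apply these tools to $\Phi^{\GFF_\epsilon}$ while treating $R_\epsilon$ as an auxiliary H\"older-regular random field acting, at all the relevant scales, as a macroscopic shift. Since $R_\epsilon$ and $\Phi^{\GFF_\epsilon}$ are produced by the same scale decomposition and are therefore highly dependent, the barrier and truncation estimates must survive a random, $R_\epsilon$-dependent shift of the centring uniformly in $\epsilon$; verifying this uniformity, and showing that the multiplicative factor $e^{\sqrt{8\pi}R(x)}$ in the limiting intensity emerges correctly from the coupling, is the principal technical challenge.
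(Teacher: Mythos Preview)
Your route differs from the paper's, and the step you yourself flag as ``the principal technical challenge'' is precisely the one the paper's approach is designed to sidestep rather than confront.

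The paper does \emph{not} work from the global coupling of Theorem~\ref{thm:coupling-intro}; the authors state explicitly (just below that theorem) that it is insufficient for convergence of the maximum. Instead they use the full multiscale coupling of Theorems~\ref{thm:coupling}--\ref{thm:coupling-bd}. The key is the scale-$s$ decomposition $\Phi_0^{\SG_\epsilon} = \Phi_s^{\SG_\epsilon} + (\Phi_0^{\GFF_\epsilon}-\Phi_s^{\GFF_\epsilon}) + R_s$ with $\max|R_s| = O(|\lZ_s|)\to 0$ as $s\to 0$, in which the small-scale Gaussian increment $\Phi_0^{\GFF_\epsilon}-\Phi_s^{\GFF_\epsilon}$ is \emph{independent} of $\Phi_s^{\SG_\epsilon}$ (they are measurable with respect to disjoint pieces of the driving Brownian motion). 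This independence lets them further split the small-scale piece so that the Dirichlet fine field $X_K^f$ of Bramson--Ding--Zeitouni appears exactly and independently of a smooth, bounded, non-Gaussian coarse field $\tilde X_{s,K}^c + X_s^h + \Phi_s^{\SG}$. The BDZ fine-field estimates (Propositions~\ref{prop:finefield-asymptotics-delta}, \ref{prop:coupling-with-G*}) are then used verbatim; only the coarse-field regularity lemmas need rechecking, and those go through because the non-Gaussian part is deterministically bounded and H\"older. No extremal-process statement is invoked and no dependence problem arises.

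Your plan keeps the dependent decomposition $\Phi^{\SG_\epsilon}=\Phi^{\GFF_\epsilon}+R_\epsilon$ and tries to push extremal-process convergence through it. This is exactly the alternative the paper gestures at in Remark~\ref{rk:ZSG}, and your identification $\ZDM^{\SG}(dx)=e^{\sqrt{8\pi}\Phi^\Delta(x)}\ZDM^{\GFF}(dx)$ matches \eqref{e:ZSGPhiDelta}. But the hard step---joint convergence of the GFF extremal process together with the dependent $R_\epsilon$, or equivalently making the barrier/second-moment estimates survive a random $R_\epsilon$-dependent shift---is identified but not resolved in your proposal. Since $R_\epsilon$ is a nonlinear functional of the entire GFF trajectory $(\Phi_t^{\GFF_\epsilon})_{t\ge 0}$ through the SDE~\eqref{e:intro-sde}, conditioning on it destroys the Gaussian structure you need for Slepian/Kahane comparisons; carrying $R_\epsilon$ through the extremal-process proof would amount to redoing that proof in a non-Gaussian setting. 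The paper's multiscale independence is the device that makes this unnecessary: at each fixed $s>0$ the small scales are \emph{purely} Gaussian and independent of everything else, and one only lets $s\to 0$ at the very end via Lemma~\ref{lem:reduction-to-phi-tilde}.
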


The result for the GFF, namely $z=0$, was proved in \cite{MR3433630}
(see also \cite{MR3414451,MR3729618,MR4043225}).
The constant in \eqref{e:thm-max-convergence} is the same as in the analogous
statement for the GFF, but the distribution of the
random variable $\ZDM^\SG$ is different from the version for the GFF; see Remark~\ref{rk:ZSG} below.
Note that there is also a trivial difference in the factors involving $2\pi$
that results from different normalisations of the fields.
Our normalisation is such that $\var(\Phi^{\SG_\epsilon}(0)) \sim \frac{1}{2\pi}\log \frac{1}{\epsilon}$.

Our proof of Theorem~\ref{thm:convergence-to-Gumbel} relies on a strong coupling between the sine-Gordon field and the GFF,
based on the methods of \cite{1907.12308,MR914427},
combined with the methods developed for the study of the maximum of the GFF from \cite{MR3433630}.
This coupling, which is of independent interest, provides uniform control over the difference
between the two fields on any scale.
Concretely, we construct the sine-Gordon field as the final solution $\Phi_0^\SG$
to the $H^{-\kappa}(\T^2)$-valued (backward) SDE
\begin{equation} \label{e:intro-sde}
  \Phi_t^{\SG} = -\int_t^\infty \dot c_s \nabla v_s(\Phi_s^{\SG}) \, ds + \Phi^{\GFF}_t,
  \qquad (t\in [0,\infty]),
\end{equation}
where $\dot c_s \nabla v_s$ is the gradient
in the field direction of the \emph{renormalised potential} defined in Section~\ref{sec:vt},
and where $\Phi^\GFF_t$ is the decomposed GFF defined by
\begin{equation}
  \Phi_t^{\GFF} = \int_t^\infty e^{\frac12 \Delta s - \frac12 m^2 s} \, dW_s,
    \qquad (t \in [0,\infty]),
\end{equation}
where $W$ is a cylindrical Brownian motion in $L^2(\T^2)$,
$\Delta$ is the Laplace operator on $\T^2$
and $m=1$.
This scale-by-scale coupling, constructed in Section~\ref{sec:coupling},
implies in particular the following global coupling for the limiting
continuum sine-Gordon field with the continuum Gaussian free field.

\begin{theorem} \label{thm:coupling-intro}
  Let $0<\beta<6\pi$ and $z\in \R$.
  Then the $\epsilon$-regularised sine-Gordon field $\Phi^{\SG_\epsilon} \sim \nu^{\SG_\epsilon}$
  converges weakly to a (non-Gaussian) limiting field $\Phi^{\SG}$ in $H^{-\kappa}(\T^2)$.
  This continuum sine-Gordon field $\Phi^{\SG}$ on $\T^2$
  can be coupled to a continuum Gaussian free field $\Phi^{\GFF}$ on $\T^2$
  such that   $\Phi^{\Delta} := \Phi^{\SG}-\Phi^{\GFF}$
  satisfies the following H\"older continuity estimates:
  \begin{align}
    \max_{x\in\T^2} |\Phi^{\Delta}(x)|+ \max_{x\in \T^2} |\partial \Phi^{\Delta}(x)| +        
    \max_{x,y\in \T^2}\frac{|\partial \Phi^\Delta(x)-\partial \Phi^\Delta(y)|}{|x-y|^{1-\beta/4\pi}}
    &\leq O_\beta(|z|),
    && (0<\beta<4\pi),
           \nnb
    \label{e:PhiDelta-bd-bis}       
    \max_{x\in\T^2} |\Phi^{\Delta}(x)|+ \max_{x,y\in \T^2}
    \frac{|\Phi^\Delta(x)-\Phi^\Delta(y)|}{|x-y| (1+|\log|x-y||)}
    &\leq O_\beta(|z|) ,
    && (\beta=4\pi),
    \\
    \max_{x\in\T^2} |\Phi^{\Delta}(x)|+ \max_{x,y\in \T^2}
    \frac{|\Phi^\Delta(x)-\Phi^\Delta(y)|}{|x-y|^{2-\beta/4\pi}}
    &\leq O_\beta(|z|) ,
    && (4\pi< \beta<6\pi),
       \nonumber
  \end{align}
  with deterministic constants $O_\beta(|z|)$.
\end{theorem}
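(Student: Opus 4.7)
The plan is to start from the SDE representation~\eqref{e:intro-sde}. Evaluating it at $t=0$ gives the explicit identity
\begin{equation*}
  \Phi^\Delta \;=\; \Phi^\SG - \Phi^\GFF \;=\; -\int_0^\infty \dot c_s\,\nabla v_s(\Phi_s^\SG)\,ds,
\end{equation*}
so $\Phi^\Delta$ is a deterministic, pathwise functional of the trajectory $(\Phi_s^\SG)_{s\ge 0}$. All three H\"older estimates will be extracted from this representation by bounding the integrand scale-by-scale and exploiting the smoothing action of the heat-like kernel $\dot c_s$.

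The core analytic input will be a family of uniform-in-$\varphi$ sup-norm bounds on $\dot c_s\nabla v_s(\varphi)$ and its first two spatial derivatives, produced in Section~\ref{sec:vt} from the Polchinski flow defining $v_s$. Writing $\ell_s := \sqrt{s}\wedge 1$, the bounds will take the form
\begin{equation*}
  \bigl\|\partial^k\bigl(\dot c_s\nabla v_s\bigr)\bigr\|_\infty \;\le\; C_\beta|z|\, g_k(\ell_s)\, e^{-cs}, \qquad k=0,1,2,
\end{equation*}
where the powers of $\ell_s$ in $g_k$ reflect two competing effects: the scaling dimension of the cosine perturbation $:\!\cos(\sqrt{\beta}\varphi)\!:$ (which improves the bound on $\nabla v_s$ as $\ell_s\to 0$ for small $\beta$ but degrades it as $\beta$ increases), and the loss of $\ell_s^{-1}$ per spatial derivative from differentiating the heat kernel $\dot c_s$. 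The hypothesis $\beta<6\pi$ is precisely what ensures $g_0$ is integrable in $s$ at $s=0$, whereas the integrability of $g_1$ degrades at $\beta=4\pi$.

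Given such bounds, the three H\"older estimates follow by routine integration with the standard splitting at $s=|x-y|^2$: the short-scale piece is estimated with the smaller $k$, the long-scale piece with the larger $k$, and the resulting exponents are exactly $1-\beta/4\pi$, logarithmic, and $2-\beta/4\pi$ in the three regimes. Weak convergence $\Phi^{\SG_\epsilon}\to\Phi^\SG$ in $H^{-\kappa}(\T^2)$ will follow by applying the same analysis on the lattice, where the SDE is finite-dimensional and well-posed; the uniform-in-$\epsilon$ bounds give tightness of the drifts in a H\"older space, and combined with the known convergence $\Phi^{\GFF_\epsilon}\to\Phi^\GFF$ this forces convergence of $\Phi^{\SG_\epsilon}$ to a limit satisfying the continuum SDE.

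The hard part will be the first step: establishing the \emph{uniform-in-$\varphi$} pointwise bounds on $\dot c_s\nabla v_s$. For $\beta<4\pi$ the Wick-ordered cosine is already bounded uniformly in $\varphi$, and the standard Polchinski estimates of \cite{1907.12308} should deliver the required $g_k$ directly. For $\beta\in[4\pi,6\pi)$, however, the nonlinear term $(\nabla v_s, \dot c_s \nabla v_s)$ in the Polchinski equation is no longer integrable against Wick-ordered cosines, and the construction of $v_s$ requires counterterms beyond Wick ordering. Controlling $\nabla v_s(\varphi)$ pointwise in the field variable---rather than merely in moments against a Gaussian reference measure---through this augmented RG flow is where the restriction $\beta<6\pi$ genuinely enters, and I expect this to absorb the bulk of the technical work.
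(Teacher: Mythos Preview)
Your proposal is correct and follows essentially the same route as the paper: the proof of Theorem~\ref{thm:coupling-intro} there is a two-line reduction to Theorems~\ref{thm:Phi-limit} and~\ref{thm:coupling-bd}, whose proofs proceed exactly via the integral representation $\Phi^\Delta_t = -\int_t^\infty \dot c_s\nabla v_s(\Phi_s^\SG)\,ds$, the uniform-in-$\varphi$ bounds $L_s^{2+k}\|\partial^k\dot c_s\nabla v_s\|_\infty \lesssim \theta_s|\lZ_s|$ from Theorem~\ref{thm:v-limit}, and the scale-splitting at $s=|x-y|^2$ you describe. One small correction: integrability of $g_0$ at $s=0$ holds for all $\beta<8\pi$; the restriction $\beta<6\pi$ enters only where you correctly locate it in your final paragraph, namely in establishing the pointwise bounds on $\dot c_s\nabla v_s$ via the Yukawa-gas expansion (specifically, the $n=2$ neutral term is handled not by a counterterm but by exploiting that it enters the $n\ge 3$ recursion only through the combination $\delta c_t(x_1,x_2,x_3)\tilde s_t(x_2,x_3)$, which remains integrable).
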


Theorem~\ref{thm:coupling-intro} provides an analogue for the (non-Gaussian) sine-Gordon field of
results of \cite{MR4047992} for Gaussian log-correlated fields.
In particular, Theorem~\ref{thm:coupling-intro} implies (with essentially the same proofs) analogues of the corollaries of \cite[Theorem~A]{MR4047992}
such as the convergence
of the derivative martingale measure (which is not a martingale here)
or the critical (non-Gaussian) multiplicative chaos measure
associated with the sine-Gordon field.

The tightness of the left-hand side of \eqref{e:thm-max-convergence}
along with Gumbel type tail bounds on its distribution are also 
immediate from Theorem~\ref{thm:coupling-intro} and the corresponding results
for the GFF from \cite{MR3101848,MR3433630,MR3729618}.
However, to obtain the actual convergence
of the centred maximum to a randomly shifted Gumbel distribution, we use more than
what is given by Theorem~\ref{thm:coupling-intro}. Our full multiscale coupling, stated in
Theorems~\ref{thm:coupling}--\ref{thm:coupling-bd} in Section~\ref{sec:coupling},
provides finer control and in particular approximate independence
of small scales from the large scales,
which we make use of to approximate the field by a GFF on small scales.

The distribution of the sine-Gordon field is not expected to be absolutely continuous
with respect to the GFF when $\beta \geq 4\pi$
(while it is absolutely continuous with respect to the GFF for $\beta<4\pi$, assuming 
the volume is finite as in our setting).
For $\beta<6\pi$, Theorem~\ref{thm:coupling-intro} shows that the sine-Gordon field
can nonetheless be coupled to a GFF up to a random H\"older continuous difference.
It remains a very interesting problem to find a best possible
coupling in the full regime $\beta<8\pi$ for which the sine-Gordon model exists \cite{MR1777310}.

Our coupling bears some resemblence with the recent
constructions of pathwise solutions to singular SPDEs
(see, e.g., \cite{MR2016604,MR3274562,MR3406823}).
Note however that our coupling provides uniform global bounds,
and is based on the renormalisation group dynamics \eqref{e:intro-sde} rather than Glauber dynamics.
In principle, it is possible to derive couplings based on Glauber dynamics as well.
These do not provide an equally useful notion of scale,
but in particular for the $\Phi^4_2$ model, it would be interesting to investigate if
these (see, for example, \cite{MR3825880,10.1002/cpa.21925})
can be used for the study of the distribution of the maximum nonetheless.
Other approaches introduced for continuum $\Phi^4$ models that have the potential
to be useful in our context (though not immediately) are those
of \cite{1805.10814} or \cite{2006.15933}.

\begin{remark} \label{rk:ZSG}
  We expect that arguments along the lines of \cite{MR4082182,MR4043225} would allow to
  characterise
  the random variable $\ZDM^{\SG}$ in Theorem~\ref{thm:convergence-to-Gumbel}
  as the total mass of the derivative martingale measure $\ZDM^{\SG}(dx)$ associated to $\Phi^\SG$.
  Using the above coupling, as briefly discussed below the statement of the theorem,
  this random measure can be represented as
  \begin{equation} \label{e:ZSGPhiDelta}
    \ZDM^{\SG}(dx) = e^{\sqrt{8\pi}\Phi^{\Delta}(x)} \ZDM^{\GFF}(dx)
  \end{equation}
  where $\ZDM^{\GFF}(dx)$ is the suitably normalised
  derivative martingale measure associated to the 
  Gaussian free field $\Phi^{\GFF}$  on $\T^2$. Alternatively,
  we expect that one can show that $\ZDM^{\SG}$ is the distributional limit of the random variables
    \begin{equation} \label{e:ZSGeps}
\ZDM^{\SG_\epsilon} = \epsilon^2\sum_{\Omega_\epsilon} (\frac{2}{\sqrt{2\pi}} \log \frac{1}{\epsilon} - \Phi^{\SG_\epsilon}) e^{-2\log\frac{1}{\epsilon}+\sqrt{8\pi}\Phi^{\SG_\epsilon}}.
  \end{equation}
  More generally, we expect that the full extremal process of the sine-Gordon field
  is characterised in  terms of $\ZDM^{\SG}(dx)$, as in \cite{MR3787554,MR4043225} for the GFF.
\end{remark}

\begin{remark}
  In Theorem~\ref{thm:convergence-to-Gumbel},
  we consider the maximum of the lattice approximation of the sine-Gordon field
  as $\epsilon \downarrow 0$.
  As in \cite{acostathesis}, it would be possible to prove an analogous statement
  for $\Phi^\SG * \eta_\epsilon$ where $\Phi^\SG$ is the limiting sine-Gordon field
  and $\eta$ is a mollifier.
\end{remark}

\subsection{Literature}
\label{sec:literature}

The extremal behaviour of the GFF and more general 
Gaussian log-correlated fields is now rather well understood. For the maximum of the GFF,
we refer in particular to 
\cite{MR3433630} as well as \cite{MR3414451} for closely related results,
and also to \cite{MR3729618} for a general class of
Gaussian log-correlated fields that includes the GFF.
There are also earlier results on branching Brownian motion and branching random walks,
see \cite{MR956064,MR3098680} and references therein and also \cite{PhysRevE.63.026110}.
The complete extremal process of the GFF was understood in \cite{MR3787554,MR3509015,MR4082182};
see in particular the excellent review \cite{MR4043225}.
Further works on related Gaussian fields include
\cite{MR4089492,1912.13184}.

For several non-Gaussian log-correlated fields,
progress on the order of the maximum has been made as well.
The following ones are most relevant to our work;
for none of these, the limiting distribution of the centred maximum has been identified.
For gradient interface models with uniformly convex interaction,
the leading logarithmic order of the maximum has been identified \cite{1610.04195} and
subsequential tightness of the centred maximum has been established as well \cite{MR3933043}, but the understanding of the subleading orders remains open.
For the (integer-valued) Discrete  Gaussian model,
it has been shown that the maximum is of logarithmic order \cite{1907.08868}.
There are also results for the maximum of the logarithm of the characteristic polynomial
of certain random matrices, see for example \cite{MR3594368,MR3848227,MR3848391,cpa.21899}.
Like these random matrix ensembles, the sine-Gordon model is closely related to a Coulomb gas
type particle ensemble,
but our point of view is completely different.

The continuum sine-Gordon field is a prototypical example of a Euclidean field theory.
It is an interesting problem to show that an analogous result for the maximum holds also
for other such examples like the continuum $\Phi^4_2$ model.

There are also lattice versions of these models. We emphasise that these are
not the same as the lattice regularisations of the continuum models.
In physics terminology, the continuum version is related to the ultraviolet problem,
while the lattice model is related to the infrared problem. These problems differ
by the scaling of the coupling constants with the lattice spacing, and the expected distribution of the maximum
is also different.

We also mention that recent results on the (continuum) sine-Gordon model include
\cite{1907.12308},
\cite{1806.02118,1903.01394},
and older ones \cite{MR649810,MR849210},
\cite{MR914427}, \cite{MR2461991}, and \cite{MR1777310,MR1672504}.
The dynamics of sine-Gordon model has been studied in \cite{MR3452276,1808.02594,1907.12308}.
The sine-Gordon model is also closely related to the two-component plasma
\cite{MR0434278,MR3606477}.

Finally, we comment on some recent works that use methods that bear some relation to the ones we develop.
Continuous scale decomposition of the GFF and methods of stochastic analysis also
have been used
in the context of the Gaussian multiplicative chaos (see \cite{MR3274356,MR3339158} and references),
in recent constructions of the $\Phi^4_d$ model,
\cite{1805.10814,2004.01513,2003.12535},
and of the boundary sine-Gordon model \cite{1903.01394}.
Despite the shared use of the continuous decomposition of the GFF, these constructions have
a flavour quite different from ours (they use the decomposition to construct the moment
generating function of the field rather than a pathwise coupling of the field itself).
For Gaussian log-correlated fields,
couplings more in the sprit of ours have been constructed
in \cite{MR4047992}; see also the discussion below Theorem~\ref{thm:coupling-intro}.
The above mentioned results for gradient interface models with uniformly convex potential
also make use of a coupling \cite{MR2855536}
(of a somewhat different flavour though).

\subsection{Outline}

In Section~\ref{sec:vt} we give detailed estimates on the renormalised potential $v_t$
of the sine-Gordon field, extending the estimates from \cite{1907.12308,MR914427}.
Using this renormalised potential, in Section~\ref{sec:coupling},
we then construct the sine-Gordon field as the solution to the SDE \eqref{e:intro-sde}.
In Section~\ref{sec:maximum}, we use this coupling together with the
methods developed to determine the distribution of the maximum of the GFF
from \cite{MR3433630} to prove Theorem~\ref{thm:convergence-to-Gumbel}.

\subsection{Notation}

We use the standard Landau big-$O$ and little-$o$ notation,
and emphasise by writing, for example,
$O_p$ that the implied constant depends on a parameter $p$.
We also write $A \lesssim B$ to denote that $A\leq O_\beta(B)$, and $A \simeq B$ if $A \lesssim B$
and $B\lesssim A$.

\section{Convergence of the renormalised potential}
\label{sec:vt}

\subsection{Renormalised potential}
\label{sec:vteps}

Let $\Omega = \T^2$ be the continuum unit torus,
and let $\Omega_\epsilon = \Omega \cap \epsilon\Z^2$ be its lattice approximation where we assume from now on that $1/\epsilon$ is an integer.
Let $X_\epsilon = \{ \varphi : \Omega_\epsilon \to \R \}$. 
For emphasis, we will sometimes denote the side length of the torus by $L=1$.
Given $z\in \R$, 
we define the microscopic potential $v_0^\epsilon$ of the sine-Gordon field
for $\varphi = (\varphi(x))_{x\in\Omega_\epsilon} \in X_\epsilon$
by
\begin{equation}
  v_0^\epsilon(\varphi)
  = \epsilon^{2} \sum_{x\in \Omega_\epsilon} 2z\epsilon^{-\beta/4\pi} \cos(\sqrt{\beta}\varphi(x)).
\end{equation}
For $t>0$, we define the renormalised potential $v_t^\epsilon: X_\epsilon \to \R$ as follows.
Let
\begin{equation} \label{e:dotcteps}
  \dot c_t^\epsilon = e^{t \Delta^\epsilon} e^{-m^2 t},
   \qquad
   c_t^\epsilon = \int_0^t \dot c_s^\epsilon \, ds,
 \end{equation}
where $\Delta^\epsilon$ is the discretised Laplacian acting on $X_\epsilon$ as defined below \eqref{eq:DefinitionSGMeasure}.
Then for any $t>s>0$,
\begin{equation} \label{e:vdef-conv}
  e^{-v_t^\epsilon(\varphi)}
  = \EE_{c_t^\epsilon} \pB{ e^{-v_0^\epsilon(\varphi+\zeta)} }
  = \EE_{c_t^\epsilon-c_s^\epsilon} \pB{ e^{-v_s^\epsilon(\varphi+\zeta)} },
\end{equation}
where $\EE_{c}$ denotes the expectation of the Gaussian measure with covariance $c$. 
The second equality holds since the convolution of two Gaussian measures with covariances
$c_1$ and $c_2$ is Gaussian with covariance $c_1+c_2$.
Equivalently to \eqref{e:vdef-conv}, $v_t^\epsilon$ is the unique
solution to the Polchinski equation:
\begin{equation}
  \label{eq:PolchinskiSDE-v}
  \partial_t v_t^\epsilon
  = \frac12 \Delta_{\dot c_t^\epsilon} v_t^\epsilon - \frac12 (\nabla v_t^\epsilon)_{\dot c_t^\epsilon}^2
  = \frac12 \epsilon^4\sum_{x,y \in \Omega_\epsilon} \dot c_t^\epsilon(x,y)
  \qa{\ddp{^2v_t^\epsilon}{\varphi(x)\partial\varphi(y)}
    -
    \ddp{v_t^\epsilon}{\varphi(x)}
    \ddp{v_t^\epsilon}{\varphi(y)}
    }
\end{equation}
where the matrix $\dot c_t^\epsilon(x,y)$ associated to $\dot c_t^\epsilon$ is normalised such that
\begin{equation} \label{e:dotcteps-sum}
  \dot c_t^\epsilon f(x) = \epsilon^2 \sum_{y\in \Omega_\epsilon} \dot c_t^\epsilon(x,y) f(y).
\end{equation}
Later, we will also use the notation
\begin{equation} \label{e:innerprod}
  \avg{f,g} \equiv \avg{f,g}_{\Omega_\epsilon} \equiv \epsilon^2 \sum_{x \in \Omega_\epsilon} f(x)\bar g(x),
  \qquad
  \int_{\Omega_\epsilon} dx \equiv \epsilon^2 \sum_{\Omega_\epsilon}.
\end{equation}
Both representations of $v_t^\epsilon$, that as a solution to the Polchinski equation
\eqref{eq:PolchinskiSDE-v},
and that in terms of Gaussian convolution \eqref{e:vdef-conv}, are useful.
For their equivalence, see \cite{1907.12308,MR914427}.

\subsection{Statement of estimates}

To state the estimates for the renormalised potential, we define
\begin{gather} \label{e:ZL-def}
  L_t = \sqrt{t} \wedge 1/m,
  \qquad
  Z_t = z L_t^{-\beta/4\pi},
  \qquad
  \lZ_t = L_t^2Z_t = z L_t^{2-\beta/4\pi}
  ,\qquad
  \theta_t = e^{-\frac12 m^2t},
\end{gather}
and also set $Z_t^\epsilon = Z_{t\vee \epsilon^2}$ and $\lZ_t^\epsilon = \lZ_{t\vee \epsilon^2}$.
We also recall that $L=1$ is the side length of the torus.
Moreover, while the results in Section~\ref{sec:intro} are stated for the mass parameter $m=1$, we will allow $m>0$ throughout this section.
By rescaling one could then also recover the general $L$ case.

The following estimates can be extracted from
\cite[Section~3]{1907.12308} (which follows the method of \cite{MR914427}).
Compared to \cite[Section~3]{1907.12308}, we have rescaled $t\to t/\epsilon^2$ and $x\to x/\epsilon$
in preparation for the limit $\epsilon\downarrow 0$ which we will take later.
This rescaling is summarised in Section~\ref{sec:rescaling} below.

\begin{theorem} \label{thm:v-bd}
  Let $\beta<6\pi$, $m^2>0$, $z\in \R$, and $\epsilon>0$.
  There is $t_0 = t_0(\beta,z,m) > 0$ independent of $\epsilon>0$ such that
  for all $\varphi \in X_\epsilon$, $t \geq 0$,
  \begin{align}
    \label{e:nablaveps-bd}
    L_t^2 \norm{\dot c_t^\epsilon \nabla v_t^\epsilon(\varphi)}_{L^\infty(\Omega_\epsilon)}
    &\leq
    O_\beta(\theta_t|\lZ_t^\epsilon|)+  {\mathbf 1}_{t>t_0} O_{\beta,m}(\theta_t|\lZ_t^\epsilon|),
    \\
    \label{e:Hessveps-bd}
    L_t^2 \norm{\dot c_t^\epsilon \nabla v_t^\epsilon(\varphi)
    -\dot c_t^\epsilon \nabla v_t^\epsilon(\varphi')}_{L^\infty(\Omega_\epsilon)}
    &\leq
    (O_\beta(\theta_t|\lZ_t^\epsilon|)
      + {\mathbf 1}_{t>t_0} O_{\beta,m,z}(\theta_t))\norm{\varphi-\varphi'}_{L^\infty(\Omega_\epsilon)}
      .
  \end{align}
\end{theorem}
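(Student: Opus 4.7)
The plan is to derive Theorem~\ref{thm:v-bd} as a reformulation of the renormalised-potential estimates established in \cite[Section~3]{1907.12308} (which follow the method of \cite{MR914427}), translated via the rescaling summarised in Section~\ref{sec:rescaling} from the lattice $\Z^2$ at unit spacing to the torus $\Omega_\epsilon = \T^2 \cap \epsilon\Z^2$. Under the map $t'=t/\epsilon^2$, $x'=x/\epsilon$, the bounds of \cite{1907.12308} (originally stated on a lattice of unit spacing) become bounds on $\Omega_\epsilon$, with the factors $\epsilon^{-\beta/4\pi}$ and $L_t^2$ in \eqref{e:nablaveps-bd}--\eqref{e:Hessveps-bd} appearing naturally from the scaling.

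The first step is to introduce a scale of Banach spaces on functions of $\varphi \in X_\epsilon$ adapted to the cosine interaction, combining a finite Taylor expansion in the field direction with a Gaussian-type regulator, as in \cite{MR914427}. The initial potential $v_0^\epsilon$ has norm of order $|Z_0^\epsilon|$ in such a space, and \eqref{eq:PolchinskiSDE-v} is recast as an integral fixed-point equation. Since $\dot c_t^\epsilon$ provides smoothing on scale $L_t$, a Picard iteration yields $v_t^\epsilon$ with $\|v_t^\epsilon\| \lesssim |\lZ_t^\epsilon|$ for $0 \leq t \leq t_0$, provided $t_0$ is chosen small enough (in terms of $\beta, z, m$) that the quadratic term $(\nabla v_t^\epsilon)_{\dot c_t^\epsilon}^2$ is subdominant. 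Differentiating the fixed-point identity once in $\varphi$ and applying $\dot c_t^\epsilon$ (which contributes an extra factor $L_t^{-2}$ in $L^\infty$) yields \eqref{e:nablaveps-bd}; differentiating twice and evaluating at two field configurations produces the first term in \eqref{e:Hessveps-bd}. For $t > t_0$ I would instead pass to the Gaussian-convolution representation \eqref{e:vdef-conv} and use the crude bound $|\cos|\leq 1$, so that the mass decay $\theta_t = e^{-m^2 t/2}$ generates the additional contributions $\mathbf{1}_{t>t_0} O_{\beta,m}(\theta_t|\lZ_t^\epsilon|)$ and $\mathbf{1}_{t>t_0} O_{\beta,m,z}(\theta_t)$, which are non-perturbative in $z$ and account for the fact that on large time scales the flow is dominated by the mass rather than by the interaction.

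The main obstacle is to push the construction through the entire range $\beta < 6\pi$: for $\beta < 4\pi$ the sine-Gordon measure is absolutely continuous with respect to the GFF and the Picard iteration converges as a power series in $z$, but for $4\pi \leq \beta < 6\pi$ the second-order (in $z$) contribution to $v_t^\epsilon$ is not absolutely convergent and must be extracted by hand before the Picard iteration of the remainder closes. This is precisely the content of \cite[Section~3]{1907.12308}, and once that estimate is in place the remainder of the proof of Theorem~\ref{thm:v-bd} reduces to book-keeping in the rescaled variables introduced in Section~\ref{sec:rescaling}.
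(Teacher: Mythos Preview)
Your proposal is correct and matches the paper's treatment: the paper does not prove Theorem~\ref{thm:v-bd} directly but simply states that it ``can be extracted from \cite[Section~3]{1907.12308}'' after the rescaling $t\to t/\epsilon^2$, $x\to x/\epsilon$ summarised in Section~\ref{sec:rescaling}, and you have identified precisely this. Your summary of the method in \cite{1907.12308}---the Yukawa gas / Fourier expansion in the field direction (which functions as a Picard-type iteration), the separate treatment of the $n=2$ neutral contribution for $4\pi\le\beta<6\pi$, and the use of the convolution representation \eqref{e:vdef-conv} for $t>t_0$---is accurate, though for $t>t_0$ the argument in the paper (see the proof of Theorem~\ref{thm:v-limit}) is not the crude bound $|\cos|\le 1$ but rather the propagation of the already-established $t=t_0$ estimates via the Polchinski semigroup $\PP_{t_0,t}$ together with Jensen's inequality.
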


Since we will not directly apply Theorem~\ref{thm:v-bd} and extend the
estimates in Theorem~\ref{thm:v-limit} below,
we do not give a precise reference at this point.

Note that, 
from the macroscopic point of view, the microscopic potential
$v_0^\epsilon$ blows up as $\epsilon\downarrow 0$.
On the other hand, when normalised with respect to the
relevant scale with a factor $L_t^2$ as in the above theorem,
the gradients of the renormalised potential $v_t^\epsilon$ are bounded 
uniformly in $\epsilon$
and in fact tend to $0$ since
$\lZ_t \to 0$ as $t\to 0$.

The main result of this section is Theorem~\ref{thm:v-limit} below.
It extends the above estimates and also shows
the convergence of $v^\epsilon_t$ as $\epsilon \downarrow 0$.
To this end, we first note that the lattice covariance decomposition $\dot c_t^\epsilon$
from \eqref{e:dotcteps} and \eqref{e:dotcteps-sum}
converges to its continuous counterpart, given by
\begin{equation} \label{e:dotct0}
  \dot c_t^{0}(x,y)
  =
  e^{t\Delta}(x,y) e^{-m^2 t} 
  =
  \sum_{n\in\Z^2} \frac{e^{-|x-y+L n|^2/4t-m^2 t}}{4\pi t}
  ,
  \qquad L=1,
\end{equation}
where $\Delta$ is the Laplacian on $\T^2$.
Moreover, we note that $v_t^\epsilon$ is a function from $X_\epsilon$ to $\R$,
so for each $\varphi \in X_\epsilon$,
$\dot c_t^\epsilon \nabla v_t^\epsilon(\varphi)$ is an element of
$X_\epsilon  = \{\Omega_\epsilon \to \R\}$
and we denote this function by
\begin{equation}
  \dot c_t^\epsilon \nabla v_t^\epsilon(\varphi, x) = [\dot c_t^\epsilon \nabla v_t^\epsilon(\varphi)](x), \qquad x\in \Omega_\epsilon, \;\varphi \in X_\epsilon.
\end{equation}
Here $\nabla$ denotes the gradient with respect to the field $\varphi \in X_\epsilon=\R^{\Omega_\epsilon}$.
We will also need (discrete) gradients in the variable $x$ and denote these by
$\partial = \partial_\epsilon$.
More precisely, for $e$ one of the $2d$ unit directions in $\Z^d$,
we set $\partial_{\epsilon}^e f(x) = \epsilon^{-1}(f(x+\epsilon e)-f(x))$,
and for $k \in \N$ we then denote by
$\partial_\epsilon^k f$ the matrix-valued function consisting of all iterated discrete gradients
$\partial^{e_1}_\epsilon \cdots \partial^{e_k}_\epsilon f$ where $e_1, \dots, e_k$
are unit directions in $\Z^d$.
For $\epsilon=0$, we similarly denote true spatial derivatives by $\partial$.

Finally, for $x\in \Omega$, we will denote by $x^\epsilon$ the point in $\Omega_\epsilon$ that is closest to $x$, i.e.,
the unique element of $\Omega_\epsilon$ such that $x\in x^\epsilon + (-\epsilon/2,\epsilon/2]^2$.

\begin{theorem} \label{thm:v-limit}
  Let $\beta<6\pi$, $m^2>0$, $z\in \R$.
  For all $t>0$, there exist bounded functions
  $\dot c_t^0 \nabla v_t^0 : C(\Omega) \to C^\infty(\Omega)$ such that
  $\dot c_t^\epsilon \nabla v^\epsilon_t$ converges to $\dot c_t^0\nabla v_t^0$
  in the sense that, for any  
  $\varphi \in C(\Omega)$ and any $\varphi^\epsilon \in X_\epsilon$ such that $\sup_{x \in \Omega} |\varphi^\epsilon(x^\epsilon)-\varphi(x)| \to 0$,
  \begin{equation} \label{e:cnablav-eps}
    L_t^{2} \norm{\dot c_t^0\nabla v_t^0(\varphi)-\dot c_t^\epsilon\nabla v_t^\epsilon(\varphi^\epsilon)}_{L^\infty(\Omega_\epsilon)}
    \to 0
    \qquad (\epsilon \to 0)
    .
  \end{equation}
  The following estimates hold (for $\epsilon>0$ and in the limit $\epsilon=0$), uniformly in $\varphi,\varphi'$, for any $k\in \N$:
  \begin{align}
    \label{e:cnablav-bd}
    L_t^{2+k} \norm{\partial^k\dot c_t\nabla v_t(\varphi)}_{L^\infty(\Omega)}
    &\leq O_{\beta,k}(\theta_t |\lZ_t|)+{\bf 1}_{t>t_0}O_{\beta,m,k}(\theta_t |\lZ_{t}|),
    \\
    \label{e:cnablav-Lip}
    L_t^{2+k} \norm{\partial^k \dot c_t\nabla v_t(\varphi)-\partial^k \dot c_t\nabla v_t(\varphi')}_{L^\infty(\Omega)}
    &\leq 
      \p{O_{\beta,k}(\theta_t|\lZ_t|)+ {\bf 1}_{t>t_0} O_{\beta,m,z,k}(\theta_t) }
      \norm{\varphi-\varphi'}_{L^\infty(\Omega)},
  \end{align}
  where $t_0=t_0(\beta,z,m)>0$ is a positive constant independent of $\epsilon$.
\end{theorem}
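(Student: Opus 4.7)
The plan is to re-run the renormalisation group (RG) construction of $v_t^\epsilon$ of \cite{1907.12308, MR914427} --- the source of Theorem~\ref{thm:v-bd} --- using a strengthened family of norms that simultaneously controls (i) spatial derivatives $\partial^k$ for every $k$ and (ii) stability under the continuum limit $\epsilon \to 0$. The statement of Theorem~\ref{thm:v-limit} is designed exactly to package these two extensions of Theorem~\ref{thm:v-bd}, and the proof is structured around them; in particular, Theorem~\ref{thm:v-bd} is used only for motivation.

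For the spatial derivative bounds \eqref{e:cnablav-bd}, I would exploit the smoothing property of the heat covariance $\dot c_t^\epsilon = e^{t\Delta^\epsilon - m^2 t}$. Using the semigroup identity $\dot c_t^\epsilon = \dot c_{t/2}^\epsilon \dot c_{t/2}^\epsilon$ and the uniform-in-$\epsilon$ heat kernel derivative bound
\[
\int_{\Omega_\epsilon} |\partial_x^k \dot c_{t/2}^\epsilon(x,y)|\, dy \leq O_k(L_t^{-k}\theta_t),
\]
one may write
\[
\partial_x^k \big[\dot c_t^\epsilon \nabla v_t^\epsilon(\varphi)\big](x) = \int_{\Omega_\epsilon} \partial_x^k \dot c_{t/2}^\epsilon(x,y)\, \big[\dot c_{t/2}^\epsilon \nabla v_t^\epsilon(\varphi)\big](y)\, dy,
\]
and thereby reduce the task to a $k=0$ bound on $\|\dot c_s^\epsilon \nabla v_t^\epsilon\|_{L^\infty(\Omega_\epsilon)}$ at the half-step $s = t/2$. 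This intermediate estimate is obtained by keeping track of the scale $s$ at which the covariance is inserted throughout the fixed-point / charge-expansion construction of $v_t^\epsilon$ in \cite{1907.12308}; the outcome is a bound of the same form $\lesssim L_t^{-2}\theta_t|\lZ_t^\epsilon|$ as in Theorem~\ref{thm:v-bd}. The Lipschitz bound \eqref{e:cnablav-Lip} then follows by one more differentiation in $\varphi$ and re-running the same argument.

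For the continuum limit \eqref{e:cnablav-eps}, the uniform $k \geq 1$ bounds from the previous step provide equicontinuity of $\{\dot c_t^\epsilon \nabla v_t^\epsilon(\cdot)\}_\epsilon$ in $x$, and the Lipschitz estimates give equicontinuity of $\varphi \mapsto \dot c_t^\epsilon \nabla v_t^\epsilon(\varphi)$ on compact subsets of $C(\Omega)$, uniformly in $\epsilon$. Arzel\`a--Ascoli therefore yields subsequential limits. To identify the limit, I would use \eqref{e:vdef-conv} in the form
\[
e^{-v_t^\epsilon(\varphi)} = \EE_{c_t^\epsilon - c_s^\epsilon}\big[ e^{-v_s^\epsilon(\varphi + \zeta)} \big]
\]
for fixed $0<s<t$: the covariance $c_t^\epsilon - c_s^\epsilon$ has a smooth kernel that converges locally uniformly to the continuum analogue $c_t^0 - c_s^0$, and $v_s^\epsilon$ converges uniformly on compact sets in $C(\Omega)$ by the preceding bounds applied at time $s$. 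This pins down a continuum object $v_t^0$ for every $s>0$, and independence of $s$ follows from the uniform smallness $|\lZ_s^\epsilon| \to 0$ as $s \to 0$, which shows that the scales below $s$ contribute negligibly.

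The principal obstacle is the strengthened intermediate estimate $\|\dot c_s^\epsilon \nabla v_t^\epsilon\|_{L^\infty} \lesssim L_t^{-2}\theta_t|\lZ_t^\epsilon|$ uniformly in $s \leq t$, especially in the regime $4\pi \leq \beta < 6\pi$ where the RG analysis of \cite{1907.12308, MR914427} relies on delicate cancellations between the cosine potential and its counterterms, organised by the charge expansion. Verifying that these cancellations remain intact when the scale $s$ at which the covariance acts is decoupled from the flow scale $t$, and that they are compatible with additional spatial derivatives and the passage to the continuum, is the central technical task. Once this is settled, the higher-$k$ estimates and the $\epsilon \to 0$ convergence follow by a routine combination of heat-kernel smoothing and Arzel\`a--Ascoli.
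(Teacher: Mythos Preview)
Your approach diverges from the paper's in a way that matters, and the convergence half has a real gap.

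\textbf{How the paper proceeds.} The paper never uses heat-kernel splitting or compactness. It works throughout with the explicit Yukawa-gas (charge) expansion \eqref{e:v-fourier1}--\eqref{e:nablavtn-fourier}: from the formula
\[
\partial^k\dot c_t\nabla v_t^{n}(\varphi,x)
= \frac{i\sqrt{\beta}}{n!}\int d\xi_1\cdots d\xi_n\Big(\sum_i\sigma_i\,\partial^k\dot c_t(x,x_i)\Big)\tilde v_t^{n}(\xi_1,\dots,\xi_n)\,e^{i\sqrt{\beta}\sum_i\sigma_i\varphi(x_i)},
\]
the $\partial^k$ bound follows immediately from $L_t^k\|\partial^k\dot c_t(x,\cdot)\|_{L^1}=O_k(\theta_t)$ together with the existing $L^1$ bounds on $\tilde v_t^{n}$ from \cite{1907.12308}; no decoupling of scales $s\neq t$ is needed. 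For the continuum limit, the paper \emph{constructs} the limiting coefficients $\tilde v_t^{n,0}$ directly via the recursion \eqref{e:polchinski-fourier-duhamel-0-1}--\eqref{e:polchinski-fourier-duhamel-0} and proves $\tilde v_t^{n,\epsilon}\to\tilde v_t^{n,0}$ by induction on $n$, using only heat-kernel convergence (Lemma~\ref{lem:C-limit}) and dominated convergence. Summing over $n$ for $t\le t_0$ and then propagating to $t>t_0$ by the convolution formula \eqref{e:nablavt-conv} completes the argument. For $4\pi\le\beta<6\pi$, the only extra work is the $n=2$ neutral term, where $\tilde v_t^2$ is not $L^1$: the paper shows the weaker but sufficient bounds \eqref{e:v2-bd1}--\eqref{e:v2-bd2} on the combination $\delta c_t(x,\cdot)\tilde s_t$ and on $|x-\cdot|\tilde s_t$.

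\textbf{The gap in your convergence argument.} Your identification step asserts that ``$v_s^\epsilon$ converges uniformly on compact sets in $C(\Omega)$.'' For $\beta\ge 4\pi$ this is false: $v_s^\epsilon(\varphi)$ diverges as $\epsilon\to 0$ because the neutral two-point function $e^{\beta c_s(x_1,x_2)}$ is not integrable (Section~\ref{sec:vt-6pi}). What survives is only the \emph{difference} $v_s^\epsilon(\varphi+\zeta)-v_s^\epsilon(\varphi)$, and establishing its convergence (Lemma~\ref{lem:v2}) requires precisely the detailed neutral-pair analysis you were hoping to avoid. Even granting that fix, your Arzel\`a--Ascoli step only yields subsequential limits; you then need uniqueness of the limiting Polchinski flow with no well-defined data at $t=0$, and your remark about $|\lZ_s|\to 0$ does not by itself supply this. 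The paper sidesteps the whole issue by building the limit $\dot c_t^0\nabla v_t^0$ explicitly from the series, so there is nothing to identify.

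Your heat-kernel splitting for the $\partial^k$ bounds is a legitimate alternative and would ultimately work, but it forces you to control $\dot c_{t/2}\nabla v_t$ with mismatched scales, which (as you note) means reopening the charge expansion anyway; the paper's direct differentiation inside \eqref{e:nablavtn-fourier} is strictly simpler.
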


Note that we consider $\dot c_t\nabla v_t$ as one object
rather than as a composition of $\dot c_t$ with $\nabla v_t$
(which we do not define in the continuum limit).
Also, as in this example for $\dot c_t\nabla v_t$, we will often omit the index $\epsilon$,
in which case we make the convention that it refers to both cases, $\epsilon>0$
and the limiting case $\epsilon=0$, simultaneously.

The remainder of this section is devoted to the proof of Theorem~\ref{thm:v-limit}.
The theorem provides the main input for the coupling of the sine-Gordon field
with the GFF in Section~\ref{sec:coupling}.

\subsection{Yukawa gas representation of renormalised potential}
\label{sec:yukawagas}

To prove Theorem~\ref{thm:v-limit},
we first recall the construction of the renormalised potential $v_t^\epsilon$
from \cite[Section~3]{1907.12308} and \cite{MR914427}.
As discussed above, compared to \cite[Section~3]{1907.12308},
we use the continuum rescaling $t\to t/\epsilon^2$ that is more convenient in our context;
the trivial relation between these scalings is summarised in Section~\ref{sec:rescaling} below.

For $\epsilon>0$ and $\xi_i = (x_i,\sigma_i) \in \Omega_\epsilon \times \{\pm 1\}$,
we define $\tilde v_t^\epsilon(\xi_1,\dots,\xi_n) = \tilde v_t^{n,\epsilon}(\xi_1,\dots,\xi_n)$ by
\begin{align}
  \label{e:polchinski-fourier-duhamel-rescaled1}
  \tilde v_t^{\epsilon}(\xi_1) &= e^{-\frac12 \beta c^\epsilon_t(0)} \epsilon^{-\beta/4\pi} z 
                                 = e^{-\frac12 \beta (c^\epsilon_t(0) + (\log \epsilon^2)/4\pi)} z
  \\
    \label{e:polchinski-fourier-duhamel-rescaled}
  \tilde v_{t}^{\epsilon}(\xi_1,\dots,\xi_n) &=
  \frac12 \int_0^{t} ds \, \sum_{I_1 \dot\cup I_2= [n]}
  \sum_{i\in I_1,j\in I_2}
  \dot u_s^\epsilon(\xi_i,\xi_j) 
  \tilde v_{s}^\epsilon(\xi_{I_1}) \tilde v_{s}^\epsilon(\xi_{I_2}) 
  e^{-(w^\epsilon_{t}-w^\epsilon_{s})(\xi_1,\dots,\xi_n)},
\end{align}
where the second equation is for $n \geq 2$ (see \cite[(3.32)]{1907.12308}),
$\xi_I = (\xi_i)_{i\in I}$ is identified with an element of $(\Omega_\epsilon\times\{\pm 1\})^{|I|}$
(the order does not matter), $\dot \cup$ denotes the disjoint union, $[n]=\{1,\ldots,n\}$, and
\begin{equation} \label{e:ueps-def}
  \dot u^\epsilon_t(\xi_1,\xi_2) = \beta \sigma_1\sigma_2 \dot c_t^\epsilon(x_1,x_2),
  \qquad
  (w^\epsilon_t-w^\epsilon_s)(\xi_1,\dots,\xi_n) = \frac{\beta}{2} \sum_{i,j=1}^n \int_s^t \sigma_i\sigma_2 \dot c_r^\epsilon(x_i,x_j) \, dr.
\end{equation}
In particular, $\tilde v_t^{n,\epsilon}$ is determined inductively by $\tilde v_s^{k,\epsilon}$ with $k<n$ and $s<t$,
and the above equations are well defined for all $n$ and $t$ whenever $\epsilon>0$.

As we will see further below,
for $\epsilon>0$, $\beta<6\pi$ and $t<t_0= t_0(z,\beta,m)$ independent of $\epsilon>0$,
the following Fourier series in fact converges absolutely uniformly in $\varphi \in X_\epsilon$:
\begin{equation} \label{e:v-fourier1}
  v_t^\epsilon(\varphi) = \sum_{n=0}^\infty v_t^{n,\epsilon}(\varphi),
\end{equation}
where
\begin{equation} \label{e:v-fourier2}
  v_t^{n,\epsilon}(\varphi) = \frac{1}{n!} \int_{(\Omega_\epsilon \times \{\pm 1\})^n} d\xi_1 \dots d\xi_n \, \tilde v_t^{n,\epsilon}(\xi_1, \dots, \xi_n) e^{i\sqrt{\beta}\sum_{i=1}^n \sigma_i\varphi(x_i)}
\end{equation}
and the discrete integral over $(\Omega_\epsilon\times \{\pm 1\})^n$ is defined as a sum analogously to \eqref{e:innerprod}.
Moreover, \eqref{e:v-fourier1}--\eqref{e:v-fourier2} then gives the unique solution to the Polchinski equation \eqref{eq:PolchinskiSDE-v};
see \cite[Section~3.3]{1907.12308}.

By differentiation in $\varphi$ (denoted $\nabla$) and then discrete differentiation in $x$
(denoted $\partial$), we further obtain that, for any $k \in \N$,
\begin{multline}
  \label{e:nablavtn-fourier}
  \partial_\epsilon^k \dot c_t^\epsilon\nabla v_t^{n,\epsilon}(\varphi,x)
  \\
  = \frac{i \sqrt{\beta}}{n!}
  \int_{(\Omega_\epsilon \times \{\pm 1\})^n} d\xi_1 \dots d\xi_n
  \pa{\sum_{i=1}^n \partial_\epsilon^k \dot c_t^\epsilon(x,x_i)\sigma_i} \tilde v_t^{n,\epsilon}(\xi_1,\dots,\xi_n) e^{i\sqrt{\beta}\sum_{i=1}^n \sigma_i \varphi(x_i)}.
\end{multline}
Denoting by $\He$ the Hessian matrix in $\varphi$, for any $g: \Omega_\epsilon \to \R$, also
\begin{multline} \label{e:Hessvtn-fourier}
  [\partial_\epsilon^k\dot c_t^\epsilon\He v_t^{n,\epsilon}(\varphi)g](x)
  \\
  = \frac{-\beta}{n!}
  \int_{(\Omega_\epsilon \times \{\pm 1\})^n} d\xi_1 \dots d\xi_n
  \pa{\sum_{i=1}^n \partial_\epsilon^k\dot c_t^\epsilon(x,x_i)\sigma_i}
  \pa{\sum_{i=1}^n g(x_i)\sigma_i}
  \tilde v_t^{n,\epsilon}(\xi_1,\dots,\xi_n) e^{i\sqrt{\beta}\sum_{i=1}^n \sigma_i\varphi(x_i)},
\end{multline}
and analogous expressions for higher derivatives hold as well.

We will define continuum versions $\tilde v_t^{0}(\xi_1,\dots, \xi_n)$ by passing to the limit $\epsilon \downarrow 0$
in \eqref{e:polchinski-fourier-duhamel-rescaled1}--\eqref{e:polchinski-fourier-duhamel-rescaled}.
To define these limits, for $t \geq  s > 0$, we first define $u_t^0$ and $w_t^0-w_s^0$  as in \eqref{e:ueps-def},
only replacing $c_t^\epsilon$ by $c_t^0$ defined in \eqref{e:dotct0}.
We emphasise that (with slight abuse of notation) we only define the difference $w_t^0-w_s^0$ and
not $w_t^0$ and $w_s^0$ individually.
We will show in Lemma~\ref{lem:C-limit} below that there exist constants $\gamma_t = \gamma_t(m)$ such that
\begin{equation}
  e^{-\frac12 \beta (c^\epsilon_t(0) + (\log \epsilon^2)/4\pi)} \to \gamma_t^\beta L_t^{-\beta/4\pi}.
\end{equation}
For all distinct $\xi_1, \xi_2, \ldots \in \Omega \times \{\pm 1\}$, we then define inductively
\begin{align}
  \label{e:polchinski-fourier-duhamel-0-1}
  \tilde v_t^{0}(\xi_1) &= L_t^{-\beta/4\pi} \gamma_t^\beta z
  \\
    \label{e:polchinski-fourier-duhamel-0}
  \tilde v_{t}^{0}(\xi_1,\dots,\xi_n) &=
  \frac12 \int_0^{t} ds \, \sum_{I_1 \dot\cup I_2= [n]}
  \sum_{i\in I_1,j\in I_2}
  \dot u_s^0(\xi_i,\xi_j) 
  \tilde v_{s}^0(\xi_{I_1}) \tilde v_{s}^0(\xi_{I_2}) 
  e^{-(w^0_{t}-w^0_{s})(\xi_1,\dots,\xi_n)}.
\end{align}
To see that this is well-defined we note that, for any distinct $\xi_i$,
one has $\dot u_s^0(\xi_i,\xi_j) = \sigma_i\sigma_j \dot c_s^0(x_i,x_j) = O_{x_i,x_j}(s^N)$ as $s\to 0$ for any $N$
by the heat kernel estimate \eqref{e:dotct0}, and also $e^{-(w_t-w_s)(\xi_1,\dots,\xi_n)} \leq 1$
since $\dot c_t$ is positive definite.
From this, it follows easily by induction that the integrals on the right-hand side of
\eqref{e:polchinski-fourier-duhamel-0} converge absolutely for all $\xi_i$ distinct.

Finally, when
\begin{equation}
\label{eq:integral-1-convergence}
  \frac{1}{n!}
  \int_{(\Omega \times \{\pm 1\})^n} d\xi_1 \dots d\xi_n \, |\tilde v_t^{n,0}(\xi_1, \dots, \xi_n)| < \infty
\end{equation}
respectively
\begin{equation}
\label{eq:integral-2-convergence}
  \frac{1}{n!} \int_{(\Omega \times \{\pm 1\})^{n}} d\xi_1\cdots d\xi_n
  \absa{\sum_{i=1}^n \partial^k\dot c_t^0(x,x_i)\sigma_i \tilde v_t^{n,0}(\xi_1,\dots,\xi_n)} < \infty
\end{equation}
we  define $v_t^{n,0}: C(\Omega) \to \R$ respectively $\partial^k c_t^0\nabla v_t^{n,0}: C(\Omega) \to C(\Omega)$
as in \eqref{e:v-fourier1}--\eqref{e:nablavtn-fourier} with $\epsilon=0$. 
We will provide conditions for \eqref{eq:integral-1-convergence} and \eqref{eq:integral-2-convergence} below.

\subsection{Convergence of covariances and Gaussian fields}
\label{sec:C-limit}

The following lemma provides the conver\-gence of the covariance terms and heat kernel terms in
\eqref{e:polchinski-fourier-duhamel-rescaled1}--\eqref{e:polchinski-fourier-duhamel-rescaled}
to those in
\eqref{e:polchinski-fourier-duhamel-0-1}--\eqref{e:polchinski-fourier-duhamel-0}.
For $x\in \Omega$, recall that we denote by $x^\epsilon$ the unique element of $\Omega_\epsilon$
such that $x\in x^\epsilon + (-\epsilon/2,\epsilon/2]^2$,
and correspondingly, for $\xi =(x,\sigma) \in \Omega \times \{\pm 1\}$, we write $\xi^\epsilon = (x^\epsilon,\sigma)$.
For $f : \Omega_\epsilon^n \to \R$ we further denote by $\exteps f$ the piecewise constant
extention of $f$ to $\Omega^n$, i.e., $\exteps f(x_1,\dots, x_n) = f(x_1^\epsilon, \dots, x_n^\epsilon)$, and we denote the extension of $f: (\Omega_\epsilon \times \{\pm 1\})^n \to \R$
analogously.

\begin{lemma} \label{lem:C-limit}
  There exists a function $\gamma_t=\gamma_t(m)$ of $(t,m) \in [0,\infty) \times (0,\infty)$
  with $\gamma_t = \gamma_0 + O(m^2t)$ as $t\to 0$ such that
  \begin{equation}
    \label{e:c0-limit}
    e^{-\frac12 c_t^{\epsilon}(0,0)}\epsilon^{-1/4\pi} \to  \gamma_t L_t^{-1/4\pi} .
  \end{equation}
  For all $x \neq y \in \Omega$ and $t>0$, the integral $c_t^0(x,y) = \int_0^t \dot c_s^0(x,y) \, ds$ exists and
  uniformly on compact subsets of $x\neq y$,
  \begin{equation} \label{e:cxy-limit}
    c_t^\epsilon(x^\epsilon,y^\epsilon) \to c_t^0(x,y).
  \end{equation}
  For all $t \geq s \geq \epsilon^2$, uniformly in $\xi = (\xi_1,\dots,\xi_n) \in (\Omega \times \{\pm 1\})^n$,  
  \begin{equation} \label{e:ew-limit}
    e^{-(w_t^\epsilon(\xi^\epsilon)-w_s^\epsilon(\xi^\epsilon))} \to e^{-(w_t^0(\xi)-w_s^0(\xi))} .
  \end{equation}
  For all $t \geq \epsilon^2$, and all $k \in \N$,
  \begin{equation}
      L_t^k \sup_x \norm{\partial_\epsilon^k \dot c_t^\epsilon(x,\cdot)}_{L^1(\Omega_\epsilon)}  
      \leq O_k(\theta_t),
      \qquad
      \label{e:c-limit}
    \sup_{x}
    \norm{\exteps \dot c_t^\epsilon(x,\cdot)-\dot c_t^0(x,\cdot)}_{L^1(\Omega)}
    \leq O(\frac{\epsilon^2}{t}) \theta_t.
  \end{equation}
\end{lemma}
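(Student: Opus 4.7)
The overall strategy is to reduce all four claims to estimates for the lattice and continuum heat kernels on the torus, writing
\begin{equation}
c_t^\epsilon(x,y) = \int_0^t e^{-m^2 s} p_s^\epsilon(x,y) \, ds,
\qquad
c_t^0(x,y) = \int_0^t e^{-m^2 s} p_s^0(x,y) \, ds,
\end{equation}
where $p_s^\epsilon$ and $p_s^0$ are the lattice and continuum heat kernels on $\T^2$. The central ingredient is the quantitative local central limit theorem for the simple random walk on $\epsilon\Z^2$, which gives $p_s^\epsilon(x,y) = p_s^0(x,y) + O(\epsilon^2/s^2)$ uniformly for $s \geq \epsilon^2$, together with the standard on-diagonal bound $p_s^\epsilon(0,0) \lesssim \epsilon^{-2} \wedge s^{-1}$.

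For \eqref{e:c0-limit} I would split $c_t^\epsilon(0,0) = \int_0^{\epsilon^2} + \int_{\epsilon^2}^t$. The first integral is $O(1)$ and converges to a computable lattice constant encoding the short-distance behaviour of $p_s^\epsilon(0,0)$ near $s=0$. The second equals $\int_{\epsilon^2}^t e^{-m^2 s} \bigl(\tfrac{1}{4\pi s} + R_s\bigr) ds$, where $R_s = p_s^0(0,0) - 1/(4\pi s)$ is bounded on $[0,t]$ (absorbing the periodic image corrections in \eqref{e:dotct0}); the $1/(4\pi s)$ term contributes $\tfrac{1}{2\pi}\log(L_t/\epsilon)$ up to $O(1)$ corrections coming from $e^{-m^2 s}$ and the cutoffs. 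Combining gives $c_t^\epsilon(0,0) = -\tfrac{1}{2\pi}\log\epsilon + \tfrac{1}{2\pi}\log L_t + g_t(m) + o(1)$ for a continuous function $g_t(m)$, and \eqref{e:c0-limit} holds with $\gamma_t = e^{-g_t(m)/2}$; expanding $e^{-m^2 s}$ to first order inside the integral yields $\gamma_t = \gamma_0 + O(m^2 t)$. For \eqref{e:cxy-limit} with $x \neq y$, the Gaussian factor $e^{-|x-y|^2/4s}$ in $p_s^0(x,y)$ (and its lattice analogue) makes $\dot c_s(x,y)$ decay super-polynomially as $s \to 0$, so the integral defining $c_t^0(x,y)$ converges absolutely, and convergence on compact subsets of $\{x\neq y\}$ follows from the local CLT together with dominated convergence.

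For \eqref{e:ew-limit} I would use that $\dot c_r^\epsilon$ is positive definite, so $e^{-(w_t^\epsilon - w_s^\epsilon)(\xi^\epsilon)} \in [0,1]$; it therefore suffices to prove uniform convergence of the exponent. Since
\begin{equation}
(w_t^\epsilon - w_s^\epsilon)(\xi^\epsilon) = \tfrac{\beta}{2}\sum_{i,j}\sigma_i\sigma_j \bigl(c_t^\epsilon - c_s^\epsilon\bigr)(x_i^\epsilon, x_j^\epsilon),
\end{equation}
it is enough to show $(c_t^\epsilon - c_s^\epsilon)(x^\epsilon, y^\epsilon) \to (c_t^0 - c_s^0)(x,y)$ uniformly on all of $\Omega \times \Omega$, including the diagonal (which is well-defined since $s > 0$). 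This uniform convergence follows from the local CLT bound $|p_r^\epsilon - p_r^0| = O(\epsilon^2/r^2)$ integrated over $r \in [s,t] \subset [\epsilon^2,\infty)$, giving total error $O(\epsilon^2/s) \to 0$ for fixed $s > 0$; uniform boundedness of the exponent (needed to exchange pointwise and uniform limits of the exponential) follows from the on-diagonal bound $(c_t^\epsilon - c_s^\epsilon)(x,x) \lesssim \log(t/s)$, valid precisely because $s \geq \epsilon^2$.

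Finally, for \eqref{e:c-limit}, the $L^1$ bound on $\partial_\epsilon^k \dot c_t^\epsilon$ is a standard lattice heat kernel derivative estimate: for $t \geq \epsilon^2$ one has $|\partial_\epsilon^k p_t^\epsilon(x,y)| \lesssim t^{-k/2} G_t(x-y)$ with $G_t$ a Gaussian density of unit $L^1$ mass, yielding the claimed factor $L_t^{-k}\theta_t$ after inserting the Yukawa factor $e^{-m^2 t}$. The $L^1$ convergence with rate $O(\epsilon^2/t)\theta_t$ is the integrated form of the quantitative local CLT, bounded by a Gaussian majorant. The main obstacle across the four claims is the uniformity in $\xi$ required in \eqref{e:ew-limit}: pointwise convergence is straightforward, but uniformity across all pairwise separations forces one to combine the on-diagonal logarithmic bound (which is available only because $s \geq \epsilon^2$) with the off-diagonal local CLT rate, and it is exactly this coupling of the two regimes that dictates the hypothesis $s \geq \epsilon^2$ in the statement.
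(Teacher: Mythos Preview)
Your proposal is correct and follows essentially the same route as the paper: both reduce everything to heat kernel estimates on $\T^2$ (the local CLT rate $|p_s^\epsilon-p_s^0|=O(\epsilon^2/s^2)$, the on-diagonal asymptotic $\tilde p_s(0)=1/(4\pi s)+O(1/s^2)$, and standard derivative bounds), and the paper's Appendix~A records exactly these ingredients.

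A few organisational differences are worth noting. For \eqref{e:c0-limit} the paper first works in infinite volume and splits the time integral at the mass scale $1/m^2$ rather than at $\epsilon^2$; this makes the dependence on $m$ and the claimed expansion $\gamma_t=\gamma_0+O(m^2t)$ more transparent, and the torus correction is then handled as a separate $O(1)$ additive term. Your two-piece split at $\epsilon^2$ gives the same conclusion but buries the $m$-dependence inside $g_t(m)$. For \eqref{e:ew-limit} the paper simply integrates the uniform pointwise bound $|\dot c_r^\epsilon(x,y)-\dot c_r^0(x,y)|\lesssim(\epsilon^2/r)^{1-\kappa}/r$ over $r\in[s,t]$ and sums over the $n^2$ pairs; there is no need to separately combine an on-diagonal and an off-diagonal regime as your last paragraph suggests, since the local CLT rate is already uniform in $x,y$. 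Also, once the exponent converges uniformly and is nonnegative (by positive definiteness of $\dot c_r$), the 1-Lipschitz bound $|e^{-a}-e^{-b}|\le|a-b|$ on $[0,\infty)$ gives uniform convergence of the exponential directly, so the separate ``uniform boundedness'' step you mention is not needed.
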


The proof of the lemma essentially follows from the convergence of the lattice
heat kernel to its continuum counterpart.
We have collected the required heat kernel statements in the appendix,
in Lemmas~\ref{lem:pt}--\ref{lem:pttorus}, to which we will refer in the 
proof of the above lemma below.

\begin{proof}[Proof of \eqref{e:c0-limit}]
To emphasise the effect of periodic boundary conditions and the mass term,
recall that we write $L$ for the side length of the torus. 
Then, denoting the torus heat kernel by $p^{\epsilon,L}_t(x)$, $x\in \Omega_\epsilon$,
as in Appendix~\ref{app:pt},
our goal is to estimate
$c_t^{\epsilon,L}(0) = \int_0^{t} p^{\epsilon,L}_{s}(0) e^{-m^2 s} \, ds$.
We will first estimate its infinite volume version, i.e., with $L=\infty$, given by
\begin{align}
  c_t^{\epsilon,\infty}(0)
  &= \int_0^{t} p^\epsilon_{s}(0) e^{-m^2 s} \, ds
    \nnb
  &= \int_0^{t\wedge 1/m^2} p^\epsilon_{s}(0) \, ds
    + \int_0^{t\wedge 1/m^2} p^\epsilon_s(0)(e^{-m^2s}-1) \, ds
    + \int_{t\wedge 1/m^2}^t p^\epsilon_s(0)e^{-m^2s} \, ds
    ,
    \label{e:cthreeterms}
\end{align}
where $p_t^\epsilon(x)$, $x\in \epsilon\Z^2$ is the heat kernel on $\epsilon\Z^2$, see Appendix~\ref{app:pt}.
The first term on the right-hand side is estimated as follows.
We denote the unit lattice heat kernel by $\tilde p_s$ so that $p_s^\epsilon(x) = \epsilon^{-2} \tilde p_{s/\epsilon^2}(x/\epsilon)$ for $x \in \epsilon\Z^2$.
By Lemma~\ref{lem:pt}, $\tilde p_s(0) = 1/(4\pi s) + O(1/s^2)$, and thus
\begin{equation}
  \int_0^t \tilde p_s(0) \, ds
  =
    \int_0^1 \tilde p_s(0) \, ds
    +
    \int_1^t \frac{ds}{4 \pi s}
    +
    \int_1^t \pa{ \tilde p_s(0)-\frac{1}{4\pi s}} \, ds
\end{equation}
and
\begin{equation} \label{e:intpt-asymp-bis}
  \int_0^t \tilde p_s(0) \, ds
  = \frac{1}{4\pi} \log t  + \tilde c + O(\frac{1}{t}),
  \qquad
  \tilde c = \int_0^1 \tilde p_s(0) \, ds +
  \int_1^\infty \pa{ \tilde p_s(0)-\frac{1}{4\pi s}} \, ds.
\end{equation}
Equivalently, rescaling $(s,x) \to (s/\epsilon^2,x/\epsilon)$, 
\begin{align}
\label{e:intpt-asymp}
  \int_0^t p_s^\epsilon(0) \, ds + \frac{1}{4\pi} \log \epsilon^{2}
  &= \frac{1}{4\pi} \log t  + \tilde c + O(\frac{\epsilon^2}{t}).
\end{align}
For the second term on the right-hand side of \eqref{e:cthreeterms},
since $p^0_t(0)=1/(4\pi t)$ and $p^0_t(0)-p^\epsilon_t(0) = O(\epsilon^2/t^2 \wedge 1/t)$,
we have
\begin{align}
  \int_0^{t} p_{s}^\epsilon(0) (1-e^{-m^2s}) \, ds 
  &=
    \int_0^{t} \frac{1}{4\pi s} (1-e^{-m^2s}) \, ds 
    + O(\epsilon^2m^2)\int_{\epsilon^2}^{1/m^2} \frac{ds}{s} + O(m^2\epsilon^2),
\end{align}
where the middle term is $O(\epsilon^2m^2 |\log (\epsilon^2m^2)|)$.
The third term on the right-hand side of \eqref{e:cthreeterms} is
\begin{align}
\int_{1/m^2}^t p_{s}^\epsilon(0) e^{-m^2s} \, ds
  &=
    \int_{1/m^2}^t \frac{1}{4\pi s} e^{-m^2s} \, ds
    + O(\epsilon^2m^2)
\end{align}
where we used
$\int_{1/m^2}^\infty \frac{\epsilon^2}{s^2} e^{-m^2s} \, ds = O(\epsilon^2m^2)$.
Define
\begin{equation}
  \tilde c(m,t) = \tilde c +
  \int_0^{t\wedge 1/m^2} \frac{1}{4\pi s} (e^{-m^2s}-1) \, ds
  +
  \int_{t\wedge 1/m^2}^t \frac{1}{4\pi s} e^{-m^2s} \, ds
\end{equation}
and note that
\begin{equation}
  \tilde c(m,t) = \tilde c + O(m^2t) \quad (t\to 0),\qquad
  \tilde c(m,t) = \tilde c(m,\infty) + O(e^{-m^2t}) \quad (t\to\infty).
\end{equation}
In summary, with $L_t = \sqrt{t} \wedge 1/m$, we have shown that
\begin{equation}
  c_t^{\epsilon,\infty}(0)+\frac{1}{4\pi}\log \epsilon^2 = \frac{1}{2\pi} \log L_t + \tilde c(m,t)
  + O(\frac{\epsilon^2}{t} + \epsilon^2 m^2 |\log \epsilon^2m^2|)
\end{equation}
and therefore
with $\gamma_t(m) = e^{-\frac12 \tilde c(m,t)}$, we have shown that
\begin{equation}
  \epsilon^{-1/4\pi} e^{-c^{\epsilon,\infty}_t(0)/2}
  = e^{-\frac12 (c^{\epsilon,\infty}_t(0) + (\log \epsilon^2)/4\pi)}
  = L_t^{-1/4\pi}  (\gamma_t(m)+O(\epsilon^2/t+m^2\epsilon^2|\log m^2\epsilon^2|)).
\end{equation}

Finally, we return to the torus version of $c_t$.
Using \eqref{e:pttorusbounds}--\eqref{e:pttoruslimit}, similar element\-ary computa\-tions show
that there is $\kappa>0$ such that
\begin{align}
  |c_t^{\epsilon,\infty}(0)-c_t^{\epsilon,L}(0)|
  &\lesssim 1
    \\
  |(c_t^{0,\infty}(0)-c_t^{0,L}(0))-(c_t^{\epsilon,\infty}(0)-c_t^{\epsilon,L}(0))|
  & \lesssim \epsilon^\kappa.
\end{align}
Thus with 
$\tilde c(m,L,t) = \tilde c(m,t) - (c_t^{0,\infty}(0)-c_t^{0,L}(0))$
and $\gamma_t(m,L)= e^{-\frac12 \tilde c(m,L,t)}$,
we have shown
\begin{align}
  \epsilon^{-1/4\pi} e^{-c_t^{\epsilon,L}(0)/2} = L_t^{-1/4\pi}(\gamma_t(m,L) + O(\frac{\epsilon^2}{t})+O_{m,L}(\epsilon^{\kappa})).
\end{align}
The claim is the special case $L=m=1$.
\end{proof}

\begin{proof}[Proof of \eqref{e:cxy-limit}]
  It follows directly from the definition of $c_t(x,y)$ that, for any $\kappa \in (0,1)$,
  \begin{multline}
    |c_t^0(x,y)-c_t^\epsilon(x^\epsilon,y^\epsilon)|
    \\
    \leq
    \int_0^{\kappa^2|x-y|^2} (p_s^0(x,y)+p_s^\epsilon(x^\epsilon,y^\epsilon)) \, ds
    + \int_{\kappa^2|x-y|^2}^\infty |p_s^0(x,y)-p_s^\epsilon(x^\epsilon,y^\epsilon)| e^{-m^2 s} \, ds.
  \end{multline}
  By \eqref{e:ptbounds} respectively \eqref{e:ptlimit}, the two terms on the right-hand side are bounded by multiples of
  \begin{equation}
    \int_0^{\kappa^2|x-y|^2} e^{-c|x-y|/\sqrt{t}} \, \frac{dt}{t}
    \lesssim e^{-c/(2\kappa)},
    \qquad
    \epsilon^2 \int_{\kappa^2|x-y|^2}^\infty t^{-2} \, dt
    \lesssim
    \frac{\epsilon^2}{\kappa^2 |x-y|^2},
  \end{equation}
  and choosing say $\kappa = c/(4\log (|x-y|/\epsilon))$ gives the claim.
\end{proof}

\begin{proof}[Proof of \eqref{e:ew-limit}]
In the definition of $(w_t-w_s)(\xi_1,\dots,\xi_n)$
there are $n^2$ pairings between the charges $\xi_1,\dots, \xi_n$.
We ignore the signs of the charges and bound each pairing simply
using \eqref{e:pttorusbounds}--\eqref{e:pttoruslimit}:
For $t\geq s \geq \epsilon^2$, this gives
\begin{equation}
  \int_s^t |\dot u_r^\epsilon(\xi_i^\epsilon,\xi_j^\epsilon) - \dot u_r^0(\xi_i,\xi_j)| \, dr
  \lesssim \int_s^t (\frac{\epsilon^2}{r})^{1-\kappa} \, \pa{\frac{1}{r}+\frac{1}{L^2}}\, \theta_r\, dr
  \lesssim (\frac{\epsilon^2}{s})^{1-\kappa} 
  ,
\end{equation}
uniformly in $\xi_i$ and $\xi_j$.
Summing over all $O(n^2)$ pairs of charges, therefore
\begin{equation}
  \label{e:w-limit}
  \sup_{\xi_1,\dots,\xi_n} |w_t^\epsilon(\xi^\epsilon)-w_s^\epsilon(\xi^\epsilon) - (w_t^0(\xi)-w_s^0(\xi))|
  \lesssim n^2 (\frac{\epsilon^2}{s})^{1-\kappa}
\end{equation}
and \eqref{e:ew-limit} follows.
\end{proof}

\begin{proof}[Proof of \eqref{e:c-limit}]  
The first bound in \eqref{e:c-limit} is immediate from \eqref{e:pttorusbounds}
and \eqref{e:ptbounds}.
For the convergence bound in \eqref{e:c-limit}, we can take the sum of
\eqref{e:pttorus} and \eqref{e:ptlimit} over $x \in\Omega_\epsilon$
to see that, for $t\geq \epsilon^2$,
\begin{equation}
  \epsilon^2\sum_{x\in \Omega_\epsilon} |p^{\epsilon,L}_t(x) - p_t^{0,L}(x)|
  \leq \epsilon^2\sum_{x\in \epsilon\Z^2} |p^\epsilon_t(x) - p_t^0(x)|
  \lesssim \frac{\epsilon^2}{t} + \frac{\epsilon^{2k}}{t^{k}}
  \lesssim \frac{\epsilon^2}{t}.
\end{equation}
For $t \leq \epsilon^2$ the left-hand side is trivially bounded by $O(1) \leq O(\epsilon^2/t)$.
\end{proof}

We also record the following lemma providing some basic estimates for the regularised
GFF. 
Here recall that for $\epsilon>0$ we use $\EE_{c^\epsilon}$ to denote the expectation of the Gaussian measure on $X_\epsilon$ with covariance
matrix $c^{\epsilon}$.
Moreover, when $c^0$ is a smooth covariance kernel on $\Omega$, we denote by $\EE_{c^0}$ the Gaussian measure
supported on $C^\infty(\Omega)$ with covariance $c^0$.

\begin{lemma} \label{lem:Gauss-conv}
Let $t_0>0$. Then (with all constants depending on $t_0$ and $m$ but uniform in $\epsilon \geq 0$)
\begin{equation}\label{e:Gaussbd}
  \sup_{1 \geq \epsilon \geq 0}\sup_{t \geq t_0}\EE_{c_t^\epsilon-c_{t_0}^\epsilon}\pa{e^{O(\norm{\partial_\epsilon\zeta}_{L^\infty(\Omega_\epsilon)})}} \lesssim 1
  .
\end{equation}
Assume that $F_\epsilon\colon X_\epsilon \to \R$ and $F_0 \colon C(\Omega)\to \R$
satisfy the uniform bounds
$|F_\epsilon(\zeta)| \leq e^{O( \| \partial_\epsilon \zeta\|_{L^\infty(\Omega_\epsilon)} ) }$
and moreover $|F_0(\zeta)| \leq e^{O( \| \partial \zeta\|_{L^\infty(\Omega)} )}$
and $F_\epsilon(\varphi^\epsilon) \to F_0(\varphi)$ as $\epsilon\to 0$
if $\max_{x \in\Omega} |\varphi^\epsilon(x^\epsilon)-\varphi(x)| \to 0$.
Then
\begin{equation}\label{e:conv-lattice-expectations}
\EE_{c_t^\epsilon-c_{t_0}^\epsilon} F_\epsilon - \EE_{c_t^0-c_{t_0}^0} F_0 \to 0.
\end{equation}
Similarly, if
$F_\epsilon\colon X_\epsilon \to X_\epsilon$ and  $F_0\colon C(\Omega) \to C(\Omega)$ satisfy
$\|F_\epsilon(\zeta)\|_{L^\infty(\Omega_\epsilon)} \leq e^{O( \| \partial_\epsilon \zeta\|_{L^\infty(\Omega_\epsilon)} ) }$
and $\|F_0(\zeta)\|_{L^\infty(\Omega)} \leq e^{O( \| \partial \zeta\|_{L^\infty(\Omega)} ) }$
and moreover
$\|F_\epsilon(\varphi^\epsilon) - F_0(\varphi)\|_{L^\infty(\Omega_\epsilon)} \to 0$ as $\epsilon \to 0$
if $\max_{x \in\Omega} |\varphi^\epsilon(x^\epsilon)-\varphi(x)| \to 0$,
then
\begin{equation}\label{e:conv-lattice-expectations-Linfty}
  \|\EE_{c_t^\epsilon-c_{t_0}^\epsilon} F_\epsilon - \EE_{c_t^0-c_{t_0}^0} F_0\|_{L^\infty(\Omega_\epsilon)} \to 0.
\end{equation}
\end{lemma}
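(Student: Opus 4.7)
The plan is to reduce all three assertions to a single uniform exponential moment bound on the Gaussian gradient $\|\partial_\epsilon \zeta\|_{L^\infty(\Omega_\epsilon)}$, combined with a coupling of the Gaussian fields $\zeta$ for different $\epsilon$ to a common driving noise. The exponential bound is \eqref{e:Gaussbd} itself, and together with the hypothesised exponential growth of $F_\epsilon$ it provides the uniform integrability needed to pass to the limit in \eqref{e:conv-lattice-expectations}--\eqref{e:conv-lattice-expectations-Linfty} by dominated convergence.

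For \eqref{e:Gaussbd}, observe that $\zeta$ has covariance $c_t^\epsilon - c_{t_0}^\epsilon = \int_{t_0}^t \dot c_s^\epsilon \, ds$. Since the integration starts at $s = t_0 > 0$, smoothness of $\dot c_s^\epsilon$ on scale $\sqrt{s} \geq \sqrt{t_0}$ handles small $s$ while the mass damping $\theta_s$ handles large $s$, so heat-kernel estimates in the spirit of Appendix~\ref{app:pt} yield a uniform bound on the discrete second differences of $c_t^\epsilon - c_{t_0}^\epsilon$ in both variables. Consequently every component of the Gaussian field $\partial_\epsilon \zeta$ has variance bounded uniformly in $\epsilon \in [0,1]$, $t \geq t_0$ and the point of evaluation; a further discrete derivative gives a uniform Gaussian modulus of continuity for $\partial_\epsilon \zeta$. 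Dudley's entropy inequality then controls $\sup_{\epsilon, t \geq t_0} \EE\|\partial_\epsilon \zeta\|_{L^\infty(\Omega_\epsilon)}$, and Borell's Gaussian concentration inequality upgrades this to the claimed finite exponential moments, with a constant depending only on $t_0$ and $m$.

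For \eqref{e:conv-lattice-expectations}, I couple the Gaussian fields by a common cylindrical Brownian motion $W$ on $L^2(\T^2)$: setting $\zeta_t^\epsilon = \int_{t_0}^t e^{(t-s)\Delta^\epsilon/2} e^{-m^2(t-s)/2} \, dW_s$ for all $\epsilon \geq 0$, where for $\epsilon>0$ the operators $\Delta^\epsilon$ act on $L^2(\Omega)$ after local averaging onto $\epsilon$-cells (an isometry $L^2(\Omega_\epsilon) \hookrightarrow L^2(\Omega)$). Lemma~\ref{lem:C-limit} then implies that $\zeta^\epsilon(x^\epsilon) - \zeta^0(x)$ is a centred Gaussian field whose variance tends to $0$ uniformly in $x \in \Omega$ as $\epsilon \to 0$, and the uniform second-derivative bounds from the previous paragraph promote this via a standard chaining argument (over an $\epsilon$-independent finite net) to $\sup_{x \in \Omega} |\zeta^\epsilon(x^\epsilon) - \zeta^0(x)| \to 0$ in probability, hence a.s.\ along a subsequence. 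The continuity of $F$ then gives $F_\epsilon(\zeta^\epsilon) \to F_0(\zeta^0)$ a.s.\ along this subsequence, the assumed exponential growth of $F_\epsilon$ combined with \eqref{e:Gaussbd} supplies uniform integrability, and dominated convergence yields \eqref{e:conv-lattice-expectations}; the full limit follows from the standard subsequence characterisation of convergence. The $L^\infty$-valued version \eqref{e:conv-lattice-expectations-Linfty} is proved in exactly the same way, now invoking the assumed uniform-in-$x$ convergence of $F_\epsilon$ and using the exponential bound to dominate $\|F_\epsilon(\zeta^\epsilon)\|_{L^\infty(\Omega_\epsilon)}$.

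I expect the main technical obstacle to lie in the uniform heat-kernel gradient estimate in the second paragraph: one must show that discrete second differences of the lattice heat kernel on scales $s \geq t_0$ satisfy the same bounds as their continuum counterparts, uniformly in $\epsilon$. This reduces to the appendix lemmas cited in the proof of Lemma~\ref{lem:C-limit}, after which the Fernique/Borell concentration step and the coupling-plus-dominated-convergence scheme are routine.
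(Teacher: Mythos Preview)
Your overall strategy matches the paper's: couple the Gaussian fields for all $\epsilon$ on a common probability space, prove uniform convergence $\sup_x |\zeta^\epsilon(x^\epsilon) - \zeta^0(x)| \to 0$, and combine with the exponential moment bound to pass to the limit by dominated convergence (or Vitali). The paper implements this a bit differently --- it uses a static Fourier coupling $\Phi^\epsilon(x) = \sum_{k \in \Omega_\epsilon^*} \hat q^\epsilon(k) e^{ik\cdot x} X(k)$ with common Gaussians $(X(k))_k$, bounds exponential moments via the Sobolev inequality $\|\partial_\epsilon f\|_{L^\infty} \lesssim \|f\|_{H^\alpha}$ rather than Dudley's entropy bound, and obtains a.s.\ convergence directly by Borel--Cantelli on the Fourier modes rather than via subsequences --- but these are implementation choices, and your route is sound in principle.

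There is, however, a concrete slip in your coupling. The field $\int_{t_0}^t e^{(t-s)\Delta^\epsilon/2} e^{-m^2(t-s)/2}\,dW_s$ has covariance $\int_{t_0}^t e^{(t-s)\Delta^\epsilon - m^2(t-s)}\,ds = \int_0^{t-t_0} e^{u\Delta^\epsilon - m^2 u}\,du = c_{t-t_0}^\epsilon$, which is not the required $c_t^\epsilon - c_{t_0}^\epsilon = \int_{t_0}^t e^{s\Delta^\epsilon - m^2 s}\,ds$. The fix is to take $\zeta^\epsilon = \int_{t_0}^t e^{s\Delta^\epsilon/2 - m^2 s/2}\,dW_s$ (as in Section~\ref{sec:gffdecomp}), or simply to adopt the paper's static Fourier coupling, which sidesteps the issue and makes the estimates on $\hat q^\epsilon(k) - \hat q^0(k)$ immediate. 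Once this is corrected, your argument goes through; note also that under the coupling the paper records the slightly stronger bound $\E\big(\sup_{\epsilon>0} e^{O(\|\partial_\epsilon \Phi^\epsilon\|_{L^\infty})}\big) \lesssim 1$, which gives a genuine dominating function and avoids the subsequence detour.
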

 
For the proof, the following coupling of the 
Gaussian measures with expectations $\EE_{c^0}$ and $\EE_{c^\epsilon}$
where
$c^\epsilon = c_t^\epsilon- c_{t_0}^\epsilon$ and $c^0 = c_t^0- c_{t_0}^0$
is convenient.
Let $\Omega^* = 2\pi \Z^2$ and $\Omega_\epsilon^* = \{k \in 2\pi \Z^2: -\pi/\epsilon < k_i \leq \pi/\epsilon \}$
be the Fourier duals of $\Omega$ and $\Omega_\epsilon$.
Let $(X(k))_{k\in 2\pi \Z^2}$ be a collection of independent complex standard Gaussian random variables
subject to $X(k) =\overline{X(-k)}$ for $k\neq 0$ and $X(0)$ is a real standard Gaussian random variable  (all defined on a common probability space).
Let $\hat q^0(k)$ denote the Fourier coefficients of  $q^0\colon \Omega\to \R$ where $[q^0 * q^0](x-y) = c^0(x,y)$ and $*$ denoting the convolution of two functions.
Then the random variable $\Phi$ defined by
\begin{equation}\label{e:gaussian-c0}
\Phi(x) = \sum_{k \in \Omega^*} \hat q^0(k) e^{ik\cdot x} X(k)
\end{equation}
takes almost surely values in $C^\infty(\Omega)$ and is Gaussian with mean zero and covariance $c^0$, i.e.,
\begin{equation}
  \E[\avg{f,\Phi} \avg{g,\Phi}]
  = \avg{c^0f,g}
\end{equation}
holds for all $f,g\in L^2(\Omega)$. 
Moreover, since $c^0$ is a positive real symmetric function, 
$\hat q^0(k) = \sqrt{\hat c^0(k)}$.
Similarly, let $q^\epsilon\colon \Omega_\epsilon \to \R$ be such that $[q^\epsilon * q^\epsilon](x-y) = c^\epsilon(x,y)$,
where $*$ now denotes convolution on the lattice,
and let $\hat q^\epsilon (k)$ be the Fourier coefficients of $q^\epsilon$, 
i.e.\ for $x\in \Omega_\epsilon$,
\begin{equation}\label{e:phi-eps}
q^\epsilon(x) = \sum_{k \in \Omega_\epsilon^*} \hat q^\epsilon (k) e^{ik\cdot x}.
\end{equation}
Then the random function $\Phi^\epsilon$ defined by
\begin{equation}\label{e:gaussian-c-eps}
\Phi^\epsilon(x) = \sum_{k \in \Omega_\epsilon^*} \hat q^\epsilon(k) e^{ik\cdot x} X(k)
\end{equation}
restricted to $X_\epsilon$
is multivariate Gaussian with covariance matrix $c^\epsilon$. 

\begin{proof}
Let $- \hat \Delta^0(k) = |k|^2$ and $- \hat \Delta^\epsilon(k)= \epsilon^{-2}\sum_{i=1}^d (2-2\cos(\epsilon k_i))$
be the Fourier multipliers of the continuum and lattice Laplacians, respectively,
and let $\Omega_\epsilon^* = \{k \in 2\pi \Z^2: -\pi/\epsilon < k_i \leq \pi/\epsilon \}$
be the Fourier dual of $\Omega_\epsilon$. 
For $k \in \Omega_\epsilon^*$, then
\begin{equation} \label{e:fourier-multipliers-difference}
  0 \leq -    \hat \Delta^0(k) + \hat \Delta^\epsilon(k)
  = \sum_{i=1}^d (k_i^2-\epsilon^{-2}(2-2\cos(\epsilon k_i)))
  \leq |k|^2 h(\epsilon k)
\end{equation}
where $h(x) = \max_{i=1,2} (1 -x_i^{-2}(2-2\cos(x_i)))$ satisfies $h(x) \in [0,1-\kappa]$ with $\kappa = 4/\pi^2$ for $|x|\leq \pi$
and $h(x) =O(|x|^2)$.

Denote $c^\epsilon = c_t^\epsilon - c_{t_0}^\epsilon$ and $c^0 = c_t^0 - c_{t_0}^0$.
Since,  for $\epsilon \geq 0$,
\begin{equation}
0 \leq \hat c^\epsilon(k) = \hat c_t^\epsilon(k) - \hat c_{t_0}^\epsilon(k) 
=  \int_{t_0}^t  e^{s \hat\Delta^\epsilon(k) - sm^2} ds ,
\end{equation}
we see from \eqref{e:fourier-multipliers-difference} that
\begin{align}\label{e:fourier-coef-bound-1}
0 \leq \hat c^0(k) &\leq \frac{1}{|k|^2+ m^2} e^{-t_0 (|k|^2 + m^2)}, &\qquad 0\leq \hat q^0(k) &\leq \frac{1}{\sqrt{|k|^2+ m^2}} e^{-(t_0/2) (|k|^2 + m^2)},
  \\
  \label{e:fourier-coef-bound-1eps}
0 \leq \hat c^\epsilon(k) &\leq \frac{1}{\kappa|k|^2+ m^2} e^{-t_0 (\kappa|k|^2 + m^2)}, &\qquad 0\leq \hat q^\epsilon(k) &\leq \frac{1}{\sqrt{\kappa |k|^2+ m^2}} e^{-(t_0/2) (\kappa |k|^2 + m^2)} .
\end{align}
Similar, but somewhat more tedious computations give 
\begin{equation}\label{e:fourier-coeff-difference-1}
  0 \leq \hat q^\epsilon(k) - \hat q^0(k) \lesssim e^{-(t_0/2) (\kappa |k|^2 + m^2)} h(\epsilon k)^{1/2}. 
\end{equation}
%
To prove \eqref{e:Gaussbd} we note that $\EE_{c^\epsilon}\|\zeta\|_{H^\alpha(\Omega_\epsilon)}^2  \leq O_\alpha(1)$ for any $\alpha \in \R$, where
\begin{equation}
\|\zeta\|_{H^\alpha(\Omega_\epsilon)}^2=
  \sum_{k \in \Omega_\epsilon^*} (1+ |k|^2)^\alpha |\hat \zeta(k) |^2.
\end{equation}
Indeed, with $\Phi^\epsilon$ as in \eqref{e:phi-eps}, 
\begin{equation}
\EE_{c^\epsilon}\|\zeta\|_{H^\alpha(\Omega_\epsilon)}^2 
= \sum_{k \in \Omega_\epsilon^*} (1+ |k|^2)^\alpha \E|\hat q^\epsilon(k)X(k) |^2
= \sum_{k \in \Omega_\epsilon^*} (1+ |k|^2)^\alpha |\hat q^\epsilon(k)|^2 \leq O_\alpha(1)
\end{equation}
where we used \eqref{e:fourier-coef-bound-1}.
By the Sobolev inequality $\|\partial_\epsilon f\|_{L^\infty(\Omega_\epsilon)} \leq C_\alpha \|f\|_{H^\alpha(\Omega_\epsilon)}$
which holds for any $\alpha> d/2+1$ (uniformly in $\epsilon \geq 0$)
and Gaussian concentration (for example, \cite[Theorem~2.1.1]{MR2319516}), hence
\begin{equation} \label{e:Gaussbd-pf}
  \EE_{c^\epsilon}
  e^{O(\norm{\partial_\epsilon \zeta}_{L^\infty(\Omega_\epsilon)})}
  \lesssim
  \EE_{c^\epsilon}
  e^{O(\norm{\zeta}_{H^\alpha(\Omega_\epsilon)})}
  \lesssim 1.
\end{equation}
In fact, under the coupling \eqref{e:gaussian-c-eps}, one has
\begin{equation} \label{e:Gaussbd-coupling}
  \E\pa{\sup_{\epsilon>0} e^{O(\|\partial_\epsilon \Phi^\epsilon\|_{L^\infty(\Omega_\epsilon)})}} \lesssim 1.
\end{equation}
This can be seen from $\|\partial_\epsilon \Phi^\epsilon\|_{L^\infty(\Omega_\epsilon)} \leq C_\alpha \|\Phi^\epsilon\|_{H^\alpha(\Omega_\epsilon)}$
and then bounding $\|\Phi^\epsilon\|_{H^\alpha(\Omega_\epsilon)}^2\lesssim \sum_{k \in \Omega_\epsilon^*} (1+|k|^2)^\alpha \hat q^\epsilon(k)^2 |X(k)|^2$
which is bounded uniformly in $\epsilon$ by \eqref{e:fourier-coef-bound-1eps}.

To prove \eqref{e:conv-lattice-expectations}  and \eqref{e:conv-lattice-expectations-Linfty},
we will use that almost surely under the coupling introduced above the statement of the lemma,
\begin{equation} \label{e:conv-sup-phi-phi-eps}
\sup_{x\in \Omega} | \Phi^\epsilon (x^\epsilon)  - \Phi(x) | \to 0
\end{equation}
as $\epsilon \to 0$. 
Indeed,
\begin{multline}
| \Phi^\epsilon(x^\epsilon)  - \Phi(x) | \leq   
| \Phi^\epsilon(x^\epsilon)  - \Phi(x^\epsilon) | 
+ | \Phi (x^\epsilon)  - \Phi(x) | 
\\
\leq  \sum_{k\in\Omega_\epsilon^*} |\hat q^\epsilon (k) - \hat q^0(k)| |X(k)|  
+ \sum_{k \in \Omega^*\setminus \Omega_\epsilon^*} |\hat q^0(k)| |X(k)|  + 
\sum_{k\in \Omega^*} |\hat q^0(k) X(k)| |k| |x^\epsilon-x|
\end{multline}
and the second and the third sum converge to $0$ almost surely since
\begin{equation}
\sum_{k\in \Omega^*} |\hat q^0(k) X(k)| <\infty \text{ a.s\ } \text{ and } \sum_{k\in \Omega^*} |\hat q^0(k) ||k| |X(k)| <\infty \text{ a.s\ }.
\end{equation}
These hold by \eqref{e:fourier-coef-bound-1} since
\begin{equation}
  \E\big[\sum_{k\in \Omega^*} (1+|k|)|\hat q^0(k) X(k)| \big] \lesssim \sum_{k\in \Omega^*} (1+|k|)|\hat q^0(k)|<\infty.
\end{equation}
To see that also the first sum converges to $0$ almost surely, we use the estimate \eqref{e:fourier-coeff-difference-1} 
together with the Borel-Cantelli lemma. 
Let $E_k= \{ |X(k)| > |k| \}$. Then $\P(E_k) \leq e^{-|k|^2/2}$ and hence $\sum_{k\in \Omega^*} \P(E_k) < \infty$.
By the Borel-Cantelli lemma, $\P(E_k \text{ infinitely often} ) = 0$. Let $M$ be the smallest natural number such that $E_k^c$ occurs for all $k>M$. Note that on the event $\{E_k \text{ finitely often} \}$ we have $M< \infty$ and hence
\begin{align}
\sum_{k\in\Omega_\epsilon^*} |\hat q^\epsilon (k) - \hat q^0(k)| |X(k)|  
&\lesssim  \sum_{k\in\Omega_\epsilon^*} e^{-{(t_0/2)(\kappa|k|^2+m^2)}} h(\epsilon k) |X(k)|
\nnb 
&\leq \sum_{|k|\leq M} e^{-{(t_0/2)(\kappa|k|^2+m^2)}} h(\epsilon k) |X(k)| + \sum_{|k|> M}  e^{-(t_0/2)(\kappa|k|^2+m^2)} h(\epsilon k) |k|.
\label{e:sums-bc}
\end{align}
Taking $\epsilon \to 0$, the first sum converges to $0$ as it has only finitely many terms. The same holds for the second sum by dominated convergence with respect to the counting measure on $\Omega^*$.

Now the statement \eqref{e:conv-lattice-expectations} follows from
\begin{equation}\label{e:dct-lattice-expectations}
\EE_{c_t^\epsilon-c_{t_0}^\epsilon} F_\epsilon = \E F_\epsilon (\Phi^\epsilon) \to  \E F_0 (\Phi) = \EE_{c_t^0-c_{t_0}^0} F_0
\end{equation}
where we used the dominated convergence theorem. Note that by \eqref{e:conv-sup-phi-phi-eps} and the assumption on $F_\epsilon$ and $F_0$, we have
$F_\epsilon (\Phi^\epsilon) \to F_0 (\Phi)$ almost surely
and moreover
\begin{equation}
  |F_\epsilon (\Phi^\epsilon)| \lesssim e^{O(\| \partial_\epsilon \Phi^\epsilon \|_{L^\infty(\Omega_\epsilon)})}
  \lesssim e^{O(\sup_{\epsilon}\| \partial_\epsilon \Phi^\epsilon \|_{L^\infty(\Omega)})}
\end{equation}
where the right-hand side is integrable by \eqref{e:Gaussbd-coupling}.

To prove \eqref{e:conv-lattice-expectations-Linfty} we use the embedding $\exteps$ and obtain
\begin{equation}
\| \exteps \EE_{c_t^\epsilon-c_{t_0}^\epsilon} F_\epsilon - \EE_{c_t^0-c_{t_0}^0} F_0 \|_{L^\infty(\Omega)} 
\leq  \E \| \exteps F_\epsilon (\Phi^\epsilon) -  F_0(\Phi) \|_{L^\infty(\Omega)} \to 0,
\end{equation}
where the convergence follows from an extension of the dominated convergence theorem to Banach space valued functions. Note that in this case, we have
\begin{equation}
\exteps F_\epsilon (\Phi^\epsilon) \to F_0(\Phi) \text{~a.s.\ in~} L^\infty(\Omega)
\end{equation}
by \eqref{e:conv-sup-phi-phi-eps} and the properties of $F_\epsilon$ and $F_0$, 
and moreover
\begin{equation}
  \| \exteps F_\epsilon (\Phi^\epsilon) \|_{L^\infty(\Omega)} = \|F_\epsilon (\Phi^\epsilon)\|_{L^\infty(\Omega_\epsilon)} \lesssim e^{O(\| \partial_\epsilon \Phi^\epsilon \|_{L^\infty(\Omega_\epsilon)})}
  \lesssim e^{O( \sup_{\epsilon} \| \partial_\epsilon \Phi^\epsilon \|_{L^\infty(\Omega_\epsilon)})}
\end{equation}
where the right-hand side is integrable by \eqref{e:Gaussbd-coupling}.
This shows that the assumption for the dominated convergence theorem are satisfied.
\end{proof}

\subsection{Proof of Theorem~\ref{thm:v-limit} for \texorpdfstring{$\beta<4\pi$}{beta<4pi}}
\label{sec:v-limit-4pi}

We first prove Theorem~\ref{thm:v-limit} in  the simpler case $\beta<4\pi$.
To this end, recall the definitions of $\tilde v^{n,\epsilon}$ from
\eqref{e:polchinski-fourier-duhamel-rescaled1}--\eqref{e:polchinski-fourier-duhamel-rescaled}
and those of  $\tilde v^{n,0}$ from
\eqref{e:polchinski-fourier-duhamel-0-1}--\eqref{e:polchinski-fourier-duhamel-0},
as well as the notation $\exteps f$ for the piecewise constant extension of a function $f$ on $\Omega_\epsilon$
introduced above Lemma~\ref{lem:C-limit}.

\begin{lemma} \label{lem:vtbd}
Let $\beta<4\pi$. Then for all $\epsilon \geq 0$, $n \geq 2$, and $t \geq 0$,
\begin{equation} \label{e:vtbd}
  L_t^2 \sup_{\xi} \norm{\tilde v_{t}^{n,\epsilon}(\xi, \cdot)}_{L^1((\Omega_\epsilon\times\{\pm 1\})^{n-1})}
  \leq n^{n-2} (C_\beta|\lZ_t^\epsilon|)^{n}.
\end{equation}
Moreover, for any $t>0$, as $\epsilon \to 0$,
\begin{equation} \label{e:vdiff-bd}
  L_t^2 \sup_{\xi} \norm{\exteps \tilde v_{t}^{n,\epsilon}(\xi,\cdot)- \tilde v_t^{n,0}(\xi,\cdot)}_{L^1((\Omega\times\{\pm 1\})^{n-1})}
  \to 0.
\end{equation}
\end{lemma}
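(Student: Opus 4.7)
The plan is to argue by simultaneous induction on $n\geq 1$, treating the lattice ($\epsilon > 0$) and continuum ($\epsilon=0$) cases in parallel and deriving \eqref{e:vtbd} and \eqref{e:vdiff-bd} together. The base case $n=1$ reduces, via the explicit formulae \eqref{e:polchinski-fourier-duhamel-rescaled1} and \eqref{e:polchinski-fourier-duhamel-0-1}, directly to the covariance convergence \eqref{e:c0-limit} in Lemma~\ref{lem:C-limit}: one has $L_t^2|\tilde v_t^{1,\epsilon}(\xi_1)| = |z|L_t^{2-\beta/4\pi}\cdot e^{-\frac{\beta}{2}(c_t^\epsilon(0)+(\log\epsilon^2)/4\pi)}$, whose exponential factor converges to $\gamma_t^\beta$ and is in particular uniformly bounded.

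For the inductive step of \eqref{e:vtbd}, set $A_n^\epsilon(t) := L_t^2\sup_{\xi}\|\tilde v_t^{n,\epsilon}(\xi,\cdot)\|_{L^1}$. Starting from the recursion \eqref{e:polchinski-fourier-duhamel-rescaled}, I would first bound $e^{-(w_t^\epsilon-w_s^\epsilon)}\leq 1$ using the positive semidefiniteness of $\dot c_s^\epsilon$, take $\sup_{\xi_1}$, integrate $d\xi_2\cdots d\xi_n$, and invoke the symmetry of $\tilde v_s^{n,\epsilon}$ in its arguments to restrict the partition sum to $1\in I_1$. The first estimate in \eqref{e:c-limit} provides $\sup_x\int dy\,|\dot c_s^\epsilon(x,y)|\lesssim\theta_s$, producing a convolution-type inequality
\begin{equation*}
A_n^\epsilon(t)\;\lesssim_\beta\; L_t^2\int_0^t\theta_s L_s^{-4}\sum_{k=1}^{n-1}\binom{n-1}{k-1}k(n-k)\,A_k^\epsilon(s)\,A_{n-k}^\epsilon(s)\,ds.
\end{equation*}
Inserting the inductive bound together with the Cayley-type identity
\begin{equation*}
\sum_{k=1}^{n-1}\binom{n-1}{k-1}k^{k-1}(n-k)^{n-k-1}\;=\;(n-1)n^{n-2},
\end{equation*}
which is a standard consequence of the functional equation $T=xe^T$ for the exponential generating function of rooted labeled trees, and using the time-integral estimate $L_t^2\int_0^t\theta_s L_s^{n(2-\beta/4\pi)-4}\,ds\lesssim L_t^{n(2-\beta/4\pi)}/[n(2-\beta/4\pi)-2]$, yields a prefactor of $(n-1)/[n(2-\beta/4\pi)-2]$ multiplying $n^{n-2}(C_\beta|\lZ_t^\epsilon|)^n$. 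This prefactor is decreasing in $n$ on $\{n\geq 2\}$ and attains its finite maximum $1/(2-\beta/2\pi)$ at $n=2$ precisely when $\beta<4\pi$, so choosing $C_\beta$ large enough to absorb this $\beta$-dependent constant will close the induction.

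For the convergence \eqref{e:vdiff-bd}, I would again argue by induction on $n$ and decompose the difference $\exteps\tilde v_t^{n,\epsilon}(\xi,\cdot)-\tilde v_t^{n,0}(\xi,\cdot)$ via a telescoping in the integrand of the recursion, replacing one ingredient at a time: $\dot u_s^\epsilon\to\dot u_s^0$, the two $\tilde v_s$ factors by their continuum counterparts, and $e^{-(w_t^\epsilon-w_s^\epsilon)}\to e^{-(w_t^0-w_s^0)}$. The first and last replacements are controlled by the convergence estimates \eqref{e:c-limit} and \eqref{e:ew-limit}, while the two $\tilde v_s$ replacements use the inductive hypothesis. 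In every term the uniform bound \eqref{e:vtbd} from the first part furnishes a dominating envelope, so that dominated convergence yields the desired vanishing. The only extra care is needed for the small-time region $s\in[0,\epsilon^2]$, where the convergence statements in Lemma~\ref{lem:C-limit} degrade; there the uniform bound \eqref{e:vtbd} combined with $\int_0^{\epsilon^2}\theta_s L_s^{n(2-\beta/4\pi)-4}\,ds\to 0$ shows that this region contributes negligibly as $\epsilon\to 0$.

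The main technical difficulty is the induction closure for \eqref{e:vtbd}: the delicate cancellation between the Cayley-type combinatorial growth $(n-1)n^{n-2}$ and the $1/[n(2-\beta/4\pi)-2]$ decay from the time integral is precisely what forces the restriction $\beta<4\pi$ in this simpler regime, and foreshadows why the complementary range $\beta\geq 4\pi$ will require a more refined, renormalised argument later in the section.
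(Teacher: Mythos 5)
Your proof of the convergence statement \eqref{e:vdiff-bd} follows the same route as the paper: the paper writes $\tilde v_t^{n,\epsilon} = \int_0^t r^{n,\epsilon}_{s,t}\,ds$, uses the uniform envelope $R^n_{s,t}$ (from which \eqref{e:vtbd} follows) to justify dominated convergence in $s$, handles the small-$s$ region where Lemma~\ref{lem:C-limit} degrades via the envelope, and advances a one-variable-at-a-time induction as you do. That part is essentially equivalent.

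For the bound \eqref{e:vtbd}, however, there is a genuine gap. The paper does \emph{not} re-derive this estimate; it cites \cite[Proposition~3.5 and (3.49)]{1907.12308}, remarking only that the rescaled statements there are uniform in $\epsilon$. Your direct inductive closure cannot work as written. Tracking your own computation: the implicit constant in the convolution inequality is of order $\beta$ (from $\dot u_s = \beta\sigma_i\sigma_j\dot c_s$, with $\|\dot c_s(x,\cdot)\|_{L^1}$ of order one), and the time integral produces the factor $1/[n(2-\beta/4\pi)-2]$. After inserting the Cayley sum, the recursion gives
\begin{equation*}
A_n(t) \;\leq\; \frac{O(\beta)\,(n-1)}{n(2-\beta/4\pi)-2}\,n^{n-2}\,(C_\beta|\lZ_t|)^n,
\end{equation*}
so closure demands $O(\beta)(n-1)/[n(2-\beta/4\pi)-2]\leq 1$ for all $n\geq 2$. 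This fails badly: at $n=2$ the quantity is $O(\beta)/(2-\beta/2\pi)$, which diverges as $\beta\uparrow 4\pi$; and as $n\to\infty$ it tends to $O(\beta)/(2-\beta/4\pi)$, which already exceeds one for $\beta$ of order one. The remedy you propose — ``choosing $C_\beta$ large enough to absorb this $\beta$-dependent constant'' — does not help, because $C_\beta^n$ appears on \emph{both} sides of the inequality; the offending prefactor is $C_\beta$-independent. The correct argument in \cite{1907.12308} is not a single-scale convolution induction; it rests on the fully unwound Brydges--Kennedy tree expansion, in which the iterated nested $s$-integrals provide a factorial gain per tree that prevents this accumulation of constants. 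In short: your combinatorial identity is correct and the telescoping argument for \eqref{e:vdiff-bd} is sound, but the claimed closure of the induction for \eqref{e:vtbd} is wrong for the full range $\beta<4\pi$, which is precisely why the paper delegates this estimate to the cited reference.
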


\newcommand{\Lpmeps}[1]{L^1((\Omega_{\epsilon} \times \{\pm 1\})^{#1})}
\newcommand{\Lpm}[1]{L^1((\Omega \times \{\pm 1\})^{#1})}
\begin{proof}
The bound \eqref{e:vtbd} is proved in \cite[Proposition~3.5 and (3.49)]{1907.12308}.
Recall that we use a different normalisation here than in \cite{1907.12308};
see Section~\ref{sec:rescaling} below for translation.
Moreover, the argument there is stated for $\epsilon>0$, but all estimates are uniform
in $\epsilon$ and hold without changes also when $\epsilon=0$.
More precisely, the following is shown in the proof of
\cite[Proposition~3.5]{1907.12308}.
Writing \eqref{e:polchinski-fourier-duhamel-rescaled} as
\begin{equation}
  \tilde v^{n,\epsilon}_t(\xi_1, \dots, \xi_n) = \int_0^t r^{n,\epsilon}_{s,t}(\xi_1, \dots, \xi_n) \, ds,
\end{equation}
where 
\begin{equation} \label{e:rdef}
  r_{s,t}^{n,\epsilon}(\xi_1,\dots,\xi_n)
  =
  \sum_{I_1 \dot\cup I_2= [n]}
  \sum_{i\in I_1,j\in I_2}
  \beta  \sigma_i\sigma_j \dot c_s^\epsilon(x_i,x_j)
  \tilde v_{s}^\epsilon(\xi_{I_1}) \tilde v_{s}^\epsilon(\xi_{I_2}) 
  e^{-(w^\epsilon_{t}-w^\epsilon_{s})(\xi_1,\dots,\xi_n)},
\end{equation}
there are $R^n_{s,t} >0$ such that
\begin{equation}\label{e:rbd1}
  \sup_{\epsilon \geq 0} \sup_{\xi_1}\norm{\exteps r^{n,\epsilon}_{s,t}(\xi_1,\cdot)}_{\Lpm{n-1}} \leq R^n_{s,t}
\end{equation}
and
\begin{equation} \label{e:rbd2}
  L_t^2 \int_0^t R^{n}_{s,t} \, ds \leq n^{n-2} (C_\beta |\lZ_t|)^n.
\end{equation}
We now deduce the convergence \eqref{e:vdiff-bd} as follows.
Assume by induction that \eqref{e:vdiff-bd} holds for all $t>0$ and $n \leq k$.
For $k=1$, this follows from \eqref{e:c0-limit}
and the definitions of $v_t^{1,\epsilon}$ in \eqref{e:polchinski-fourier-duhamel-rescaled1}
and of $v_t^{1,0}$ in \eqref{e:polchinski-fourier-duhamel-0-1}.
To advance the induction, note that the definition of $r_{s,t}^{n,\epsilon}$ above depends on $\tilde v^{m,\epsilon}_s$ only with $m<n$.
Therefore, using the inductive assumption for the $\tilde v_s$ terms in \eqref{e:rdef},
and Lemma~\ref{lem:C-limit} for $e^{-(w_t-w_s)}$ and $\dot c_s$ in \eqref{e:rdef},
it follows that, for any $0<s<t$, as $\epsilon \to 0$,
\begin{equation}
  \sup_{\xi_1}\norm{\exteps r^{k+1,\epsilon}_{s,t}(\xi_1,\cdot)- r^{k+1,0}_{s,t}(\xi_1,\cdot)}_{\Lpm{k}} \to 0.
\end{equation}
By dominated convergence for the $s$-integral, using \eqref{e:rbd2}, it then follows that, as $\epsilon \to 0$,
\begin{equation}
  \norm{\exteps \tilde v^{k+1,\epsilon}_{s,t}(\xi_1,\cdot)- \tilde v^{k+1,0}_{s,t}(\xi_1,\cdot)}_{\Lpm{k}} \to 0,
\end{equation}
and the induction is advanced.
\end{proof}

\begin{lemma} \label{lem:nablavt-limit}
  Let $\beta < 4\pi$, $z\in \R$.
  Then for all $n \geq 2$ and $k\in \N$, uniformly in $\varphi$ and $\epsilon \geq 0$,
  \begin{align} 
        \label{e:vtn-bd}
    |v_t^{n,\epsilon}(\varphi)|
    &\leq (C_{\beta}|\lZ_{t\wedge \epsilon}|)^n,
    \\
    \label{e:nablavtn-bd}
    L_t^{2+k} \norm{\partial_\epsilon^k\dot c_t^\epsilon\nabla v_t^{n,\epsilon}(\varphi)}_{L^\infty(\Omega_\epsilon)}
    &\leq C_k(C_{\beta}|\lZ_{t\wedge \epsilon}|)^n,
    \\
    \label{e:Hessvtn-bd}
    L_t^{2+k} \norm{\partial_\epsilon^k\dot c_t^\epsilon\He v_t^{n,\epsilon}(\varphi)g}_{L^\infty(\Omega_\epsilon)}
    &\leq C_k \norm{g}_{L^\infty(\Omega_\epsilon)} (C_{\beta}|\lZ_{t\wedge \epsilon}|)^n
      .
  \end{align}
  Moreover, for any $t>0$ and $\max_{x\in\Omega_\epsilon} |\varphi^\epsilon(x^\epsilon) -\varphi(x)| \to 0$ as $\epsilon \to 0$,
  \begin{align} 
    \label{e:vt-limit}
    |v_t^{n,\epsilon}(\varphi^\epsilon) - v_t^{n,0}(\varphi)|
    &\to 0
      ,
    \\
    \label{e:nablavt-limit}
        L_t^2 \norm{\dot c_t^\epsilon\nabla v_t^{n,\epsilon}(\varphi^\epsilon)-\dot c_t^0\nabla v_t^{n,0}(\varphi)}_{L^\infty(\Omega_\epsilon)}
    &\to 0
    .
    \end{align}
\end{lemma}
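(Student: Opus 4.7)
The plan is to derive all six claims from the Fourier representations \eqref{e:v-fourier2}, \eqref{e:nablavtn-fourier}, and \eqref{e:Hessvtn-fourier}, using the trivial bound $|e^{i\sqrt{\beta}\sum_i \sigma_i \varphi(x_i)}|\leq 1$ to eliminate the $\varphi$-dependence in the uniform estimates, and combining with the inputs from Lemmas~\ref{lem:vtbd} and~\ref{lem:C-limit}. Since the integrands in \eqref{e:nablavtn-fourier} and \eqref{e:Hessvtn-fourier} are products of one factor depending on $\dot c_t^\epsilon$, one equal to $\tilde v_t^{n,\epsilon}$, and one phase factor, each claim factors into bilinear pairings that can be controlled by $L^1$-against-$L^\infty$ estimates in the charge variables.

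For the uniform bounds, I would first apply the triangle inequality inside the Fourier integrals. To prove \eqref{e:vtn-bd}, estimating $|v_t^{n,\epsilon}(\varphi)|\leq (n!)^{-1}\|\tilde v_t^{n,\epsilon}\|_{L^1((\Omega_\epsilon\times\{\pm 1\})^n)}$ and bounding the full $L^1$ norm by $2|\Omega|$ times the $\sup_{\xi_1}$-quantity in \eqref{e:vtbd}, together with $n^{n-2}/n!\leq C^n$, yields the claim up to combinatorial constants. For \eqref{e:nablavtn-bd}, I use the permutation symmetry of $\tilde v_t^{n,\epsilon}$ to reduce the sum $\sum_i \partial_\epsilon^k \dot c_t^\epsilon(x,x_i)\sigma_i$ to $n$ copies of a single term, then pair $\int |\partial_\epsilon^k \dot c_t^\epsilon(x,\cdot)|$ with the $\sup_{\xi_1}L^1$-bound \eqref{e:vtbd} via Fubini, and invoke the kernel estimate $L_t^k\|\partial_\epsilon^k \dot c_t^\epsilon(x,\cdot)\|_{L^1(\Omega_\epsilon)}=O_k(\theta_t)$ from \eqref{e:c-limit}. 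The Hessian bound \eqref{e:Hessvtn-bd} is analogous, with the extra factor $|\sum_i g(x_i)\sigma_i|\leq n\|g\|_{L^\infty(\Omega_\epsilon)}$ absorbed into a further $n/n!$ and then into $C^n$.

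For the convergence statements \eqref{e:vt-limit} and \eqref{e:nablavt-limit}, I would use the piecewise constant extension $\exteps$ to compare the lattice Fourier integrals with their continuum counterparts, decomposing the integrand via a three-term telescoping identity in the factors $\tilde v_t^{n}$, $\dot c_t$, and the phase $e^{i\sqrt{\beta}\sum\sigma_i\varphi(x_i)}$. Each difference term is handled by pairing $L^1$ convergence of one factor (Lemma~\ref{lem:vtbd} for $\tilde v_t^{n,\epsilon}\to \tilde v_t^{n,0}$, \eqref{e:c-limit} for $\dot c_t^\epsilon\to \dot c_t^0$, or the uniform continuity of the phase under the hypothesis $\sup_x|\varphi^\epsilon(x^\epsilon)-\varphi(x)|\to 0$) with the uniform-in-$\epsilon$ $L^1$ or $L^\infty$ bounds on the remaining factors already supplied by Lemmas~\ref{lem:vtbd} and~\ref{lem:C-limit}. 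The main obstacle, and where care is needed, is obtaining the convergence uniformly in the position variable $x\in\Omega_\epsilon$ for \eqref{e:nablavt-limit}: this forces the comparison of the $\dot c_t$-factor to be done in the supremum norm in $x$ outside the integrated charges, which is possible because \eqref{e:c-limit} gives $\sup_x\|\exteps\dot c_t^\epsilon(x,\cdot)-\dot c_t^0(x,\cdot)\|_{L^1}\to 0$ and \eqref{e:vdiff-bd} gives the $L^1$ convergence of $\tilde v_t^{n,\epsilon}$ uniformly in the outer charge $\xi_1$.
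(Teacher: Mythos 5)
Your argument matches the paper's: the uniform bounds are obtained by bounding the phase factor by $1$ in the Fourier representations \eqref{e:v-fourier2}--\eqref{e:Hessvtn-fourier}, pairing the charge-variable $L^1$ bound \eqref{e:vtbd} against the heat-kernel $L^1$ bound $L_t^k\sup_x\|\partial_\epsilon^k\dot c_t^\epsilon(x,\cdot)\|_{L^1}\lesssim\theta_t$ from \eqref{e:c-limit}, and absorbing $n^{n-2}/n!$ (and the extra $n$ from permutation symmetry) via $n^n/n!\leq e^n$; for the convergence statements the paper likewise telescopes, though it groups the $\dot c_t$ and $\tilde v_t^n$ comparisons into a single term $r_t^\epsilon-r_t^0$ while you spell out the three-term decomposition explicitly, and controls the phase difference by $O(na_\epsilon)$ with $a_\epsilon=\sup_x|\varphi^\epsilon(x^\epsilon)-\varphi(x)|$ exactly as you describe. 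One small point worth flagging: the computation you outline actually yields $L_t^2\,|v_t^{n,\epsilon}(\varphi)|\leq (C_\beta|\lZ_t^\epsilon|)^n$ --- consistent with the $L_t^{2+k}$ normalisation of \eqref{e:nablavtn-bd}--\eqref{e:Hessvtn-bd} --- rather than the unnormalised inequality written in \eqref{e:vtn-bd}; the extra $L_t^{-2}$ is not a combinatorial constant, and your phrase ``up to combinatorial constants'' glosses over this. It is harmless where the bound is invoked in the paper (only at $t=t_0\simeq 1$, where $L_{t_0}^{-2}$ is bounded), but you should either carry the $L_t^2$ normalisation or restrict the claim accordingly.
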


\begin{proof}
  The bounds \eqref{e:vtn-bd}--\eqref{e:Hessvtn-bd} follow easily by substituting \eqref{e:vtbd} into
  \eqref{e:v-fourier1}--\eqref{e:Hessvtn-fourier},
  exactly   as in \cite[Section~3]{1907.12308}.
  We consider this argument in more detail for \eqref{e:nablavtn-bd}.
  To this end, let
  \begin{equation}
    r_t^{\epsilon}(x,\xi_1, \dots, \xi_n)
    = \frac{i \sqrt{\beta}}{n!}
    \pa{\sum_{i=1}^n \partial_\epsilon^k \dot c_t^\epsilon(x,x_i)\sigma_i} \tilde v_t^{n,\epsilon}(\xi_1,\dots,\xi_n)
  \end{equation}
  so that \eqref{e:nablavtn-fourier} can be written as
  \begin{equation}
    \partial_\epsilon^k \dot c_t^\epsilon\nabla v_t^{n,\epsilon}(\varphi,x)
    =
    \int_{(\Omega_\epsilon \times \{\pm 1\})^n} \, d\xi_1 \cdots d\xi_n \,
    r_t^{\epsilon}(x,\xi_1,\dots,\xi_n) e^{i\sum_{i=1}^n  \sigma_i \varphi(x_i)}.
  \end{equation}
  Using that $n^n/n! \leq e^{n}$,
  it then follows from \eqref{e:c-limit} and \eqref{e:vtbd} that
  \begin{multline} \label{e:nablavtn-rbd}
    L_t^{2+k} \sup_x \norm{r_t^{\epsilon}(x,\cdot)}_{L^{1}(\Omega_\epsilon \times \{\pm 1\})}
    \\
    \lesssim \frac{n}{n!} L_t^k \sup_x \norm{\partial_\epsilon^k c_t^\epsilon(x,\cdot)}_{\Lpmeps{}}
      L_t^2 \sup_{\xi_1} \norm{\tilde v_t^{n,\epsilon}(\xi_1,\cdot)}_{\Lpmeps{n}}
    \lesssim C_k (C_{\beta}|\lZ_t|)^n.
  \end{multline}
  To show the convergence \eqref{e:nablavt-limit}, recall the notation $\xi^\epsilon$
  from Section~\ref{sec:C-limit}.
  Then (where now  $k=0$)
  \begin{equation}
    \dot c_t^\epsilon\nabla v_t^{n,\epsilon}(\varphi, x)
    =
    \int_{(\Omega \times \{\pm 1\})^n} \, d\xi_1 \cdots d\xi_n \,
    r_t^{\epsilon}(x,\xi_1^\epsilon,\dots,\xi_n^\epsilon) e^{i\sum_{i=1}^n \sigma_i \varphi(x_i^\epsilon)},
  \end{equation}
  and $\sup_x |\dot c_t^\epsilon\nabla v_t^{n,\epsilon}(\varphi, x)-      \dot c_t^0\nabla v_t^{n,0}(\varphi, x)|$
  is bounded by the sum of the following two terms:
  \begin{align}
    \label{e:nablavtn-pf1}
    &\sup_x \int_{(\Omega \times \{\pm 1\})^n} \, d\xi_1 \cdots d\xi_n \,
    \absB{r_t^{\epsilon}(x,\xi_1^\epsilon,\dots,\xi_n^\epsilon)
      -r_t^{0}(x,\xi_1,\dots,\xi_n)}
    ,
    \\
    \label{e:nablavtn-pf2}
    &\sup_x \int_{(\Omega \times \{\pm 1\})^n} \, d\xi_1 \cdots d\xi_n \,
    |r_t^{\epsilon}(x,\xi_1^\epsilon,\dots,\xi_n^\epsilon)|
    \absB{
      e^{i\sum_{i=1}^n \sigma_i \varphi(x_i^\epsilon)}-e^{i\sum_{i=1}^n \sigma_i \varphi(x_i)}
    }
    .
  \end{align}
  The first term \eqref{e:nablavtn-pf1} converges to $0$ as $\epsilon\to 0$
  by \eqref{e:c-limit} and \eqref{e:vdiff-bd}.
  For the second term \eqref{e:nablavtn-pf2},
  we use the assumption $\sup_{x\in\Omega}|\varphi^\epsilon(x^\epsilon) - \varphi(x)|\leq a_\epsilon$ with $a_\epsilon \to 0$
  which implies
  \begin{equation}
    \absB{
      e^{i\sum_{i=1}^n  \sigma_i \varphi(x_i^\epsilon)}-e^{i\sum_{i=1}^n\sigma_i  \varphi(x_i)}
    }
    \leq O(n a_\epsilon).
  \end{equation}
  Hence
  \eqref{e:nablavtn-pf2} is bounded by
  $O(na_\epsilon) \sup_x \norm{r^\epsilon(x,\cdot)}_{\Lpmeps{n}}$
  and the claim follows from \eqref{e:nablavtn-rbd}. 
\end{proof}

\begin{proof}[Proof of Theorem~\ref{thm:v-limit} for $\beta<4\pi$] 
    In order to treat the two cases $\epsilon>0$ and $\epsilon=0$ simultaneously, we will typically omit the superscript $\epsilon$,
    and we also make the convention that for $\epsilon>0$ the space $C(\Omega_\epsilon)$ simply refers to the finite dimensional space $X_\epsilon$.
  We define $t_0 = t_0(\beta,z,m)$ as the largest $t_0>0$ with
  $C_\beta|\lZ_{t_0}| \leq \alpha_0$ for a sufficiently small constant $\alpha_0$;
  this $t_0$ is independent of $\epsilon$.
  We then start with showing the bounds in the case $t \leq t_0$.
  In this case, by \eqref{e:nablavtn-bd}, the sum on the right-hand side of
  \begin{equation}
    \dot c_t \nabla v_t(\varphi) = \sum_{n=0}^{\infty} \dot c_t \nabla v_t^{n}(\varphi)
  \end{equation}
  converges in $C(\Omega)$, uniformly in $\varphi\in C(\Omega)$,
  and for any $n_0$, we have
\begin{equation}
  \dot c_t \nabla v_t(\varphi) = \sum_{n=0}^{n_0}\dot c_t\nabla v_t^{n}(\varphi) + O(L_t^{-2} |\lZ_t|^{n_0})
\end{equation}
where again the error  $O(L_t^{-2}|\lZ_t|^{n_0})$ is bounded in $C(\Omega)$ uniformly in $\varphi$.
In particular, with $n_0\to \infty$, the bound  \eqref{e:cnablav-bd} with $k=0$
follows for $t\leq t_0$.
The argument for $k \in \N$ is analogous,
and the bound \eqref{e:cnablav-Lip} also follows analogously from \eqref{e:Hessvtn-bd}.
To show the convergence \eqref{e:cnablav-eps}, for $t\leq t_0$,
we apply \eqref{e:nablavt-limit} to see
from the above that
\begin{align}\label{e:conv-c-nabla-v}
  L_t^2 \norm{\dot c_t^\epsilon \nabla v_t^\epsilon(\varphi^{\epsilon}) - \dot c_t^0\nabla v_t^0(\varphi)}_{L^\infty(\Omega_\epsilon)}
  &\leq \sum_{n=0}^{n_0} L_t^2 \norm{\dot c_t^\epsilon\nabla v_t^{n,\epsilon}(\varphi^{\epsilon})-\dot c_t\nabla v_t^{n,0}(\varphi)}_{L^\infty(\Omega_\epsilon)} + O(|\lZ_t|^{n_0})
  \nnb
  &\leq
    o_{n_0}(1)
    + O(|\lZ_t|^{n_0}) \to 0
\end{align}
by taking $\epsilon \to 0$ and then $n_0\to\infty$.
This establishes the convergence \eqref{e:cnablav-eps} for $t\leq t_0$.

We next extend the bounds to $t>t_0$.
Since $t_0$ is of order $1$,
$\|\dot c_{t_0} \nabla v_{t_0}(\varphi)\|_{L^\infty(\Omega)}$
is uniformly bounded,
and the same argument shows that $|v_{t_0}(\varphi)|$ is likewise uniformly bounded,
and that $v_{t_0}^\epsilon(\varphi^\epsilon) \to v_{t_0}^0(\varphi)$  when $\varphi^\epsilon \to \varphi$ as in the statement of the theorem.
From this, a simple argument suffices. Indeed,
$v_t$ can be obtained from $v_{t_0}$ using the convolution representation
\eqref{e:vdef-conv} as
\begin{align} \label{e:vt-conv}
  e^{-v_t(\varphi)} &= \EE_{c_{t}-c_{t_0}} (e^{-v_{t_0}(\varphi+\zeta)}),
  \\ \label{e:nablavt-conv}
  \partial^k\dot c_t \nabla v_t(\varphi)
  &= [\PP_{t_0,t} (\partial^k \dot c_{t} \nabla v_{t_0})](\varphi),
\end{align}
where
\begin{equation}
  \PP_{t_0,t}F(\varphi) = e^{+v_t(\varphi)} \EE_{c_{t}-c_{t_0}} (e^{-v_{t_0}(\varphi+\zeta)} F(\varphi+\zeta)).
\end{equation}
This representation holds for any $\epsilon>0$ and we use it as our definition
in the limiting case $\epsilon= 0$.
To see that this is well-defined note that, by Jensen's inequality,
\begin{equation} \label{e:vt-Jensen}
  v_t(\varphi)
  = -\log \EE_{c_t-c_{t_0}}(e^{-v_{t_0}(\varphi+\zeta)})
  \leq \EE_{c_t-c_{t_0}}(v_{t_0}(\varphi+\zeta))
  \leq O_{t_0,t}(1),
\end{equation}
and thus $e^{+v_t}$ is bounded.
By taking maxima over $\varphi$ and $x$,
\eqref{e:nablavt-conv} also immediately implies that
\begin{equation}
  \sup_\varphi\|\partial^k \dot c_t \nabla v_t(\varphi)\|_{L^\infty(\Omega)}
  \leq \sup_\varphi\|\partial^k \dot c_t \nabla v_{t_0}(\varphi)\|_{L^\infty(\Omega)}
  \lesssim \theta_{t-t_0} \sup_\varphi\|\partial^k \dot c_{t_0} \nabla v_{t_0}(\varphi)\|_{L^\infty(\Omega)}
\end{equation}
which gives \eqref{e:cnablav-bd} for $t\geq t_0$ since $t_0$ is of order $1$ meaning that
$L_t \simeq L_{t_0}$ and $\lZ_t \simeq \lZ_{t_0}$ (with constants depending on $m,L$).
The bound \eqref{e:cnablav-Lip} also follows similarly; see \cite[Section~3.8]{1907.12308}.

To see the convergence \eqref{e:cnablav-eps} for $t > t_0$, we apply \eqref{e:conv-lattice-expectations}.
Indeed, by the above analysis for $t\leq t_0$, when $\sup_{x\in\Omega}|\varphi^\epsilon(x^\epsilon)-\varphi(x)|\to 0$, we have
$v_{t_0}^\epsilon(\varphi^\epsilon) \to v_{t_0}^0(\varphi)$ and
$\dot c_{t}^\epsilon \nabla v^\epsilon_{t_0}(\varphi^\epsilon,x^\epsilon) \to \dot c_{t}^0 \nabla v_{t_0}^0(\varphi,x)$ uniformly in $x$.
By \eqref{e:conv-lattice-expectations}, therefore 
\begin{align}
  e^{-v_t^\epsilon(\varphi)} = \EE_{c_{t}^\epsilon-c_{t_0}^\epsilon} (e^{-v_{t_0}^\epsilon(\varphi+\zeta)})
  &\to
  \EE_{c_{t}^0-c_{t_0}^0} (e^{-v_{t_0}^0(\varphi+\zeta)})
  =e^{-v_t^0(\varphi)},
  \\
  \EE_{c_{t}^\epsilon-c_{t_0}^\epsilon}(\dot c_t^\epsilon \nabla v_{t_0}^\epsilon(\varphi+\zeta)e^{-v_{t_0}^\epsilon(\varphi+\zeta)})
  &\to
  \EE_{c_{t}^0-c_{t_0}^0}(\dot c_t^0 \nabla v_{t_0}^0(\varphi+\zeta)e^{-v_{t_0}^0(\varphi+\zeta)}) .
\end{align}
Since $e^{-v_{t}^\epsilon(\varphi)}$ is bounded below uniformly in $\epsilon \geq 0$
by \eqref{e:vt-Jensen}, the convergence of
\begin{align}
  \dot c_t \nabla v_t
  = e^{+v_{t}(\varphi)} \EE_{c_t-c_{t_0}} (\partial^k \dot c_{t} \nabla v_{t_0}(\varphi+\zeta) e^{-v_{t_0}(\varphi+\zeta)})
\end{align}
then follows.
\end{proof}

\subsection{Proof of Theorem~\ref{thm:v-limit} for \texorpdfstring{$\beta<6\pi$}{beta<6pi}}
\label{sec:vt-6pi}

Finally, we extend the proof of Theorem~\ref{thm:v-limit} to the more subtle regime $\beta<6\pi$.
In this case, the $n=2$ term requires a more careful treatment
as it turns out that, for $\beta \geq 4\pi$,
\begin{equation}
  \norm{\tilde v_t^{2,0}(\xi,\cdot)}_{L^1(\Omega \times \{\pm 1\})} = \infty.
\end{equation}
Indeed, explicitly by \eqref{e:polchinski-fourier-duhamel-rescaled1}--\eqref{e:polchinski-fourier-duhamel-rescaled},
\begin{align}
  \tilde v_{t}(\xi_1,\xi_2)
  &=
    \int_0^{t} ds \,
    \dot u_s(\xi_1,\xi_2) 
    \tilde v_{s}(\xi_{1}) \tilde v_{s}(\xi_{2}) 
    e^{- \frac12 \int_s^t \dot u_r(\xi_1,\xi_1) - \frac12 \int_s^t \dot u_r(\xi_2,\xi_2) -\int_s^t dr \, \dot u_r(\xi_1,\xi_2)},
  \nnb
  &=
    \tilde v_{t}(\xi_{1}) \tilde v_{t}(\xi_{2}) 
    \int_0^{t} ds \,
    \pa{\ddp{}{s} e^{-\int_s^t dr \, \dot u_r(\xi_1,\xi_2)}}
  =
    \tilde v_{t}(\xi_{1}) \tilde v_{t}(\xi_{2}) 
    (1-e^{-\beta \sigma_1\sigma_2 c_t(x_1,x_2)}).
    \label{e:s-explicit}
\end{align}

In the neutral case $\sigma_1+\sigma_2=0$, the term $e^{+\beta c_t(x_1,x_2)}$
diverges as $|x_1-x_2|^{-\beta/2\pi}$ as $|x_1-x_2|\to 0$,
when $\epsilon=0$,
and is therefore not integrable for $\beta \geq 4\pi$.
The remedy for this is as follows.
First, the \emph{charged} part $\tilde v^2(\xi_1,\xi_2) {\bf 1}_{\sigma_1+\sigma_2 \neq 0}$ remains integrable, and we will see that
it satisfies the same estimates as for $\beta<4\pi$.
Second, while (as observed above) the \emph{neutral} part $\tilde v^2(\xi_1,\xi_2) {\bf 1}_{\sigma_1+\sigma_2=0}$
is not integrable,
the following weaker estimates hold and we will see that these are sufficient to prove the theorem.
We write
\begin{align}
  \delta c_t(x_1,x_2,x_3) &= c_t(x_1,x_2)-c_t(x_2,x_3),
  \\
  \partial^k \delta c_t(x_1,x_2,x_3) &= \partial_{x_1}^k c_t(x_1,x_2)-\partial_{x_1}^k c_t(x_1,x_3),
\end{align}
where all quantities can depend on $\epsilon \geq 0$ (as always).

\begin{lemma} \label{lem:vtbd-6pi-1}
Let $\beta <6\pi$.
For $n=2$ and all $t>0$,
the charged part $\tilde v^2_t(\xi_1,\xi_2)\mathbf{1}_{\sigma_1+\sigma_2 \neq 0}$
satisfies the bounds \eqref{e:vtbd}--\eqref{e:vdiff-bd},
while the neutral part $\tilde v^2_t(\xi_1,\xi_2)\mathbf{1}_{\sigma_1+\sigma_2=0} =: \tilde s_t(x_1,x_2)$
satisfies the following replacement: for any $k\in \N$ and $\epsilon \geq 0$,
\begin{align}
  \label{e:v2-bd1}
  L_t^{2+k} \sup_x \norm{\partial^k_\epsilon \delta c_t^\epsilon(x,\cdot)\tilde s_t}_{L^1(\Omega_\epsilon^2)}
  &\leq C_{\beta,k} \lZ_t^2\theta_t^2,
  \\
  \label{e:v2-bd2}
  L_t^2 \sup_x \norm{\frac{|x-\cdot|}{L_t} \tilde s_t^\epsilon(x,\cdot)}_{L^1(\Omega_\epsilon)}
  &\leq C_{\beta,k} \lZ_t^2.
\end{align}
Moreover, for any $t>0$, as $\epsilon \to 0$,
\begin{align} \label{e:v2-diffeps1}
  L_t^2 \sup_x \norm{\exteps \delta c_t^\epsilon(x,\cdot) \exteps s_t^{\epsilon}- \delta c_t^0(x,\cdot)\tilde s_t^{0}}_{L^1(\Omega^2)}
      &\to 0
      ,
  \\
  \label{e:v2-diffeps2}
  L_t^2 \sup_x \norm{\frac{|x-\cdot|}{L_t}(\exteps s_t^{\epsilon}(x, \cdot) - \tilde s_t^{0}(x,\cdot))}_{L^1(\Omega)}
      &\to 0
  .
\end{align}
\end{lemma}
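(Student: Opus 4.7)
My plan is to work from the explicit closed form \eqref{e:s-explicit} for $\tilde v_t^2$, splitting the integrand according to whether $\sigma_1+\sigma_2=0$. The charged part ($\sigma_1\sigma_2=+1$) is straightforward: since the lattice heat kernel is positive and $m^2>0$, one has $c_t^\epsilon(x_1,x_2)\geq 0$, and therefore $0\leq 1-e^{-\beta c_t^\epsilon(x_1,x_2)}\leq 1$ uniformly in all arguments. Combined with $|\tilde v_t^\epsilon(\xi)|\lesssim|z| L_t^{-\beta/4\pi}$ from \eqref{e:polchinski-fourier-duhamel-rescaled1}/\eqref{e:polchinski-fourier-duhamel-0-1}, a direct integration (together with the observation that the integrand is effectively supported on $|x_1-x_2|\lesssim L_t$ by Gaussian decay of the heat kernel) reproduces the same $L_t^2\|\cdot\|_{L^1}\lesssim\lZ_t^2$ estimate as in Lemma~\ref{lem:vtbd} for $\beta<4\pi$. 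The corresponding convergence \eqref{e:vdiff-bd} on the charged part then follows from the pointwise convergences of $\tilde v_t^\epsilon(\xi)$ and $c_t^\epsilon(x_1,x_2)$ off the diagonal (Lemma~\ref{lem:C-limit}) via dominated convergence.

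The substance of the lemma is the neutral part. From \eqref{e:s-explicit}, $\tilde s_t^\epsilon(x_1,x_2) = -\tilde v_t^\epsilon(\xi_1)\tilde v_t^\epsilon(\xi_2)\bigl(e^{\beta c_t^\epsilon(x_1,x_2)}-1\bigr)$. Using the short-distance asymptotics $c_t^\epsilon(x_1,x_2) = -\frac{1}{2\pi}\log|x_1-x_2| + O(\log L_t)$ for $|x_1-x_2|$ between $\epsilon$ and $L_t$, one obtains the pointwise bound $|\tilde s_t^\epsilon(x_1,x_2)|\lesssim z^2|x_1-x_2|^{-\beta/2\pi}$ on the near-diagonal region $|x_1-x_2|\lesssim L_t$, together with a rapidly decaying tail of size $z^2 L_t^{-\beta/2\pi}\theta_t^2$ outside. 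For \eqref{e:v2-bd2}, the explicit weight $|x-\cdot|/L_t$ converts this singularity to $|x_1-x_2|^{1-\beta/2\pi}$, which is integrable in two dimensions precisely when $\beta/2\pi<3$, i.e.\ $\beta<6\pi$; a direct polar-coordinate computation gives $\int(|x-x_2|/L_t)|\tilde s_t^\epsilon(x,x_2)|\,dx_2\lesssim z^2 L_t^{2-\beta/2\pi}$, which upon multiplying by $L_t^2$ yields the required $O(\lZ_t^2)$. For \eqref{e:v2-bd1}, the same mechanism applies with the cancellation $\partial^k_\epsilon\delta c_t^\epsilon(x,x_1,x_2)=\partial^k_{x}c_t^\epsilon(x,x_1)-\partial^k_x c_t^\epsilon(x,x_2)$ vanishing when $x_1=x_2$ and satisfying the uniform Lipschitz estimate $|\partial^k_\epsilon\delta c_t^\epsilon(x,x_1,x_2)|\lesssim L_t^{-k-1}\theta_t^2|x_1-x_2|$ on the near-diagonal region (and the trivial bound $L_t^{-k}\theta_t^2$ off it), which plays the role of the weight $|x-\cdot|/L_t$ and produces the additional factor $\theta_t^2$.

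The main technical obstacle is establishing these Lipschitz/gradient estimates for $\partial^k_\epsilon\delta c_t^\epsilon$ uniformly in $\epsilon\geq 0$ with the correct $L_t$-scaling and $\theta_t^2$ mass-decay. These are obtained by writing $c_t^\epsilon=\int_0^t\dot c_s^\epsilon\,ds$, differentiating under the integral using the lattice heat kernel estimates of Lemmas~\ref{lem:pt}--\ref{lem:pttorus}, and splitting the $s$-integral into a short-time part (where the Gaussian spatial decay of $\dot c_s^\epsilon$ gives the $|x_1-x_2|$ factor upon differencing) and a long-time part (controlled by the mass $\theta_t^2$). The convergence statements \eqref{e:v2-diffeps1}--\eqref{e:v2-diffeps2} then follow by dominated convergence, using the uniform bounds just established as dominating functions and the pointwise convergences \eqref{e:c0-limit}, \eqref{e:cxy-limit} from Lemma~\ref{lem:C-limit} off the diagonal; the diagonal contribution is negligible because both $\delta c_t$ and $|x-\cdot|$ vanish there, ensuring uniform integrability in $\epsilon$.
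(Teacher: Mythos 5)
Your plan follows the same overall route as the paper's proof: start from the closed form \eqref{e:s-explicit} for $\tilde v_t^2$, split into charged and neutral parts, and reduce everything to heat-kernel estimates. The difference is that the paper delegates the actual computations to \cite{1907.12308}, whereas you spell them out. Your treatment of the charged part and of \eqref{e:v2-bd2} is sound, and the power-counting correctly identifies $\beta<6\pi$ as the integrability threshold once a factor of $|x_1-x_2|$ has been gained.

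The gap is in \eqref{e:v2-bd1}, where the bounds you quote for $\partial^k\delta c_t$ do not hold pointwise. Since $c_t(x,y)=\int_0^t p_s(x,y)e^{-m^2s}\,ds$ has a logarithmic diagonal singularity, one has $|\partial^k_x c_t(x,y)|\sim|x-y|^{-k}$ and $|\partial^{k+1}_x c_t(x,y)|\sim|x-y|^{-k-1}$ as $y\to x$. Neither your ``trivial bound $L_t^{-k}\theta_t^2$'' nor your ``uniform Lipschitz estimate $|\partial^k_\epsilon\delta c_t^\epsilon(x,x_1,x_2)|\lesssim L_t^{-k-1}\theta_t^2|x_1-x_2|$'' is therefore uniform in $(x_1,x_2)$ --- both blow up as $x_1$ or $x_2$ approaches $x$, and the integral over $\Omega_\epsilon^2$ sees this region. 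Passing to an $L^1$-in-one-variable Lipschitz bound does not rescue the argument as you have set it up either: $\|\partial^{k+1}c_t(x,\cdot)\|_{L^1(\Omega)}\gtrsim\int_0^t s^{-(k+1)/2}\,ds$ diverges at $s=0$ for every $k\geq 1$, which is exactly what your own outline (``write $c_t^\epsilon=\int_0^t\dot c_s^\epsilon\,ds$ and split the $s$-integral'') would uncover if carried through. The estimate that actually makes \eqref{e:v2-bd1} go is the $L^1$-type bound of \eqref{e:c-limit} applied to the \emph{single-time} kernel $\dot c_t$ (whose spatial derivatives are all bounded and satisfy $\|\partial^{k+1}\dot c_t(x,\cdot)\|_{L^1(\Omega)}\lesssim L_t^{-k-1}\theta_t$), combined with the pointwise bound $|\rho|\,|\tilde s_t(\rho)|\lesssim z^2|\rho|^{1-\beta/2\pi}$ after the change of variables that isolates $\rho=x_1-x_2$; this is consistent with the role the $\delta c\,\tilde s$ combination plays in the recursion \eqref{e:polchinski-fourier-duhamel-rescaled}, where the kernel is $\dot c_s$, not $c_t$. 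As written, your proof of \eqref{e:v2-bd1} would break down for $k\geq 1$ and $\beta\geq 4\pi$; and since your dominated-convergence argument for \eqref{e:v2-diffeps1} uses the same putative pointwise bounds as dominating functions, the gap propagates there too.
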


\begin{proof}
  The estimates all essentially follow from the formula \eqref{e:s-explicit}
  and heat kernel estimates.
  In more detail, for the charged part, the estimate \eqref{e:vtbd} is derived in \cite[(3.65)]{1907.12308}.
  For the neutral part, the bound \eqref{e:v2-bd1} is \cite[(3.76)]{1907.12308} when $k=0$
  and, as in \eqref{e:nablavtn-rbd}, the generalisation to $k\in \N$ is 
  exactly the same since $L_t^k\partial^k c_t$ satisfies
  the same estimates   as $c_t$ by \eqref{e:c-limit}.
  The proof of \eqref{e:v2-bd2}
  can similarly be seen from the representation
  \eqref{e:s-explicit} together with \cite[Lemma~A.4]{1907.12308}.

  For the convergence statements, we can again start from the formula \eqref{e:s-explicit}.
  By \eqref{e:cxy-limit},
  \begin{equation}
    \sup_{|x_1-x_2|\geq \kappa} \max_{\sigma_1,\sigma_2 \in \{\pm 1\}} |\tilde v^{2,\epsilon}_t(\xi_1^\epsilon,\xi_2^\epsilon) - \tilde v^{2,0}_t(\xi_1,\xi_2)| \to 0
    \qquad (\epsilon \to 0),
  \end{equation}
  for any $\kappa>0$.
  In the charged case, \eqref{e:vdiff-bd} follows because then
  \begin{equation}
    \sup_{\xi_1}
    \int_{\Omega\times\{\pm 1\}}
    \pB{ |\tilde v_t^{2,\epsilon}(\xi_1^\epsilon,\xi_2^\epsilon)|+|\tilde v_t^{2,0}(\xi_1,\xi_2)| } \, {\bf 1}_{|x_1-x_2| \leq \kappa} \, d\xi_2 \to 0
    \qquad (\kappa \to 0).
  \end{equation}
  The statements for the neutral case, \eqref{e:v2-diffeps1}, and \eqref{e:v2-diffeps2}, follow similarly.
\end{proof}

For $n\geq 3$, the recursive definition of $\tilde v(\xi_1,\dots, \xi_n)$ in
\eqref{e:polchinski-fourier-duhamel-rescaled}
and \eqref{e:polchinski-fourier-duhamel-0}
depends on the neutral part of $\tilde v^{2}$ only in the combination
\begin{equation}
  \delta c_t(x_1,x_2,x_3) \tilde v_t^2(\xi_2,\xi_3)\mathbf{1}_{\xi_2+\xi_3=0}
  =
  \delta c_t(x_1,x_2,x_3) \tilde s_t(x_2,x_3)
  .
\end{equation}
From this, one obtains the following estimates
generalising Lemmas~\ref{lem:vtbd}--\ref{lem:nablavt-limit}.

\begin{lemma} \label{lem:vtbd-6pi-2}
Let $\beta <6\pi$.
Then \eqref{e:vtbd}--\eqref{e:vdiff-bd} hold for $n\geq 3$.
\end{lemma}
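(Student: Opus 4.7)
\medskip
\noindent\textbf{Proof proposal.}
The plan is to proceed by induction on $n\geq 3$, following the scheme of Lemma~\ref{lem:vtbd}, but replacing every appearance of the neutral part of $\tilde v^2$ by the weighted estimates from Lemma~\ref{lem:vtbd-6pi-1}. The key identity that unlocks the cancellation is that, whenever a block $I\subset [n]$ in the recursion \eqref{e:polchinski-fourier-duhamel-rescaled}--\eqref{e:polchinski-fourier-duhamel-0} is a neutral pair $I=\{j_1,j_2\}$ with $\sigma_{j_1}=-\sigma_{j_2}$, the inner sum over $j\in I$ collapses to
\begin{equation}
  \sum_{j\in I} \dot u^\epsilon_s(\xi_i,\xi_j) = \beta\sigma_i\sigma_{j_1}\bigl(\dot c_s^\epsilon(x_i,x_{j_1})-\dot c_s^\epsilon(x_i,x_{j_2})\bigr),
\end{equation}
which after integration in $s$ produces a $\delta c_t^\epsilon$-factor of exactly the type that pairs well against $\tilde s_t^\epsilon$ via \eqref{e:v2-bd1}.

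I would then split each term of the recursion according to the type of the blocks $I_1\dot\cup I_2=[n]$. A split is called \emph{regular} when neither block is a neutral pair of size two, and \emph{singular} otherwise. For regular splits, each factor $\tilde v^{|I_k|,\epsilon}_s$ is integrable either by the inductive hypothesis (when $|I_k|\neq 2$) or by the charged part of Lemma~\ref{lem:vtbd-6pi-1} (when $|I_k|=2$ charged), and the $\dot c^\epsilon_s$ factor is controlled by \eqref{e:c-limit}; these terms are therefore bounded exactly as in Lemma~\ref{lem:vtbd} and fit into the same dominating sequence $R^n_{s,t}$ from \eqref{e:rbd1}--\eqref{e:rbd2}. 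For singular splits, we first apply the cancellation above, then integrate $(x_{j_1},x_{j_2})$ out against $\tilde s_s^\epsilon$ using \eqref{e:v2-bd1} (with $k=0$, plus the time-integration converting $\dot c_s$ into $\delta c_t-\delta c_s$ as in \eqref{e:s-explicit}), and finally control $\tilde v^{|I_1|,\epsilon}_s(\xi_{I_1})$ by the inductive hypothesis. Because \eqref{e:v2-bd1} has the same $L_t^{-2}\lZ_t^2\theta_t^2$ scaling as the regular $L^1$-bound on $\tilde v^2$ would formally give, singular terms feed into the same $R^n_{s,t}$ sequence, and the combinatorial factor $n^{n-2}$ is preserved by the same argument as in \cite[Proof of Proposition~3.5]{1907.12308}.

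For the convergence \eqref{e:vdiff-bd} with $n\geq 3$, I would run the same induction once more: the convergence of $\dot c^\epsilon_s$, $c^\epsilon_s$ and $e^{-(w^\epsilon_t-w^\epsilon_s)}$ from Lemma~\ref{lem:C-limit}, together with \eqref{e:v2-diffeps1}--\eqref{e:v2-diffeps2} for the neutral $\tilde v^2$ factors and the inductive hypothesis for $\tilde v^{m,\epsilon}$ with $m<n$, give pointwise convergence of the integrand in $s$ on $(0,t)$. The uniform-in-$\epsilon$ bounds from the first part of the proof provide the dominating function for the $s$-integral, and dominated convergence closes the step.

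The main obstacle will be the bookkeeping in the inductive estimate: one must verify that the \emph{two} kinds of terms introduced by the neutral-pair decomposition (the $\delta c$ paired against $\tilde s$, and the extra combinatorial choices for which blocks are neutral pairs) still aggregate, after summing over all set partitions $I_1\dot\cup I_2=[n]$, into the tree-counting constant $n^{n-2}$. Since both types contribute the same powers of $L_t$ and $\lZ_t$ (by design of Lemma~\ref{lem:vtbd-6pi-1}), the only real worry is a finite $n$-independent loss of constant, which can be absorbed into $C_\beta$; more than that would break the geometric convergence of the Fourier series \eqref{e:v-fourier1} used downstream.
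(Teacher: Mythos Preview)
Your proposal is correct and follows the same strategy as the paper, which simply cites \cite[Proposition~3.7]{1907.12308} for the bound \eqref{e:vtbd} and then runs the same inductive dominated-convergence argument as in Lemma~\ref{lem:vtbd} for \eqref{e:vdiff-bd}; your sketch is essentially an outline of what that cited proof does, with the neutral-pair cancellation $\sum_{j\in I}\dot u_s(\xi_i,\xi_j)=\beta\sigma_i\sigma_{j_1}(\dot c_s(x_i,x_{j_1})-\dot c_s(x_i,x_{j_2}))$ being exactly the mechanism highlighted just above the lemma. One small imprecision: the factor you obtain is a $\dot c_s$-difference paired with $\tilde s_s$ at the \emph{integration} time $s$, not a $\delta c_t$-factor produced ``after integration in $s$'' (the other $s$-dependent factors prevent a clean $s$-integration); the relevant estimate is therefore the time-$s$ version of \eqref{e:v2-bd1}--\eqref{e:v2-bd2}, which holds for the same reasons.
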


\begin{proof}
  For $n\geq 3$ and $\beta<6\pi$,
  the bounds in \eqref{e:vtbd} with $k=0$ are shown in \cite[Proposition~3.7]{1907.12308},
  and  again their extension with general $k\in \N$ is exactly the same since $L_t^k\partial^k c_t$ satisfies
  the same estimates   as $c_t$. 
  Moreover, the $\epsilon \to 0$ convergence
  \eqref{e:vdiff-bd} for $n\geq 3$ 
  holds by similarly arguments as in
  in the proof of Lemma~\ref{lem:vtbd} for $\beta<4\pi$.
\end{proof}

The upshot of the above considerations
is that, for all $n\geq 3$, we can define and analyse $v^{n,0}_t(\varphi)$ in exactly the same way
for $\beta<6\pi$ as for $\beta<4\pi$.
On the other hand, while the definition of $v^{2,0}_t(\varphi)$ does not make sense when $\beta \geq 4\pi$,
it turns out that $v^{2,0}_t(\varphi)$ is actually well-defined up to an divergent additive constant.
More precisely, the $\epsilon\to 0$ limits of
the differences $v_t^{2,\epsilon}(\varphi+\zeta)-v_t^{2,\epsilon}(\varphi)$ and
of the $\varphi$-derivative $\dot c_t^\epsilon \nabla v_t^{2,\epsilon}(\varphi,x)$ continue to exist.
Abusing notation,
we will denote this limit by $v^{2,0}_t(\varphi+\zeta)-v^{2,0}_t(\varphi)$,
for example,
with the implicit understanding that only the difference of $v^{2,0}_t$ is defined.
The following lemma then provides the required replacement of Lemma~\ref{lem:nablavt-limit}.

\begin{lemma} \label{lem:v2}
  Let $\beta<6\pi$.
  For $n \geq 3$ and $t>0$, the bounds \eqref{e:nablavtn-bd}--\eqref{e:nablavt-limit} continue to hold. 
  For $n=2$ and $t>0$, the bounds \eqref{e:nablavtn-bd}--\eqref{e:Hessvtn-bd} and \eqref{e:nablavt-limit}
  also continue to hold,
  and the following replacements for \eqref{e:vtn-bd} and \eqref{e:vt-limit} hold:
  for any $\varphi$ and $\zeta$,
  \begin{equation}
    \label{e:v2-phi-bd2}
    |v^{2}_{t}(\varphi+\zeta)- v^{2}_{t}(\varphi)|
    \lesssim |\lZ_t|^2 (1+ L_t\norm{\partial \zeta}_{L^\infty(\Omega)}),
  \end{equation}
  and for any $t>0$,
  and $\max_{x\in\Omega}|\varphi^\epsilon(x^\epsilon)-\varphi(x)| \to 0$
  and $\max_{x\in\Omega}|\zeta^\epsilon(x^\epsilon)-\zeta(x)| \to 0$,
  as $\epsilon \to 0$,
  \begin{equation}
    \label{e:v2-phi-diffeps2}
    |[v^{2,0}_t(\varphi+\zeta)-v^{2,0}_t(\varphi)]-
    [v_t^{2,\epsilon}(\varphi+\zeta)-v_t^{2,\epsilon}(\varphi)]|
    \to 0
      .
  \end{equation}
\end{lemma}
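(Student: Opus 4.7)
The plan is to split $\tilde v^{2,\epsilon}_t$ into its charged and neutral parts, $\tilde v^{2,\epsilon}_t\mathbf{1}_{\sigma_1+\sigma_2\neq 0}+\tilde s^\epsilon_t$, and to handle each separately. For all $n\geq 3$ and for the charged part at $n=2$, Lemmas~\ref{lem:vtbd-6pi-1}--\ref{lem:vtbd-6pi-2} supply exactly the same $L^1$ bounds and $\epsilon\to 0$ convergence as Lemma~\ref{lem:vtbd} did in the regime $\beta<4\pi$. The proof of Lemma~\ref{lem:nablavt-limit} therefore transfers verbatim to these contributions: substituting into the Fourier representations \eqref{e:v-fourier2}--\eqref{e:Hessvtn-fourier} yields \eqref{e:nablavtn-bd}--\eqref{e:Hessvtn-bd} together with \eqref{e:nablavt-limit}, as well as \eqref{e:vtn-bd} and \eqref{e:vt-limit} for $n\geq 3$ and for the charged part at $n=2$. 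In particular the charged part obeys the stronger pointwise bound $|v^{2,\mathrm{ch}}_t(\varphi)|\lesssim|\lZ_t|^2$, which already furnishes the $|\lZ_t|^2$ term of \eqref{e:v2-phi-bd2}.

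The only genuinely new ingredient, and thus the main obstacle, is the neutral part $\tilde s^\epsilon_t$. As noted below \eqref{e:s-explicit}, it is not $L^1$-integrable in the limit $\epsilon\to 0$ when $\beta\geq 4\pi$, so $v^{2,\mathrm{neu}}_t(\varphi)$ is not itself well-defined in the continuum, and this is precisely why \eqref{e:vtn-bd} and \eqref{e:vt-limit} must be replaced by the difference estimates \eqref{e:v2-phi-bd2} and \eqref{e:v2-phi-diffeps2}. The idea is that in every quantity we need to control, $\tilde s^\epsilon_t(x_1,x_2)$ appears only paired with a factor that vanishes on the diagonal $x_1=x_2$, and the weighted estimates \eqref{e:v2-bd1}, \eqref{e:v2-bd2} are exactly tailored to such products. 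Concretely, the dipole sum $\sigma_2=-\sigma_1$ in the neutral sector of \eqref{e:nablavtn-fourier}--\eqref{e:Hessvtn-fourier} produces the weight $\partial_\epsilon^k \dot c_t^\epsilon(x,x_1)-\partial_\epsilon^k \dot c_t^\epsilon(x,x_2)$ multiplying $\tilde s^\epsilon_t(x_1,x_2)$, which is $L^1$-controlled uniformly in $x$ by \eqref{e:v2-bd1}; an analogous computation for the Hessian gives \eqref{e:nablavtn-bd}--\eqref{e:Hessvtn-bd} for the neutral part at $n=2$.

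For the potential difference, one writes
\[
   v^{2,\mathrm{neu},\epsilon}_{t}(\varphi+\zeta) - v^{2,\mathrm{neu},\epsilon}_{t}(\varphi)
   = \frac{1}{2}\int dx_1\,dx_2 \sum_{\sigma=\pm 1} \tilde s^\epsilon_t(x_1,x_2)\,
       e^{i\sqrt{\beta}\sigma(\varphi(x_1)-\varphi(x_2))}
       \bigl(e^{i\sqrt{\beta}\sigma(\zeta(x_1)-\zeta(x_2))}-1\bigr),
\]
and bounds $|e^{ia}-1|\leq \min(2,\sqrt{\beta}|x_1-x_2|\|\partial\zeta\|_{L^\infty(\Omega)})$; applying the weighted estimate \eqref{e:v2-bd2} after splitting at $|x_1-x_2|=L_t$ controls the neutral contribution, and combined with the charged contribution this yields \eqref{e:v2-phi-bd2}. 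Finally, the convergence statements \eqref{e:nablavt-limit} at $n=2$ and \eqref{e:v2-phi-diffeps2} follow from the same dominated-convergence argument used in the proof of Lemma~\ref{lem:nablavt-limit}, now invoking \eqref{e:v2-diffeps1}, \eqref{e:v2-diffeps2} in place of \eqref{e:vdiff-bd}, the heat-kernel convergence from Lemma~\ref{lem:C-limit}, and the uniform continuity of the phase factors under $\sup_{x\in\Omega}|\varphi^\epsilon(x^\epsilon)-\varphi(x)|\to 0$ and $\sup_{x\in\Omega}|\zeta^\epsilon(x^\epsilon)-\zeta(x)|\to 0$.
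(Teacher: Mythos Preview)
Your proof is correct and follows essentially the same approach as the paper: reduce $n\geq 3$ and the charged $n=2$ part to Lemma~\ref{lem:nablavt-limit} via Lemmas~\ref{lem:vtbd-6pi-1}--\ref{lem:vtbd-6pi-2}, and handle the neutral part through the weighted estimates \eqref{e:v2-bd1}--\eqref{e:v2-bd2}. The only minor difference is that the paper bounds the neutral contribution to $v^2_t(\varphi+\zeta)-v^2_t(\varphi)$ directly via $|\cos(a+b)-\cos(a)|\leq|b|$ and \eqref{e:v2-bd2} without splitting at $|x_1-x_2|=L_t$; your split is harmless but unnecessary.
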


\begin{proof}
  For $n\geq 3$ and the charged part of $n=2$,
  since \eqref{e:vtbd}--\eqref{e:vdiff-bd} continue to hold in these cases for $\beta<6\pi$ by
  Lemma~\ref{lem:vtbd-6pi-1}--\ref{lem:vtbd-6pi-2},
  the argument is the same as that for $\beta<4\pi$ in Lemma~\ref{lem:nablavt-limit}.
It thus suffices to consider the neutral part of $v_{t}^{2}(\varphi+\zeta)-v_{t}^2(\varphi)$
which is given by
\begin{equation}
  \epsilon^{4}\sum_{x_1,x_2 \in \Omega_\epsilon} \tilde v^2_{t}((x_1,+1),(x_2,-1))(\cos(\varphi(x_1)-\varphi(x_2)+\zeta(x_1)-\zeta(x_2))-\cos(\varphi(x_1)-\varphi(x_2))).
\end{equation}
Using that $|\cos(a+b)-\cos(a)| \leq |b|$
and \eqref{e:v2-bd2}, this is bounded uniformly in $\varphi$ by
\begin{equation}
  L_t^2 \sup_x \norm{\frac{|x-\cdot|}{L_t} \tilde v^2_{t}((x,+1),(\cdot,-1))}_{L^1(\Omega_\epsilon)} 
  \qbb{ \sup_{x_1,x_2} \frac{|\zeta(x_1)-\zeta(x_2)|}{|x_1-x_2|/L_t}}
  \lesssim |\lZ_t|^2 L_t \norm{\partial \zeta}_{L^\infty(\Omega_\epsilon)}.
\end{equation}
This proves \eqref{e:v2-phi-bd2}.
The convergence \eqref{e:v2-phi-diffeps2} is proved analogously using
\eqref{e:v2-diffeps2}.
\end{proof}

\begin{proof}[Proof of Theorem~\ref{thm:v-limit} for $\beta<6\pi$]
For $t\leq t_0$,
the proof is again identical to that for $\beta<4\pi$
since \eqref{e:nablavtn-bd}--\eqref{e:Hessvtn-bd} and \eqref{e:nablavt-limit}
continue to hold for $n\geq 2$ by Lemma~\ref{lem:v2}.

Thus let $t \geq t_0$.
As discussed at the beginning of Section~\ref{sec:vt-6pi},
$v_{t_0}(\varphi)$ is now divergent as $\epsilon\to 0$, but
differences $v_{t_0}(\varphi+\zeta)-v_{t_0}(\varphi)$ continue to make sense
in the limit. Indeed, 
by \eqref{e:v2-phi-bd2} for the $n=2$ term and \eqref{e:vtn-bd} for the $n\geq 3$ terms,
for any fixed $t_0>0$,
\begin{equation}
  e^{-v_{t_0}(\varphi+\zeta)+v_{t_0}(\varphi)} \lesssim e^{O(\|\partial \varphi\|_{L^\infty(\Omega)})}.
\end{equation}
Moreover, for any fixed $t_0>0$ and $t>t_0$,
the estimated for $v_{t_0}$ used above,
Jensen's inequality,
and $\EE_{c_t-c_{t_0}} \|\partial \zeta\|_{L^\infty(\Omega)} \leq O_{t_0}(1)$ which holds by Lemma~\ref{lem:Gauss-conv},
imply
\begin{align} \label{e:vt-Jensen-6pi}
  v_t(\varphi)-v_{t_0}(\varphi)
  &= -\log \EE_{c_t-c_{t_0}}(e^{-v_{t_0}(\varphi+\zeta) + v_{t_0}(\varphi)})
  \nnb
  &\leq \EE_{c_t-c_{t_0}}(v_{t_0}(\varphi+\zeta)-v_{t_0}(\varphi))
  \leq O_{t_0}(1).
\end{align}
Hence $e^{+v_t(\varphi)-v_{t_0}(\varphi)}$ is bounded.
We then again start from \eqref{e:nablavt-conv}, now interpreted for $\epsilon=0$ as
\begin{equation}
  \PP_{t_0,t}F(\varphi) = e^{+v_t(\varphi)-v_{t_0}(\varphi)}
  \EE_{c_{t}-c_{t_0}} (e^{-v_{t_0}(\varphi+\zeta)+v_{t_0}(\varphi)} F(\varphi+\zeta)).
\end{equation}
From this, the proofs of the bounds \eqref{e:cnablav-bd}--\eqref{e:cnablav-Lip} using the
already established case $t=t_0$ are identical
to those in case $\beta<4\pi$, and the convergence \eqref{e:cnablav-eps} is also similar
from \eqref{e:conv-lattice-expectations}.
\end{proof}

\subsection{Summary of rescaling}
\label{sec:rescaling}

To compare with the estimates from \cite[Section~3]{1907.12308},
we here summarise how the definitions change under the rescaling
$t \to t/\epsilon^2$ and $x \to x/\epsilon$
that we use here.

To distinguish both normalisations,
we will here denote objects in unit lattice normalisation by capital letters
and their versions in continuum normalisation by lower case latters.
For $\xi=(x,\sigma) \in \Lambda \times \{\pm 1\}$, where $\Lambda$ is a subset of
the unit lattice $\Z^2$ as in \cite[Section~3]{1907.12308}, below we write
$\xi/\epsilon = (x/\epsilon, \sigma) \in \Omega \times \{\pm 1\}$, and then note the correspondence
\begin{align}
  c_t^\epsilon(x_1,x_2) &= C_{t/\epsilon^2}^\epsilon(x_1/\epsilon,x_2/\epsilon),
  \\
  \dot c_t^\epsilon(x_1,x_2) &= \epsilon^{-2}\dot C_{t/\epsilon^2}^\epsilon(x_1/\epsilon,x_2/\epsilon),
  \\
  u_t(\xi_1,\xi_2) &= U_{t/\epsilon^2}(\xi_1/\epsilon, \xi_2/\epsilon),
  \\
  \dot u_t(\xi_1,\xi_2) &= \epsilon^{-2} \dot U_{t/\epsilon^2}(\xi_1/\epsilon, \xi_2/\epsilon),
  \\
  \tilde v_t(\xi_1, \dots, \xi_n)
  &= \epsilon^{-2n} \tilde V_{t/\epsilon^2}(\xi_1/\epsilon, \dots, \xi_n/\epsilon),
  \\
  w_t(\xi_1,\dots,\xi_n) &= W_{t/\epsilon^2}(\xi_1/\epsilon,\dots,\xi_n/\epsilon),
\end{align}
where the functions on the right-hand sides are as in \cite[Section~3]{1907.12308},
with the only difference that we write $U_t$ instead of $u_t$
because we reserve lower case letters for continuum objects.
Moreover,
  \begin{gather}
  L_t = \epsilon \ell_{t/\epsilon^2} = \sqrt{t} \wedge 1/m,
  \qquad
  Z_t^\epsilon = \epsilon^{-2} z_{t/\epsilon^2}
  = z \epsilon^{-\beta/4\pi} e^{-\beta c_t^\epsilon(0)/2}
  \approx Z_t
  ,\\
  \lZ_t^\epsilon = L_t^2Z_t^\epsilon \asymp \ell_{t/\epsilon^2}^2z_{t/\epsilon^2}^\epsilon =\lz_{t/\epsilon^2}^\epsilon
  ,\qquad
  \theta_t = \vartheta_{t/\epsilon^2}^\epsilon = e^{-\frac12 m^2t}.
\end{gather}
Note that under the identification of functions on $\Lambda^n$
with functions on $\Omega^n$ that are constant on squares of side length $\epsilon$,
the norms $\|\cdot\|$ used in \cite[Section~3]{1907.12308}, i.e.,
\begin{align}
  \|F\| = \max_{\xi_1} \sum_{\xi_2,\dots,\xi_n} |F(\xi_1, \dots, \xi_n)|
  ,
\end{align}
for functions $F : (\Lambda \times \{\pm 1\})^n \to \R$ can be written as
\begin{align} \label{e:norm-rescaling}
  \epsilon^{2(n-1)}\|F\|
  &= \sup_{\xi_1} \int_{(\Omega_\epsilon \times \{\pm 1\})^{n-1}} d\xi_2 \dots d\xi_{n-1}
    |F(\xi_1, \dots, \xi_n)|
    \nnb
  &= \sup_{\xi_1} \norm{F(\xi_1,\cdot)}_{\Lpmeps{n-1}}.
\end{align}

\section{Coupling of the sine-Gordon field with the GFF}
\label{sec:coupling}

Using the estimates for the renormalised potential given in Section~\ref{sec:vt},
we now couple the sine-Gordon field with the GFF.
The main results of this section, Theorems~\ref{thm:coupling}--\ref{thm:coupling-bd} below,
immediately imply Theorem~\ref{thm:coupling-intro}
but are stronger in an important way exploited in Section~\ref{sec:maximum}.

\subsection{Decomposed Gaussian free field}
\label{sec:gffdecomp}

For the construction of the decomposed lattice GFF on $\Omega_\epsilon$, let
$(W^\epsilon(x))_{x\in\Omega_\epsilon}$
be independent Brownian motions $W^\epsilon(x)=(W^\epsilon_t(x))_{t\geq 0}$
with quadratic variations $t/\epsilon^d=t/\epsilon^2$.
In other words, $W^\epsilon$ is a standard Brownian motion with values in
$X_\epsilon$ equipped with the inner product \eqref{e:innerprod}.
In terms of
the (discrete) heat operator
$\dot c_{t}^\epsilon = e^{t\Delta^\epsilon - m^2t}$
from \eqref{e:dotcteps},
we then define the decomposed lattice GFF by
\begin{equation}
  \label{e:GFFepsdef}
  \Phi_t^{\GFF_\epsilon} = \int_t^\infty q^\epsilon_{u}\, dW_{u}^\epsilon,
  \qquad \text{where }
  q_t^\epsilon
  = \dot c_{t/2}^\epsilon.
\end{equation}
In particular, $\Phi_0^{\GFF_\epsilon}$ is then a realisation of
the massive Gaussian free field on $\Omega_\epsilon$,
i.e., a Gaussian field with covariance
$\int_0^\infty \dot c_t^{\epsilon}\, dt = (-\Delta^\epsilon+m^2)^{-1}$.

The decomposed continuum GFF is constructed analogously.
Let $W$ now be a cylindrical Brownian motion in $L^2(\Omega)$ defined on some probability space.
Thus, $W_t = \sum_{k \in 2\pi \Z^2} e^{ik\cdot (\cdot)} \hat W_t(k)$
where the $\hat W(k)$ are independent complex standard Brownian motions subject to $\hat W(k)=\overline{\hat W(-k)}$
for $k \neq 0$ and $\hat W(0)$ is a real standard Brownian motion,
and where the sum over $k \in 2\pi\Z^2$ converges in $C([0,\infty),H^{-d/2-\kappa}(\Omega))$.

The decomposed continuum GFF $(\Phi^{\GFF_0}_t)_{t\geq 0}$ is then defined
analogously as in \eqref{e:GFFepsdef}, by replacing
the lattice heat kernel $\dot c^\epsilon$ by
its continuum counterpart $\dot c^0$ defined in \eqref{e:dotct0}, 
i.e.,
\begin{equation} \label{e:GFF0def}
  \Phi_t^{\GFF_0} = \int_t^\infty q_u^0 dW_{u} \equiv
  \sum_{k\in2\pi \Z^2}  e^{ik\cdot(\cdot)}\int_t^\infty \hat q_u^0(k) d\hat W_u(k),
  \qquad \text{where }
  q_t^0
  = \dot c_{t/2}^0.
\end{equation}
The process $\Phi^{\GFF_0}$ takes values in $C([0,\infty),H^{-\kappa}(\Omega))$ for any $\kappa>0$
almost surely since $\int_0^\infty \sum_k (1+|k|^{2})^{-\kappa}|\hat q_u^0(k)|^2 du < \infty$,
and, for any $t_0>0$,
also $\Phi^{\GFF_0} \in C([t_0,\infty),C^\infty(\Omega))$ almost surely.

To take the limit $\epsilon \to 0$, it will be
convenient to couple  the Brownian motions $W^\epsilon(x)$ for all $\epsilon>0$ to
$W$ in the following standard way.
Recall that any $f\colon \Omega_\epsilon \to \R$ has the Fourier representation
$f(x)=\sum_{k\in\Omega_\epsilon^*} \hat f(k)e^{ik\cdot x}$ where
$\Omega_\epsilon^* = \{k \in 2\pi \Z^2: -\pi/\epsilon < k_i \leq \pi/\epsilon \}$.
For $x\in \Omega_\epsilon$, we set $W_t^\epsilon(x) = \Pi_\epsilon W_t(x)$
with $\Pi_\epsilon$ the restriction to the Fourier coefficients in $\Omega_\epsilon^*$,
i.e.,
\begin{equation}
  W^\epsilon_t(x) = \Pi_\epsilon W_t(x) = \sum_{k \in \Omega_\epsilon^*} e^{ik\cdot x} \hat W_t(k),
  \qquad x\in \Omega_\epsilon.
\end{equation}
Then $(W^\epsilon(x))_{x\in\Omega_\epsilon}$ are independent Brownian motions
indexed by $\Omega_\epsilon$ with quadratic variation $t/\epsilon^{2}$.
Indeed, clearly $W^\epsilon$ is a Gaussian process, and its covariance is
\begin{align}
  \E[W_t^\epsilon(x) W_s^\epsilon(y)]
  &=\sum_{k\in \Omega_\epsilon^*}\sum_{k'\in \Omega_\epsilon^*} \E[\hat W_t(k)\hat W_s(-k')]e^{i(k\cdot x-k'\cdot y)}
    \nnb
  &=
\sum_{k\in \Omega_\epsilon^*} (s\wedge t) e^{ik\cdot (x-y)}=\epsilon^{-2}(s\wedge t)\mathbf 1_{x=y}.
\end{align}
On the last line, we used that the Fourier series of $\mathbf 1_{x_0}\colon \Omega_\epsilon \to \R$ is given by
\begin{equation}
  \mathbf 1_{x_0}(x)= \sum_{k\in \Omega_\epsilon^*} \avg{\mathbf 1_{x_0}, e^{ik\cdot(\cdot)}}_{\Omega_\epsilon}e^{ik\cdot x}= \epsilon^2 \sum_{k \in \Omega_\epsilon^*} e^{ik(x-x_0)}
  .
\end{equation}
Note also that the lattice field $\Phi_t^{\GFF_\epsilon}$
admits a Fourier representation analogous to \eqref{e:GFF0def}
using the Brownian motions $W^\epsilon(x)$:
\begin{equation} \label{e:GFFepsFourier}
  \Phi_t^{\GFF_\epsilon} =
  \sum_{k\in \Omega_\epsilon^*}  e^{ik\cdot(\cdot)}\int_t^\infty \hat q_u^\epsilon(k) d\hat W_u(k),
  \qquad \text{where }
  q_t^\epsilon
  = \dot c_{t/2}^\epsilon.
\end{equation}

We will use the above coupling from now on.
The associated forward and backward filtrations $(\cF_t)$ and $(\cF^t)$
are defined by completing the $\sigma$-algebras generated by
the past $\{W_s-W_0: s \leq t\}$  respectively the future $\{W_s-W_t: s\geq t\}$.
We emphasise that the processes $\Phi^{\GFF} = (\Phi^{\GFF}_t)_{t>0}$
(with either $\epsilon >0$ or $\epsilon=0$) are adapted
to the \emph{backward} filtration $(\cF^t)$.

\subsection{Decomposed sine-Gordon field}

The following four theorems are the main results of Section~\ref{sec:coupling}.
These all refer to the decomposed GFF from \eqref{e:GFFepsdef}--\eqref{e:GFF0def}
and the renormalised potential $v_t^\epsilon$ from \eqref{eq:PolchinskiSDE-v}
when $\epsilon>0$ and Theorem~\ref{thm:v-limit} when $\epsilon=0$.

The first theorem shows the well-posedness of the (backward) SDEs that we
subsequently show to construct the sine-Gordon field.
In its statement and afterwards, $C_0$ denotes the space of continuous functions vanishing at infinity.

\begin{theorem} \label{thm:coupling}
  For $\epsilon>0$, there is a unique $\cF^t$-adapted process $\Phi^{\SG_\epsilon} \in C([0,\infty), X_\epsilon)$ such that
  \begin{equation}
    \label{e:Phi-SG-eps-coupling}
    \Phi_{t}^{\SG_\epsilon}
    = - \int_t^\infty \dot c^\epsilon_u \nabla v_{u}^\epsilon(\Phi_u^{\SG_\epsilon}) \, du
      + \Phi_t^{\GFF_\epsilon}.
    \end{equation}
    Analogously, there is a unique $\cF^t$-adapted process $\Phi^{\SG_0}$
    with $\Phi^{\SG_0}-\Phi^{\GFF_0}\in C_0([0,\infty), C(\Omega))$ such that
  \begin{equation}
    \label{e:Phi-SG-0-coupling}
    \Phi_{t}^{\SG_0} = - \int_t^\infty \dot c^0_u \nabla v_{u}^0(\Phi_u^{\SG_0}) \, du
                              + \Phi_t^{\GFF_0}.
  \end{equation}
  In particular, for any $t>0$, $\Phi^{\GFF}_0-\Phi_t^{\GFF}$ is independent of $\Phi_t^{\SG}$
  (in either case).
\end{theorem}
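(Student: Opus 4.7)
The plan is to solve both backward integral equations pathwise by Picard iteration against the decomposed GFF, invoking the bounds from Theorem~\ref{thm:v-limit} to run a Volterra/Gronwall argument on $[t,\infty)$, and then to read off the independence statement from the independent increments of the underlying Brownian motion $W$.

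For $\epsilon>0$, condition on a realization of $\Phi^{\GFF_\epsilon}\in C([0,\infty),X_\epsilon)$. The drift $F_u(\varphi):=\dot c_u^\epsilon \nabla v_u^\epsilon(\varphi)$ is bounded and globally Lipschitz in $\varphi$ by \eqref{e:nablaveps-bd}--\eqref{e:Hessveps-bd}, with Lipschitz constant controlled by $\Lambda(u):=L_u^{-2}(\theta_u|\lZ_u^\epsilon|+\mathbf{1}_{u>t_0}\theta_u)$. A direct computation using $L_u=\sqrt u\wedge 1/m$ and $\lZ_u=zL_u^{2-\beta/4\pi}$ gives $\int_0^\infty \Lambda(u)\,du<\infty$: near $u=0$ the integrand is $|z|u^{-\beta/8\pi}$, integrable since $\beta<8\pi$, while for large $u$ the factor $\theta_u$ provides exponential decay. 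Consider the map
\begin{equation*}
T(\Psi)_t := \Phi_t^{\GFF_\epsilon} - \int_t^\infty F_u(\Psi_u)\,du
\end{equation*}
on $C([0,\infty),X_\epsilon)$. Starting from $\Psi^{(0)}\equiv \Phi^{\GFF_\epsilon}$, Picard iteration yields
\begin{equation*}
\|\Psi^{(k+1)}_t-\Psi^{(k)}_t\|_{L^\infty(\Omega_\epsilon)}\leq \int_t^\infty \Lambda(u)\,\|\Psi^{(k)}_u-\Psi^{(k-1)}_u\|_{L^\infty(\Omega_\epsilon)}\,du,
\end{equation*}
which iterates to a bound proportional to $\big(\int_t^\infty \Lambda\big)^k/k!$, summable. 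Uniform convergence to a fixed point $\Phi^{\SG_\epsilon}$ follows; the same inequality yields uniqueness by backward Gronwall, and $\cF^t$-measurability is inherited iterate-by-iterate from $\Phi^{\GFF_\epsilon}$.

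For $\epsilon=0$, since $\Phi_0^{\GFF_0}$ is only in $H^{-\kappa}$, we instead solve for the difference $Y_t:=\Phi_t^{\SG_0}-\Phi_t^{\GFF_0}$, seeking $Y\in C_0([0,\infty),C(\Omega))$ pathwise. The equation reads
\begin{equation*}
Y_t=-\int_t^\infty \dot c_u^0\nabla v_u^0(Y_u+\Phi_u^{\GFF_0})\,du,
\end{equation*}
and almost surely $\Phi_u^{\GFF_0}\in C^\infty(\Omega)$ for every $u>0$, so that $Y_u+\Phi_u^{\GFF_0}\in C(\Omega)$ and Theorem~\ref{thm:v-limit} applies. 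The pointwise bound \eqref{e:cnablav-bd}, uniform in the argument, forces absolute convergence of the integral on $[0,\infty)$ and vanishing as $t\to\infty$, so $T$ preserves $C_0([0,\infty),C(\Omega))$; the Picard/backward-Gronwall argument from the lattice case now runs verbatim using \eqref{e:cnablav-Lip} in place of \eqref{e:Hessveps-bd}. For the independence claim, $\Phi_0^{\GFF}-\Phi_t^{\GFF}=\int_0^t q_u\,dW_u$ is $\cF_t$-measurable and $\Phi_t^{\SG}$ is $\cF^t$-measurable by the construction, while the increments of $W$ on $[0,t]$ and $[t,\infty)$ are independent.

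The main obstacle is the delicate setup at $\epsilon=0$: one must exploit that although $\Phi^{\GFF_0}$ enters as a distribution at $t=0$, it is smooth for every $u>0$ almost surely, so that $\dot c_u^0\nabla v_u^0$ can be evaluated at $Y_u+\Phi_u^{\GFF_0}\in C(\Omega)$ for every $u\in(0,\infty)$ on a full-measure event. Once this pathwise regularity is in place and the boundedness estimate \eqref{e:cnablav-bd} is used to tame the integrability at $u=0$ uniformly in the argument, the contraction mechanism is parallel to the finite-dimensional lattice case.
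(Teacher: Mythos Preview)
Your proposal is correct and follows essentially the same route as the paper: a pathwise Picard iteration for the difference field using the uniform-in-argument bound \eqref{e:cnablav-bd} for the first step and the Lipschitz bound \eqref{e:cnablav-Lip} (or \eqref{e:Hessveps-bd}) for the contraction, together with the integrability $\int_0^\infty L_u^{-2}\theta_u|\lZ_u|\,du<\infty$ and a backward Gronwall argument. The only cosmetic differences are that for $\epsilon>0$ you iterate on the full field starting from $\Phi^{\GFF_\epsilon}$ whereas the paper iterates on the difference starting from $0$, and that you spell out the independence statement explicitly from the filtration structure, which the paper simply records as an immediate consequence of $\cF^t$-adaptedness.
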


The next theorem shows that $\Phi^{\SG_{\epsilon}}_0$ is really the sine-Gordon field
\eqref{eq:DefinitionSGMeasure} when $\epsilon>0$.

\begin{theorem} \label{thm:coupling-sg}
  Let $\epsilon>0$. Then
  $\Phi_0^{\SG_\epsilon}$ is distributed as the sine-Gordon field on $\Omega_\epsilon$
  defined in \eqref{eq:DefinitionSGMeasure}.
\end{theorem}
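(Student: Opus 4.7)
The plan is to show, for every bounded measurable $F\colon X_\epsilon\to\R$, that
\[
\E[F(\Phi_0^{\SG_\epsilon})] = \frac{1}{Z_\epsilon}\,\EE_{c_\infty^\epsilon}\!\bigl[F(\zeta)\,e^{-v_0^\epsilon(\zeta)}\bigr],
\]
where $c_\infty^\epsilon := \int_0^\infty\dot c_u^\epsilon\,du = (-\Delta^\epsilon+m^2)^{-1}$ is the GFF covariance and $Z_\epsilon = \EE_{c_\infty^\epsilon}[e^{-v_0^\epsilon}]$ is the partition function from \eqref{eq:DefinitionSGMeasure}. This identifies the law of $\Phi_0^{\SG_\epsilon}$ as $\nu^{\SG_\epsilon}$. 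The main tool is a pathwise martingale identity obtained by running the SDE \eqref{e:Phi-SG-eps-coupling} forward in the time-reversed direction; since $X_\epsilon$ is finite-dimensional and $\dot c_t^\epsilon\nabla v_t^\epsilon$ is smooth and Lipschitz by Theorem~\ref{thm:v-bd}, all stochastic calculus below is classical.

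Fix a large $T>0$ and set $\tilde\Psi_\tau := \Phi_{T-\tau}^{\SG_\epsilon}$ for $\tau\in[0,T]$. Using that $\hat W_\tau := W_T - W_{T-\tau}$ is a Brownian motion and the standard time-reversal of It\^o integrals, the SDE \eqref{e:Phi-SG-eps-coupling} rewrites as the forward SDE
\[
d\tilde\Psi_\tau = -\dot c_{T-\tau}^\epsilon\nabla v_{T-\tau}^\epsilon(\tilde\Psi_\tau)\,d\tau + q_{T-\tau}^\epsilon\,d\hat W_\tau.
\]
The key claim is that for any smooth $G_t$ satisfying the forward heat equation $\partial_t G_t = \tfrac12\Delta_{\dot c_t^\epsilon}G_t$ (equivalently $G_t(\varphi) = \EE_{c_t^\epsilon-c_{t_0}^\epsilon}[G_{t_0}(\varphi+\zeta)]$ for $t\geq t_0$), the process
\[
Q_\tau := e^{v_{T-\tau}^\epsilon(\tilde\Psi_\tau)}\,G_{T-\tau}(\tilde\Psi_\tau)
\]
is a local martingale. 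I would verify this by direct It\^o computation: expanding the $\dot c_{T-\tau}^\epsilon$-Laplacian of the product $e^{v_{T-\tau}^\epsilon}G_{T-\tau}$ and using the Polchinski equation $\partial_t v_t^\epsilon = \tfrac12\Delta_{\dot c_t^\epsilon}v_t^\epsilon - \tfrac12(\nabla v_t^\epsilon)_{\dot c_t^\epsilon}^2$ together with the heat equation for $G_t$, all drift terms cancel exactly; in particular the Polchinski nonlinearity $(\nabla v_t^\epsilon)_{\dot c_t^\epsilon}^2$ is precisely what is needed to absorb the analogous term arising from the product rule.

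Given the martingale identity, I would choose $G_0 := F\,e^{-v_0^\epsilon}$ and define $G_t := \EE_{c_t^\epsilon}[G_0(\cdot+\zeta)]$ for $t>0$, so that $G_T(\varphi) = \EE_{c_T^\epsilon}[F(\varphi+\zeta)\,e^{-v_0^\epsilon(\varphi+\zeta)}]$. By the definitions, $Q_T = F(\Phi_0^{\SG_\epsilon})$, and the martingale property gives $\E[F(\Phi_0^{\SG_\epsilon})] = \E[e^{v_T^\epsilon(\Phi_T^{\SG_\epsilon})}\,G_T(\Phi_T^{\SG_\epsilon})]$. Sending $T\to\infty$, the bound \eqref{e:nablaveps-bd} together with the exponential factor $\theta_u = e^{-m^2u/2}$ shows that the drift tail $\int_T^\infty\dot c_u^\epsilon\nabla v_u^\epsilon(\Phi_u^{\SG_\epsilon})\,du\to 0$ almost surely, and $\mathrm{Cov}(\Phi_T^{\GFF_\epsilon}) = c_\infty^\epsilon - c_T^\epsilon\to 0$, so $\Phi_T^{\SG_\epsilon}\to 0$ in probability. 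Simultaneously, the convolution representation at $\varphi=0$ gives $v_T^\epsilon(0)\to v_\infty^\epsilon(0) = -\log Z_\epsilon$, while $G_T(0)\to\EE_{c_\infty^\epsilon}[F(\zeta)\,e^{-v_0^\epsilon(\zeta)}]$. Dominated convergence (with uniform integrability from Theorem~\ref{thm:v-bd} and boundedness of $F$) then yields the claimed formula.

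The main obstacle is the It\^o verification that $Q_\tau$ is a martingale: although the cancellation is structurally forced by the fact that $-\dot c_t^\epsilon\nabla v_t^\epsilon$ and the Polchinski equation both arise from the same convolution representation $e^{-v_t^\epsilon(\varphi)} = \EE_{c_t^\epsilon}[e^{-v_0^\epsilon(\varphi+\zeta)}]$, the precise bookkeeping of gradients in It\^o's formula requires care. The passage $T\to\infty$ is a milder secondary point, essentially a uniform integrability check relying on the uniform bounds of Theorem~\ref{thm:v-bd} and on $m>0$.
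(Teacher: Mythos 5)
Your proposal is correct, and while it shares the same core ingredients as the paper's proof (the Polchinski equation, time reversal of the SDE, It\^o calculus, and a $T\to\infty$ limit), the packaging is genuinely different. The paper first establishes, via the semigroup identity $\E_{\nu_0}f = \E_{\nu_T}\PP_{0,T}f$ and the generator of the time-reversed SDE, that the finite-horizon solution $\Phi^T$ initialised at $\Phi_T^T\sim\nu_T$ has $\Phi_0^T\sim\nu^{\SG_\epsilon}$; it then needs a separate Gronwall comparison to show $\Phi_0^T\to\Phi_0^{\SG_\epsilon}$ as $T\to\infty$, using $\nu_T\to\delta_0$. You instead work directly with the infinite-horizon solution $\Phi^{\SG_\epsilon}$: your martingale $Q_\tau = e^{v_{T-\tau}}G_{T-\tau}(\tilde\Psi_\tau)$ encodes the Polchinski semigroup acting backwards along the path (note that $e^{v_t}G_t$ is precisely $\PP_{0,t}F$), and the It\^o cancellation you invoke is exactly the one that the paper outsources to the cited generator computation. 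This gives $\E[F(\Phi_0^{\SG_\epsilon})]=\E[\PP_{0,T}F(\Phi_T^{\SG_\epsilon})]$ directly, so no auxiliary process and no Gronwall comparison are needed; you only need $\Phi_T^{\SG_\epsilon}\to 0$ in probability (from the drift integrability and $c_\infty-c_T\to 0$ with $m>0$) together with the uniform boundedness of $\PP_{0,T}F$ (a consequence of Jensen's inequality applied to $v_T$ and the boundedness of $v_0^\epsilon$ on the finite lattice). One small technical point to be clean about: to apply It\^o you need $G_t$ smooth, which is automatic for $t>0$ since $G_t = \EE_{c_t^\epsilon}[G_0(\cdot+\zeta)]$ smooths, but not at $t=0$ if $F$ is merely bounded measurable. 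The standard fix is to prove the martingale identity on $[0,T-\delta]$, then send $\delta\to 0$ using continuity of $\Phi^{\SG_\epsilon}$ and dominated convergence (or take $F$ smooth first and extend by density); either way, this is a minor gap that the paper's own argument also has to address implicitly.
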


For $\epsilon=0$, the sine-Gordon field has no direct definition and is instead defined as
the $\epsilon\to 0$ limit of the regularised fields.
Thus, by showing that $\Phi^{\SG_\epsilon} \to \Phi^{\SG_0}$, the next theorem justifies
that $\Phi^{\SG_0}$ is the continuum sine-Gordon field.

We denote the difference of the GFF and the sine-Gordon field under the
coupling of Theorem~\ref{thm:coupling} by
\begin{equation}
  \Phi_t^\Delta := \Phi_t^{\SG}-\Phi_t^{\GFF}
  = - \int_t^\infty \dot c_u \nabla v_{u}(\Phi_u^{\SG}) \, du.
\end{equation}

\begin{theorem} \label{thm:Phi-limit}
  Under the couplings of Theorems~\ref{thm:coupling} 
  (with the same Brownian motion for lattice and continuum versions
  as in Section~\ref{sec:gffdecomp}),
  for any $t_0>0$, 
  \begin{equation}
    \label{e:Phi-Delta-limit}
    \E \sup_{t\geq t_0}\norm{\Phi_t^{\Delta_\epsilon}- \Phi^{\Delta_0}_t}_{L^\infty(\Omega_\epsilon)} \to 0
    \quad \text{as $\epsilon \downarrow 0$.}
  \end{equation}
  In particular, for any $t\geq 0$,
  the lattice field $\Phi^{\SG_\epsilon}_t$
  converges weakly to 
  $\Phi^{\SG_0}_t$ in $H^{-\kappa}(\Omega)$ as $\epsilon \downarrow 0$,
  when $\kappa>0$,
  where we have identified $\Phi^{\SG_\epsilon}_t$ with the element of $C^\infty(\Omega)$
  with the same Fourier coefficients for $k\in \Omega_\epsilon^*$ and
  vanishing Fourier coefficients for $k\not\in\Omega_\epsilon^*$.
\end{theorem}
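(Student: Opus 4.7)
The plan is to subtract the defining SDEs \eqref{e:Phi-SG-eps-coupling} and \eqref{e:Phi-SG-0-coupling} (under the common Brownian coupling of Section~\ref{sec:gffdecomp}) to obtain, for $\Psi^\epsilon_t := \Phi^{\Delta_\epsilon}_t - \Phi^{\Delta_0}_t$,
\begin{equation*}
\Psi^\epsilon_t = -\int_t^\infty \bigl[\dot c^\epsilon_u \nabla v^\epsilon_u(\Phi^{\SG_\epsilon}_u) - \dot c^0_u \nabla v^0_u(\Phi^{\SG_0}_u)\bigr]\, du,
\end{equation*}
and to close the resulting integral inequality by a backward Gr\"onwall argument. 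Adding and subtracting $\dot c^\epsilon_u \nabla v^\epsilon_u(\Phi^{\SG_0}_u)$ (with $\Phi^{\SG_0}_u$ restricted to $\Omega_\epsilon$ via $x \mapsto x^\epsilon$) splits the integrand into a \emph{Lipschitz piece}, bounded by \eqref{e:cnablav-Lip}, and a \emph{parameter piece}, bounded by the convergence \eqref{e:cnablav-eps}. Writing $\Phi^{\SG_\epsilon}_u - \Phi^{\SG_0}_u = \Psi^\epsilon_u + (\Phi^{\GFF_\epsilon}_u - \Phi^{\GFF_0}_u)$, the Lipschitz piece contributes
\begin{equation*}
\int_t^\infty C_u\bigl(\|\Psi^\epsilon_u\|_{L^\infty(\Omega_\epsilon)} + \|\Phi^{\GFF_\epsilon}_u - \Phi^{\GFF_0}_u\|_{L^\infty(\Omega_\epsilon)}\bigr)\, du,
\end{equation*}
with $C_u = L_u^{-2}(O_\beta(\theta_u|\lZ_u|) + \mathbf{1}_{u > t_0} O_{\beta,m,z}(\theta_u))$, which is integrable on $[t_0,\infty)$ for any $t_0 > 0$.

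Fix $t_0 > 0$. The resulting backward inequality
\begin{equation*}
\|\Psi^\epsilon_t\|_{L^\infty(\Omega_\epsilon)} \leq \alpha^\epsilon_{t_0} + \int_t^\infty C_u \|\Psi^\epsilon_u\|_{L^\infty(\Omega_\epsilon)}\, du, \qquad t \geq t_0,
\end{equation*}
where $\alpha^\epsilon_{t_0}$ collects the $\Phi^{\GFF}$-mismatch and parameter-convergence contributions integrated over $u \geq t_0$, can be iterated: since \eqref{e:cnablav-bd} provides a uniform (in $u \geq t_0$ and $\epsilon$) bound on $\|\Psi^\epsilon_u\|_{L^\infty}$, integrating the logarithmic derivative of the right-hand side gives $\sup_{t \geq t_0}\|\Psi^\epsilon_t\|_{L^\infty(\Omega_\epsilon)} \leq e^{K}\alpha^\epsilon_{t_0}$ with $K := \int_{t_0}^\infty C_u\, du < \infty$. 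It remains to show $\E\,\alpha^\epsilon_{t_0} \to 0$: the parameter contribution vanishes by dominated convergence using \eqref{e:cnablav-eps} pointwise in $u$ (applicable since $\Phi^{\SG_0}_u \in C(\Omega)$ for $u > 0$) together with the integrable dominant from \eqref{e:cnablav-bd}; the $\Phi^{\GFF}$ contribution is controlled via the Fourier coupling of Section~\ref{sec:gffdecomp}, using \eqref{e:fourier-coeff-difference-1}, the high-frequency tail $k \notin \Omega_\epsilon^*$, Sobolev embedding, and Gaussian concentration exactly as in the proof of Lemma~\ref{lem:Gauss-conv}, to obtain $\E\sup_{u \geq t_0}\|\Phi^{\GFF_\epsilon}_u - \Phi^{\GFF_0}_u\|_{L^\infty(\Omega_\epsilon)} \to 0$. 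This establishes \eqref{e:Phi-Delta-limit}.

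For the weak convergence of $\Phi^{\SG_\epsilon}_t$ to $\Phi^{\SG_0}_t$ in $H^{-\kappa}(\Omega)$, decompose $\Phi^{\SG} = \Phi^{\Delta} + \Phi^{\GFF}$. For $t > 0$, the $\Phi^{\Delta}$ piece converges strongly in $L^\infty$ (hence in $H^{-\kappa}$) by the preceding step, while $\Phi^{\GFF_\epsilon}_t \to \Phi^{\GFF_0}_t$ in $H^{-\kappa}$ follows directly from the Fourier representations \eqref{e:GFF0def} and \eqref{e:GFFepsFourier} together with \eqref{e:fourier-coef-bound-1}--\eqref{e:fourier-coeff-difference-1}. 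For $t = 0$, the additional input is the uniform-in-$\epsilon$ smallness $\|\Phi^{\Delta_\epsilon}_0 - \Phi^{\Delta_\epsilon}_{t_0}\|_{L^\infty(\Omega_\epsilon)} \leq \int_0^{t_0} L_u^{-2}\,O_\beta(\theta_u |\lZ_u^\epsilon|)\, du = o_{t_0 \downarrow 0}(1)$, which holds for $\beta < 6\pi < 8\pi$ by the scaling $L_u^{-2}|\lZ_u^\epsilon| \simeq |z| L_u^{-\beta/4\pi}$, and likewise for $\epsilon = 0$; the weak limit at $t=0$ then follows by sending $\epsilon \to 0$ first and then $t_0 \downarrow 0$.

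The main obstacle is the circularity in the Gr\"onwall step: $\Psi^\epsilon$ appears inside the integrand through the Lipschitz estimate, so one needs uniform-in-$\epsilon$ integrability of $C_u$ on $[t_0,\infty)$ together with an a priori uniform $L^\infty$ bound on $\Psi^\epsilon_u$, both supplied by Theorem~\ref{thm:v-limit}. By contrast, the pointwise drift convergence at a fixed input is an immediate consequence of \eqref{e:cnablav-eps}, and the $\Phi^{\GFF}$ coupling reduces to a Fourier computation that was essentially prepared in the proof of Lemma~\ref{lem:Gauss-conv}.
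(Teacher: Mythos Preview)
Your proposal is correct and follows essentially the same route as the paper: the same add-and-subtract splitting into a Lipschitz piece (handled by \eqref{e:cnablav-Lip}) and a parameter piece (handled by \eqref{e:cnablav-eps}), closed by backward Gr\"onwall, with the GFF mismatch and the $t=0$ extension treated just as in the paper. The only cosmetic difference is that the paper isolates the $\sup_{u\geq t_0}$ GFF convergence into a separate lemma (Lemma~\ref{lem:PhiGFF-limit}), proved via Burkholder's inequality for the backward $H^\alpha$-valued martingale rather than the fixed-time concentration argument of Lemma~\ref{lem:Gauss-conv} you point to; the martingale route is the cleaner way to get the supremum over $u$.
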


Finally,
with the estimates for $\dot c_t\nabla v_t$ from Theorem~\ref{thm:v-limit},
we obtain the following strong estimates for the 
difference of the GFF and the sine-Gordon field.

\begin{theorem} \label{thm:coupling-bd}
  The following estimates hold for $\epsilon>0$ or $\epsilon=0$ with all
  constants deterministic and independent of $\epsilon$.
  For any $t\geq 0$, the difference field $\Phi^\Delta$ satisfies the bound
  \begin{equation} \label{e:PhiDelta-bd0t}
    \max_x |\Phi^{\Delta}_0(x)-\Phi^{\Delta}_t(x)|
    \leq
    O_\beta(|\lZ_t|),
  \end{equation}
  as well as the following H\"older continuity estimates:
  \begin{align}
    \max_{x} |\Phi_t^{\Delta}(x)|+ \max_{x} |\partial \Phi_t^{\Delta}(x)| +        
    \max_{x,y}\frac{|\partial \Phi_t^\Delta(x)-\partial \Phi_t^\Delta(y)|}{|x-y|^{1-\beta/4\pi}}
    &\leq O_\beta(|z|),
    && (0<\beta<4\pi),
    \nnb
    \label{e:PhiDelta-bd}
    \max_{x} |\Phi_t^{\Delta}(x)|+ \max_{x,y}
    \frac{|\Phi_t^\Delta(x)-\Phi_t^\Delta(y)|}{|x-y| (1+|\log|x-y||)}
    &\leq O_\beta(|z|) ,
    && (\beta=4\pi),
    \\
    \max_{x} |\Phi_t^{\Delta}(x)|+ \max_{x,y}
    \frac{|\Phi_t^\Delta(x)-\Phi_t^\Delta(y)|}{|x-y|^{2-\beta/4\pi}}
    &\leq O_\beta(|z|) ,
    && (4\pi\leq \beta<6\pi).
       \nonumber
  \end{align}
  In addition, for any $t>0$
  and $k \in \N$, the following cruder bounds on all derivatives hold:
  \begin{equation} \label{e:delPhiDelta-bd}
    L_t^k\|\partial^k\Phi^\Delta_t\|_{L^\infty(\Omega)}
    \leq O_{\beta,k}(|z|),
  \end{equation}
  and in particular $\Phi^\Delta_t \in C^\infty(\Omega)$ for any $t>0$.
\end{theorem}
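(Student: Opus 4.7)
The plan is to exploit the explicit representation
\begin{equation*}
  \Phi^\Delta_t = -\int_t^\infty \dot c_u \nabla v_u(\Phi_u^{\SG})\, du
\end{equation*}
stated just above the theorem, together with the pointwise and derivative bounds \eqref{e:cnablav-bd}--\eqref{e:cnablav-Lip} for $\dot c_u \nabla v_u$ provided by Theorem~\ref{thm:v-limit}. Every assertion reduces to an elementary integration in $u$, and because the bounds in Theorem~\ref{thm:v-limit} are deterministic and uniform in $\varphi$ and in $\epsilon \geq 0$, the resulting estimates on $\Phi^\Delta$ are automatically deterministic and uniform in $\epsilon$.

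For \eqref{e:PhiDelta-bd0t}, one writes $\Phi^\Delta_0 - \Phi^\Delta_t = -\int_0^t \dot c_u \nabla v_u(\Phi_u^{\SG})\, du$, takes the $L^\infty$ norm, and inserts \eqref{e:cnablav-bd} with $k=0$; this produces an integrand controlled by $|z| L_u^{-\beta/4\pi}\theta_u$, which integrates (splitting at the mass scale $u = 1/m^2$, where $L_u$ saturates at $1/m$ and $\theta_u$ begins exponential decay) to $O_\beta(|\lZ_t|)$ for every $\beta < 8\pi$. The same computation with $t$ replaced by $\infty$ gives the uniform bound $\max_x |\Phi^\Delta_t(x)| \leq O_\beta(|z|)$ required throughout \eqref{e:PhiDelta-bd}. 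The higher-derivative bound \eqref{e:delPhiDelta-bd} is analogous: pulling $\partial^k$ under the integral and using \eqref{e:cnablav-bd} yields $L_t^k \|\partial^k \Phi^\Delta_t\|_{L^\infty} \lesssim |z| L_t^k \int_t^\infty L_u^{-k-\beta/4\pi}\theta_u\, du$, which a direct computation (again separating $u \leq 1/m^2$ from $u \geq 1/m^2$) bounds by $O_{\beta,k}(|z|)$ for $\beta < 8\pi$.

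The Hölder continuity estimates \eqref{e:PhiDelta-bd} are the core of the argument. One combines the elementary inequality
\begin{equation*}
  |\dot c_u \nabla v_u(\varphi, x) - \dot c_u \nabla v_u(\varphi, y)| \leq \min\bigl(2\|\dot c_u \nabla v_u\|_{L^\infty},\ \|\partial \dot c_u \nabla v_u\|_{L^\infty} |x-y|\bigr)
\end{equation*}
with a split of $\int_t^\infty du$ at the natural scale $u = |x-y|^2$: for $u \leq |x-y|^2$ one has $L_u \leq |x-y|$ and the first bound is used, while for $u \geq |x-y|^2$ the factor $|x-y|/L_u \leq 1$ makes the second bound preferable. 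Substituting \eqref{e:cnablav-bd} with $k=0,1$ yields integrals of the shape $|z|\int_0^{|x-y|^2} u^{-\beta/8\pi}\, du$ and $|z||x-y|\int_{|x-y|^2}^\infty u^{-1/2-\beta/8\pi}\theta_u\, du$, each of which evaluates by direct computation to $O_\beta(|z||x-y|^{2-\beta/4\pi})$ for $4\pi < \beta < 6\pi$; at $\beta = 4\pi$ the critical integral $\int du/u$ produces the logarithmic correction, and for $\beta < 4\pi$ the same splitting applied to $\partial \Phi^\Delta_t$ using \eqref{e:cnablav-bd} with $k=1,2$ produces the stronger $C^{1,1-\beta/4\pi}$ estimate. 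The one mildly delicate point is the book-keeping of constants around the threshold $t_0 = t_0(\beta,z,m)$ at which the bound in Theorem~\ref{thm:v-limit} switches between its two forms, but since $\theta_u$ supplies integrable exponential decay for $u > t_0$ the contribution from that range is bounded by a constant and causes no essential difficulty.
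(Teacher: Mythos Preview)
Your proposal is correct and follows essentially the same route as the paper: start from the integral representation $\Phi^\Delta_t = -\int_t^\infty \dot c_u \nabla v_u(\Phi_u^{\SG})\, du$, insert the uniform bounds \eqref{e:cnablav-bd} (equivalently the paper's \eqref{e:cnablav-bdx}--\eqref{e:cnablav-contx}, which package your $\min$ inequality as a single factor $1 \wedge |x-x'|/L_u$), and split the $u$-integral at $|x-y|^2$ for the H\"older estimates, working with $\partial\Phi^\Delta$ when $\beta<4\pi$ and with $\Phi^\Delta$ itself when $\beta\geq 4\pi$. The only cosmetic difference is that the paper suppresses the $t_0$ threshold in its restated bounds \eqref{e:cnablav-bdx}--\eqref{e:cnablav-contx} (absorbing the $m$-dependent constant since $m=1$), whereas you mention it explicitly.
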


In the remainder of Section~\ref{sec:coupling}, we prove the above four theorems.
Theorem~\ref{thm:coupling-intro} is then an immediate consequence.

\begin{proof}[Proof of Theorem~\ref{thm:coupling-intro}]
  By Theorem~\ref{thm:Phi-limit}, the lattice approximation to the sine-Gordon field
  conver\-ges to the solution of \eqref{e:Phi-SG-0-coupling}. The stated estimates are
  immediate from Theorem~\ref{thm:coupling-bd}.
\end{proof}

\subsection{Well-posedness of SDEs: proof of Theorem~\ref{thm:coupling}}

We first show that the SDEs
\eqref{e:Phi-SG-eps-coupling} and \eqref{e:Phi-SG-0-coupling}
that construct the sine-Gordon field are well-posed.
This is essentially the standard argument for SDEs
and uses the Lipschitz continuity of $\dot c_s \nabla v_s$ given by Theorems~\ref{thm:v-bd} and \ref{thm:v-limit}.

\begin{proof}[Proof of Theorem~\ref{thm:coupling}]
The same argument applies to $\epsilon>0$ and to $\epsilon=0$,
but for concreteness, we will discuss the case $\epsilon=0$.
For $D,E \in C([t,\infty), C(\Omega))$, let
\begin{align} \label{e:F-Picard-def}
  F_t^0(D,E) = -\int_t^\infty \dot c_s^0\nabla v_s^0(E_s+D_s) \, ds.
\end{align}
By \eqref{e:cnablav-Lip}, the following estimate holds uniformly in $E \in C([t,\infty),C(\Omega))$:
\begin{align} \label{e:F-Lip}
  \norm{F_t^0(D,E)-F_t^0(\tilde D,E)}_{L^\infty(\Omega)}
  \leq \int_t^\infty O(\theta_s|Z_s|) \norm{D_s-\tilde D_s}_{L^\infty(\Omega)} \, ds.
\end{align}
Using that $0<\beta<8\pi$, it follows from \eqref{e:ZL-def} that
\begin{align} \label{e:intZt}
  \int_{0}^t \theta_s |Z_s| \, ds
  &\leq |z| \int_{0}^{t} L_s^{-\beta/4\pi} \, ds
  \leq O_\beta(|z|L_t^{2-\beta/4\pi})
  = O_\beta(|\lZ_t|), \qquad (t \leq 1/m^2),
  \\
  \label{e:intZ}
  \int_{0}^\infty \theta_s |Z_s| \, ds
  &\leq O_\beta(|\lZ_{1/m^2}|)
  +  |Z_{1/m^2} | \int_{1/m^2}^\infty \theta_s \, ds 
  \leq O_{\beta}(|z|m^{-2+\beta/4\pi}) = O_{\beta,m}(|z|),
\end{align}
where on the second line we used that $Z_t$ is decreasing in $t$ and that $\theta_t = e^{-\frac12 m^2t}$.

Suppose first that there are two solutions $\Phi^{\SG}$ and $\tilde\Phi^{\SG}$ to \eqref{e:Phi-SG-0-coupling}
satisfying $D:=\Phi^{\SG}-\Phi^{\GFF} \in C_0([t_0,\infty),L^\infty(\Omega))$
and $\tilde D:=\tilde\Phi^{\SG}-\Phi^{\GFF} \in C_0([t_0,\infty),L^\infty(\Omega))$.
Then by \eqref{e:F-Lip},
\begin{align}
  \norm{D_t-\tilde D_t}_{L^\infty(\Omega)}
  &= \norm{F_t^0(D,\Phi^{\GFF})-F_t^0(\tilde D,\Phi^{\GFF})}_{L^{\infty}(\Omega)}
    \nnb
  &\leq \int_t^\infty O_{\beta,m}(\theta_s|Z_s|) \norm{D_s-\tilde D_s}_{L^\infty(\Omega)} \, ds
    .
\end{align}
Thus $f(t) =   \norm{D_t-\tilde D_t}_{L^\infty(\Omega)}$ is bounded with $f(t)\to 0$ as $t\to\infty$ and
it satisfies
\begin{equation}
  f(t) \leq a + \int_t^\infty O_{\beta,m}(\theta_s|Z_s|) f(s) \, ds, \qquad \text{with $a=0$.}
\end{equation}
Since $\int_t^\infty O(\theta_s |Z_s|) \, ds < \infty$ by \eqref{e:intZ}
a version of Gronwall's inequality (which is derived in the same way as the standard version)
implies that 
\begin{equation}
  f(t) \leq a \exp\pa{\int_t^\infty O_\beta(\theta_s |Z_s|) \, ds} = 0.
\end{equation}
Hence $D=\tilde D$.

That a solution as above exists also follows by the standard argument for SDEs, i.e., by Picard iteration.
Let $\|D\|_t = \sup_{s\geq t} \norm{D_s}_{L^\infty(\Omega)}$.
For any $t>0$ and $E\in C([t,\infty), C(\Omega))$ fixed,
set $D^0=0$ and $D^{n+1}=F(D^n,E)$.
Using \eqref{e:intZ} then
\begin{align}
  \|D^1\|_t = \|F^0(0)\|_t &\leq \int_t^\infty ds\, O_{\beta,m}(\theta_s|Z_s|)= O_{\beta,m}(|z|),
  \\
  \|D^{n+1}-D^n\|_t
  &\leq \int_t^\infty ds \, O_{\beta,m}(\theta_{s} |Z_{s}|) \|D^{n}-D^{n-1}\|_{s},
\end{align}
and from the elementary identity 
\begin{equation}
  \int_t^\infty ds \, g(s) \pa{\int_s^\infty ds' \, g(s')}^{k-1} =  \frac{1}{k} \pa{\int_t^\infty ds\, g(s)}^{k}
\end{equation}
applied with $g(s) = O_{\beta,m}(\theta_s|Z_s|)$, we conclude that
\begin{equation}
  \|D^{n+1}-D^n\|_t
  \leq \frac{1}{n!} \pa{O_{\beta,m}(|z|)}^n 
    .
\end{equation}

Since the right-hand side is summable,
we have that $D^n \to D$ for some $D = D^*(E) \in C_0([t,\infty),C(\Omega))$ in $\|\cdot\|_t$,
and the limit satisfies $F_s^0(D^*(E),E)=D^*(E)_s$ for $s\geq t$.
By uniqueness, $D^*(E)_t$ is consistent in $t$ and we can thus define
$D^*(E) \in C_0((0,\infty), L^\infty(\Omega))$.
In summary, $\Phi^{\SG}_t = D^*(\Phi^\GFF)_t + \Phi^{\GFF}_t$ is the desired solution for $t>0$.
Further noticing that, for $t>0$,
\begin{equation}
  D^*(\Phi^\GFF)_t = -\int_t^\infty \dot c_s \nabla v_s(\Phi_s^\SG) \, ds,
\end{equation}
we may extend the solution to $t=0$ by continuity. Indeed, by \eqref{e:intZt},
as $t\to 0$,
\begin{equation}
  \norma{ \int_0^t \dot c_s \nabla v_s(\Phi_s^\SG) \, ds}_{L^\infty(\Omega)}
  \leq
  \int_0^t O_\beta(\theta_s|Z_s|) \, ds \leq O_\beta(|\lZ_t|) \to 0,
\end{equation}
and thus the limit $\lim_{t\downarrow 0} D^*(\Phi^\GFF)_t$ exists in $C(\Omega)$.
\end{proof}

\subsection{Coupling of fields: proof of Theorem~\ref{thm:coupling-sg}}
\label{ss:CouplingFields}

We next show that \eqref{e:Phi-SG-eps-coupling} defines a
realisation of the sine-Gordon field on $\Omega_\epsilon$, 
thus proving Theorem~\ref{thm:coupling-sg}.

\begin{proof}[Proof of Theorem~\ref{thm:coupling-sg}]
Throughout this proof, $\epsilon>0$ is fixed and dropped from the notation.
We will show that the sine-Gordon field  \eqref{eq:DefinitionSGMeasure}
can be constructed from the Polchinski semigroup as
follows.
Let $\nu_t$ be the renormalised measure defined by
$\E_{\nu_t}f = e^{v_t(0)} \EE_{c_t}(e^{-v_0(\zeta)}f(\zeta))$
where $\EE_{c_t}$ is the expectation of the Gaussian measure with covariance $c_t$
and $v_t$ is as in Section~\ref{sec:vteps}.
In \cite[Section~2]{1907.12308}, it is shown that
for every bounded and smooth function $f: X_\epsilon \to \R$,
\begin{equation} \label{e:nu0nutf}
  \E_{\nu_0}f
  = \E_{\nu_t} \PP_{0,t} f,
\end{equation}
where
\begin{equation}
  \PP_{s,t} f(\phi) = e^{v_t(\phi)} \EE_{c_t-c_s}(e^{-v_s(\phi+\zeta)} f(\phi+\zeta)).
\end{equation}
By \cite[Proposition~2.1]{1907.12308}, this semigroup is characterised by its infinitesimal generator
\begin{equation}
  \LL_t = \frac12 \Delta_{\dot c_t} - (\nabla v_t,\nabla)_{\dot c_t},
\end{equation}
in the sense that
\begin{equation}
  \ddp{}{s} \PP_{s,t} f = -\PP_{s,t} \LL_s f,
  \qquad
  \ddp{}{t} \PP_{s,t} f = +\LL_t \PP_{s,t} f.
\end{equation}
Therefore, as in \cite[Remark~2.2]{1907.12308}, the semigroup $\PP_{s,t}$ has the following stochastic representation.
(As in Section~\ref{sec:vt} note that we have here rescaled $t$ by $1/\epsilon^2$
compared to \cite{1907.12308} both in the definition of the renormalised measure
and the Polchinski semigroup; see Section~\ref{sec:rescaling} for a translation of
these normalisations.)
Let $(W_t)$ be an $X_\epsilon$-valued Brownian motion normalised so that
$(W_t(x))$ is a Brownian motion with quadratic variation $t/\epsilon^2$
for each $x\in \Omega_\epsilon$, with completed forward and backward filtrations $(\cF_t)$
and $(\cF^t)$.
For any $T>0$, the time-reversed process $\tilde W^{T}_t = W_T-W_{T-t}$ is then again a Brownian motion.
Let $\tilde\Phi^T$ be the unique strong solution to the following (forward) SDE
with Lipschitz coefficients:
\begin{equation}
  d\tilde\Phi_t^T = -\dot c_{T-t} \nabla v_{T-t}(\tilde\Phi_t^T) \, dt + q_{T-t} \, d\tilde W_t^T,
  \qquad (0 \leq t \leq T).
\end{equation}
By It\^o's formula, we see that
\begin{equation}
  \ddp{}{t} \E_{\tilde\Phi_0^T=\phi} f(\tilde\Phi_t^T)
  =
  \E_{\tilde\Phi_0^T=\phi} (L_{T-t}f(\tilde\Phi_t^T)).
\end{equation}
Therefore
$t\mapsto \E_{\tilde\Phi_0^T=\phi} f(\tilde\Phi_t^T)$
defines an (inhomogeneous) Markov semigroup with generator $L_{T-t}$ and uniqueness
of such semigroups implies $\PP_{T-t,T} f(\phi) = \E_{\tilde \Phi_0^T=\phi} f(\tilde \Phi_t^T)$.
Indeed, denoting the last right-hand side
by $\tilde\PP_{T-t,T}f(\phi)$, for any bounded smooth function $f$, one has
\begin{align}
  \tilde \PP_{T-t,T}f - \PP_{T-t,T}f
  &= \int_0^t \ddp{}{s}(\tilde\PP_{T-s,T} \PP_{T-t,T-s}f) \, ds
    \nnb
  &= \int_0^t (\tilde\PP_{T-s,T} (L_{T-s}-L_{T-s})\PP_{T-t,T-s}f) \, ds = 0.
\end{align}
In particular,
\begin{equation}
  \PP_{0,T} f(\phi)
  = \E_{\tilde\Phi_0^T=\phi} f(\tilde\Phi_T^T)
  .
\end{equation}
We now reverse the time direction.
We thus set $\Phi_t^T = \tilde\Phi_{T-t}^T$ so that,
with the change of variable $s\mapsto T-s$ on the second line of the following display,
\begin{align}  \label{e:varphiT-SDE}
  \Phi_t^T
  &= \Phi_T^T - \int_0^{T-t} \dot c_{T-s} \nabla v_{T-s}(\tilde\Phi_s^T) \, ds + \int_0^{T-t} q_{T-s} \, d\tilde W_s^T
    \nnb
  &= \Phi_T^T - \int_t^T \dot c_{s} \nabla v_{s}(\Phi_s^T) \, ds + \int_t^T q_{s} \, dW_s
    .
\end{align}
Then $(\Phi^T_t)$ is adapted to the descending filtration $\cF^t$, and
\begin{equation}
  \PP_{0,T}f (\phi)
  = \E_{\Phi_T^T=\phi} f(\Phi_0^T).
\end{equation}
In particular, by \eqref{e:nu0nutf}, if $\Phi_T^T$ is distributed according to $\nu_T$
then $\Phi_0^T$ is distributed according to the sine-Gordon measure $\nu_0 = \nu^{\SG_\epsilon}$.

It therefore suffices to show that, as $T\to\infty$, the solution \eqref{e:varphiT-SDE}
with $\Phi_T^T$ distributed according to $\nu_T$ converges to
the solution $(\Phi_t)_{t\geq 0}$ to \eqref{e:Phi-SG-eps-coupling}
constructed using the same Brownian motion $W$.
This will essentially follow from the fact that $\nu_T \to \delta_0$.
Let $\Phi_\infty=0$. We start from
\begin{equation}
  \Phi_t - \Phi_t^T
  = (\Phi_\infty-\Phi_T^T)
  - \int_t^T \qa{\dot c_{s} \nabla v_{s}(\Phi_s)-\dot c_{s} \nabla v_{s}(\Phi_s^T)} \, ds  + \int_T^\infty \dot c_{s} \nabla v_{s}(\Phi_s) \, ds + \int_T^\infty q_{s} \, dW_s.
\end{equation}
Since $\epsilon>0$ is fixed, we may use any norm on $X_\epsilon$ and denote it by $\norm{\cdot}$.
The first, third, and fourth terms on the right-hand side above are independent of $t$ and
we claim that they converge to $0$ in probability as $T \to\infty$.
For the first term this follows from the weak convergence of the
measure $\nu_T$ to $\delta_0$, e.g.,
in the sense of \cite[(1.6)]{1907.12308}.
By \eqref{e:nablaveps-bd}, the third term is bounded by
\begin{equation}
  \int_T^\infty \norm{\dot c_s \nabla v_s(\Phi_s)} \, ds
  \leq \int_T^\infty O_{\beta,m}(\theta_sZ_s) \, ds \to 0.
\end{equation}
The fourth term is a Gaussian field on $\Omega_\epsilon$ with covariance matrix
$c_\infty - c_T \to 0$
as $T\to\infty$. Since $\Omega_\epsilon$ is finite,
it is a trivial consequence that 
this Gaussian field convergences to $0$.

In summary, we have shown that there is $R_T$ such that $\norm{R_T} \to 0$ in probability, and
\begin{equation}
  \Phi_t - \Phi_t^T
  = - \int_t^T \qa{\dot c_{s} \nabla v_{s}(\Phi_s)-\dot c_{s} \nabla v_{s}(\Phi_s^T)} \, ds  + R_T.
\end{equation}
By the Lipschitz continuity of $\dot c_s \nabla v_s$,
see \eqref{e:nablaveps-bd},
with $M_t=O_\beta(\theta_t |Z_t|) + O_{\beta,m,z}(\theta_t {\mathbf 1}_{t\geq t_0})$,
\begin{equation}
  \int_t^T \norm{\dot c_{s} \nabla v_{s}(\Phi_s)-\dot c_{s} \nabla v_{s}(\Phi_s^T)} \, ds
  \leq \int_t^T  M_s \norm{\Phi_s-\Phi_s^T}  \, ds
  .
\end{equation}
Thus we have shown that $D_t = \Phi_{t}-\Phi^T_{t}$ satisfies
\begin{align}
  \norm{D_t}
  \leq \norm{R_T} +  \int_{t}^T  M_s \norm{D_s}  \, ds.
\end{align}
Since $\int_0^\infty M_s \, ds < \infty$,
the same version of Gronwall's inequality as in the proof of Theorem~\ref{thm:coupling}
implies that
\begin{equation}
  \norm{D_t}
  \leq \norm{R_T}\exp\pa{\int_t^T M_s \, ds}
  \leq \norm{R_T}\exp\pa{\int_0^\infty M_s \, ds}
  \lesssim \norm{R_T}.
\end{equation}
This bound is uniform in $t$ and
hence $\sup_{t \in [0,T]} \norm{D_t} \to 0$ in probability as $T\to\infty$.
\end{proof}

\subsection{Lattice convergence: proof of Theorem~\ref{thm:Phi-limit}}

We now prove that $\Phi^{\SG_\epsilon} \to \Phi^{\SG_0}$ as $\epsilon \to 0$.
To this end, we will need the following crude estimates for the convergence of the decomposed GFF.
Let $\extepsI: L^2(\Omega_\epsilon)\hookrightarrow L^2(\Omega)$
be the standard isometric embedding, i.e.,
given $f: \Omega_\epsilon \to \R$, the function $\extepsI f \in C^\infty(\Omega)$
is defined to have
the same Fourier coefficients as $f$ for $k \in \Omega_\epsilon^*$
and to have vanishing Fourier coefficients for $k\in\Omega^*\setminus \Omega_\epsilon^*$.

\begin{lemma} \label{lem:PhiGFF-limit}
  For any $t_0>0$, and $1\leq p<\infty$,
  \begin{align}
    \label{e:PhitGFFbd}
    \E\qa{ \sup_{t\geq t_0}\norm{\partial \Phi_t^{\GFF}}_{L^\infty(\Omega)}^p} &\leq O_{p,t_0}(1)
\\
    \E\qa{ \sup_{t\geq t_0}\norm{\Phi^{\GFF_\epsilon}_t -\Phi^{\GFF_{0}}_t}_{L^\infty(\Omega_\epsilon)}^p} &\to 0 \quad \text{as $\epsilon \downarrow 0$.}
    \label{eq:sup-lattice-restriction}
  \end{align}
  Moreover, with $\extepsI$ the isometric embedding $L^2(\Omega_\epsilon)\hookrightarrow L^2(\Omega)$ as above, for any $t\geq 0$ and $\kappa>0$,
  \begin{equation} \label{e:PhiGFFlimit}
    \E\qa{ \|\extepsI \Phi_t^{\GFF_\epsilon}-\Phi_t^{\GFF_{0}}\|_{H^{-\kappa}(\Omega)}^2 } \to 0.
  \end{equation}
\end{lemma}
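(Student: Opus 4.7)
All three estimates rest on the Fourier representation of the decomposed GFF combined with (i) the pointwise bounds \eqref{e:fourier-coef-bound-1}--\eqref{e:fourier-coeff-difference-1} on $\hat q_u^\epsilon(k) - \hat q_u^0(k)$ derived in the proof of Lemma~\ref{lem:Gauss-conv}, and (ii) Doob's inequality applied mode-by-mode to the backward Gaussian martingales $t \mapsto \hat \Phi_t^{\GFF}(k) = \int_t^\infty \hat q_u(k) d\hat W_u(k)$.

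For \eqref{e:PhitGFFbd} the plan is to reduce via the Sobolev embedding $\|\partial f\|_{L^\infty(\Omega)} \leq C_\alpha \|f\|_{H^\alpha(\Omega)}$ (valid for $\alpha>2$, uniformly in $\epsilon \geq 0$) to an estimate on the Sobolev norm. Expanding $\|\Phi_t^{\GFF}\|_{H^\alpha}^2 = \sum_k (1+|k|^2)^\alpha |\hat\Phi_t(k)|^2$ and interchanging $\sup_t$ with the sum, it suffices to bound $\E \sup_{t \geq t_0} |\hat\Phi_t(k)|^2$ for each $k$. Writing $\hat\Phi_t(k) = \hat\Phi_{t_0}(k) + M_{t-t_0}^k$ for $t \geq t_0$, with $M_s^k = -\int_{t_0}^{t_0+s} \hat q_u(k) d\hat W_u(k)$ a forward $L^2$-martingale independent of $\hat\Phi_{t_0}(k)$, Doob's inequality yields
\begin{equation}
\E \sup_{t \geq t_0} |\hat\Phi_t(k)|^2 \lesssim \int_{t_0}^\infty |\hat q_u(k)|^2 du \lesssim \frac{e^{-t_0(\kappa|k|^2+m^2)}}{\kappa|k|^2+m^2},
\end{equation}
by \eqref{e:fourier-coef-bound-1eps}, and the sum in $k$ with weights $(1+|k|^2)^\alpha$ converges. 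To upgrade from $L^2$ to general $L^p$, I would apply Borell--TIS to the centred Gaussian field $(t,x) \mapsto \partial \Phi_t^{\GFF}(x)$, whose pointwise variance is uniformly bounded.

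For \eqref{eq:sup-lattice-restriction}, I would replace $\Phi_t^{\GFF_\epsilon}$ by its Fourier extension $\extepsI \Phi_t^{\GFF_\epsilon}$, use $\|\cdot\|_{L^\infty(\Omega_\epsilon)} \leq \|\cdot\|_{L^\infty(\Omega)}$, and again apply Sobolev embedding (now for $\alpha>1$). Under the coupling of Section~\ref{sec:gffdecomp}, the Fourier coefficients of $\extepsI \Phi_t^{\GFF_\epsilon} - \Phi_t^{\GFF_0}$ equal $\int_t^\infty (\hat q_u^\epsilon(k) - \hat q_u^0(k)) d\hat W_u(k)$ for $k \in \Omega_\epsilon^*$ and $-\int_t^\infty \hat q_u^0(k) d\hat W_u(k)$ for $k \notin \Omega_\epsilon^*$. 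The same Doob argument then gives
\begin{equation}
\E \sup_{t \geq t_0} \|\extepsI \Phi_t^{\GFF_\epsilon} - \Phi_t^{\GFF_0}\|_{H^\alpha(\Omega)}^2 \lesssim \sum_{k \in \Omega_\epsilon^*}(1+|k|^2)^\alpha \int_{t_0}^\infty |\hat q_u^\epsilon(k)-\hat q_u^0(k)|^2 du + \sum_{k \notin \Omega_\epsilon^*}(1+|k|^2)^\alpha \int_{t_0}^\infty |\hat q_u^0(k)|^2 du.
\end{equation}
The first sum vanishes as $\epsilon \to 0$ by dominated convergence (pointwise convergence is \eqref{e:fourier-coeff-difference-1} with $h(\epsilon k) \to 0$, dominated by an $\epsilon$-uniform integrable function from \eqref{e:fourier-coef-bound-1eps}); the second is the tail of a convergent series. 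Borell--TIS upgrades $L^2$ convergence to $L^p$ convergence.

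Finally, \eqref{e:PhiGFFlimit} is the easiest: no supremum in $t$ is needed, so Parseval gives
\begin{equation}
\E \|\extepsI \Phi_t^{\GFF_\epsilon} - \Phi_t^{\GFF_0}\|_{H^{-\kappa}(\Omega)}^2 = \sum_{k \in \Omega_\epsilon^*}(1+|k|^2)^{-\kappa}\int_t^\infty |\hat q_u^\epsilon(k) - \hat q_u^0(k)|^2 du + \sum_{k \notin \Omega_\epsilon^*}(1+|k|^2)^{-\kappa}\int_t^\infty |\hat q_u^0(k)|^2 du,
\end{equation}
and both sums vanish by the same dominated convergence argument. The only mildly delicate issue in the whole argument is the treatment of the high modes $k \notin \Omega_\epsilon^*$ that contribute to $\Phi^{\GFF_0}$ but are absent from $\Phi^{\GFF_\epsilon}$; this is controlled purely by the rapid decay of $\hat q_u^0(k)$ in $|k|$ at any positive time $u \geq t_0 > 0$, so I do not anticipate a genuine obstacle.
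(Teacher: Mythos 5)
Your approach is correct and reaches the same conclusion, but by a somewhat different route. The paper treats $(\Phi_t^{\GFF})_{t\geq t_0}$ as a single $H^\alpha(\Omega)$-valued backward martingale with deterministic quadratic variation and invokes Burkholder's inequality for Hilbert-space-valued martingales, which delivers the $L^p$ bound for all $p$ in one stroke. You instead apply Doob scalar-by-scalar to each Fourier mode to get the $L^2$ estimate, and then upgrade to general $p$ by exploiting Gaussianity through Borell--TIS applied to the space--time field. Both routes are sound; the paper's is more compact, while yours trades the vector-valued Burkholder machinery for an extra Gaussian concentration step. The same comparison applies to your treatment of \eqref{eq:sup-lattice-restriction} (where the paper splits the difference into the two backward martingales $\Phi^{\GFF_\epsilon} - \Pi_\epsilon\Phi^{\GFF_0}$ and $\Pi_\epsilon\Phi^{\GFF_0} - \Phi^{\GFF_0}$ and applies Burkholder to each), and your reduction of \eqref{e:PhiGFFlimit} to an It\^o-isometry computation is the same as the paper's.

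One small inaccuracy: you assert that the forward martingale $M_s^k = -\int_{t_0}^{t_0+s}\hat q_u(k)\,d\hat W_u(k)$ is independent of $\hat\Phi_{t_0}(k) = \int_{t_0}^{\infty}\hat q_u(k)\,d\hat W_u(k)$. It is not---both are built from the future increments of $W$ over overlapping intervals; what is true is that $\hat\Phi_{t_0}(k) = -M^k_\infty + \hat\Phi_\infty(k) = -M^k_\infty$, so they are highly correlated. This does not affect the estimate: either use the crude bound $\sup_t|\hat\Phi_t(k)|^2 \leq 2|\hat\Phi_{t_0}(k)|^2 + 2\sup_s|M_s^k|^2$ together with Doob for $M^k$, or, more cleanly, apply Doob directly to the backward martingale $(\hat\Phi_t(k))_{t\geq t_0}$, whose $L^2$ norm is maximal at its terminal time $t_0$, giving $\E\sup_{t\geq t_0}|\hat\Phi_t(k)|^2 \leq 4\,\E|\hat\Phi_{t_0}(k)|^2 = 4\int_{t_0}^\infty|\hat q_u(k)|^2\,du$.
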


\begin{proof}
Similarly to the proof of Lemma \ref{lem:Gauss-conv},
we will use the Fourier coefficients of $\Phi_t^{\GFF_\epsilon}$ and $\Phi_t^{\GFF_0}$ to prove the statements. Let $q_t^\epsilon$ and $q_t^0$ be as in \eqref{e:GFFepsdef} and \eqref{e:GFF0def}. Then as in \eqref{e:fourier-coef-bound-1}--\eqref{e:fourier-coef-bound-1eps},
\begin{equation}\label{e:fourier-coef-bound-bis}
  0\leq \hat q_t^0(k) = e^{-(t/2) (|k|^2 + m^2)} ,\quad 
                      0\leq \hat q_t^\epsilon(k) \leq e^{-(t/2) (\frac14 |k|^2 + m^2)},
\end{equation}
and, similarly, with $h$ as below \eqref{e:fourier-multipliers-difference},
\begin{align}
0 \leq \hat q_t^\epsilon - \hat q_t^0
&= e^{(t/2) (\hat \Delta^\epsilon (k) - m^2)}(1-e^{(t/2)(-\hat \Delta^0(k) + \hat \Delta^\epsilon(k))} )
\nnb
&\leq e^{-(t/2)(\frac14 |k|^2 + m^2))}(t/2) |k|^2 h(\epsilon k).
\label{e:GFFfourier-difference}
\end{align}
By the Sobolev inequality
$\|f\|_{L^\infty(\Omega_\epsilon)} \leq C_\alpha \|f\|_{H^\alpha(\Omega_\epsilon)}$,
$\alpha>d/2$,
which holds for $\epsilon= 0$ and $\epsilon>0$,
it suffices to prove     \eqref{e:PhitGFFbd}--\eqref{eq:sup-lattice-restriction}
with the norms replaced by $H^\alpha$ norms for $\alpha$ large enough.

  For any $\alpha\in \R$ and $t_0>0$,
  the field $(\Phi^{\GFF}_t)_{t\geq t_0}$ is an $H^\alpha(\Omega)$-valued
  backward martingale with respect to the backward filtration $\cF^t=\sigma(W_u \colon u\geq t)$
  and quadratic variation
  \begin{equation}
\int_t^\infty \norm{q_u}_{H^\alpha(\Omega)}^2 du 
= \sum_{k \in \Omega^*} (1+|k|^2)^\alpha \int_t^\infty |\hat q_u(k)|^2 du
\leq O_{t_0,\alpha}(1),
  \end{equation}
where we used 
\eqref{e:fourier-coef-bound-1}--\eqref{e:fourier-coef-bound-1eps}
to obtain the uniform bound on the right-hand side.

  Thus Burkholders's inequality (for Hilbert space valued martingales, see, e.g.,
  \cite{MR3463679}) implies
  \begin{equation}
    \E\qbb{\sup_{t\geq t_0}\norm{\Phi_t^{\GFF}}_{H^\alpha(\Omega)}^p}
    \leq O_{t_0,\alpha,m,p}(1).
  \end{equation}
  To prove \eqref{eq:sup-lattice-restriction} we use a similar argument together with the Fourier representations \eqref{e:GFF0def} and \eqref{e:GFFepsFourier}.
  Observe that
  \begin{equation}
  \norm{\Phi_t^{\GFF_\epsilon} - \Phi_t^{\GFF_0} }_{L^\infty(\Omega_\epsilon)}^p
  \lesssim 
   \norm{ \Phi_t^{\GFF_\epsilon} - \Pi_\epsilon \Phi_t^{\GFF_0} }_{L^\infty(\Omega_\epsilon)}^p
    +
      \norm{ \Pi_\epsilon \Phi_t^{\GFF_0}  - \Phi_t^{\GFF_0} }_{L^\infty(\Omega)}^p.
  \end{equation}
  Similarly as above the processes $(\Phi_t^{\GFF_\epsilon} -  \Pi_\epsilon\Phi_t^{\GFF_0})_{t\geq t_0}$ and $(\Pi_\epsilon \Phi_t^{\GFF_0} -  \Phi_t^{\GFF_0})_{t\geq t_0}$ are backward martingales taking values in $H^\alpha(\Omega_\epsilon)$ and $H^\alpha(\Omega)$ with quadratic variations
  \begin{align}
  [\Phi_t^{\GFF_\epsilon} -  \Pi_\epsilon\Phi_t^{\GFF_0}]_t
  &=\sum_{k\in \Omega_\epsilon^*} (1+|k|^2)^\alpha \int_t^\infty | \hat q_u^\epsilon (k) - \hat q_u^0(k)|^2 \, du 
  \\
  [\Pi_\epsilon \Phi_t^{\GFF_0} -  \Phi_t^{\GFF_0}]_t &= \sum_{k\in \Omega^* \setminus \Omega_\epsilon^*} (1+ |k|^2)^\alpha \int_t^\infty |\hat q_u^0(k)|^2 \, du.
  \end{align} 
Since both converge to $0$ as $\epsilon \to 0$, the claim follows again by Burkholder's inequality for Hilbert space valued martingales.

Finally, we prove \eqref{e:PhiGFFlimit}.
By the definition of the Sobolev norm, we have
\begin{multline}
\|\extepsI \Phi_t^{\GFF_\epsilon} -\Phi_t^{\GFF_{0}}\|_{H^{-\kappa}(\Omega)}^2 
= \sum_{k\in \Omega_\epsilon^*} (1+ |k|^2)^{-\kappa} \big|\int_t^\infty (\hat q_u^\epsilon(k) - \hat q_u^0(k)) d\hat W_u(k)   \big|^2 
\\
+ 
\sum_{k \in \Omega^*\setminus \Omega_\epsilon^*}  (1+ |k|^2)^{-\kappa} \big|\int_t^\infty \hat q_u^0(k) d\hat W_u(k)   \big|^2.
\end{multline}
Taking expectation the corresponding second sum on the right-hand side is finite for $\kappa>0$ and hence converges to $0$ as $\epsilon \to 0$.

For the first sum on the right-hand side, we may of course assume that $\kappa \leq 4$.
Then $h(\epsilon k)^2 \leq O(\epsilon |k|)^\kappa$  and the expectation of the first sum is bounded by
\begin{align}
\sum_{k\in \Omega_\epsilon^*} (1+ |k|^2)^{-\kappa} \int_0^\infty |\hat q_u^\epsilon(k) - \hat q_u^0(k)|^2 du   
&\leq  \frac{1}{4}  \sum_{k\in \Omega_\epsilon^*} (1+ |k|^2)^{-\kappa} |k|^4 h^2(\epsilon k) \int_0^\infty  e^{-t(\frac14 |k|^2 + m^2)}t^2 \, dt
\nnb
  &\lesssim \epsilon^{\kappa} \sum_{k\in \Omega_\epsilon^*} (1+ |k|^2)^{-\kappa} |k|^{4+\kappa} \frac{1}{(|k|^2 + m^2)^3} 
    \lesssim \epsilon^\kappa
\end{align}
as needed.
\end{proof}

\begin{proof}[Proof of Theorem~\ref{thm:Phi-limit}]
We will first show \eqref{e:Phi-Delta-limit}, i.e., that for any fixed $t_0>0$,
\begin{equation} \label{e:Phi-Delta-limit-bis}
  \sup_{t\geq t_0}\norm{\Phi_t^{\Delta_\epsilon} - \Phi_t^{\Delta}}_{L^\infty(\Omega_\epsilon)}
  \to 0\qquad
  \text{in $L^1(\P)$ as $\epsilon \downarrow 0$.}
\end{equation}
To this end, define $F^0$ as in \eqref{e:F-Picard-def}
and $F^\epsilon$ analogously with $\dot c_t^0\nabla v_t^0$ replaced by $\dot c_t^\epsilon \nabla v_t^\epsilon$.
Let $D = \Phi^\Delta$ be the corresponding fixed points with $E = \Phi^{\GFF}$, i.e.,
\begin{equation}
  \Phi^{\Delta_\epsilon} = F^\epsilon(\Phi^{\Delta_\epsilon},\Phi^{\GFF_\epsilon}),
  \qquad \Phi^{\Delta_0} = F^0(\Phi^{\Delta_0},\Phi^{\GFF_0}).
\end{equation}
Then (again identifying $\Phi_t^{\Delta_0}$ with its restriction to $\Omega_\epsilon$)
\begin{align} \label{e:Phi-limit-twoterms}
  \Phi^{\Delta_0}-  \Phi^{\Delta_\epsilon}
  &=F^0(\Phi^{\Delta_0}, \Phi^{\GFF_0})-F^\epsilon(\Phi^{\Delta_\epsilon}, \Phi^{\GFF_\epsilon})
    \nnb
  &= [F^\epsilon(\Phi^{\Delta_0}, \Phi^{\GFF_0})-F^\epsilon(\Phi^{\Delta_\epsilon}, \Phi^{\GFF_\epsilon})]
     +[F^0(\Phi^{\Delta_0}, \Phi^{\GFF_0})-F^{\epsilon}(\Phi^{\Delta_0},\Phi^{\GFF_0})].
\end{align}
The first term in \eqref{e:Phi-limit-twoterms} is bounded as in \eqref{e:F-Lip} by
\begin{multline}
  \norm{F_t^\epsilon(\Phi^{\Delta_0},\Phi^{\GFF_0})-F_t^\epsilon(\Phi^{\Delta_\epsilon}, \Phi^{\GFF_\epsilon})}_{L^\infty(\Omega_\epsilon)}
  \\
  \leq \int_t^\infty O(\theta_s|Z_s|) \norm{\Phi^{\Delta_0}_s-\Phi^{\Delta_\epsilon}_s}_{L^\infty(\Omega_\epsilon)} \, ds
  +
  \int_t^\infty O(\theta_s|Z_s|) \norm{\Phi_s^{\GFF_0}-\Phi_s^{\GFF_\epsilon}}_{L^\infty(\Omega_\epsilon)} \, ds
  .
\end{multline}
By Lemma~\ref{lem:PhiGFF-limit}, the second term on the right-hand side converges to $0$
  in $L^1(\P)$.
To bound the second term in \eqref{e:Phi-limit-twoterms},
write
  \begin{multline}
    \norm{F_t^0(\Phi^{\Delta_0}, \Phi^{\GFF_0})-F_t^{\epsilon}(\Phi^{\Delta_0},\Phi^{\GFF_0})}_{L^\infty(\Omega_\epsilon)}
    \\
    \leq \int_t^\infty \norm{
      \dot c_s^0\nabla v_s^0(\Phi_s^{\Delta_0}+\Phi_s^{\GFF_0})
      -
      \dot c_s^\epsilon\nabla v_s^\epsilon(\Phi_s^{\Delta_0}+\Phi_s^{\GFF_0})}_{L^\infty(\Omega_\epsilon)} \, ds.
  \end{multline}
  By \eqref{e:PhitGFFbd}, $\Phi^{\GFF_0}_t$ is smooth for all $t \geq t_0>0$,
  almost surely,
  and by Theorem~\ref{thm:coupling-bd}
  (which is presented after the current theorem but whose proof is independent of it),
  $\Phi^{\Delta}_t$ is also smooth for all $t>0$.
  Therefore, by \eqref{e:cnablav-eps}--\eqref{e:cnablav-bd} and dominated convergence,
  \begin{equation}
    \E \sup_{t \geq t_0} \norm{F_t^0(\Phi^{\Delta_0}, \Phi^{\GFF_0})-F_t^{\epsilon}(\Phi^{\Delta_0},\Phi^{\GFF_0})}_{L^\infty(\Omega_\epsilon)} \to 0 \qquad (\epsilon \to 0).
  \end{equation}
  In summary, for every $t>0$, there is a random variable $R_t^\epsilon$ converging to $0$ as $\epsilon \to 0$
  in $L^1(\P)$ such that
\begin{equation}
  \norm{\Phi^{\Delta_0}_t-\Phi^{\Delta_\epsilon}_t}_{L^\infty(\Omega_\epsilon)}
  \leq R_t^\epsilon
  + \int_t^\infty O(\theta_sZ_s) \norm{\Phi^{\Delta_0}_s-\Phi^{\Delta_\epsilon}_s}_{L^\infty(\Omega_\epsilon)}  \, ds.
\end{equation}
The same version of Gronwall's inequality as in the proof of 
Theorem~\ref{thm:coupling} thus implies \eqref{e:Phi-Delta-limit-bis}.

We now conclude the proof of the convergence $\extepsI \Phi^{\SG_\epsilon} \to \Phi^{\SG_0}$ in $H^{-\kappa}(\Omega)$.
Since $\extepsI \Phi^{\GFF_\epsilon} \to \Phi^{\GFF_0}$ in $H^{-\kappa}(\Omega)$ by Lemma~\ref{lem:PhiGFF-limit},
it is more than sufficient to show that  the following
right-hand side converges to $0$ in $L^\infty(\Omega)$ as $\epsilon \to 0$:
\begin{equation}
  (\Phi^{\SG_0}_0-\extepsI \Phi^{\SG_\epsilon}_0)
  -
  (\Phi^{\GFF_0}_0-\extepsI \Phi^{\GFF_\epsilon}_0)
  =
  (\Phi^{\Delta_0}_0-\Phi^{\Delta_0}_t)
  + \extepsI(\Phi^{\Delta_\epsilon}_t-\Phi^{\Delta_\epsilon}_0)
  +
    (\Phi^{\Delta_0}_t-\extepsI \Phi^{\Delta_\epsilon}_t).
\end{equation}
By \eqref{e:PhiDelta-bd0t},
the first and second terms on the right-hand side are bounded as
$\norm{\Phi_t^{\Delta}-\Phi_0^{\Delta}}_{L^\infty(\Omega)} = O(\lZ_t)$ which converges to $0$ as $t\to 0$.
For the third term,
since $\Phi^{\Delta_0}_t$ satisfies the H\"older continuity estimate \eqref{e:PhiDelta-bd0t},
for $\kappa<2-\beta/4\pi$ we have
\begin{equation}
  \norm{\Phi^{\Delta_0}_t-\extepsI \Phi^{\Delta_\epsilon}_t}_{L^\infty(\Omega)}
  \leq
  \norm{\Phi^{\Delta_0}_t - \Phi^{\Delta_\epsilon}_t}_{L^\infty(\Omega_\epsilon)} + O(\epsilon^{\kappa})
  .
\end{equation}
This converges to $0$ in $L^1(\P)$ as $\epsilon \to 0$ for any $t>0$
by \eqref{e:Phi-Delta-limit}.
Thus taking first $\epsilon \to 0$ and then $t\to 0$ gives the required convergence.
\end{proof}

\subsection{Estimates: proof of Theorem~\ref{thm:coupling-bd}}

To prove Theorem~\ref{thm:coupling-bd}, we start from
\begin{equation} \label{e:PhiDeltaint}
  \Phi^\Delta_t = \Phi_t^{\SG}-\Phi_t^{\GFF} = -\int_t^\infty \dot c_s\nabla v_s(\Phi_s^{\SG}) \, ds
  .
\end{equation}
As previously, we often omit the subscript $\epsilon$ which can be positive or zero.
By \eqref{e:cnablav-bd}, the following estimates for $\dot c_t\nabla v_t(\varphi)$
hold with constants uniform in $\varphi \in C(\Omega)$ and $x \in \Omega$:
\begin{align}
  \label{e:cnablav-bdx}
  L_t^{2} \abs{\partial^k\dot c_t\nabla v_t(\varphi,x)}
  &= O_{\beta,k}(\theta_t\frac{|\lZ_t|}{L_t^k}),
  \\
  \label{e:cnablav-contx}
  L_t^2 \abs{\partial^k\dot c_t\nabla v_t(\varphi,x) - \partial^k \dot c_t\nabla v_t(\varphi,x')}
  &= O_{\beta,k}(\theta_t\frac{|\lZ_t|}{L_t^k}) \pa{1\wedge \frac{|x-x'|}{L_t}}.
\end{align}
Theorem~\ref{thm:coupling-bd} now follows by integrating these bounds as follows.


\begin{proof}[Proof of \eqref{e:PhiDelta-bd0t}]
By \eqref{e:PhiDeltaint} and \eqref{e:cnablav-bdx}, for $0\leq \beta<8\pi$,
\begin{equation}
  |\Phi^\Delta_0(x)-\Phi^\Delta_t(x)| 
  \lesssim \int_0^t \theta_s |\lZ_s| \, \frac{ds}{L_s^2}
  \lesssim |z| \int_0^t s^{1-\beta/8\pi} \, \theta_s\, \frac{ds}{s}
    \lesssim |\lZ_t|.\qedhere
\end{equation}
\end{proof}

\begin{proof}[Proof of \eqref{e:PhiDelta-bd} for $\beta<4\pi$]
  By \eqref{e:PhiDeltaint} and   \eqref{e:cnablav-bdx}, for $\beta<4\pi$,
  \begin{equation}
    |\partial\Phi^\Delta_0(x)|\lesssim 
    \int_{0}^\infty \frac{|\lZ_t|}{L_t} \, \theta_t\, \frac{dt}{L_t^2}
  \lesssim 
  |z| \, \int_{0}^\infty t^{1/2-\beta/8\pi} \, \theta_t\, \frac{dt}{t}
  \lesssim |z|\,.
\end{equation}
Moreover, by \eqref{e:PhiDeltaint} and \eqref{e:cnablav-contx},
\begin{equation}
  |\partial\Phi^\Delta_0(x)-\partial\Phi^\Delta_0(x')|
  \lesssim \int_0^\infty \theta_t\frac{|\lZ_t|}{L_t} \pa{1\wedge \frac{|x-x'|}{L_t}} \frac{dt}{L_t^2}
    .
\end{equation}
We split the integral into the two contribution from $t \leq |x-x'|^2$ and $t\geq |x-x'|^2$.
For $\beta<4\pi$, the contribution from $t\leq |x-x'|^2$ is bounded by
\begin{equation}
  \int_0^{|x-x'|^2} \frac{|\lZ_t|}{L_t} \, \frac{dt}{L_t^2}
  \lesssim
  |z| \int_0^{|x-x'|^2} t^{1/2-\beta/8\pi} \, \frac{dt}{t}
  \lesssim  |z|\, |x-x'|^{1-\beta/4\pi},
\end{equation}
and the contribution from $t\geq |x-x'|^2$ likewise gives
\begin{equation}
  \int_{|x-x'|^2}^\infty \frac{|\lZ_t|}{L_t} \frac{|x|}{L_t} \, \theta_t\, \frac{dt}{t}
  \lesssim
  |z| \, |x-x'| \int_{|x|^2}^\infty t^{-\beta/8\pi} \, \theta_t\, \frac{dt}{t}
  \lesssim |z|\, |x-x'|^{1-\beta/4\pi}
  .\qedhere
\end{equation}
\end{proof}

\begin{proof}[Proof of \eqref{e:PhiDelta-bd} for $4\pi \leq \beta < 6\pi$]
By \eqref{e:PhiDeltaint} and \eqref{e:cnablav-contx},
\begin{equation}
  |\Phi^\Delta_0(x)-\Phi^\Delta_0(x')|
  \lesssim \int_0^\infty \theta_t|\lZ_t| \pa{1\wedge \frac{|x-x'|}{L_t}} \frac{dt}{L_t^2}
    .
\end{equation}
The integral is bounded in the same way as in the previous proof
by splitting the integral into the two contribution
from $t \leq |x-x'|^2$ and $t\geq |x-x'|^2$.
The contribution from $t\leq |x-x'|^2$ is bounded by
\begin{equation}
  \int_0^{|x-x'|^2} |\lZ_t| \, \frac{dt}{L_t^2}
  \lesssim
  |z| \int_0^{|x-x'|^2} t^{1-\beta/8\pi} \, \frac{dt}{t}
  \lesssim  |z|\, |x-x'|^{2-\beta/4\pi}.
\end{equation}
For the contribution due to $t > |x-x'|^2$, for $\beta>4\pi$, we also have
\begin{equation}
  \int_{|x-x'|^2}^\infty |\lZ_t| \frac{|x-x'|}{L_t} \, \theta_t\, \frac{dt}{t}
  \lesssim
  |z| \, |x-x'| \int_{|x|^2}^\infty t^{1/2-\beta/8\pi} \, \theta_t\, \frac{dt}{t}
  \lesssim |z|\, |x-x'|^{2-\beta/4\pi}
  .
\end{equation}
For $\beta = 4\pi$, similarly,
\begin{equation}
  \int_{|x-x'|^2}^\infty |Z_t| \frac{|x-x'|}{L_t} \, \theta_t\, \frac{dt}{t}
  =
  |z| \, |x-x'| \int_{|x-x'|^2}^\infty \theta_t\, \frac{dt}{t}
  \lesssim |z|\, |x-x'| \, |\log |x-x'||
  .
\end{equation}
The claim follows by taking the sum over the $t\leq |x-x'|^2$ and $t>|x-x'|^2$ contributions.
\end{proof}

\begin{proof}[Proof of \eqref{e:delPhiDelta-bd}]
By \eqref{e:PhiDeltaint} and \eqref{e:cnablav-bdx}, for $0\leq \beta<8\pi$,
\begin{equation}
  \|\partial^k\Phi^\Delta_t\|_{L^\infty(\Omega)}
  \lesssim
  \int_t^\infty   \theta_s\frac{|\lZ_s|}{L_s^k}\, \frac{ds}{L_s^2}
  \leq
  \frac{1}{L_t^k}\int_t^\infty   \theta_s|\lZ_s|\, \frac{ds}{L_s^2}
  \lesssim \frac{|z|}{L_t^k}.\qedhere
\end{equation}
\end{proof}

\newcommand{\tPhisSG}{\tilde \Phi_s^\SG}
\newcommand{\PhisKSG}{\Phi_s}
\newcommand{\PhisKGFF}{\eta}  
\newcommand{\maxPhisKGFF}{\eta^*}
\newcommand{\maxPhisKSG}{\Phi_s^*}
\newcommand{\restmaxPhisKSG}{\Psi^*} 
\newcommand{\coarsesKGauss}{\tilde X_{s,K}^c}
\newcommand{\coarseSG}{Z_{s,K}^c}  
\newcommand{\coarseGauss}{X_{s,K}^c}  
\newcommand{\limcoarseSG}{Z_{s,K}^{c,0}}  
\newcommand{\limcoarseSGatMax}{S_{s,K}^i}  
\newcommand{\scaling}{\sqrt{8\pi}}
\newcommand{\scalinglog}{\frac{2}{\sqrt{2\pi}}}
\newcommand{\fineGFF}{X_K^f} 
\newcommand{\distgrid}{\delta} 
\newcommand{\bnuKdel}{\nu_{K,\epsilon}}
\newcommand{\nuKdel}{\nu_K}
\newcommand{\DiscTorusGrid}{\Omega_\epsilon^{\delta}}
\newcommand{\sqrtpi}{} 

\section{Convergence in law of the maximum} 
\label{sec:maximum}

In this section, we show the convergence of the maximum of the regularised
sine-Gordon field, Theorem~\ref{thm:convergence-to-Gumbel}.
To this end, we start from the multiscale coupling of the sine-Gordon field with the GFF
given by Theorem~\ref{thm:coupling}.
In the first step, we show that the non-Gaussian part of the sine-Gordon field
may be removed at small scales.
We then connect our coupling to the decomposition
of the GFF used in \cite{MR3433630}, by decomposing the Gaussian part of our
field once more into
a `fine' and a `coarse' field analogous to those in \cite{MR3433630}.
In fact, we will choose exactly the same Gaussian fine field as in \cite{MR3433630}
so that the key results from that reference immediately transfer to our setting.
Together with regularity estimates for our Gaussian coarse field
analogous to those in \cite{MR3433630}
and the regularity estimates from Theorem~\ref{thm:coupling-bd} for the non-Gaussian
part of the sine-Gordon field,
the convergence then follows along similar lines as in \cite{MR3433630}.

Throughout this section, we always work on a probability space
  on which the fields $\Phi^{\SG_\epsilon}_t$ and $\Phi^{\GFF_\epsilon}_t$ are constructed simultaneously for all $\epsilon \geq 0$ and $t\geq 0$
  as in Section~\ref{sec:gffdecomp}.
As before, we will often drop the index $\epsilon$ from the notation in
$\Phi^{\SG_\epsilon}$ and $\Phi^{\GFF_\epsilon}$.
While all estimates are uniform in $\epsilon$,
throughout this section, 
we will always assume that $\epsilon>0$ is strictly positive (unless emphasised otherwise)
 because we are studying the limiting behaviour of the maximum which is
 singular as $\epsilon\to 0$.
The $\epsilon$-dependent part of the mean of the maximum will be denoted
\begin{equation}
  \label{e:definition-m}
  m_\epsilon = \frac{1}{\sqrt{2\pi}}(2\log \frac{1}{\epsilon} - \frac34 \log \log \frac{1}{\epsilon}).
\end{equation}

Some statements below refer to the L\'evy distance $d$ on the set of probability measures on $\R$.
This is a metric for the topology of weak convergence,
defined for any two probability measures $\nu_1,\nu_2$ on $\R$ by
\begin{equation} \label{e:Levy}
d(\nu_1,\nu_2)= \min \{\kappa >0 \colon \nu_1 (B) \leq \nu_2(B^\kappa)+\kappa \text{~for all open sets~} B \}
\end{equation}
where $B^\kappa=\{y\in \R \colon \dist (y,B) < \kappa\}$.
We will use the convention that when a random variable appears in the argument of $d$, we refer to its distribution on $\R$.
Note that if two random variables $X$ and $Y$ can be coupled with $|X-Y|\leq \kappa$ with probability $1-\kappa$ then $d(X,Y) \leq \kappa$.

\subsection{Regularisation of the non-Gaussian part}
\label{ssec:fine-coarse}

From Theorem \ref{thm:coupling} and \eqref{e:GFFepsdef} it follows that
\begin{equation}
\Phi_0^{\SG} = \Phi_s^{\SG} + (\Phi_0^{\GFF}-\Phi_s^{\GFF}) + R_s
\label{e:phi-SG-decomp}
\end{equation}
where 
\begin{equation}
R_s = - \int_0^s \dot c_t \nabla v_t(\Phi^{\SG}_t) \, dt.
\end{equation}
Note that the first two terms in \eqref{e:phi-SG-decomp} are independent,
by Theorem~\ref{thm:coupling}, and
that $\max_{\Omega_\epsilon}|R_s|$ is of order $|\lZ_s|$ by \eqref{e:PhiDelta-bd0t}
and thus tends to $0$ as $s\to 0$ (recall from \eqref{e:ZL-def} that $|\lZ_s| \leq |z|s^{1-\beta/8\pi}$).
Denoting the main contribution of the field by
\begin{equation}
\tPhisSG \equiv \Phi_s^{\SG} + (\Phi_0^{\GFF}-\Phi_s^{\GFF}),
\label{e:Phit-s}
\end{equation}
we can rewrite \eqref{e:phi-SG-decomp} as
\begin{equation}
\Phi_0^\SG=\tPhisSG + R_s.
\label{e:phi-SG-phi-tilde}
\end{equation}

By the following lemma, it suffices to prove Theorem~\ref{thm:convergence-to-Gumbel} with $\Phi^{\SG}$ replaced by $\tPhisSG$ with $s>0$.

\begin{lemma} \label{lem:reduction-to-phi-tilde}
  Assume that the limiting law $\tilde \mu_s$ of $\max_{\Omega_\epsilon} \sqrtpi \tPhisSG -m_\epsilon$ as $\epsilon\to 0$ exists for every $s>0$,
  and that there are positive random variables $\ZDM_s$ (on the above common probability space) such that
  \begin{equation}    \label{e:mu-s-infty-bis}
    \tilde \mu_s((-\infty,x])=\E[e^{-\alpha^* \ZDM_s e^{-\scaling x}}],
  \end{equation}
  for some constant $\alpha^*>0$.
  Then the law of $\max_{\Omega_\epsilon} \sqrtpi \Phi_0^\SG-m_\epsilon$ converges weakly to
  some probability measure $\mu_0$ as $\epsilon \to 0$ and $\tilde \mu_s \rightharpoonup \mu_0$ weakly as $s\to 0$.
  Moreover,  there is a positive random variable
  $\ZDM^\SG$ such that
  \begin{equation}  \label{e:mu0-ZSG}
    \mu_0((-\infty,x])=\E[e^{-\alpha^* \ZDM^{\SG} e^{-\scaling x}}].
  \end{equation}
\end{lemma}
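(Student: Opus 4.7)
The plan is to exploit the pathwise bound $\|R_s\|_{L^\infty(\Omega_\epsilon)} \leq O_\beta(|\lZ_s|)$ from \eqref{e:PhiDelta-bd0t}, which is deterministic, uniform in $\epsilon>0$, and satisfies $|\lZ_s| \to 0$ as $s\downarrow 0$. Write $\eta_s = O_\beta(|\lZ_s|)$ for this bound; note that $|\lZ_t|$ is monotone in $t$ for $\beta<8\pi$, so for $0<s'\leq s$ one automatically has $\eta_{s'}\leq \eta_s$.

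The first step is to establish a Cauchy property for the family $\{\tilde\mu_s\}_{s>0}$ in the L\'evy metric. For $0<s'\leq s$, the identity $\tPhisSG - \tilde\Phi_{s'}^\SG = R_{s'} - R_s$ gives the pathwise bound $\|\tPhisSG - \tilde\Phi_{s'}^\SG\|_{L^\infty(\Omega_\epsilon)} \leq 2\eta_s$, so the two centred maxima $\max \sqrt{8\pi}\,\tPhisSG - m_\epsilon$ and $\max \sqrt{8\pi}\,\tilde\Phi_{s'}^\SG - m_\epsilon$ differ pointwise by at most $2\sqrt{8\pi}\,\eta_s$. Hence $d(\tilde\mu_{s,\epsilon},\tilde\mu_{s',\epsilon}) \leq 2\sqrt{8\pi}\,\eta_s$, where $\tilde\mu_{s,\epsilon}$ is the law of the first maximum at lattice spacing $\epsilon$. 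Passing $\epsilon\downarrow 0$ and using the assumed weak convergence $\tilde\mu_{s,\epsilon}\rightharpoonup \tilde\mu_s$ together with lower semicontinuity of the L\'evy metric, I get $d(\tilde\mu_s,\tilde\mu_{s'})\leq 2\sqrt{8\pi}\,\eta_s$, so the family is Cauchy in L\'evy distance as $s\downarrow 0$ and converges to some probability measure $\mu_0$.

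The second step is to relate $\mu_0$ to the limit of $\mu_\epsilon$, the law of $\max \sqrt{8\pi}\,\Phi_0^\SG - m_\epsilon$. The same pathwise bound $\|R_s\|_\infty\leq \eta_s$ gives $d(\mu_\epsilon,\tilde\mu_{s,\epsilon})\leq \sqrt{8\pi}\,\eta_s$. A standard three-step estimate then yields $\mu_\epsilon\rightharpoonup \mu_0$: given $\delta>0$, first pick $s$ small enough that $\sqrt{8\pi}\,\eta_s + d(\tilde\mu_s,\mu_0) < \delta/2$, then pick $\epsilon$ small enough that $d(\tilde\mu_{s,\epsilon},\tilde\mu_s)<\delta/2$, and combine via the triangle inequality. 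This simultaneously gives the weak convergence $\tilde\mu_s\rightharpoonup \mu_0$ as $s\downarrow 0$ stated in the lemma.

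The final step is to identify the form of $\mu_0$ using Laplace transforms. Set $G_s(\lambda) := \E[e^{-\lambda \ZDM_s}]$ for $\lambda\geq 0$, so that by \eqref{e:mu-s-infty-bis} one has $\tilde\mu_s((-\infty,x]) = G_s(\alpha^* e^{-\sqrt{8\pi}x})$. Weak convergence $\tilde\mu_s\rightharpoonup \mu_0$ together with monotonicity of $G_s(\cdot)$ forces pointwise convergence $G_s(\lambda)\to G_0(\lambda)$ for every $\lambda>0$, where $G_0$ is the function built from $\mu_0$ by the same formula. As a pointwise limit of Laplace transforms of positive random variables, $G_0$ is completely monotone; since $\mu_0$ is a genuine probability measure, $G_0(0^+) = \mu_0(\R) = 1$ and $G_0(\infty)=\mu_0(\emptyset)=0$. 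The continuity theorem for Laplace transforms then produces an a.s.\ positive random variable $\ZDM^\SG$ with $G_0(\lambda) = \E[e^{-\lambda \ZDM^\SG}]$, which is exactly \eqref{e:mu0-ZSG}. The entire argument is soft; the one indispensable analytic input is the deterministic, $\epsilon$-uniform bound on $R_s$ from Theorem~\ref{thm:coupling-bd}, without which one could not interchange the limits $\epsilon\to 0$ and $s\to 0$.
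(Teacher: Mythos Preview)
Your argument is correct and follows the same basic route as the paper: both exploit the deterministic, $\epsilon$-uniform bound $\|R_s\|_{L^\infty}\le O_\beta(|\lZ_s|)$ to compare the centred maxima of $\Phi_0^{\SG}$ and $\tPhisSG$ in L\'evy distance. You establish that $(\tilde\mu_s)_{s>0}$ is Cauchy and call its limit $\mu_0$, then deduce $\mu_\epsilon\to\mu_0$; the paper instead extracts a subsequential weak limit of $(\mu_\epsilon)$ first and proves its uniqueness via the same comparison. This is a harmless reordering.

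The genuine difference is in identifying $\mu_0$ as a randomly shifted Gumbel. The paper proves tightness of $(\ZDM_s)_{s>0}$ by a separate contradiction argument comparing $\tilde\mu_s$ to the known limiting law $\mu_0^{\GFF}$ of the GFF maximum (using that $\tPhisSG-\Phi_0^{\GFF}=\Phi_s^\Delta$ is uniformly bounded). You bypass this entirely: having already shown that $\mu_0$ is a probability measure on $\R$, the identity $G_0(\lambda)=F_{\mu_0}(x_\lambda)$ immediately gives $G_0(0^+)=\mu_0(\R)=1$ and $G_0(\infty)=0$, which is precisely what the continuity theorem needs to produce a genuine, a.s.\ positive limit $\ZDM^{\SG}$. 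This is more self-contained and avoids invoking the GFF result.

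One point to tighten: weak convergence $\tilde\mu_s\rightharpoonup\mu_0$ only yields $G_s(\lambda)\to G_0(\lambda)$ at the (co-countably many) $\lambda$ corresponding to continuity points of $F_{\mu_0}$, not literally for every $\lambda>0$ as you assert. The gap is easily closed: any vague subsequential limit $\nu$ of the laws of $\ZDM_s$ has continuous Laplace transform $\hat\nu$ agreeing with $G_0$ on that dense set, which forces $F_{\mu_0}$ to be continuous and hence $G_0=\hat\nu$ everywhere.
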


\begin{proof}
By \eqref{e:phi-SG-phi-tilde}, on the underlying coupled probability space we have
\begin{equation}
\max_{\Omega_\epsilon}\sqrtpi \tPhisSG -m_\epsilon=\max_{\Omega_\epsilon}\sqrtpi \Phi_0^{\SG}-m_\epsilon + O(\max_{\Omega_\epsilon} |R_s|)
\label{e:phi-sg-phi-tilde-s-max}
\end{equation}
with $\max_{\Omega_\epsilon} |R_s| = O(|\lZ_s|) \to 0$ as $s\to 0$.
Since $(\max_{\Omega_\epsilon}\sqrtpi \tPhisSG -m_\epsilon)_\epsilon$ is tight by assumption,
it follows that also $(\max_{\Omega_\epsilon}\sqrtpi \Phi_0^{\SG}-m_\epsilon)_\epsilon$ is tight.
By the latter tightness, there is a subsequence $\epsilon \to 0$ such that the law of $(\max_{\Omega_{\epsilon}}\sqrtpi\Phi_0^{\SG} -m_{\epsilon})_{\epsilon}$ converges to $\mu_0$. Thus along this subsequence,
\begin{align}
d(\tilde\mu_s,\mu_0)
&\leq \limsup_{\epsilon = \epsilon_k \to 0} \Big[ d(\max_{\Omega_\epsilon}\sqrtpi\tPhisSG -m_\epsilon,\tilde\mu_s)
+ d(\max_{\Omega_\epsilon}\sqrtpi\Phi_0^{\SG}-m_\epsilon,\mu_0) \nnb
  &\qquad \qquad+ d(\max_{\Omega_\epsilon}\sqrtpi\Phi_0^{\SG}-m_\epsilon,\max_{\Omega_\epsilon}\sqrtpi\tPhisSG-m_\epsilon) \Big]
    \leq O(|\lZ_s|),
\end{align}
where we recall that $d$ denotes the L\'evy distance.
In particular, by taking $s\to 0$, it follows that $\mu_0$ is unique,
and therefore now along any sequence $s \to 0$,
\begin{equation}
\label{e:laplace-transforms-s-to-0}
\tilde \mu_s((-\infty,x]) \to \mu_0((-\infty,x]).
\end{equation}

Next, we prove that $(\ZDM_s)_s$ is tight. Denote by $\mu_0^\GFF$ the limiting law of the centred maximum of $\Phi_0^\GFF$
which exists by \cite{MR3433630,MR3729618}.
Since $\tPhisSG -\Phi_0^\GFF = \Phi_s^\Delta$ is uniformly bounded, it follows from \eqref{e:mu-s-infty-bis} that
\begin{equation}
\label{e:mu-GFF-estimate}
\mu_0^\GFF((-\infty,x-C]) \leq  \E[e^{-\alpha^* \ZDM_s e^{-\scaling x}}] 
\end{equation}
for some constant $C>0$ which is independent of $s$. Assume $(\ZDM_s)_s$ is not tight. Then,
\begin{equation}
\exists \kappa >0 \colon \forall M>0 \colon \exists s_M \colon \P(\ZDM_{s_M} >M) >\kappa.
\end{equation}
It follows that 
\begin{equation}
\E[e^{-\alpha^* \ZDM_{s_M} e^{-\scaling x}}]
\leq e^{-\alpha^* M e^{-\scaling x}} + \P(\ZDM_{s_M} \leq M) \leq  e^{-\alpha^* M e^{-\scaling x}} + (1-\kappa).
\end{equation}
Sending $M\to \infty$, \eqref{e:mu-GFF-estimate} implies that 
\begin{equation}
\mu_0^\GFF((-\infty, x-C]) \leq 1-\kappa
\end{equation}
which is a contradiction when sending $x\to \infty$.

This means that $(\ZDM_s)_s$ is tight and by \eqref{e:laplace-transforms-s-to-0} the Laplace transforms converge as $s\to 0$.
Thus, L\'evy's continuity theorem for Laplace transforms implies that there is a random variable $\ZDM^\SG$ such that $\ZDM_s\to \ZDM^\SG$ and
\begin{equation}
\E[e^{-\alpha^* \ZDM_s e^{-\scaling x}}] \to \E[e^{-\alpha^* \ZDM^\SG e^{-\scaling x}}]
\end{equation}
as $s\to 0$, proving \eqref{e:thm-max-convergence}.
That $\ZDM^\SG>0$ a.s.\ follows from the tightness of $\max_{\Omega_\epsilon} \Phi^{\SG}_0-m_\epsilon$.
\end{proof}

\subsection{Approximation of small scale field}

As a next step we decompose our Gaussian small scale field $\Phi_s^{\GFF}-\Phi_0^{\GFF}$
(recall the convention that we omit the index $\epsilon>0$)
further in order to connect to the setup of \cite{MR3433630}.
First observe that its covariance can be written as
\begin{equation}
  \int_0^s e^{\Delta t-m^2 t} \, dt = (-\Delta+m^2+1/s)^{-1} + g_s(-\Delta+m^2)
\end{equation}
where
\begin{equation} \label{e:function-g}
  g_s(\lambda) = \frac{1}{\lambda}(1-e^{-\lambda s}) - \frac{1}{\lambda + 1/s} \geq 0.
\end{equation}

Next, we divide $\Omega$ along the grid $\Gamma$,
which is a union of horizontal and vertical lines intersecting at the vertices $\frac{1}{K}\Z^2 \cap \Omega$,
into boxes $V_i \subset \Omega$, $i=1,\dots, K^2$ of (macroscopic) side length $1/K$
such that $\Omega = \cup_{i=1}^{K^2} V_i \cup \Gamma$.
We will always assume that $1/K$ is a multiple of $\epsilon$ such that $\Gamma$ can be
regarded as a subset of $\Omega_\epsilon$ and
$\Omega_\epsilon = \cup_{i=1}^{K^2} (V_i \cap \Omega_\epsilon) \cup \Gamma$.
When no confusion can arise, 
we use the notation $V_i$ for both the subset of $\Omega$ and the corresponding lattice version as subset of $\Omega_\epsilon$. 

Now, let $\Delta_\Gamma$ be the Laplacian on $\Omega$ with Dirichlet boundary conditions on $\Gamma$,
and let $\Delta$ be the Laplacian with periodic boundary conditions on $\Omega$.
The domain of $\Delta$ is the space of $1$-periodic functions,
and that of $\Delta_\Gamma$ is the smaller space of $1$-periodic functions vanishing on $\Gamma$.
This implies that $-\Delta_\Gamma \geq -\Delta$ and $(-\Delta+m^2 + 1/s)^{-1} \geq (-\Delta_\Gamma+m^2 + 1/s)^{-1}$ as quadratic form inequalities.
(The form inequality for the Green function is equivalent to the use of the Markov property in \cite{MR3433630}.)

Hence we can decompose $\Phi^\GFF_0-\Phi^\GFF_s$ in distribution as
\begin{equation}
  \Phi^{\GFF}_0-\Phi^{\GFF}_s  \stackrel{d}{=} \tilde X_{s,K}^f + \coarsesKGauss + X^h_s,
  \label{e:decomp-gaussian-part}
\end{equation}
where the  three fields on the right-hand side are independent Gaussian fields with covariances
\begin{align}
  \label{e:Xf}
  \cov(\tilde X_{s,K}^f) &= (-\Delta_\Gamma + m^2 + 1/s)^{-1}  \\
    \label{e:Xc}
  \cov(\coarsesKGauss) &=  (-\Delta+m^2 + 1/s)^{-1} - (-\Delta_\Gamma+m^2 + 1/s)^{-1}
  \\
  \label{e:Xh}
  \cov(X^h_s) &= g_s(-\Delta+m^2)
\end{align}
where the function $g_s$ is as in \eqref{e:function-g}. 

By definition, when 
restricted to any of the boxes $V_i$, the Laplacian $\Delta_\Gamma$ coincides with the
Dirichlet Laplacian on $V_i$, and 
$(-\Delta_\Gamma + m^2 + 1/s)^{-1} (x,y)=0$ for $x\in V_i$ and $y\in V_j$ with $i\neq j$.
Therefore the Gaussian fields $\tilde X_{s,K}^f|_{V_i}$ and $\tilde X_{s,K}^f|_{V_j}$ are independent
for $i\neq j$.
Moreover, $\tilde X_{s,K}^f$ is essentially a  massless GFF with Dirichlet boundary conditions on~$\Gamma$.
Indeed, we will later choose $1/K^2 \ll (m^{2}+1/s)^{-1}$,
and the mass term $m^2 + 1/s$
will then be irrelevant.
To connect directly to the setup of \cite{MR3433630},
it is convenient to replace $\tilde X_{s,K}^f$ by a field $\fineGFF$
which is in fact exactly a massless GFF with Dirichlet boundary conditions on $\Gamma$, i.e.,
a centred Gaussian field with covariance
\begin{equation}
  \label{e:XKf}
  \cov(\fineGFF) 
  = (-\Delta_\Gamma)^{-1}.
\end{equation}
The next two simple lemmas show that the distribution of the maximum of $\Phi_0^{\GFF}-\Phi_s^{\GFF}$
is indeed well approximated by that of $\fineGFF+\tilde X^c_{s,K}+ X^h_s$
and that $X_s^h$ is H\"older continuous.

\begin{lemma}
  \label{lem:G-concentration}
  Let $G$ be a centred Gaussian field with covariance 
  $(-\Delta_\Gamma)^{-1} - (-\Delta_\Gamma + m^2 + 1/s)^{-1}$.
  Then for $K^2 \geq m^{2}+ 1/s$,
  \begin{equation}
    \P\pB{ \max_{\Omega_\epsilon} G > t}
    \lesssim K^2 \exp\left(-\frac{ct^2}{\sigma^2}\right),
      \qquad
      \sigma^2 = \frac{m^2+1/s}{K^2} + e^{-cK^2s}.
    \end{equation}
   In particular, $\P(\max G > u) \to 0$ for any $u>0$ as $K\to\infty$ with $m$ and $s$ fixed. 
\end{lemma}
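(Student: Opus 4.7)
The plan is to establish a pointwise variance bound $\var G(x)\lesssim \sigma^2$ on all of $\Omega_\epsilon$ and then apply Gaussian concentration with a union bound over the $K^2$ boxes $V_i$. Since both $(-\Delta_\Gamma)^{-1}$ and $(-\Delta_\Gamma+m^2+1/s)^{-1}$ vanish between distinct boxes (Dirichlet conditions on $\Gamma$ decouple them), the restriction of $G$ to different boxes is independent, so it suffices to bound $\max_{V_i\cap\Omega_\epsilon} G$ for a single box and then take a union bound.

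For the variance I would use the heat-kernel representation
\begin{equation}
  \var G(x) = \int_0^\infty p_t^\Gamma(x,x)\,\bigl(1-e^{-t(m^2+1/s)}\bigr)\,dt,
\end{equation}
where $p_t^\Gamma$ is the Dirichlet heat kernel on $V_i$, satisfying both the short-time bound $p_t^\Gamma(x,x)\leq 1/(4\pi t)$ and the long-time Dirichlet decay $p_t^\Gamma(x,x)\leq CK^2 e^{-cK^2 t}$. The natural place to split the $t$-integral is at $t=s$. On $[s,\infty)$ the bound $1-e^{-t(m^2+1/s)}\leq 1$ combined with the Dirichlet decay gives $\int_s^\infty K^2 e^{-cK^2 t}\,dt\lesssim e^{-cK^2 s}$, which is a meaningful bound because $K^2 s\geq 1$ by the hypothesis $K^2\geq m^2+1/s$. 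On $[0,s]$ the linearisation $1-e^{-t(m^2+1/s)}\leq t(m^2+1/s)$, combined with the short-time bound for $t\leq 1/K^2$ and the Dirichlet decay for $t\in[1/K^2,s]$, gives a contribution of order $(m^2+1/s)/K^2$. Summing, $\var G(x)\lesssim \sigma^2$.

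To pass from the pointwise variance to the supremum I would invoke Borell-TIS. A parallel heat-kernel computation bounds the increment variance by $\var(G(x)-G(y))\lesssim (m^2+1/s)|x-y|^2(1+\log(1/|x-y|))$, so Dudley's entropy integral yields $\E[\max_{V_i\cap\Omega_\epsilon} G]\lesssim \sigma\sqrt{\log K}$; Borell-TIS then gives $\P(\max_{V_i\cap\Omega_\epsilon} G>t)\lesssim \exp(-ct^2/\sigma^2)$ for $t\gtrsim\sigma\sqrt{\log K}$, while in the complementary range the asserted bound $K^2\exp(-ct^2/\sigma^2)$ already exceeds $1$ and there is nothing to prove. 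A union bound over the $K^2$ boxes concludes the argument, and the final assertion $\P(\max G>u)\to 0$ as $K\to\infty$ with $m,s,u$ fixed is immediate from $\sigma^2\to 0$. The main subtlety is obtaining $e^{-cK^2 s}$ rather than the weaker $e^{-cK^2/(m^2+1/s)}$ that results from treating $1-e^{-t(m^2+1/s)}$ as a single factor: the sharper bound is what forces the split at $t=s$, where the Dirichlet decay alone suffices and one does not need to exploit the mass at all.
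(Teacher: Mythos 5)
Your proposal is correct and follows the same route as the paper's proof: a heat-kernel variance bound, an increment bound, a chaining estimate for the expected box maximum, Borell--TIS, and a union bound over the $K^2$ boxes. The split at $t=s$ you identify is indeed what produces the $e^{-cK^2 s}$ term, and the variance computation checks out. One small quantitative remark: your Dudley estimate $\E[\max_{V_i} G]\lesssim\sigma\sqrt{\log K}$ is actually weaker than necessary -- the paper uses Fernique applied to $G/\sigma$ with the increment bound $\var(G(x)-G(y))\lesssim\sigma^2 K|x-y|$, and because the box has side $1/K$ the entropy integral over $V_i$ contains no $\log K$, giving $\E[\max_{V_i} G]\lesssim\sigma$ directly. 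Your increment bound $(m^2+1/s)|x-y|^2(1+\log(1/|x-y|))$ is comparable (since $\sigma^2 K^2\asymp m^2+1/s$ under $K^2\geq m^2+1/s$), and a careful entropy integral from it would also give $O(\sigma)$ without the spurious $\sqrt{\log K}$. That said, you correctly observe that the weaker bound costs nothing: for $t\lesssim\sigma\sqrt{\log K}$ the target $K^2 e^{-ct^2/\sigma^2}$ exceeds $1$ once $c$ is small enough, so the argument closes either way.
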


\begin{proof}
    Let $C_G$ be the covariance of $G$.
  We first verify the following estimates:
  \begin{align}
    \label{e:cov-est-G}
    C_G(x,x)+C_G(y,y)-2C_G(x,y)
    &\lesssim \sigma^2 (K|x-y|)
    \\
    C_G(x,x)
    &\lesssim \sigma^2
      .
      \label{e:G-variance}
  \end{align}
  These follow from standard estimates on the Dirichlet heat kernel given in
  Lemma~\ref{lem:ptGamma}.
  Indeed, to see \eqref{e:G-variance},
  we use $e^{t\Delta_\Gamma}(x,y) \lesssim (1/t)e^{-ctK^2}$ which holds by \eqref{e:pGammabd} with $\alpha=0$ to get
  \begin{equation}
    C_G(x,x)
    = \int_0^\infty (1-e^{-(m^2+1/s)t})p^\Gamma_{t}(x,x) \, dt
    \lesssim  \int_0^\infty \frac{1-e^{-(m^2+1/s)t}}{t} e^{-K^2 t} \, dt
    \lesssim \sigma^2
    .
  \end{equation}

  To see \eqref{e:cov-est-G}, we similarly apply \eqref{e:pGammabd} with $\alpha=1$ to see
  that the left-hand side of     \eqref{e:cov-est-G} is 
    \begin{multline}
      \int_0^\infty (1-e^{-(m^2+1/s)t}) (p_t^\Gamma(x,x)+p_t^\Gamma(y,y)-2p_t^\Gamma(x,y)) \, dt
      \\
      \lesssim |x-y|
        \int_0^\infty \frac{1-e^{-(m^2+1/s)t}}{t} t^{-1/2} e^{-cK^2t} \, dt
        \lesssim \sigma^2 (K |x-y|).
  \end{multline}

  As a consequence of \eqref{e:cov-est-G} and Fernique's criterion
  (see, for example, \cite[Lemma~3.5]{MR3433630} applied to $G/\sigma$),
  we obtain $\E[\max_{V_i} G] \lesssim \sigma$
  uniformly in $\epsilon$.
  The tail estimate then follows from a union bound over all $K^2$ boxes
  and the Borell-Tsirelson concentration inequality for the maximum (as, for example, stated in \cite[Lemma 3.4]{MR3433630}).
  Indeed, using \eqref{e:G-variance}, this gives
  \begin{align}
    \P(\max_{x\in \Omega_\epsilon}G > u)
    \leq K^2 \P(\max_{V_i}G > u)
    &\leq 2K^2 \P(|\max_{V_i}G - \E[\max_{V_i}G]| > u - O(\sigma))
        \nnb
    &\leq 2 K^2 \exp\left(-\frac{(u - O(\sigma))^2}{O(\sigma^2)}\right)
      \lesssim  K^2 \exp\left(-\frac{cu^2}{\sigma^2}\right)
  \end{align}
  which gives the claim.
\end{proof}

The next lemma is the H\"older continuity of $X^h_s$.

\begin{lemma}\label{lem:hoelder-expectation}
Let $X^h_s$ be a Gaussian field with covariance   \eqref{e:Xh}.
There is $\alpha>0$ such that
\begin{equation}\label{e:hoelder-expectation}
  \sup_{\epsilon \geq 0}\E\pa{\sup_{x\in\Omega_\epsilon} |X_s^h(x)| + \sup_{x \neq y\in\Omega_\epsilon} \frac{|X_s^h(x)-X_s^h(y)|}{|x-y|^\alpha}} \leq O_{s,m}(1).
\end{equation}
\end{lemma}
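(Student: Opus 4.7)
The plan is to exploit the fact that the covariance kernel $g_s(-\Delta + m^2)$ is considerably more regular than that of the GFF itself. The first step is to observe that the function $g_s$ defined in \eqref{e:function-g} is smooth, bounded on $[m^2,\infty)$, and satisfies the decay
\begin{equation}
  0 \leq g_s(\lambda) \leq C_{s,m}(1+\lambda)^{-2}
\end{equation}
for all $\lambda \geq m^2$. This follows from elementary asymptotics of \eqref{e:function-g}: writing the difference with a common denominator, the numerator is $O(\lambda^2 s)$ as $\lambda \to 0$ (so $g_s(\lambda) \lesssim s^2 \lambda$), while for large $\lambda$ the two terms combine to give $g_s(\lambda) \sim 1/(s\lambda^2)$.

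Next, I would employ the Fourier representation of $X_s^h$ on either the lattice or in the continuum, entirely analogous to that of $\Phi^{\GFF}$ introduced in Section~\ref{sec:gffdecomp}: with independent standard complex Gaussians $X(k)$ (subject to the usual reality constraints),
\begin{equation}
  X_s^h(x) = \sum_{k \in \Omega_\epsilon^*} \sqrt{g_s(-\hat \Delta^\epsilon(k) + m^2)} \, e^{ik\cdot x} X(k).
\end{equation}
Combining the bound on $g_s$ with the two-sided comparison $-\hat \Delta^\epsilon(k) \simeq |k|^2$ on $\Omega_\epsilon^*$ already used in \eqref{e:fourier-multipliers-difference} and the elementary inequality $|e^{ik\cdot x} - e^{ik\cdot y}|^2 \lesssim (|k|\,|x-y|)^{2\alpha}$ for $\alpha \in [0,1]$, one obtains
\begin{equation}
  \E\bigl[(X_s^h(x) - X_s^h(y))^2\bigr] \lesssim |x-y|^{2\alpha} \sum_{k\in\Omega_\epsilon^*} (1+|k|^2)^{\alpha-2} \lesssim_{s,m,\alpha} |x-y|^{2\alpha}
\end{equation}
for any $\alpha < 1$, uniformly in $\epsilon \geq 0$, since the sum converges in two dimensions once $\alpha < 1$. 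The same Fourier computation yields $\E[X_s^h(x)^2] = \sum_k g_s(-\hat\Delta^\epsilon(k) + m^2) \lesssim_{s,m} 1$, uniformly in $x$ and $\epsilon$.

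With these two moment estimates in hand, the conclusion follows from standard Gaussian chaining, implemented exactly as in the proof of Lemma~\ref{lem:G-concentration}: the modulus of continuity of $X_s^h$ with respect to the Euclidean metric is $|x-y|^\alpha$ up to a subgaussian factor, so Fernique's criterion (for example in the form of \cite[Lemma~3.5]{MR3433630} applied to $X_s^h$ rescaled by the ambient constant) gives finite expectation of the H\"older seminorm for any exponent $\alpha' < \alpha < 1$, while the sup norm is controlled by a union bound together with Borell--TIS as in Lemma~\ref{lem:G-concentration}. All constants are uniform in $\epsilon \geq 0$.

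The main obstacle is not conceptual; it is simply the verification that the Fourier bounds hold with constants independent of $\epsilon$, which reduces to the uniform comparability of $-\hat \Delta^\epsilon(k)$ with $|k|^2$ noted above.
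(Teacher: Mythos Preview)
Your proposal is correct and follows essentially the same route as the paper: both arguments rest on the observation that $g_s(\lambda) = O_{s}((1+\lambda)^{-2})$, which gives sufficient Fourier decay of the covariance to conclude via a standard Gaussian regularity criterion. The paper is simply terser, noting that $|\hat C^h(k)| \leq O_s(1/(1+|k|^4))$ implies Lipschitz continuity of $C^h$ uniformly in $\epsilon$ and then invoking Kolmogorov's continuity criterion, whereas you spell out the increment variance bound explicitly and appeal to Fernique/Borell--TIS; these are interchangeable here.
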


\begin{proof}
Let $C^h$ be the covariance \eqref{e:Xh} and note that by translation invariance we may identify $C^h(x,y)$
with a function $C^h(x-y)$.
Since $g_s(\lambda) = O_s(1/(1+\lambda^2))$ it follows that
\begin{equation}
  |\hat C^h(k)| \leq O_{s}(1/(1+|k|^4)).
\end{equation}
In particular, $C^h(x,y)$ is Lipschitz continuous uniformly in  $\epsilon$, and Kolmogorov's continuity criterion
implies that \eqref{e:hoelder-expectation} holds.
\end{proof}

\subsection{Coarse field regularity}

In what follows, as in \cite{MR3433630}, it suffices to consider
the maximum of the fields restricted to points that are of macroscopic distance to the grid $\Gamma$.
For $\delta \in (0,1)$, define 
\begin{equation}
  V_i^{\delta}= \{x \in V_i \colon \dist(x, \Gamma) \geq \delta/K \},
  \qquad
  \Omega^\delta= \bigcup_{i=1}^{K^2} V_i^{\delta},
  \qquad
  \DiscTorusGrid= \Omega^\delta \cap (\epsilon \Z^2).
  \label{e:Omega-delta}
\end{equation}

Note that, differently from \cite{MR3433630},
$\dist$ here denotes the macroscopic distance on $\Omega_\epsilon \subset \Omega$
rather than the microscopic distance which differs by a factor $\epsilon$.
The following lemma provides estimates for the covariance of $\coarsesKGauss$.
It is analogous to \cite[Lemma~2.1 and Lemma~3.10]{MR3433630},
except that we have a mass term and periodic boundary conditions.

\begin{lemma}[Version of {\cite[Lemma~2.1 and Lemma~3.10]{MR3433630}}]
  \label{lem:Xc-cov}
  Define the covariance $C^{c} = C^{c,\epsilon}$ as in \eqref{e:Xc}.
  Then for $K^{2}  \geq m^{2}+ 1/s$ and as $\epsilon \to 0$,
  \begin{equation} \label{e:Xc-conv}
    \max_{x,y\in V_i^{\delta}} |C^{c,\epsilon}(x,y)-C^{c,0}(x,y)| \to 0,
  \end{equation}
  and the following estimates hold uniformly in $\epsilon$  and $K$
  (with constants depending on $\delta,s,m$):
  \begin{align}
    \label{e:Xc-cov3}
    C^c(x,y)&\lesssim \log K && (x,y \in \Omega^\delta),
    \\
    \label{e:Xc-cov2}
    |C^c(x,y)-C^c(x,y')| &\lesssim K|y-y'| && (x \in \Omega^\delta, \; y,y' \in V_i^\delta ),
    \\
    \label{e:Xc-cov}
    C^c(x,x)+C^c(y,y)-2C^c(x,y) &\lesssim (K|x-y|)^2 &&(x,y \in V_i^\delta).
  \end{align}
\end{lemma}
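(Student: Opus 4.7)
The key starting point is to identify $C^c$ as the difference of two massive Green functions: $C^c = G^\Omega - G^\Gamma$, where $G^\Omega := (-\Delta^\epsilon + m^2 + 1/s)^{-1}$ and $G^\Gamma := (-\Delta^\epsilon_\Gamma + m^2 + 1/s)^{-1}$ denote the Green function on $\Omega_\epsilon$ with periodic and $\Gamma$-Dirichlet boundary conditions, respectively. Since the lattice Dirac mass in both Green functions coincides, for fixed $x \in \Omega_\epsilon$ the function $f(y) := C^c(x,y)$ satisfies $(-\Delta^\epsilon + m^2 + 1/s) f(y) = 0$ on $\Omega_\epsilon \setminus \Gamma$ with Dirichlet boundary values $f(z) = G^\Omega(x,z)$ for $z \in \Gamma$. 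By symmetry of $C^c$ the analogous statement holds with the roles of $x$ and $y$ swapped, and in particular $C^c$ is smooth in both variables on $V_i \times V_i$ (the diagonal singularities cancelling).

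For \eqref{e:Xc-cov3}, the discrete maximum principle for the operator $-\Delta^\epsilon + m^2 + 1/s$ gives $|C^c(x,y)| \leq \sup_{z \in \Gamma} G^\Omega(x,z)$; for $x \in \Omega^\delta$ the distance from $x$ to any $z \in \Gamma$ is at least $\delta/K$, and the logarithmic behaviour of the two-dimensional massive Green function then yields $G^\Omega(x,z) \lesssim \log K$. For \eqref{e:Xc-cov2} and \eqref{e:Xc-cov}, the plan is to invoke interior regularity for the discrete massive equation on boxes $V_i$ of side $1/K$: iterated gradient estimates, extracted from the Dirichlet heat kernel bounds in Lemma~\ref{lem:ptGamma}, give $|\partial^k f(y)| \lesssim K^k \sup_{V_i} |f|$ for $y$ at distance $\geq \delta/K$ from $\partial V_i$. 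Applied to $f = C^c(x,\cdot)$, a first-order Taylor expansion in $y$ combined with \eqref{e:Xc-cov3} yields \eqref{e:Xc-cov2}.

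For \eqref{e:Xc-cov}, I would write
\begin{equation*}
C^c(x,x) + C^c(y,y) - 2 C^c(x,y) = [C^c(x,x)-C^c(x,y)] + [C^c(y,y)-C^c(y,x)]
\end{equation*}
using the symmetry $C^c(x,y) = C^c(y,x)$, and expand each bracket to second order in $x - y$. Because of this symmetry, the first-order contributions cancel in the sum, and the remainder is controlled by the second-order gradient bound of order $K^2$, yielding the estimate $\lesssim K^2 |x-y|^2$.

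For \eqref{e:Xc-conv}, the convergence $C^{c,\epsilon} \to C^{c,0}$ on $V_i^\delta \times V_i^\delta$ reduces, via $C^c = G^\Omega - G^\Gamma$, to convergence of the two massive Green functions, which follows from lattice-to-continuum convergence of the corresponding heat kernels (with the mass term providing exponential decay that makes the $t$-integral uniformly convergent). Since $C^c$ is smooth on $V_i \times V_i$ and $V_i^\delta$ is compactly contained in $V_i$, pointwise convergence together with the above gradient bounds upgrades to the required uniform convergence by equicontinuity. The main obstacle I anticipate is establishing the interior regularity estimates for the discrete massive equation with the correct dependence on $K$; the continuum analogues are classical, but in the discrete setting they must be derived carefully from the Dirichlet heat kernel bounds of Lemma~\ref{lem:ptGamma}, ensuring that all constants depend only on $\delta, s, m$ and not on $\epsilon$ or $K$.
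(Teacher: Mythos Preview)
Your approach via the maximum principle and interior regularity is conceptually different from the paper's, which works directly with the heat-kernel representation
\[
C^c(x,y)=\int_0^\infty \bigl(p_t^L(x,y)-p_t^\Gamma(x,y)\bigr)\,e^{-(m^2+1/s)t}\,dt
\]
and integrates the pointwise bound $|\partial^\alpha(p_t^L-p_t^\Gamma)(x,y)|\lesssim t^{-1-|\alpha|/2}e^{-c\delta/\sqrt{K^2t}}$ (from Lemma~\ref{lem:ptGamma}) for $|\alpha|=0,1,2$.

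Your maximum-principle argument for \eqref{e:Xc-cov3} is correct and clean. The gap lies in \eqref{e:Xc-cov2} and \eqref{e:Xc-cov}. The interior estimate you invoke, $|\partial^k f(y)|\lesssim K^k\sup_{V_i}|f|$, combined with the bound $\sup|f|\lesssim\log K$ just established, yields only $|\partial_y C^c(x,y)|\lesssim K\log K$, hence $K|y-y'|\log K$ in \eqref{e:Xc-cov2} rather than $K|y-y'|$; the same $\log K$ loss propagates to \eqref{e:Xc-cov}. (Also, in your Taylor argument the first-order terms do not literally cancel: they combine to $[\partial_2 C^c(y,y)-\partial_2 C^c(x,x)]\cdot(y-x)$, which is $O(|x-y|^2)$ only once you control second derivatives---so the step genuinely rests on the second-order interior bound.) You correctly flag this step as the main obstacle, but the proposal does not close it.

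The paper's route sidesteps the issue because the integrand $p_t^L-p_t^\Gamma$ is already smooth in $(x,y)$, so one differentiates it before integrating in $t$, and $\int_0^1 t^{-3/2}e^{-c\delta/\sqrt{K^2t}}\,dt=O_\delta(K)$ with no logarithm. To repair your approach you would need a sharper interior estimate exploiting the regularity of the boundary data $z\mapsto G^\Omega(x,z)$ on $\Gamma$ (which is Lipschitz with constant $O_\delta(K)$ when $x\in\Omega^\delta$), not merely its size; this is feasible but requires more than the naive Schauder-type bound, particularly in the discrete setting and in the presence of the mass term (note that subtracting a constant does not preserve $(-\Delta+M)$-harmonicity).
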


We remark that in \cite[Lemma~3.10]{MR3433630}, there is also a lower bound for the analogue of the left-hand side
of \eqref{e:Xc-cov}. This lower bound is however only used in the proof of \cite[Proposition~4.1]{MR3433630}
for which we do not need an analogue given that we can use the already established tail asymptotics
for the Dirichlet GFF in a box (Proposition~\ref{prop:finefield-asymptotics-delta} below).

\begin{proof}
First, observe that $C^c$ can be conveniently expressed as
\begin{equation}
  C^c(x,y)
  = \int_0^\infty (e^{\Delta t} -e^{\Delta_\Gamma t})(x,y)e^{-(m^2 + 1/s)t} \, dt
  = \int_0^\infty (p_t^L(x,y)-p_t^\Gamma(x,y))e^{-(m^2 + 1/s)t} \, dt.
\end{equation}
Hence, we will use the standard estimates on heat kernels given in Lemmas~\ref{lem:pt}--\ref{lem:ptGamma}
and first verify \eqref{e:Xc-cov3}--\eqref{e:Xc-cov}.
Indeed, \eqref{e:pGamma-inside} and \eqref{e:pttorusbounds} give
\begin{equation}
  \partial^\alpha p^\Gamma_t(x,y)
  = \partial^\alpha p_t^{L}(x,y) + O(t^{-d/2-|\alpha|/2}e^{-c\delta/\sqrt{K^2t}})
    + O(t^{-d/2-|\alpha|/2}(\sqrt{t}/L)^{d}e^{-cL/\sqrt{t}}).
\end{equation}
For $t \leq 1$ the second error term is smaller than the first one and conversely for $t>1$.

To see \eqref{e:Xc-cov3}, we integrate this bound with $d=2$ and $\alpha=0$:
\begin{align}
  C^c(x,y)
  &\lesssim 
  \int_0^1 t^{-1} e^{-c\delta /\sqrt{K^2t}}\, dt
  + \int_1^\infty e^{-(m^2 +1/s)t} \, dt 
  \lesssim \log (K/\delta) + (m^2+1/s)^{-1}
  \lesssim \log K.
\end{align}

To see the upper bound in \eqref{e:Xc-cov}, we similarly integrate the above bound with $\alpha=2$:
\begin{align}
  C^c(x,x)+C^c(y,y)-2C^c(x,y) 
  &\lesssim
    |x-y|^2 \pa{\int_0^1 t^{-2} e^{-c\delta /\sqrt{K^2t}} \, dt
    + \int_1^\infty t^{-1}e^{-(m^2 +1/s)t} \, dt }
  \nnb
    &\lesssim (K/\delta)^2|x-y|^2
    \lesssim K^2|x-y|^2.
\end{align}
The bound \eqref{e:Xc-cov2} follows analogously if $x$ is in the same square as
$y$ and $y'$. Otherwise,
$e^{\Delta t}(x,y)-e^{\Delta_\Gamma t}(x,y) = e^{\Delta t}(x,y)$ and likewise with $y'$ instead of $y$.
Since $|x-y|\wedge |x-y'| \geq\delta$, it therefore again follows from \eqref{e:ptbounds} that
\begin{align}
  |C^c(x,y)-C^c(x,y')|
 & \lesssim |y-y'|\pa{\int_0^1 t^{-3/2} e^{-c\delta/\sqrt{t}} \, dt
  +\int_1^\infty  t^{-3/2} e^{-c\delta/\sqrt{t}} e^{-(m^2 +1/s)t}  \,dt}
  \nnb
  &\lesssim |y-y'| \leq K|y-y'|.
\end{align}
The convergence \eqref{e:Xc-conv} follows similarly using \eqref{e:ptlimit} and \eqref{e:ptGammalimit}.
\end{proof}

Finally, we also record the smoothness of $\Phi_s^\GFF$ and $\Phi_s^{\SG}$.

\begin{lemma}
  \label{lem:coarse-smooth}
  For any $s>0$, the fields $\Phi_s^{\GFF}$ and $\Phi_s^{\SG}$ are smooth
  uniformly in $\epsilon \geq 0$.
  In particular, for $\# \in \{\SG,\GFF\}$,
  \begin{equation}\label{e:phi-smooth}
    \sup_{\epsilon\geq 0}\E\pbb{\max_{x\in\Omega_\epsilon} |\Phi_s^{\#}(x)| + \max_{x,y \in \Omega_\epsilon} \frac{|\Phi_s^{\#}(x)-\Phi_s^{\#}(y)|}{|x-y|}} < \infty
    .
  \end{equation}
\end{lemma}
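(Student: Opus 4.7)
The plan is to prove the two cases $\# = \GFF$ and $\# = \SG$ separately, with the sine-Gordon case reduced to the GFF case via Theorem~\ref{thm:coupling-bd}. The main point is that both bounds are essentially already available in the paper: Lemma~\ref{lem:PhiGFF-limit} for the Gaussian part, and the deterministic smoothness estimate \eqref{e:delPhiDelta-bd} for the non-Gaussian difference.

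For the GFF case, I would simply invoke Lemma~\ref{lem:PhiGFF-limit} with $t_0 = s$ and $p = 1$ to get
\begin{equation}
  \sup_{\epsilon\geq 0}\E\qa{\norm{\partial \Phi_s^{\GFF}}_{L^\infty(\Omega_\epsilon)}} \leq O_{s}(1).
\end{equation}
This immediately controls both $\max |\Phi_s^\GFF|$ (using that $\Phi_s^\GFF$ has mean zero and a Poincar\'e-type inequality, or just bounding by $\|\partial \Phi_s^\GFF\|_{L^\infty}$ plus the value at one point controlled through the Fourier representation as in the proof of Lemma~\ref{lem:PhiGFF-limit}) and the Lipschitz seminorm (bounded directly by $\|\partial \Phi_s^\GFF\|_{L^\infty(\Omega_\epsilon)}$).

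For the sine-Gordon case, I would write
\begin{equation}
  \Phi_s^{\SG} = \Phi_s^{\Delta} + \Phi_s^{\GFF},
\end{equation}
and apply \eqref{e:delPhiDelta-bd} from Theorem~\ref{thm:coupling-bd} with $k=0$ and $k=1$ to get the \emph{deterministic} bounds
\begin{equation}
  \|\Phi_s^{\Delta}\|_{L^\infty(\Omega)} + L_s \|\partial \Phi_s^{\Delta}\|_{L^\infty(\Omega)} \leq O_{\beta,s}(|z|),
\end{equation}
uniformly in $\epsilon \geq 0$. In particular the Lipschitz seminorm of $\Phi_s^\Delta$ is controlled by $\|\partial \Phi_s^\Delta\|_{L^\infty}$. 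Combining with the GFF estimate and taking expectations concludes the bound for $\Phi_s^\SG$.

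The only minor subtlety is interpreting these continuum-derivative bounds on the discretised torus $\Omega_\epsilon$: since $\Phi_s^\Delta$ is genuinely smooth and $\Phi_s^\GFF$ is smooth with uniform-in-$\epsilon$ derivative bounds in Fourier, the discrete Lipschitz seminorm on $\Omega_\epsilon$ is controlled by the continuous $L^\infty$ norm of the gradient (with the obvious identification). No genuine obstacle arises here; the lemma is a direct corollary of results already proven in Sections \ref{sec:vt} and \ref{sec:coupling}.
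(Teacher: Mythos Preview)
Your proposal is correct and follows essentially the same approach as the paper: invoke Lemma~\ref{lem:PhiGFF-limit} for the Gaussian part and \eqref{e:delPhiDelta-bd} for the difference $\Phi_s^\Delta = \Phi_s^{\SG} - \Phi_s^{\GFF}$. The paper's proof is just two sentences citing exactly these two ingredients; your added discussion of how to extract the $L^\infty$ bound on $\Phi_s^{\GFF}$ itself (not just its gradient) from the $H^\alpha$ control in the proof of Lemma~\ref{lem:PhiGFF-limit} is a reasonable elaboration of what the paper leaves implicit.
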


\begin{proof}
  The Gaussian field $\Phi_s^\GFF$ has smooth covariance
  $\int_s^\infty e^{\Delta t -m^2 t}\, dt$ and the standard estimate follows,
  e.g., from Lemma~\ref{lem:PhiGFF-limit}.
  For $\Phi_s^{\Delta}=\Phi_s^{\SG}-\Phi_s^{\GFF}$ the estimate follows from
  \eqref{e:delPhiDelta-bd}.
\end{proof}

\subsection{Generalisation of required GFF results}
In the previous two subsections,
we have argued that the maximum of the sine-Gordon field can be replaced with negligible
error as $s\to 0$ by that of the field
\begin{equation}
  \PhisKSG = \fineGFF + \coarsesKGauss + X^h_s + \Phi^{\SG}_s,
  \label{e:PhisKSG-decomposition}
\end{equation}
where all four terms on the right-hand side are independent fields.
In the remainder of this section we will proceed along similar steps as in \cite{MR3433630}
to establish convergence in law of the centred maximum of $\PhisKSG$.
Since $\fineGFF$ is a GFF with Dirichlet boundary conditions, exactly as in \cite{MR3433630},
most of the steps can be used verbatim from \cite{MR3433630}.
In this section,
we summarise these results for $X^f_K$, and also
generalise the required results from \cite{MR3433630} that involve
the coarse field as well
(which now corresponds to our non-Gaussian field $\tilde X^{c}_{s,K} + X^h_s + \Phi^{\SG}_s$).

The first result concerns the limiting behaviour of the maximum and maximiser of $\fineGFF$.

\begin{proposition}[Exactly {\cite[Propositions~2.2--2.3]{MR3433630}}]
  Let $g(K) \to \infty$ as $K\to\infty$.
  Let $\fineGFF$ be a Gaussian field with covariance \eqref{e:XKf}
  and denote $V^\delta=V_i^\delta$.
  Then there is $\alpha^*>0$ and a non-negative continuous function $\psi: (0,1)^2 \to \R^2$ with
  $\int_{(0,1)^2} \psi = 1$ such that,
  for any $\delta \geq 0$ small enough,
  with $m_\delta = \int_{(\delta, 1-\delta)^2} \psi$ and $\psi^\delta = \psi/m_\delta$,
  and $z=z_\delta$ the maximiser of $\fineGFF$  in $V^{\delta}$, the following holds:
  \begin{equation}
    \lim_{K\to\infty} \limsup_{\epsilon\to 0} \frac{e^{\scaling g(K)}}{g(K)}\P(\cA_{\delta, K,\epsilon}) = \alpha^* m_\delta,
    \qquad
     \cA_{\delta, K,\epsilon} = \ha{ \max_{V^{\delta}} \sqrtpi \fineGFF \geq m_{\epsilon K} + g(K) }.
  \end{equation}
  Moreover, for any open set $A\subseteq (\delta, 1-\delta)^2$ and any sequence $(x_K)_K$ with $x_K\geq 0$ independent of $\epsilon$,
  \begin{multline}
   \lim_{K\to\infty}\limsup_{\epsilon \to 0}\frac{e^{\scaling x_K}g(K)}{g(K) + x_K}\P\pa{\max_{V^{\delta}} \sqrtpi \fineGFF \geq m_{\epsilon K} +g(K) +x_K, K z_\delta \in A \;\bigg|\; \cA_{\delta, K,\epsilon}} \\
   =\lim_{K\to\infty}\liminf_{\epsilon \to 0}\frac{e^{\scaling x_K}g(K)}{g(K) + x_K}\P\pa{\max_{V^{\delta}} \sqrtpi \fineGFF \geq m_{\epsilon K} +g(K) +x_K, K z_\delta\in A \;\bigg|\; \cA_{\delta, K,\epsilon}} 
    =\int_A\psi^\delta(y) dy.
   \label{e:asymptotic-cond-prob}
  \end{multline}
  with the convergence being uniform in the sequence $(x_K)_K$.
  \label{prop:finefield-asymptotics-delta}
\end{proposition}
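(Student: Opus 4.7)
The statement is quoted essentially verbatim from \cite{MR3433630}, so the plan is simply to verify that $\fineGFF$ fits into that framework and then invoke the corresponding propositions there. First I would observe that because $\Delta_\Gamma$ is the Dirichlet Laplacian on the disjoint boxes $V_i$, the covariance $(-\Delta_\Gamma)^{-1}$ vanishes between distinct boxes and restricts to the Dirichlet Green's function of $V_i$ on $V_i\times V_i$. Hence the restrictions $(\fineGFF|_{V_i})_{i=1}^{K^2}$ are mutually independent, each distributed as a massless Dirichlet GFF on $V_i$. Rescaling $V_i$ to the unit box by a factor of $K$ identifies $\fineGFF|_{V_i}$ with the lattice Dirichlet GFF on the unit box with mesh $K\epsilon$, which is precisely the object studied in \cite{MR3433630}; the corresponding centring constant becomes $m_{K\epsilon}$.

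With this reduction in place, the first claim follows from \cite[Proposition~2.2]{MR3433630}, which provides the right-tail asymptotic
\begin{equation*}
  \P\Big( \max_{V^{\delta}} \sqrtpi \fineGFF \geq m_{K\epsilon} + g(K) \Big)
  \sim \alpha^* m_\delta \, g(K) e^{-\scaling g(K)}
\end{equation*}
for a single box together with the identification of the unconditional density $\psi$ of the rescaled maximiser position. To pass from the single-box tail to $\P(\cA_{\delta,K,\epsilon})$, I would combine this with a union bound over the $K^2$ independent boxes, noting that the probability that two distinct boxes simultaneously contain points with value above the target threshold is of order $(g(K) e^{-\scaling g(K)})^2$, hence negligible at the scale of the claimed asymptotic.

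The localisation statement \eqref{e:asymptotic-cond-prob} is then the content of \cite[Proposition~2.3]{MR3433630}, again applied inside each independent box: the conditional law of the rescaled maximiser $Kz_\delta$, given $\cA_{\delta,K,\epsilon}$, converges to the density $\psi^\delta = \psi/m_\delta$ on $(\delta,1-\delta)^2$. The refinement by the shift $x_K$ is a direct consequence of the regularly varying form of the tail already established: replacing $g(K)$ by $g(K)+x_K$ multiplies the unconditional probability by the factor $(g(K)+x_K)\, e^{-\scaling x_K}/g(K)$ but does not affect the conditional distribution of the maximiser's position, and the estimates in \cite{MR3433630} are uniform enough in the shift to cover any sequence $(x_K)$ with $x_K\geq 0$.

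There is no genuine obstacle specific to our setting, since the covariance structure of $\fineGFF$ on each box coincides exactly with that of a Dirichlet GFF. The analytic heavy lifting---the modified second moment method and the barrier estimates that produce the precise $g(K)$ prefactor in front of $e^{-\scaling g(K)}$, as well as the identification of $\psi$---is carried out once and for all in \cite{MR3433630} and transfers to our setting without modification. The only small bookkeeping items are the translation of the centring from the unit-mesh Dirichlet GFF on a box of side $1/(K\epsilon)$ back to the continuum-normalised field at mesh $\epsilon$ on a box of side $1/K$, yielding the constant $m_{K\epsilon}$, and the independence across boxes used in the union bound.
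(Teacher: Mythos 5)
The proposition is labelled ``Exactly~[Propositions~2.2--2.3]'' and the paper gives it no proof of its own: since $\fineGFF$ on each box is a massless Dirichlet GFF with exactly the covariance studied in that reference, the statement is imported verbatim (modulo the normalisation conversion encoded in $\scaling$ and $m_{\epsilon K}$). Your core plan---check the covariance identification, rescale the box, translate the centring, and quote the two propositions---matches this and is sound.

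However, there is a concrete misreading of the statement. The event
\[
  \cA_{\delta, K,\epsilon} = \Big\{ \max_{V^{\delta}} \fineGFF \geq m_{\epsilon K} + g(K) \Big\}
\]
concerns the maximum over a \emph{single} box $V^\delta = V_i^\delta$, not over $\Omega^\delta = \bigcup_i V_i^\delta$; this is also forced by the second part of the proposition, where the maximiser $z_\delta \in V^\delta$ is such that $K z_\delta$ lives in the unit square $(\delta,1-\delta)^2$. Your step ``To pass from the single-box tail to $\P(\cA_{\delta,K,\epsilon})$, I would combine this with a union bound over the $K^2$ independent boxes'' is therefore not part of this proposition at all, and moreover would destroy the claimed asymptotic: a union bound over $K^2$ boxes would produce a prefactor $K^2$ in front of $g(K)e^{-\scaling g(K)}$, which is not of the stated form. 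The aggregation over the $K^2$ boxes happens later in the paper's argument (Theorem~\ref{thm:levy-convergence-psi} and its proof, via the Bernoulli random variables $\rho_K^i$), not here. Once you delete the union-bound step, your proof reduces to the paper's citation and is correct.
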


In the following results,
which also involve the coarse field,
it will be convenient to separate the Gaussian part of $\PhisKSG$.
With $\Phi^\Delta_s$ as in Theorem~\ref{thm:coupling-bd},
we may write
\begin{equation}
  \PhisKSG=  \PhisKGFF + \Phi_s^\Delta,
\label{e:phi-tilde-with-phi-delta}
\end{equation}
where
\begin{equation}
  \PhisKGFF=  \fineGFF + (\coarsesKGauss +  X^h_s +\Phi_s^\GFF).
\label{e:eta-s-K}
\end{equation}

Here $\Phi_s^\Delta$ is non-Gaussian, but
by Theorem~\ref{thm:coupling-bd},
$\max_{\Omega_\epsilon} |\Phi_s^\Delta| = O(1)$ uniformly in $\epsilon$.
Moreover, $\fineGFF$ is independent of $\coarsesKGauss + X^h_s + \Phi_s^\GFF$ in \eqref{e:eta-s-K}.
Hence, using \eqref{e:decomp-gaussian-part}  we see that 
\begin{equation}
  \PhisKGFF \stackrel{d}{=} \Phi_0^\GFF +X^h_s+ G,
\label{e:eta-GFF}
\end{equation}
where $G$ is centred Gaussian with covariance as in Lemma~\ref{lem:G-concentration} and independent of $\Phi_0^\GFF$.

The first result generalises the robustness result for the maximum of the GFF
from \cite[Lemma~3.9]{MR3433630} and \cite[Proposition~1.1]{MR3729618}
to the maximum of $\PhisKSG$.

\begin{lemma}[Version of {\cite[Lemma~3.7]{MR3729618}}]\label{lem:robust-maximum}
Let $\{\phi_u,\, u\in \Omega_\epsilon\}$ be a collection of  random variables independent of $\PhisKSG$ such that for all $u\in \Omega_\epsilon$ and all $y\geq 0$,
\begin{equation}
\P(\phi_u \geq 1+ y) \leq e^{-y^2}.
\end{equation}
Then there exist absolute constants $c,C>0$ such that for any $\kappa >0$, $\epsilon >0$ and $-c \kappa^{-1/2} \leq x$ 
\begin{equation}
  \P(\max_{\Omega_\epsilon} (\PhisKSG  + \kappa \phi )\geq m_\epsilon + x)
  \leq \P(\max_{\Omega_\epsilon} \PhisKSG \geq m_\epsilon + x -\sqrt{\kappa} ) (1+ O(e^{-C^{-1}\kappa^{-1}} )).
\end{equation}
Furthermore, for $\kappa<1$,
\begin{equation}
\E[\max_{\Omega_\epsilon}( \PhisKSG + \kappa \phi) ] \leq \E[\max_{\Omega_\epsilon} \PhisKSG] + C\sqrt{\kappa}.
\end{equation}
\end{lemma}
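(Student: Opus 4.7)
The plan is to follow the strategy of DRZ Lemma~3.7 (MR3729618) closely, exploiting the decomposition $\PhisKSG = \PhisKGFF + \Phi_s^\Delta$ from \eqref{e:phi-tilde-with-phi-delta} together with the deterministic bound $\|\Phi_s^\Delta\|_{L^\infty(\Omega_\epsilon)} \leq O_\beta(|z|)$ provided by Theorem~\ref{thm:coupling-bd}. Since $\Phi_s^\Delta$ contributes only a bounded additive shift, up to adjusting the absolute constant $C$, all tail and right-tail estimates needed for the maximum of $\PhisKSG$ transfer directly from the corresponding estimates for the Gaussian field $\PhisKGFF$. In particular, I would use the convergence $\Phi^{\SG} \sim \Phi^{\GFF}$ modulo a bounded Hölder remainder (Theorem~\ref{thm:coupling-intro}) together with the standard right-tail bound for log-correlated Gaussian maxima, $\P(\max \PhisKSG \geq m_\epsilon + t) \lesssim (1+|t|)e^{-\sqrt{8\pi} t}$ for $t \geq -O(1)$.

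The main decomposition is based on the event $E = \{\max_u \phi_u \leq \kappa^{-1/2}\}$. On $E$, the trivial bound $\kappa \phi_u \leq \sqrt\kappa$ yields $\max(\PhisKSG + \kappa \phi) \leq \max \PhisKSG + \sqrt\kappa$, giving immediately
\[
\P\bigl(\{\max(\PhisKSG + \kappa \phi) \geq m_\epsilon + x\} \cap E\bigr) \leq \P(\max \PhisKSG \geq m_\epsilon + x - \sqrt\kappa).
\]
On $E^c$, I would use the independence of $\phi$ and $\PhisKSG$ and write
\[
\P\bigl(\{\max(\PhisKSG + \kappa \phi) \geq m_\epsilon + x\} \cap E^c\bigr) \leq \int_{\kappa^{-1/2}}^\infty \P(\max \PhisKSG \geq m_\epsilon + x - \kappa y)\, \P(\max_u \phi_u \in dy),
\]
insert the right-tail estimate for $\max \PhisKSG$, and integrate against the Gaussian tail of $\max_u \phi_u$. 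Completing the square in the exponent combines the $e^{\sqrt{8\pi}\kappa y}$ factor from the right-tail with the $e^{-(y-1)^2}$ factor from the sub-Gaussian tail of $\phi_u$, and for $x \geq -c\kappa^{-1/2}$ with $c$ small enough, the resulting bound is dominated by $\P(\max \PhisKSG \geq m_\epsilon + x - \sqrt\kappa)$ times a factor of order $e^{-C^{-1}\kappa^{-1}}$, which is the claimed multiplicative error.

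For the expectation bound, I would use the layer-cake representation and integrate the tail estimate just established, together with a trivial truncation at $x$ of order~$1$, to obtain $\E[\max(\PhisKSG + \kappa \phi)] - \E[\max \PhisKSG] \leq C\sqrt\kappa$ for $\kappa < 1$; the contribution from $x \geq \kappa^{-1/4}$ is negligible due to the exponential right-tail decay.

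The hard part is the $E^c$ estimate: a naive union bound over $u \in \Omega_\epsilon$ for $\max_u \phi_u > \kappa^{-1/2}$ produces an $|\Omega_\epsilon| = \epsilon^{-2}$ factor that is not absorbed by $e^{-\kappa^{-1}}$ for large ranges of $\kappa$. The resolution, as in DRZ, is to perform the union bound not on the full lattice but on the (much smaller) set of points where $\PhisKSG$ is within $\sqrt\kappa$ of its maximum, whose expected size can be controlled via the right-tail of $\max \PhisKSG$ and a first-moment computation; alternatively, by a chaining/multiscale argument over the natural scales of the GFF the effective number of points is $O(1)$. This step uses the Gaussian structure of $\PhisKGFF$ essentially, which is why I reduce to the Gaussian case via Theorem~\ref{thm:coupling-bd} at the start.
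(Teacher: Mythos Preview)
Your overall strategy---reduce to the Gaussian field $\PhisKGFF$ via the deterministic bound on $\Phi_s^\Delta$ from Theorem~\ref{thm:coupling-bd}, then split according to the size of the perturbation---matches the paper. The paper's proof is a one-line reduction: it cites \cite[Lemma~3.9]{MR3433630} verbatim, the only new input being that the required tail estimate
\[
\P\Bigl(\max_{A}\PhisKSG \geq m_\epsilon + z - y\Bigr)\;\lesssim\;\Bigl(\tfrac{|A|}{|\Omega_\epsilon|}\Bigr)^{1/2} z\, e^{-\sqrt{8\pi}(z-y)} + o(1)
\]
holds for \emph{arbitrary subsets} $A\subset\Omega_\epsilon$, which follows from \cite[Proposition~1.1]{MR3729618} applied to $\PhisKGFF$ plus the bounded shift $\Phi_s^\Delta$.

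The gap in your proposal is in the resolution of the ``hard part''. Your integral bound on $E^c$ uses the inequality $\max(\PhisKSG+\kappa\phi)\leq \max\PhisKSG + \kappa\max_u\phi_u$, so the only information about $\phi$ that enters is the distribution of its \emph{global} maximum; any $\epsilon$-uniform control of that distribution inevitably carries the $|\Omega_\epsilon|$ factor you flag. Your suggested fix---restrict the union bound to the set of near-maximisers of $\PhisKSG$---does not help here: on $E^c$ the argmax $u^*$ of $\PhisKSG+\kappa\phi$ may have $\phi_{u^*}$ very large and $\PhisKSG(u^*)$ far below $m_\epsilon+x-\sqrt\kappa$, so $u^*$ need not be a near-maximiser of $\PhisKSG$ at all.

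The correct move (this is what \cite[Lemma~3.9]{MR3433630} actually does) swaps the roles: one conditions on $\phi$ and applies the \emph{subset} tail bound above with $A=A_j=\{u:\phi_u>j\kappa^{-1/2}\}$, which is $\phi$-measurable and independent of $\PhisKSG$. Since $\E|A_j|\leq |\Omega_\epsilon|\,e^{-(j\kappa^{-1/2}-1)^2}$, Jensen gives $\E(|A_j|/|\Omega_\epsilon|)^{1/2}\leq e^{-(j\kappa^{-1/2}-1)^2/2}$, and the factor $(|A|/|\Omega_\epsilon|)^{1/2}$ kills the lattice-size dependence entirely. Summing over $j\geq 1$ then produces the $e^{-C^{-1}\kappa^{-1}}$ error. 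So the missing ingredient is not a near-maximiser count or chaining, but precisely the subset right-tail estimate displayed above.
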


\begin{proof}
Both inequalities are proved using identical calculations as in the proof of \cite[Lemma 3.9]{MR3433630} together with the estimate 
\begin{equation}
\P(\max_{A} \sqrtpi \PhisKSG \geq m_\epsilon+z-y) \lesssim\left(\frac{\abs{A}}{\abs{\Omega_\epsilon}} \right)^{1/2}z e^{-\scaling(z-y)} + o_{s,\delta,K}(1)
\end{equation}
where $z\geq 1$, $y\geq 0$,  $A\subseteq \Omega_\epsilon$
and $o_{s,\delta,K}(1) \to 0$ as $\epsilon \to 0$.
The latter follows from the analogue inequality (3.23) in \cite[Lemma~3.8]{MR3433630}
(or rather \cite[Proposition~1.1]{MR3729618} applied to the Gaussian field $\PhisKGFF$)
and the fact that $\max_{\Omega_\epsilon}|\Phi_s^\Delta|= O(1)$ uniformly in $\epsilon>0$.
\end{proof}

The next result states that the event that the maximum is not attained close to the grid $\Gamma$  occurs with high probability. 

\begin{proposition}[Version of {\cite[Proposition~5.1]{MR3433630}}]
  \label{prop:maximiser-not-on-grid}
Let $\DiscTorusGrid$ be as in \eqref{e:Omega-delta}. Then,
\begin{equation}
\lim_{\delta \to 0}\limsup_{K\to \infty} \limsup_{\epsilon \to 0} \P (\max_{\DiscTorusGrid}\PhisKSG\neq \max_{\Omega_\epsilon}\PhisKSG)=0.
\end{equation}
\end{proposition}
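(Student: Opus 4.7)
The plan is to adapt the argument of [MR3433630, Proposition~5.1] for the pure Gaussian free field, handling our non-Gaussian correction and the extra Gaussian components as a bounded perturbation. Write
\begin{equation*}
  \PhisKSG = A + R, \qquad A := \fineGFF + \coarsesKGauss, \qquad R := X^h_s + \Phi^{\GFF}_s + \Phi^\Delta_s.
\end{equation*}
The Gaussian field $A$ has the same two-scale Markov decomposition structure as the pure GFF treated in [MR3433630], differing only by an inessential additional mass $m^2 + 1/s$ in the covariance (and by periodic boundary conditions on $\T^2$ rather than free), while $R$ is uniformly small in max-norm.

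For the remainder, Lemma~\ref{lem:hoelder-expectation} and Lemma~\ref{lem:coarse-smooth} give $\E\max_{\Omega_\epsilon}(|X^h_s|+|\Phi^{\GFF}_s|)=O_s(1)$, and Theorem~\ref{thm:coupling-bd} gives the deterministic bound $\max_{\Omega_\epsilon}|\Phi^\Delta_s|=O(|z|)$. Hence, for any $\eta>0$, one can choose $M=M(\eta,s,z)$ independent of $\epsilon,K,\delta$ with $\P(\max_{\Omega_\epsilon}|R|>M) < \eta$. On the event $\{\max_{\Omega_\epsilon}|R|\leq M\}$, if some $x^*\in\Omega_\epsilon\setminus\DiscTorusGrid$ maximises $\PhisKSG$, then for every $y\in\DiscTorusGrid$ the inequality $\PhisKSG(x^*)\geq\PhisKSG(y)$ rearranges to $A(x^*)\geq A(y)-2M$; taking maxima and using $\max_{\Omega_\epsilon}A=\max(\max_{\DiscTorusGrid}A,\max_{\Omega_\epsilon\setminus\DiscTorusGrid}A)$ then forces $\max_{\Omega_\epsilon\setminus\DiscTorusGrid}A\geq\max_{\Omega_\epsilon}A-2M$. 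Therefore
\begin{equation*}
  \P\bigl(\max_{\Omega_\epsilon}\PhisKSG\neq \max_{\DiscTorusGrid}\PhisKSG\bigr)
  \leq \eta + \P\bigl(\max_{\Omega_\epsilon\setminus\DiscTorusGrid}A\geq\max_{\Omega_\epsilon}A-2M\bigr).
\end{equation*}

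To conclude, it suffices to show that the second probability vanishes in the triple limit for any fixed $M$. Applied to the Gaussian field $A$, the argument of [MR3433630, Proposition~5.1] shows that the Dirichlet conditions of $\fineGFF$ on $\Gamma$ force $\max_{\Omega_\epsilon\setminus\DiscTorusGrid}A - m_\epsilon\to -\infty$ in probability as $\epsilon\to 0$, $K\to\infty$, $\delta\to 0$ in this order, while $\max_{\Omega_\epsilon}A-m_\epsilon$ remains tight. Hence $\max_{\Omega_\epsilon}A-\max_{\Omega_\epsilon\setminus\DiscTorusGrid}A\to\infty$ in probability, which gives the desired bound on the second probability; sending $\eta\to 0$ completes the proof.

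The main technical point I expect to address is verifying that the proof of [MR3433630, Proposition~5.1] transfers cleanly to our $A$: its covariance differs from the pure GFF in [MR3433630] only by the bounded mass $m^2 + 1/s$ and by the use of periodic (rather than zero) boundary conditions on $\T^2$, and the argument relies only on the logarithmic singularity of the Green's function (preserved by Lemma~\ref{lem:Xc-cov}) and on the Markov decomposition of $A$ with respect to $\Gamma$, both of which are preserved. A minor secondary point is that we invoke [MR3433630, Proposition~5.1] in the slightly strengthened form $\max A - \max_{\Omega_\epsilon\setminus\DiscTorusGrid} A\to\infty$ rather than the literal statement $\max A \neq \max_{\Omega_\epsilon\setminus\DiscTorusGrid} A$; this strengthening is implicit in the same proof, which in fact shows that $\max_{\Omega_\epsilon\setminus\DiscTorusGrid} A - m_\epsilon$ tends to $-\infty$ in probability.
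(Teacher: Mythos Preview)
Your proposal is correct and follows essentially the same strategy as the paper: reduce to a statement about a Gaussian log-correlated field by splitting off a uniformly bounded perturbation, then invoke the corresponding GFF result. The differences are cosmetic rather than substantive.

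The paper makes two slightly more economical choices. First, it splits $\PhisKSG = \PhisKGFF + \Phi^\Delta_s$ with $\PhisKGFF = \fineGFF + \coarsesKGauss + X^h_s + \Phi^\GFF_s$ Gaussian and only the non-Gaussian $\Phi^\Delta_s$ as remainder; since $\Phi^\Delta_s$ is \emph{deterministically} bounded by Theorem~\ref{thm:coupling-bd}, the extra parameter $\eta$ and the high-probability argument for $R$ are not needed. Second, rather than arguing that the proof of \cite[Proposition~5.1]{MR3433630} transfers to the massive periodic field $A$, the paper invokes \cite[Proposition~1.1]{MR3729618} as a black box: this is a tail bound $\P(\max_A \PhisKGFF \geq m_\epsilon + x) \lesssim (|A|/|\Omega_\epsilon|)^{1/2}$ valid for general log-correlated Gaussian fields, which applies directly to $\PhisKGFF$ and yields \eqref{e:lim-max-close-to-boundary} without further verification. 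Your route via $A$ is equally valid---Lemma~\ref{lem:Xc-cov} supplies exactly the covariance estimates needed for the \cite{MR3433630} argument---but it leaves you with the transfer check you flag, whereas the paper sidesteps it entirely.
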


\begin{proof}
  The proof is essentially the same as that of \cite[Proposition 5.1]{MR3433630} for the GFF. 
  Indeed, by \eqref{e:eta-GFF} the sequence $(\max_{\Omega_\epsilon} \sqrtpi \PhisKGFF -m_\epsilon)_{\epsilon}$ is tight and hence, by \eqref{e:phi-tilde-with-phi-delta}, so is the sequence $(\max_{\Omega_\epsilon} \sqrtpi \PhisKSG -m_\epsilon)_{\epsilon}$.
  Thus, using again \eqref{e:phi-tilde-with-phi-delta} and that $\Phi^\Delta_s$ is bounded
  by a deterministic constant independent of $\epsilon$,
  the claim follows from the result that, for any fixed $x\in\R$,
\begin{equation}
\lim_{\delta \to 0} \limsup_{K\to \infty} \limsup_{\epsilon \to 0} \P(\max_{\Omega_\epsilon \setminus \DiscTorusGrid} \sqrtpi \PhisKGFF-m_\epsilon\geq x)=0,
\label{e:lim-max-close-to-boundary}
\end{equation}
which holds again by \cite[Proposition~1.1]{MR3729618}
\end{proof}

The following result states that, to approximate the maximum of the  full field,
one may evaluate the field at the local maximizers of the field inside each box.

\begin{proposition}[Version of {\cite[Proposition~5.2]{MR3433630}}]
  \label{prop:fine-field-maximisers}
Let $\PhisKSG$ be as in \eqref{e:phi-tilde-with-phi-delta}. Let $z_i \in V_i^{\delta}$ be such that 
\begin{equation}
\max_{V_i^{\delta}} \fineGFF=  \fineGFF(z_i)
\end{equation}
and let $\bar z$  be such that 
\begin{equation}
\max_i \PhisKSG(z_i) = \PhisKSG(\bar z).
\end{equation}
Then for any fixed $\kappa>0$ and small enough $\delta >0$,
\begin{equation}
\lim_{K\to \infty} \limsup_{\epsilon \to 0} \P (\max_{\DiscTorusGrid} \PhisKSG \geq \PhisKSG(\bar z)+\kappa)=0.
\label{e:Psi-fine-field-maximisers}
\end{equation}
Moreover, there is a function $g\colon \N \to \R_0^+$ with $g(K) \to \infty$ as $K\to \infty$, such that 
\begin{equation}
\lim_{K\to \infty} \limsup_{\epsilon \to 0} \P ( \sqrtpi \fineGFF(\bar z)\leq m_{\epsilon K}+g(K))=0.
\label{e:GFF-fine-field-maximisers}
\end{equation}
\end{proposition}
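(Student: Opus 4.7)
The plan is to adapt the strategy of \cite[Proposition~5.2]{MR3433630} to the present setting, exploiting that the non-Gaussian contribution is deterministically controlled by Theorem~\ref{thm:coupling-bd}. Write $\PhisKSG = \fineGFF + Y$ with $Y = \coarsesKGauss + X^h_s + \Phi_s^{\SG}$. The field $Y$ is ``regular'' on the box scale: its Gaussian piece $\coarsesKGauss$ has increment variance $\lesssim K^2|x-y|^2$ and pointwise variance $\lesssim \log K$ (Lemma~\ref{lem:Xc-cov}), the contribution of $X_s^h$ is H\"older continuous uniformly in $\epsilon$ (Lemma~\ref{lem:hoelder-expectation}), and $\Phi_s^\SG$ is Lipschitz (Lemma~\ref{lem:coarse-smooth}).

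For the first assertion \eqref{e:Psi-fine-field-maximisers}, let $\hat z_i \in V_i^\delta$ be the maximiser of $\PhisKSG$ on $V_i^\delta$, so $\max_{\DiscTorusGrid}\PhisKSG = \max_i \PhisKSG(\hat z_i)$. Since $z_i$ maximises $\fineGFF$ on $V_i^\delta$, we have
\begin{equation*}
  \PhisKSG(\hat z_i) - \PhisKSG(z_i) = [\fineGFF(\hat z_i)-\fineGFF(z_i)] + [Y(\hat z_i) - Y(z_i)] \le Y(\hat z_i) - Y(z_i).
\end{equation*}
The key step is the localisation $|\hat z_i - z_i| \le r$ for some small $r \to 0$ as $K\to\infty$ and then $\epsilon\to 0$, uniformly over those boxes that can plausibly contain the global maximiser. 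Combined with the H\"older modulus of $Y$ on scale $r$, this yields $Y(\hat z_i) - Y(z_i) \lesssim r^\alpha \le \kappa$. The localisation itself follows from a sharp peak (cluster) property of the log-correlated field $\fineGFF$: near-maxima of the Dirichlet GFF concentrate in a small macroscopic neighbourhood of $z_i$, whose diameter shrinks with the gap between the peak height and the second-best value. The relevant quantitative version is the Gaussian input from \cite{MR3433630}, and the non-Gaussian contribution $\Phi_s^\SG$ enters only through its deterministic Lipschitz bound.

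For the second assertion \eqref{e:GFF-fine-field-maximisers}, I combine the first assertion with tightness of $\max_{\Omega_\epsilon} \PhisKSG - m_\epsilon$, which is inherited from the Gaussian case via \eqref{e:phi-tilde-with-phi-delta} and Theorem~\ref{thm:coupling-bd}. This yields $\PhisKSG(\bar z) \ge m_\epsilon - C_\eta$ with probability $\ge 1-\eta-o(1)$, and hence $\fineGFF(\bar z) \ge m_\epsilon - C_\eta - Y(\bar z)$. It remains to bound $Y(\bar z)$ above: a union bound over the $K^2$ values $Y(z_i)$, using the variance bound on $\coarsesKGauss$ from Lemma~\ref{lem:Xc-cov} and uniform boundedness of $X_s^h + \Phi_s^\SG$, together with the entropic observation that the box selected by the joint optimum $\bar z$ cannot place $Y(\bar z)$ at its extremal value without sacrificing the $\fineGFF$-contribution, leaves a gap of logarithmic order in $K$ between $\fineGFF(\bar z)$ and $m_{\epsilon K}$; any sufficiently slowly growing $g(K)\to\infty$ then suffices.

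The hardest part is the quantitative sharp peak property for $\fineGFF$ underlying the first assertion, whose rate must match the H\"older modulus of $Y$ at the localisation scale so that $\kappa$ can absorb the residual oscillation. This is a purely Gaussian ingredient, available in \cite{MR3433630} but to be transferred to the massive toroidal subdivision used here. By contrast, the passage from the Gaussian field $\PhisKGFF$ to the sine-Gordon field $\PhisKSG$ is essentially automatic, owing to the deterministic scale-independent control on $\Phi_s^\Delta$ from Theorem~\ref{thm:coupling-bd}.
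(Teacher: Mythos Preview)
Your overall architecture is right and matches the paper: split $\PhisKSG=\fineGFF+Y$, use the deterministic H\"older control on $\Phi_s^\Delta$ from Theorem~\ref{thm:coupling-bd} to reduce to Gaussian questions, and import the needed estimates from \cite{MR3433630}. For the second assertion your outline is essentially the paper's argument; the ``entropic observation'' you allude to is exactly the variance computation that forces $g(K)=\alpha\log\log K$, deferred there to \cite{MR3433630}.

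For the first assertion, however, your localisation step has a circularity that you do not resolve. You want $|\hat z_j-z_j|$ small via the sharp-peak property of $\fineGFF$, but $\hat z_j$ maximises $\PhisKSG$, not $\fineGFF$; to conclude that $\hat z_j$ is a near-maximum of $\fineGFF$ you need $\fineGFF(z_j)-\fineGFF(\hat z_j)\le Y(\hat z_j)-Y(z_j)$ to be small, which is precisely what you are trying to prove. A bootstrap (whole-box oscillation of $Y$ is $O(1)$, hence $\hat z_j$ is an $O(1)$-near-maximum, hence cluster) can break the circle, but the clustering of $O(1)$-near-maxima at microscopic scale is only available in boxes where the field is at level $m_\epsilon-O(1)$; this is a ``no two separated high points'' statement for the \emph{full} log-correlated field (here $\PhisKGFF$), not for $\fineGFF$ in isolation, and it must survive a union bound over $K^2$ boxes. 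Your sketch does not make this explicit. Note also that the localisation scale is microscopic, $\epsilon f(\log K)$, going to $0$ with $\epsilon$ for fixed $K$, not a macroscopic $r\to 0$ as $K\to\infty$.

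The paper organises this differently and thereby avoids the circularity. It first passes from $\PhisKSG$ to the purely Gaussian $\PhisKGFF$ via the box-scale estimate $|\Phi_s^\Delta(x)-\Phi_s^\Delta(y)|\le c_{s,K}=O(K^{-c})$ for $x,y\in V_j^\delta$: it defines an intermediate event $B=\{\max_{V_j^\delta}\PhisKGFF\ge \PhisKGFF(z_j)+\kappa/2\}$, where the box $V_j^\delta$ is still selected by $\PhisKSG$, and shows $\{\max_{\DiscTorusGrid}\PhisKSG\ge\PhisKSG(\bar z)+\kappa\}\subset B$ using only the deterministic H\"older bound on $\Phi_s^\Delta$. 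It then runs the \cite{MR3433630} event decomposition $A_0,A_1,A_2,A_3$ verbatim for $\PhisKGFF$ and its coarse part $\coarseGauss=\coarsesKGauss+X_s^h+\Phi_s^\GFF$: on $(\tilde A_0\cup A_2\cup A_3)^c$ the maximiser $\tau$ of $\PhisKGFF$ in $V_j^\delta$ satisfies $|\tau-z_j|\le\epsilon f(k)$, and $A_1^c$ gives $|\coarseGauss(\tau)-\coarseGauss(z_j)|<\kappa/2$, whence $B^c$. The non-Gaussian piece enters only in the two short inclusions $\{\max\PhisKSG\ge\ldots\}\subset B$ and $\tilde A_0\subset A_0$, both of which use nothing beyond the H\"older bound; all probabilistic work is on the Gaussian side where Lemma~\ref{lem:Xc-cov} puts $\coarsesKGauss$ in exactly the framework of \cite{MR3433630}. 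This separation is what your direct approach is missing.
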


\begin{proof}
Again, our argument is an adaption of the proof of the analogous statement for the GFF,
\cite[Proposition 5.2]{MR3433630}, where it is shown that the event on left-hand side
of \eqref{e:Psi-fine-field-maximisers} is contained in the union of suitable small
events $A_0,\dots,A_3$, defined as follows.
Let $\coarseGauss\equiv \coarsesKGauss+X_s^h +\Phi_s^\GFF$ be the overall coarse part of $\PhisKGFF$. Set $k=\log K$. Similarly as in the proof of \cite[Proposition 5.2]{MR3433630}, we define
\begin{align}
A_0 &= \{ \max_{\DiscTorusGrid} \PhisKGFF < m_\epsilon -C\}
\label{e:def-A0}\\
A_1&=\{\max_{v\in \DiscTorusGrid} \max_{w \in \DiscTorusGrid \colon \abs{v-w} \leq \epsilon f(k)} \abs{\coarseGauss(v)-\coarseGauss(w)} \geq  \kappa/2 \}\\
A_2&=\{ \max_i \max_{u,v \in V_i^{\delta}} \left(\PhisKGFF(u) + \coarseGauss(u)-\coarseGauss(v) \right) \geq m_\epsilon +C'  \}\\
A_3&= \{\exists i, v \colon 1/K \geq \abs{v-z_i} > \epsilon f(k), \, \PhisKGFF(v) \geq m_\epsilon -C, \PhisKGFF(z_i) \geq m_\epsilon -2C-C'\}
\label{e:def-A3}
\end{align}
for some constants $C,C'>0$ and $\kappa>0$
and where $f(k)$ is a function that grows to $\infty$ as $k \to \infty$.
We emphasise that, differently from \cite{MR3433630}, we always use the macroscopic distance
$\abs{\cdot}$ on $\Omega_\epsilon$ which differs from the microscopic lattice distance used in \cite{MR3433630}
by a factor $\epsilon$.
As in the proof of \cite[Proposition~5.2]{MR3433630}, we have that  
\begin{equation}
\lim_{C,C'\to \infty}\limsup_{K\to \infty}\limsup_{\epsilon \to 0} \P(A_l)=0,
\label{e:conv-Ak}
\end{equation}
for $l=0,\ldots,3$.
Indeed, $\PhisKGFF$ has precisely the same fine field as in \cite{MR3433630},
and the events $A_l$ only involve the Gaussian part of our coarse field. 
The minor difference between the field $\coarsesKGauss$ and the one in \cite{MR3433630},
i.e., periodic compared to Dirichlet boundary conditions and a mass term,
does not matter since the field satisfies the same covariance estimates,
by Lemma~\ref{lem:Xc-cov}, which is all that enters the proof of \eqref{e:conv-Ak}.
Moreover, using \eqref{e:hoelder-expectation} and \eqref{e:phi-smooth} it is easy to see that the continuous and $K$-independent fields $\Phi_s^\GFF$ and $X^h_s$ in the coarse part of $\PhisKSG$ are irrelevant for the convergence in \eqref{e:conv-Ak}.
Hence, the conclusion for the limits of $\P(A_l)$ is the same.

Having \eqref{e:conv-Ak} for the Gaussian part of our field in place, it remains to consider the
effect of the non-Gaussian part $\Phi^\Delta_s$.
For this,
it suffices to prove that with
\begin{equation}
  B:=
  \hbb{ \max_{V_j^{\delta}} \PhisKGFF \geq \PhisKGFF(z_j) +  \kappa/2, \text{~~$V_j^{\delta}$ the box where $\PhisKSG |_{\Omega^\delta}$ is maximal} },
\end{equation}
we have:
\begin{enumerate}
\item
  For any given $\kappa>0$ and $K$ large enough,
  $\h{\max_{\DiscTorusGrid} \PhisKSG \geq \PhisKSG(\bar z) +  \kappa }
    \subseteq B$.
  \item $B \subseteq A_0 \cup A_1 \cup A_2 \cup A_3$ where the $A_l$ are as in
    \eqref{e:def-A0}--\eqref{e:def-A3}.
\end{enumerate}
The event $B$ is defined similarly as in the proof of \cite[Proposition 5.2]{MR3433630},
except that compared to their argument we consider the box where $\PhisKSG$ is maximal
rather than the one where $\PhisKGFF$ is maximal.
In the proof of both statements we will use that
Theorem~\ref{thm:coupling-bd} implies that there is $c>0$, such that for $x,y \in V_i$,
\begin{equation}\label{e:continuity-phi-delta}
  |\Phi_s^{\Delta}(x)-\Phi_s^{\Delta}(y)| \leq c_{s,K} = 
  O_s(K^{-c}).
\end{equation}

\smallskip
\noindent
\textit{Proof of (i):} Using  \eqref{e:continuity-phi-delta}
and since $V_j^\delta$ is the box on which $\PhisKSG |_{\Omega^\delta}$ is maximal,
we have
\begin{equation}
  \max_{V_j^{\delta}} \PhisKGFF = \max_{V_j^{\delta}} (\PhisKSG-\Phi_s^\Delta)
  \geq \max_{V_j^{\delta}} \PhisKSG - \Phi_s^\Delta(z_j) - c_{s,K} 
  = \max_{\DiscTorusGrid} \PhisKSG -\Phi_s^\Delta(z_j) - c_{s,K}
  .
\end{equation}
Thus assuming that the event $\{\max_{\DiscTorusGrid}\PhisKSG \geq \PhisKSG(\bar z) + \kappa\}$ occurs, we can further estimate
\begin{equation}
  \max_{V_j^{\delta}}\PhisKGFF \geq \PhisKSG(\bar z) + \kappa - \Phi_s^\Delta(z_j) - c_{s,K}
  \geq \PhisKGFF(z_j) + \kappa -c_{s,K}
  \geq \PhisKGFF(z_j) + \kappa/2.
\end{equation}
This completes the proof of (i).

\smallskip
\noindent
\textit{Proof of (ii):}
The inclusion for $B$ follows similarly as in the proof of \cite[Proposition 5.2]{MR3433630}.
Fix some constants $C,C'$. We will first show that
\begin{equation}
B \subseteq \tilde A_0 \cup A_1 \cup A_2 \cup A_3
\label{e:inclusion}
\end{equation}
where $A_1, A_2, A_3$ are as in \eqref{e:def-A0}--\eqref{e:def-A3} and  where
\begin{equation}
\tilde A_0=\{\max_{V_j^{\delta}} \PhisKGFF < m_\epsilon -C\}.
\end{equation}

Let $\tau\in V_j^\delta$ be the maximiser of $\PhisKGFF$ in $V_j^\delta$, recall that $V_j^{\delta}$ is the box where $\PhisKSG$ attains its global maximum on $\DiscTorusGrid$. To see that  \eqref{e:inclusion} holds, we first claim that  $(\tilde A_0 \cup A_2 \cup A_3)^c \subseteq \{ \abs{\tau -z_j} \leq \epsilon f(k) \}$. Otherwise, on $(\tilde A_0 \cup A_2)^c$, we have
\begin{equation}
\coarseGauss(\tau)-\coarseGauss(z_j)=\left( \PhisKGFF(\tau) + \coarseGauss(\tau) - \coarseGauss(z_j)  \right)- \PhisKGFF(\tau) \leq m_\epsilon + C' - (m_\epsilon - C)= C+C'
\end{equation}
and thus, since $z_j$ maximises $\fineGFF=\PhisKGFF-\coarseGauss$ in $V_j^\delta$,
\begin{equation}
\PhisKGFF(z_j) \geq \PhisKGFF(\tau) - \left(\coarseGauss(\tau)-\coarseGauss(z_j)\right) \geq m_\epsilon -C - (C+C')= m_\epsilon - 2C -C'.
\end{equation}
This is not consistent with being in $A_3^c$. Consequently,
\begin{equation}
(\tilde A_0 \cup A_2 \cup A_3)^c \subseteq \{\abs{\tau-z_j} \leq \epsilon f(k)\}.
\end{equation}
But then, we also have
\begin{align}
  (\tilde A_0 \cup A_1 \cup A_2 \cup A_3)^c \subseteq A_1^c \cap \{\abs{\tau-z_j} \leq \epsilon f(k)\}
  &\subseteq \{\abs{\coarseGauss(\tau)-\coarseGauss(z_j)} <  \kappa/2 \}  \nnb
&\subseteq \{\PhisKGFF(\tau)-\PhisKGFF(z_j) < \kappa/2 \} =B^c
\end{align}
which is \eqref{e:inclusion}.

It remains to show that $\tilde A_0\subseteq A_0$, where $A_0$ is as in \eqref{e:def-A0} but for a possibly different choice of the constant $C$. Denote by $\zeta\in V_j^\delta$ the global maximiser of $\PhisKSG$ in $\DiscTorusGrid$.
Since $\PhisKGFF$  is maximal at $\tau$ in $V_j^\delta$ and using the continuity of $\Phi_s^\Delta$, see \eqref{e:continuity-phi-delta}, we obtain for $K$ large enough that
\begin{equation}
  \max_{\DiscTorusGrid} \PhisKSG -\PhisKSG(\tau)
  = \PhisKSG(\zeta)- \PhisKSG(\tau)= \PhisKGFF(\zeta) - \Phi_s^\Delta(\zeta) - \left(\PhisKGFF(\tau) -\Phi_s^\Delta(\tau)\right) 
\leq \Phi_s^\Delta(\tau) - \Phi_s^\Delta(\zeta) < 1.
\end{equation}
Hence, we have with $\tilde C >0$ such that $\max_{\Omega_\epsilon} \abs{\Phi_s^\Delta} \leq \tilde C$ by Theorem~\ref{thm:coupling-bd},
\begin{align}
  \tilde A_0 = \{\PhisKGFF(\tau) < m_\epsilon -C\} \subseteq \{ \PhisKSG(\tau) \leq m_\epsilon - C + \tilde C \}
  &\subseteq \{ \max_{\DiscTorusGrid} \PhisKSG \leq m_\epsilon -C + \tilde C + 1\} \nnb
&\subseteq \{\max_{\DiscTorusGrid} \PhisKGFF \leq m_\epsilon -C + 2 \tilde C + 1\}
\end{align}
where we used in the last step that the global maxima of $\PhisKSG$ and $\PhisKGFF$ differ by at most $\tilde C$. This completes the proof of (ii).

It remains to prove \eqref{e:GFF-fine-field-maximisers}. While our fine field $\fineGFF$ is the same as in \cite{MR3433630},
the only difference to the analogous equation in \cite[Proposition~5.2]{MR3433630} is that our $\bar z$ is defined via $\PhisKSG$ rather than via $\PhisKGFF$.
We first fix a constant $\gamma>0$, which will be adjusted throughout the argument. From \eqref{e:conv-Ak} for $A_0$ and again $\max_{\Omega_\epsilon} |\Phi_s^\Delta | \leq \tilde C$, we obtain 
\begin{equation}
\lim_{K\to \infty} \liminf_{\epsilon\to 0} \P(\max_{\DiscTorusGrid}\PhisKSG \geq m_\epsilon - \gamma \log k) = 1
\label{e:max-PhisKSG-geq-m-eps}
\end{equation}
 where $k = \log K$. By \eqref{e:Psi-fine-field-maximisers}, and adjusting $\gamma$, we also have
\begin{equation}
\lim_{K\to \infty} \liminf_{\epsilon \to 0} \P(\PhisKSG(\bar z) \geq m_\epsilon - \gamma \log k)=1.
\end{equation}
Hence, since $m_\epsilon- m_{\epsilon K} = \scalinglog k + o_K(1)$ with $o_K(1) \to 0$ as $\epsilon\to 0$ for fixed $K$, it suffices to find an appropriate function $g(K)$, such that
\begin{equation}
\lim_{K\to \infty} \limsup_{\epsilon\to 0} \P(\max_{i} \coarseGauss(z_i)  \geq \scalinglog k - \gamma \log k - g(K) )=0.
\label{e:g-function-coarse-field}
\end{equation}
Since our Gaussian coarse field $\coarsesKGauss$ satisfies the same covariance estimates as the coarse field in \cite{MR3433630}
and since the continuous and $K$-independent fields $\Phi_s^\GFF$ and $X^h_s$ are again irrelevant,
   exactly the same argument as in the proof of the analogous result in \cite[Proposition~5.2]{MR3433630} shows that $g(K)=\alpha \log k$ for some $\alpha>0$ suffices.
Finally, writing
\begin{multline}
\P(\fineGFF(\bar z) \leq m_{\epsilon K}+g(K) ) = \P(\PhisKSG(\bar z)-\coarseGauss(\bar z)-\Phi_s^\Delta(\bar z) \leq m_{\epsilon K} +g(K)) \\
\leq \P(\max_{i} \coarseGauss(z_i) \geq \scalinglog  k + C_\epsilon(K) - \gamma \log K - g(K)) + \P(\PhisKSG(\bar z) < m_\epsilon - \gamma\log K),
\end{multline}
\eqref{e:GFF-fine-field-maximisers} follows by taking first $\epsilon \to 0$ and then $K\to \infty$, as the first probability vanishes by \eqref{e:g-function-coarse-field} and that the second probability vanishes by \eqref{e:max-PhisKSG-geq-m-eps}.
\end{proof}

\subsection{Approximation of the maximum}

In this section,
we adapt the approximation of the centred maximum by $\epsilon$-independent random variables
from \cite[Section~2.3]{MR3433630}.
Let
\begin{equation}
\maxPhisKSG = \max_{\Omega_\epsilon}\PhisKSG.
\end{equation}
Following \cite[Section 2.3]{MR3433630} we will approximate $\maxPhisKSG-m_\epsilon$ by
$G^*_{s,K}= \max_{i}G_{s,K}^i$ where 
\begin{equation}
G_{s,K}^i= \rho_{K}^i (Y_{K}^i+g(K)) + \limcoarseSG (\bu_\delta^i) - \scalinglog \log K,
\label{e:approximation-maximum}
\end{equation}
and also define
\begin{equation} \label{e:cZdef}
 \ZDM_{s,K}=m_\delta \frac{1}{K^2} \sum_{i=1}^{K^2} (\scalinglog \log K - \limcoarseSG(\bu_\delta^i)) e^{ -2\log K +\scaling \limcoarseSG(\bu_\delta^i) }
\end{equation}
Here the sequence $g(K)$ is as in Proposition~\ref{prop:finefield-asymptotics-delta}
and the random variables in \eqref{e:approximation-maximum} and \eqref{e:cZdef}
are all independent and defined as follows. 
\begin{itemize}
\item
  The random variables $\rho_{K}^i \in \{0,1\}$, $i=1,\dots, K^2$, are independent Bernoulli random variables
with $\bbP(\rho_{K}^i=1)=\alpha^*  m_\delta g(K) e^{-\scaling g(K)}$
with $\alpha^*$ and $m_\delta$ as in Proposition~\ref{prop:finefield-asymptotics-delta}.
\item
  The random variables $Y_{K}^i \geq 0$, $i=1,\dots, K^2$,
  are independent and characterised by $\bbP(Y_{K}^i\geq x)= \frac{g(K)+x}{g(K)}e^{- \scaling x}$ for $x\geq 0$.
\item
  The random field $\limcoarseSG(x)$, $x\in\Omega$, is the limit of the overall coarse field
  $\coarseSG\equiv \coarsesKGauss+X_s^h+\Phi_s^\SG$ as $\epsilon\to 0$.
  The existence of this limit is guaranteed by Theorem~\ref{thm:Phi-limit}
  and Lemma~\ref{lem:Xc-cov}.
\item
  The random variables $\bu_\delta^i \in V_i^\delta$, $i=1,\dots,K^2$,
  have the limiting distribution of the maximisers $z_i$ of $\fineGFF$ in $V_i^{\delta}$
  as $\epsilon\to 0$.
  Thus $\bu_\delta^i$ takes values in the $i$-th subbox of $\Omega=\T^2$ and,
  scaled to the unit square, its
  density is $\psi^\delta$ as in Proposition~\ref{prop:finefield-asymptotics-delta}.
\end{itemize}
Note that that the correction in \eqref{e:approximation-maximum} can be understood from
\begin{equation}
m_{\epsilon  K}-m_{\epsilon}= - \scalinglog \log K + O_K(\epsilon).
\label{e:correction-approximation}
\end{equation}

\begin{theorem}[Extension of  {\cite[Theorem 2.4]{MR3433630}}]
\label{thm:levy-convergence-psi}
With the notation introduced around \eqref{e:Levy},
\begin{equation}
\lim_{\delta \to 0} \limsup_{K\to \infty} \limsup_{\epsilon \to 0} d(\sqrtpi \max_{\Omega_\epsilon}\PhisKSG - m_\epsilon,G_{s,K}^*)=0.
\label{e:levy-convergence-psi}
\end{equation}
\end{theorem}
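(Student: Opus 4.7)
The plan is to follow the structure of the proof of \cite[Theorem~2.4]{MR3433630} for the GFF, with the key new ingredient being that the non-Gaussian contribution to the overall coarse field $\coarseSG = \coarsesKGauss + X_s^h + \Phi_s^\SG$ is still independent of the fine field $\fineGFF$ and admits uniform-in-$\epsilon$ smoothness (Theorem~\ref{thm:coupling-bd} combined with Lemmas~\ref{lem:hoelder-expectation} and \ref{lem:coarse-smooth}). First I would reduce the problem to the local maximisers: by Proposition~\ref{prop:maximiser-not-on-grid} we may restrict to $\DiscTorusGrid$, and by Proposition~\ref{prop:fine-field-maximisers} we may replace $\max_{\DiscTorusGrid}\PhisKSG$, up to error going to $0$ in probability as $\epsilon \to 0$, $K \to \infty$, $\delta \to 0$, by $\max_i \PhisKSG(z_i)$ where $z_i$ is the maximiser of $\fineGFF$ in $V_i^\delta$; moreover on this event $\sqrt{8\pi}\,\fineGFF(\bar z) \geq m_{\epsilon K} + g(K)$ with high probability.

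Second, using \eqref{e:PhisKSG-decomposition} and the identity $m_\epsilon - m_{\epsilon K} = \scalinglog \log K + o_K(1)$ from \eqref{e:correction-approximation}, I would rewrite
\begin{equation*}
  \sqrt{8\pi}\,\PhisKSG(z_i) - m_\epsilon
  = \bigl(\sqrt{8\pi}\,\fineGFF(z_i) - m_{\epsilon K}\bigr) + \sqrt{8\pi}\,\coarseSG(z_i) - \scalinglog \log K + o_K(1),
\end{equation*}
so that it suffices to study the joint distribution of the first and second terms on the right. Now condition on the $\sigma$-algebra generated by $\coarseSG$; since $\fineGFF$ is independent of $\coarseSG$, Proposition~\ref{prop:finefield-asymptotics-delta} gives, conditionally and box by box, that the event $\cA_{\delta,K,\epsilon}^i = \{\max_{V_i^\delta}\sqrt{8\pi}\,\fineGFF \geq m_{\epsilon K} + g(K)\}$ has asymptotic probability $\alpha^* m_\delta g(K) e^{-\sqrt{8\pi} g(K)} = \P(\rho_K^i = 1)$, the conditional overshoot has the law of $Y_K^i$ (uniformly in the threshold shift $x_K$), and the rescaled maximiser converges in law to the density $\psi^\delta$ independently across boxes. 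This produces the Bernoulli$\times$$(Y_K^i+g(K))$ structure in \eqref{e:approximation-maximum}.

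Third, I would upgrade the convergence of $\coarseSG(z_i)$ to $\limcoarseSG(\bu_\delta^i)$. By Theorem~\ref{thm:Phi-limit} and Lemma~\ref{lem:Xc-cov}, the coarse field converges in distribution in $C(\Omega^\delta)$ to $\limcoarseSG$, and its modulus of continuity is uniformly controlled in $\epsilon$ (Lemmas~\ref{lem:hoelder-expectation}--\ref{lem:coarse-smooth} and \eqref{e:PhiDelta-bd}). Since the joint distribution of $(z_i)_i$ converges to $(\bu_\delta^i)_i$ with density $\psi^\delta$ (conditionally on the corresponding exceedance events) and is independent of $\limcoarseSG$, a tightness-plus-continuity argument---evaluate $\coarseSG$ on a mesh finer than any modulus-of-continuity scale and pass to the limit---yields the joint weak convergence of $(\fineGFF(z_i), \coarseSG(z_i))_i$, restricted to the boxes where $\cA^i_{\delta,K,\epsilon}$ occurs, to the sequence $(\rho_K^i(Y_K^i+g(K))/\sqrt{8\pi} + m_{\epsilon K}/\sqrt{8\pi},\,\limcoarseSG(\bu_\delta^i))_i$. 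Assembling gives convergence in law of $\sqrt{8\pi}\max_i \PhisKSG(z_i) - m_\epsilon$ to $G^*_{s,K}$, which in turn gives \eqref{e:levy-convergence-psi} via the L\'evy metric.

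The main obstacle is the last step: in \cite{MR3433630} the coarse field is Gaussian, so one can exploit Gaussian comparison and exact conditional computations, whereas here $\coarseSG$ contains the non-Gaussian piece $\Phi_s^\SG$. The crucial observations that salvage the argument are that (i) $\Phi_s^\SG$ is measurable with respect to scales $u \geq s$ and hence independent of $\fineGFF$, so the Bernoulli/overshoot asymptotics for $\fineGFF$ decouple cleanly from $\coarseSG$; and (ii) by Theorem~\ref{thm:coupling-bd} and Lemma~\ref{lem:coarse-smooth} the field $\Phi_s^\SG = \Phi_s^\GFF + \Phi_s^\Delta$ is uniformly H\"older and uniformly bounded in $\epsilon$, so it behaves as a continuous random perturbation with no additional concentration machinery required beyond what was already developed for the Gaussian coarse part in \cite{MR3433630}.
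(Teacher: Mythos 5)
Your proposal has the right high-level architecture---reduce to local fine-field maximisers via Propositions~\ref{prop:maximiser-not-on-grid} and~\ref{prop:fine-field-maximisers}, feed in the fine-field Bernoulli/overshoot asymptotics from Proposition~\ref{prop:finefield-asymptotics-delta}, and upgrade the convergence of the lattice coarse field to $\limcoarseSG$---and this does match the skeleton of the paper's proof, which constructs $\nuKdel$ (law of $\restmaxPhisKSG - m_\epsilon$), $\bnuKdel$, and $\mu_{s,K}$ and chains L\'evy-distance bounds between them. However, there is a genuine gap in your third step, and your closing paragraph contains an incorrect assessment of what the non-Gaussianity costs.

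The gap is the step where you pass from $\coarseSG(z_i)$ to $\coarseSG$ evaluated at the coupled points $z_i'$ close to $z_i$ (and ultimately to $\limcoarseSG(\bu_\delta^i)$). You dismiss this with ``a tightness-plus-continuity argument,'' but this is precisely where the paper needs its Lemma~\ref{lem:coarse-field-perturbation}, and the argument is not soft. The difficulty is that the replacement errors $\coarseSG(z_i) - \coarseSG(z_i')$, while individually small, must be controlled \emph{after} taking a maximum over $K^2$ boxes, and a naive union bound or modulus-of-continuity bound is not strong enough. The paper handles this by introducing an auxiliary independent Gaussian field $\bar Z$ (resp.\ $Z^1, Z^2$), verifying a Sudakov--Fernique-type domination, and then invoking the robustness Lemma~\ref{lem:robust-maximum} together with Fernique and Borell--Tsirelson estimates to compare $\E[\xi_1^*]$ with $\E[\maxPhisKSG]$. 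Without this mechanism your argument does not yield a quantitative bound that survives the triple limit; note also that Proposition~\ref{prop:finefield-asymptotics-delta} only holds after $\lim_{K}\limsup_\epsilon$, so you cannot in fact obtain a clean weak limit at fixed $K$ and then apply the continuous-mapping theorem for $\max$ on $\R^{K^2}$.

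Your observation (ii) is wrong as stated: it is not true that ``no additional concentration machinery'' is needed beyond what \cite{MR3433630} already contains. The paper's coarse field $\coarseSG = \coarsesKGauss + X_s^h + \Phi_s^\SG$ is non-Gaussian precisely because of $\Phi_s^\SG$, and the Sudakov--Fernique comparison in \cite[Lemma~6.4]{MR3433630} is a Gaussian tool that does not directly apply. The paper's fix---which is one of the genuinely new points of the proof---is to condition on the non-Gaussian part $\Phi_s^\SG + X_s^h$, write $\xi_1(x,y) = \tilde\xi_1(x,y) + \Phi_s^\SG(x) + X_s^h(x)$ with $\tilde\xi_1$ Gaussian, and invoke the \emph{non-centred} version of the Sudakov--Fernique inequality from \cite[Theorem~2.2.3]{MR2319516} for the conditional measure. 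The uniform boundedness and H\"older regularity of $\Phi_s^\Delta$ from Theorem~\ref{thm:coupling-bd} are certainly used (e.g.\ to absorb the difference $\Phi_s^\SG(z_i) - \Phi_s^\SG(z_i')$ at the start of the proof of Lemma~\ref{lem:coarse-field-perturbation}), but the conditional Gaussian-comparison step is new and essential; it is not a byproduct of boundedness alone. You should identify Lemma~\ref{lem:coarse-field-perturbation} and its conditional-Sudakov--Fernique argument as a required ingredient and flesh out how it replaces your heuristic continuity argument.
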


The remainder of this section is devoted to the proof of Theorem~\ref{thm:levy-convergence-psi},
following \cite[Section~6.1]{MR3433630} closely.
The approach of the proof is to couple the field $\PhisKSG$ with the random variables $G_{s,K}^i$.
In \cite[Section~6.1]{MR3433630} such a coupling is constructed between the independent random variables $(\rho_K^i, Y_K^i, \bu_\delta^i)$ and the values and locations of the maxima of the fine field $\fineGFF$.
This is done for each box independently, so we restrict for now to the first box $V_1^{\delta}$ and the random variables $(\rho_K^1,Y_K^1, \bu_\delta^1)\equiv(\rho,Y, \bu_\delta)$. Since the field $\PhisKSG$ has the same fine field, we may use the same coupling to prove Theorem \ref{thm:levy-convergence-psi}. We first recall this coupling by stating the following three results for $\fineGFF$ from \cite{MR3433630}. Recall that we set $k=\log K$.

\begin{lemma}[Exactly {\cite[Lemma 6.1]{MR3433630}}]
  \label{lem:coupling-C*}
There exists a constant $C ^*>0$ such that, for all $K$,
\begin{equation}
  \limsup_{\epsilon \to 0}  \P(\max_{V_1} \fineGFF \geq m_{\epsilon K} + C^*k) \leq K^{-3}
  .
\end{equation}
\end{lemma}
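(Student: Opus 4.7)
The plan is to reduce the claim to the standard right-tail estimate for the maximum of a Dirichlet GFF on a box; indeed this lemma is already stated as exactly \cite[Lemma~6.1]{MR3433630}, so the real work consists in invoking the cited Gaussian tail bound with the right constant and then choosing $C^*$ large enough.

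First, I observe that, restricted to $V_1$, the field $\fineGFF$ is a centred Gaussian field whose covariance is the Green's function of the lattice Laplacian on $V_1\cap\Omega_\epsilon$ with Dirichlet boundary conditions on $\partial V_1 \subset \Gamma$ (and it is independent of its restriction to any $V_j$ with $j\neq 1$). Rescaling $x\mapsto Kx$ turns this into a standard massless Dirichlet GFF on $[0,1]^2$ with mesh $\epsilon K$, whose interior pointwise variance equals $\tfrac{1}{2\pi}\log\tfrac{1}{\epsilon K} + O(1)$; in particular the centring $m_{\epsilon K}$ in \eqref{e:definition-m} is precisely the Bramson--Ding--Zeitouni recentering for this field. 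The classical right-tail estimate in this normalisation (see, e.g., \cite[Lemma~3.8]{MR3433630}, with the uniform-in-mesh version in \cite[Proposition~1.1]{MR3729618}) then gives
\begin{equation*}
\limsup_{\epsilon\to 0}\P\bigl(\max_{V_1}\fineGFF \geq m_{\epsilon K} + y\bigr) \;\leq\; C(1+y)\,e^{-\sqrt{8\pi}\,y}, \qquad y\geq 0,
\end{equation*}
where the exponent $\sqrt{8\pi}$ coincides with the Gumbel rate appearing in Theorem~\ref{thm:convergence-to-Gumbel}.

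Applying this bound with $y = C^* k$ and $k = \log K$ yields $\limsup_\epsilon \P(\cdots) \leq C(1+C^*k)\,K^{-\sqrt{8\pi}\,C^*}$. Picking any $C^*$ with $\sqrt{8\pi}\,C^* > 3$, and then slightly enlarging it to absorb the polynomial prefactor $(1+C^*k)$ into a marginally larger effective exponent, gives the bound $K^{-3}$ for all $K$ large enough; the finitely many small $K$ are handled by further increasing $C^*$ (at $K=1$ the statement is trivial since $K^{-3}=1$). The only point requiring care is the matching of normalisations: one must verify that the pointwise variance of $\fineGFF$ on $V_1$ in our continuum normalisation produces exactly the scale that appears in $m_{\epsilon K}$, and this is automatic from the paper's conventions, most notably $\operatorname{Var}(\Phi^{\SG_\epsilon}(0))\sim\tfrac{1}{2\pi}\log\tfrac{1}{\epsilon}$. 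There is no real obstacle beyond invoking the cited tail estimate.
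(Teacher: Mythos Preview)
Your proposal is correct and follows exactly the route of the cited lemma: the paper itself gives no proof, simply marking the statement as ``Exactly \cite[Lemma~6.1]{MR3433630}'', and what you have written is the standard derivation of that result from the Gaussian right-tail bound for the Dirichlet GFF on a box. The normalisation check you mention is indeed the only point of care, and it works as you say.
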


Set $\theta_K(x)= e^{\scaling (x+g(K))}/(g(K)+x)$ for $x\geq 0$. 

\begin{lemma}[Exactly {\cite[Lemma 6.2]{MR3433630}}]
  \label{lem:coupling-alpha}
There exists a sequence $(\epsilon_K)_K$, $\epsilon_K \to 0$ as $K\to \infty$ and a sequence of numbers $\alpha_{1,K,\epsilon}< \alpha_{2,K,\epsilon}< \ldots < \alpha_{C^*k,K,\epsilon}$ satisfying
\begin{equation}
\abs{\alpha_{j,K,\epsilon}-(g(K) +j-1)} \leq \epsilon_K
\end{equation}
such that for all $j=1,\ldots, C^*k$,
\begin{equation}
\theta_K(0)^{-1} \P(Y_K^i \in [j-1,j])= \P(\max_{V_1} \fineGFF-m_{\epsilon K}\in [\alpha_{j,K,\epsilon},\alpha_{j+1,K,\epsilon})).
\end{equation}
\end{lemma}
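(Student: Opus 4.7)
The plan is to define each $\alpha_{j,K,\epsilon}$ as a quantile of the distribution of $\max_{V_1}\fineGFF - m_{\epsilon K}$ matched to the prescribed mass on the right-hand side, and then to appeal to the uniform tail asymptotic contained in Proposition~\ref{prop:finefield-asymptotics-delta} (with Lemma~\ref{lem:coupling-C*} used to truncate the extreme tail) to locate these quantiles within $o(1)$ of $g(K)+j-1$.

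Concretely, set
\begin{equation*}
F_{K,\epsilon}(x) = \P(\max_{V_1}\fineGFF - m_{\epsilon K} \geq x).
\end{equation*}
Since for $\epsilon>0$ the joint law of the finitely many Gaussian variables $\{\fineGFF(y): y \in V_1 \cap \Omega_\epsilon\}$ is non-degenerate, $F_{K,\epsilon}$ is continuous and strictly decreasing, and $\alpha_{j,K,\epsilon}$ can be defined as the unique solution of
\begin{equation*}
F_{K,\epsilon}(\alpha_{j,K,\epsilon}) = \theta_K(0)^{-1}\,\P(Y_K^i \geq j-1).
\end{equation*}
The distributional identity in the lemma then follows by telescoping, and monotonicity of both sides in $j$ automatically yields $\alpha_{1,K,\epsilon}<\alpha_{2,K,\epsilon}<\cdots<\alpha_{C^*k+1,K,\epsilon}$.

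For the quantitative bound, a direct computation from the definition of $Y_K^i$ gives
\begin{equation*}
\theta_K(0)^{-1}\,\P(Y_K^i \geq j-1) = (g(K)+j-1)\,e^{-\scaling(g(K)+j-1)},
\end{equation*}
while Proposition~\ref{prop:finefield-asymptotics-delta}, applied with the deterministic sequence $x_K = j-1$ and $A$ chosen so that $\int_A\psi^\delta = 1$, yields
\begin{equation*}
F_{K,\epsilon}(g(K)+x_K) = (g(K)+x_K)\,e^{-\scaling(g(K)+x_K)}\,(1+\eta_{K,\epsilon}(x_K))
\end{equation*}
after absorbing a fixed $O(1)$ prefactor into a uniform additive shift of all $\alpha_{j,K,\epsilon}$. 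The key point is that, by the "uniform in the sequence $(x_K)_K$" clause of Proposition~\ref{prop:finefield-asymptotics-delta}, one has $\sup_{0\leq x\leq C^*k}|\eta_{K,\epsilon}(x)|\to 0$ as $K\to\infty$ followed by $\epsilon\to 0$, while Lemma~\ref{lem:coupling-C*} ensures that truncating at $x = C^*k$ only discards tail probability of size at most $K^{-3}$. Combining the two displays gives $F_{K,\epsilon}(\alpha_{j,K,\epsilon})/F_{K,\epsilon}(g(K)+j-1) = 1+o(1)$ uniformly in $j\leq C^*k$; and since the same uniform asymptotic yields $F_{K,\epsilon}(y+\Delta)/F_{K,\epsilon}(y) = e^{-\scaling\Delta}(1+o(1))$ for $|\Delta|$ bounded, inverting this monotone relation produces $|\alpha_{j,K,\epsilon}-(g(K)+j-1)| = o(1)$ uniformly in $j$, and setting $\epsilon_K$ equal to the supremum of these errors proves the lemma.

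The main obstacle is precisely this uniformity over the growing window $\{1,\dots,C^*k\}$: pointwise tail asymptotics at a single height would control only one $\alpha_{j,K,\epsilon}$, whereas the proof needs all of them simultaneously. Thus the argument rests squarely on the explicit uniform-in-$(x_K)$ clause of Proposition~\ref{prop:finefield-asymptotics-delta}, which upgrades pointwise convergence of the tail distribution function to quantile control on the full window, with Lemma~\ref{lem:coupling-C*} cutting off the upper end of the window so that the uniform regime is applicable throughout.
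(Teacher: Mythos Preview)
The paper does not supply its own proof of this lemma; it is quoted verbatim from \cite[Lemma~6.2]{MR3433630}. Your quantile-matching scheme --- defining $\alpha_{j,K,\epsilon}$ as the unique solution of $F_{K,\epsilon}(\alpha_{j,K,\epsilon})=\theta_K(0)^{-1}\P(Y_K^i\geq j-1)$ and then locating it via the uniform-in-$x_K$ tail asymptotic of Proposition~\ref{prop:finefield-asymptotics-delta} --- is the correct strategy and is exactly how the original reference proceeds.

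There is one genuine slip in your write-up. You say the constant prefactor coming from Proposition~\ref{prop:finefield-asymptotics-delta} can be ``absorbed into a uniform additive shift of all $\alpha_{j,K,\epsilon}$''. But taking $\delta=0$ (so $m_0=1$) in that proposition gives
\[
F_{K,\epsilon}(g(K)+x)=\alpha^*\,(g(K)+x)\,e^{-\scaling(g(K)+x)}\,(1+o(1)),
\]
and inverting the equation $\alpha^*\,\alpha_j e^{-\scaling\alpha_j}=(g(K)+j-1)e^{-\scaling(g(K)+j-1)}$ yields
\[
\alpha_{j,K,\epsilon}=g(K)+j-1+\tfrac{1}{\scaling}\log\alpha^*+o(1),
\]
which does \emph{not} satisfy $|\alpha_{j,K,\epsilon}-(g(K)+j-1)|\to 0$ unless $\alpha^*=1$. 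The resolution is not to absorb the prefactor as a shift but to notice that the same constant $\alpha^*$ (and $m_\delta$, once one reads $\max_{V_1}$ as $\max_{V_1^\delta}$ consistently with how the lemma is used in Proposition~\ref{prop:coupling-with-G*}, where $z_i$ maximises over $V_i^\delta$) is built into the matching from the outset: in the coupling one actually equates $\P(\rho_K^i=1)\P(Y_K^i\in[j-1,j))=\alpha^* m_\delta\,\theta_K(0)^{-1}\P(Y_K^i\in[j-1,j))$ with the tail-interval probability, so the prefactor cancels exactly and no shift arises. You should verify the precise left-hand side in \cite{MR3433630} rather than rely on the absorption step, which as written does not deliver $\epsilon_K\to 0$.
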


Recall that $\fineGFF(z_1)= \max_{V_1^{\delta}} \fineGFF$.
The following coupling result relates  the fine field maximum
to the $\epsilon$-independent random variables in \eqref{e:approximation-maximum}.

\begin{proposition}[Exactly {\cite[Proposition 6.3]{MR3433630}}]
\label{prop:coupling-with-G*}
Let $C^*$ be as in Lemma \ref{lem:coupling-C*} and let $\alpha_{j,K,\epsilon}$, $j=1,\ldots C^*k$, be as in Lemma \ref{lem:coupling-alpha}.
There exists a sequence $(\bar \epsilon_K)_K$ depending on $\delta$,  $\bar \epsilon_K \to 0$ as $K\to \infty$, such that $(\bar \rho_{\delta,K,\epsilon}, \fineGFF(z_1)-m_{\epsilon K}, z_1)$ and $(\rho, Y, \bu_\delta)$ can be constructed on the same probability space with $\rho=\bar \rho_{\delta,K,\epsilon}$ holding with probability 1, and such that, on the event $\{\fineGFF(z_1) - m_{\epsilon K} \leq \alpha_{C^*k,K,\epsilon}\}$,
\begin{equation}
\rho\abs{g(K) +Y -(\fineGFF(z_1)-m_{\epsilon K})} + K\abs{\bu_\delta -  z_1} \leq \bar \epsilon_K.
\label{e:coupling-distances}
\end{equation}
\end{proposition}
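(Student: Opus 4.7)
The plan is to adapt the argument of Biskup--Louidor verbatim, since the fine field $\fineGFF$ is exactly a Dirichlet GFF on $V_1$ and agrees with the one studied in \cite{MR3433630}; the statement is $\epsilon$-dependent but entirely Gaussian, so no sine-Gordon input enters. The construction splits into three independent coupling problems: the Bernoulli $\rho$, the excess $Y$ of the fine-field maximum, and the location $\bu_\delta$ of the maximiser, all built on an enlarged probability space.

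\textbf{Step 1 (Bernoulli coupling).} First define the would-be indicator $\tilde\rho = \mathbf 1\{\max_{V_1^\delta}\fineGFF \geq m_{\epsilon K} + g(K)\}$. By Proposition~\ref{prop:finefield-asymptotics-delta}, $\P(\tilde\rho = 1) = \alpha^* m_\delta g(K) e^{-\scaling g(K)}(1+o(1))$ as $\epsilon\to 0$ then $K\to\infty$, and this matches $\P(\rho=1)$ up to a factor $1+o(1)$. Enlarging the probability space by an independent uniform $U\in[0,1]$, we adjust $\tilde\rho$ on an event of vanishing probability to produce $\bar\rho_{\delta,K,\epsilon}$ with exactly the law of $\rho$; we then set $\rho = \bar\rho_{\delta,K,\epsilon}$ almost surely. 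The small discrepancy contributes $o(1)$ to $\bar\epsilon_K$.

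\textbf{Step 2 (quantile coupling of the excess).} On the event $\{\bar\rho = 1\}$, Lemma~\ref{lem:coupling-alpha} gives matched probabilities of the level sets $\{\fineGFF(z_1)-m_{\epsilon K}\in[\alpha_{j,K,\epsilon},\alpha_{j+1,K,\epsilon})\}$ and $\{Y\in[j-1,j)\}$ for $j=1,\dots,C^*k$, after normalisation. Use a quantile (monotone rearrangement) coupling within each matched level: if $\fineGFF(z_1) - m_{\epsilon K} = \alpha_{j,K,\epsilon} + \tau(\alpha_{j+1,K,\epsilon}-\alpha_{j,K,\epsilon})$ for $\tau\in[0,1)$, set the corresponding $Y = (j-1) + \tau$. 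Then
\[
|g(K)+Y - (\fineGFF(z_1)-m_{\epsilon K})|
\leq |\alpha_{j,K,\epsilon}-(g(K)+j-1)| + \tau\,|\alpha_{j+1,K,\epsilon}-\alpha_{j,K,\epsilon} - 1| \leq 3\epsilon_K
\]
by the approximation bound in Lemma~\ref{lem:coupling-alpha}. The complementary event $\{\fineGFF(z_1)-m_{\epsilon K} > \alpha_{C^*k,K,\epsilon}\}$, where the bound \eqref{e:coupling-distances} is not required, has probability at most $K^{-3}$ by Lemma~\ref{lem:coupling-C*}.

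\textbf{Step 3 (location coupling) and combination.} By the uniform version of Proposition~\ref{prop:finefield-asymptotics-delta}, conditional on $\bar\rho=1$ and on the value of $\fineGFF(z_1)$, the rescaled maximiser $K z_1$ converges in distribution to a random variable with density $\psi^\delta$ on $(\delta,1-\delta)^2$; moreover this convergence is uniform in the excess value. A standard Skorokhod-type construction (e.g.\ via the inverse of the joint conditional CDF on a diagonal subsequence $\epsilon\to 0$, $K\to\infty$) produces $\bu_\delta$ on the same space with $K|\bu_\delta - z_1|\leq \bar\epsilon_K'\to 0$, jointly with the previous coupling. Setting $\bar\epsilon_K$ to the sum of the two error contributions gives \eqref{e:coupling-distances}.

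The main technical obstacle will be ensuring that the quantile coupling of Step 2 and the location coupling of Step 3 can be carried out \emph{jointly}, i.e., that the Skorokhod construction respects the level-set decomposition used for $Y$. This requires the asymptotics \eqref{e:asymptotic-cond-prob} in the form of joint convergence of $(\fineGFF(z_1)-m_{\epsilon K}, K z_1)$ conditional on $\{\bar\rho = 1\}$, which is delivered by the uniformity statement in Proposition~\ref{prop:finefield-asymptotics-delta}. Everything else is a direct transcription of the proof in \cite[Proposition~6.3]{MR3433630}.
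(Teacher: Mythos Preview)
The paper does not supply a proof of this proposition at all: it is stated as ``Exactly \cite[Proposition~6.3]{MR3433630}'' and used as a black box, since the fine field $\fineGFF$ coincides with the Dirichlet GFF of that reference. Your three-step outline (Bernoulli adjustment, quantile coupling of the excess via Lemma~\ref{lem:coupling-alpha}, and location coupling via the density $\psi^\delta$ from Proposition~\ref{prop:finefield-asymptotics-delta}) is a faithful sketch of how the construction in \cite{MR3433630} proceeds, and you correctly identify that the joint convergence in \eqref{e:asymptotic-cond-prob} is what makes Steps~2 and~3 compatible. One small correction: the reference \cite{MR3433630} is Bramson--Ding--Zeitouni, not Biskup--Louidor.
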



As in \cite{MR3433630} we need a continuity result which shows that the maximum of $\PhisKSG$
effectively does not change if the coarse field is evaluated close to the fine field maximum.
The proof of the GFF analogue of this statement in \cite[Lemma~6.4]{MR3433630} and \cite[Lemma 10.3.3]{acostathesis}
is based on Gaussian techniques for the coarse field,
in particular the Sudakov--Fernique inequality.
Our coarse field  is non-Gaussian, but we show that using the decomposition \eqref{e:PhisKSG-decomposition}
we may condition on the non-Gaussian part $\Phi_s^\SG+X_s^h$
and apply the Sudakov--Fernique inequality with nonzero mean to the conditional measure.
Therefore, we denote  $\tilde \eta = \fineGFF + \coarsesKGauss$ so that
\eqref{e:PhisKSG-decomposition} reads $\PhisKSG =  \tilde \eta + \Phi_s^\SG + X_s^h$.
Note that in both decompositions the terms are independent.
Also recall that $\coarseSG \equiv \coarsesKGauss+X_s^h+\Phi_s^\SG$
is the overall (non-Gaussian) coarse part of $\PhisKSG$.

\begin{lemma}[Version of {\cite[Lemma~6.4]{MR3433630}}]  \label{lem:coarse-field-perturbation}
  Let $z_i$ be as in Proposition \ref{prop:fine-field-maximisers},
  let $(\bar \epsilon_K)_K$ be as in Proposition \ref{prop:coupling-with-G*},
  and let $z'_i$, $i=1,\ldots,K^2$, denote a family of independent random variables chosen so that $z'_i$ is measurable with respect to $\sigma( \fineGFF(v)\colon v \in V_i^{\delta})$ and that $K \abs{z_i-z'_i} \leq \bar \epsilon_K$.
  Then
\begin{equation}\label{e:coarse-field-perturbation}
\lim_{K\to \infty} \limsup_{\epsilon \to 0} d(\max_{i}(\fineGFF(z_i) + \coarseSG(z_i)),\max_{i}(\fineGFF(z_i) + \coarseSG(z'_i)))=0.
\end{equation}
\end{lemma}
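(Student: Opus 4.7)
The plan is to peel off the non-Gaussian contribution to $\coarseSG$ and reduce the remaining estimate to the Gaussian comparison of \cite[Lemma~6.4]{MR3433630}. Decompose $\coarseSG = \coarsesKGauss + G$ with $G := X_s^h + \Phi_s^\SG$ the non-Gaussian summand; both $\coarsesKGauss$ and $G$ are independent of $\fineGFF$, and $\coarsesKGauss$ is also independent of $G$. Using the elementary bound $|\max_i A_i - \max_i B_i| \le \max_i |A_i - B_i|$, the difference of the two maxima appearing in \eqref{e:coarse-field-perturbation} is at most
\begin{equation}
  \max_i |G(z_i) - G(z_i')| + \bigl|\max_i(\fineGFF(z_i) + \coarsesKGauss(z_i)) - \max_i(\fineGFF(z_i) + \coarsesKGauss(z_i'))\bigr|.
\end{equation}

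For the first term, Lemma~\ref{lem:hoelder-expectation} supplies a random H\"older seminorm of $X_s^h$ of some positive order $\alpha$ with finite expectation uniformly in $\epsilon$, while \eqref{e:phi-smooth} gives an analogous random Lipschitz seminorm for $\Phi_s^\SG$. Combined with $|z_i - z_i'| \le \bar\epsilon_K/K$ from Proposition~\ref{prop:coupling-with-G*} (applied independently in each of the $K^2$ boxes), this immediately yields $\max_i |G(z_i) - G(z_i')| = O((\bar\epsilon_K/K)^\alpha)$, which tends to zero in probability as $K \to \infty$.

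The second, purely Gaussian term is handled by the same Sudakov--Fernique/chaining argument as in \cite[Lemma~6.4]{MR3433630}. That argument relies solely on covariance estimates for the coarse field of exactly the form stated in our Lemma~\ref{lem:Xc-cov}, together with the independence between the random points $z_i, z_i'$ (which are measurable with respect to $\fineGFF$) and the Gaussian coarse field. Since both structural ingredients carry over verbatim to our setting, the argument applies without modification.

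The main obstacle in a unified treatment would be the non-Gaussian shift contributed by $G$: the hint alluded to in the excerpt---conditioning on $X_s^h + \Phi_s^\SG$ and invoking Sudakov--Fernique with nonzero mean---applies precisely to such a unified version, where one conditions on $G$ to reduce to a Gaussian process with H\"older-continuous deterministic mean, and then absorbs the mean differences $G(z_i) - G(z_i')$ into an additive $O((\bar\epsilon_K/K)^\alpha)$ error. In either formulation, the combined estimates show that the difference of the two maxima in \eqref{e:coarse-field-perturbation} tends to zero in probability, which implies the claimed vanishing of the L\'evy distance.
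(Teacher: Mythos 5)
Your displayed inequality is false, and this invalidates the two-term splitting that the rest of the argument rests on. Writing $G = X_s^h + \Phi_s^\SG$, you claim
\[
\Big|\max_i\big(\fineGFF(z_i)+\coarsesKGauss(z_i)+G(z_i)\big) - \max_i\big(\fineGFF(z_i)+\coarsesKGauss(z_i')+G(z_i')\big)\Big|
\le \max_i|G(z_i)-G(z_i')| + \Big|\max_i\big(\fineGFF(z_i)+\coarsesKGauss(z_i)\big)-\max_i\big(\fineGFF(z_i)+\coarsesKGauss(z_i')\big)\Big|.
\]
But dropping the shift $G(z_i)$ from inside both maxima is not justified: if, say, on $i=1$ the Gaussian part is small but $G(z_1)$ is huge, then $\max_i$ on the left sees index $1$ while $\max_i$ on the right sees a different index, and the two sides decouple (you can make the left side $1$ while both terms on the right are $0$). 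What the H\"older bound on $G$ \emph{does} legitimately give, via the triangle inequality, is that one may replace $G(z_i')$ by $G(z_i)$ at cost $\max_i|G(z_i)-G(z_i')|$, reducing to the comparison of $M=\max_i(\fineGFF(z_i)+\coarsesKGauss(z_i)+G(z_i))$ and $M'=\max_i(\fineGFF(z_i)+\coarsesKGauss(z_i')+G(z_i))$ — but $M'$ still carries the non-Gaussian shift $G(z_i)$ inside the maximum, so the remaining term is \emph{not} the purely Gaussian statement of \cite[Lemma~6.4]{MR3433630} and cannot be cited ``without modification''.

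Note also that even the weaker (correct) consequence $|\max A-\max B|\le \max_i|\coarsesKGauss(z_i)-\coarsesKGauss(z_i')|+\max_i|G(z_i)-G(z_i')|$ does not close the argument: by Lemma~\ref{lem:Xc-cov} each Gaussian increment has standard deviation $O(\bar\epsilon_K)$, so the maximum over $K^2$ boxes is of order $\bar\epsilon_K\sqrt{\log K}$, and Proposition~\ref{prop:coupling-with-G*} gives no rate for $\bar\epsilon_K\to 0$. This is exactly why a Sudakov--Fernique comparison against the global maximum (which avoids the $\sqrt{\log K}$ loss) is needed. The mechanism you sketch as ``the unified treatment'' — conditioning on $G$, viewing the Gaussian comparison fields with the deterministic shift $G(z_i)$ as non-centred Gaussian processes, and applying the non-centred Sudakov--Fernique inequality together with Lemma~\ref{lem:robust-maximum} — is not an alternative; it is the proof. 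You should promote that paragraph to the main argument and drop the invalid splitting.
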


\begin{proof}
Note that by Lemma~\ref{lem:coarse-smooth} and Lemma \ref{lem:hoelder-expectation}
we may immediately replace $\Phi_s^\SG(z'_i)$ by $\Phi_s^\SG(z_i)$ and $X_s^h(z_i')$ by $X_s^h(z_i)$  in
\eqref{e:coarse-field-perturbation} since their H\"older continuity implies that, for any $\kappa >0$,
\begin{align}
  \lim_{K\to\infty}\limsup_{\epsilon \to 0} \P(\max_i|\Phi_s^\SG(z_i) - \Phi_s^\SG(z'_i) |>\kappa) = 0,
  \\
  \lim_{K\to\infty}\limsup_{\epsilon \to 0} \P(\max_i|X_s^h(z_i) - X_s^h(z'_i) |>\kappa) = 0.
\end{align}
Thus,
with $M=\max_i (\fineGFF(z_i) + \Phi_s^\SG(z_i) +X_s^h(z_i)+  \coarsesKGauss(z_i))$
and $M'=\max_i(\fineGFF(z_i) + \Phi_s^\SG(z_i) + X_s^h(z_i) + \coarsesKGauss(z'_i))$,
it suffices to prove
\begin{equation}
  \label{e:M-M-prime-in-prob}
  \lim_{K\to\infty}\limsup_{\epsilon\to 0}\P(\abs{M-M'}>\kappa) =0.
\end{equation}

In the remainder of the proof we proceed along similar steps as in the proofs of \cite[Lemma 6.4]{MR3433630} and \cite[Lemma 10.3.3]{acostathesis}, i.e.\ we write 
\begin{align}
\P&(|M-M'|>\tilde \epsilon) \leq \P(M-M'> \kappa) +
\P(M'-M>\kappa )\equiv  p_1 + p_2
\end{align}
and show that both probabilities on the right-hand side converge to $0$ when taking first $\epsilon \to 0$ and then $K\to \infty$. 
In each case we introduce fields which reflect the coarse field perturbation and condition on the non-Gaussian, 
but independent part $\Phi_s^\SG$. This allows to connect to corresponding unconditional results proved in \cite{MR3433630}  and \cite{acostathesis}. 
Define $V_{\bar \epsilon_K/K}^{\times 2}= \{(u,v) \colon u,v \in \Omega_\epsilon^{\delta},  \abs{u-v} \leq \bar \epsilon_K/K\}$.

For $p_1$, we use a similar argument as in \cite[Lemma 5.4]{MR3433630}. Restricting to the box $V_j^\delta$ where $M$ is attained, we have
\begin{align}
\label{e:p1-estimate}
p_1 
\leq\P(\coarsesKGauss(z_j)  - \coarsesKGauss (z'_j) >\kappa).
\end{align}
Now, recall that $\tilde \eta = \fineGFF + \coarsesKGauss$ and set 
\begin{align}
\xi_1(x,y)&=\PhisKSG(x) +\coarsesKGauss(x) - \coarsesKGauss(y)\\
\tilde \xi_1(x,y)&= \tilde \eta(x) + \coarsesKGauss(x)-\coarsesKGauss(y),
\end{align}
for $(x,y)\in V_{\bar \epsilon_K/K}^{\times 2}$. Moreover, define $\xi_1^*= \max_{V_{\bar\epsilon_K/K}^{\times 2}} \xi_1$ and similarly $\tilde \xi_1^*$.
Then \eqref{e:p1-estimate} and Markov's inequality imply
\begin{align}
p_1 \leq  \P(\xi_1(z_j, z'_j) - \PhisKSG(z_j) >\kappa) \leq \P(\xi_1^*- \PhisKSG(z_j) > \kappa)  \leq \E[\xi_1^* - \PhisKSG(z_j)]/\kappa.
\label{e:conv-p1}
\end{align}
Since the event $\{\PhisKSG(z_j) \geq \maxPhisKSG - \kappa'\}$  occurs with high probability by Proposition \ref{prop:fine-field-maximisers}, 
it suffices to prove that $\E[\xi^* - \maxPhisKSG] \to 0$ as first $\epsilon \to 0$ and then $K\to \infty$.

Note that conditional on $\Phi_s^\SG+X_s^h$, we have that
$\xi_1$ is a Gaussian field indexed by $V_{\bar \epsilon_K/K}^{\times 2}$ and that 
\begin{equation}
\label{e:xi-tilde-xi}
\xi_1(x,y)= \tilde \xi_1(x,y) + \Phi_s^\SG(x) + X_s^h(x).
\end{equation}
Moreover, $\tilde \xi_1$ is the analogue to the field $Y$ defined in \cite[page 112]{MR3433630}, except that $Y$ is defined for the set 
$V_{1/K,\delta}^{\times 2}= \{(u,v)\colon u,v \in V_i^\delta \text{~for some~} i \}$.
Choosing $K$ large enough, we can ensure that $V_{\bar \epsilon_K/K}^{\times 2} \subseteq V_{1/K,\delta}^{\times 2}$.
Therefore, the estimate in \cite[Lemma 5.3]{MR3433630} holds for $\tilde \xi_1$, i.e.\ there is a constant $c_1$ which is independent of $\epsilon$ and $K$, such that for all $(x,y),(x',y') \in V_{1/K,\delta}^{\times 2}$,
\begin{align}
\label{e:tilde-xi-estimate}
\E[(\tilde \xi_1(x,y) -\tilde \xi_1(x',y'))^2] \leq \E[(\tilde \eta(x)-\tilde \eta(x'))^2] + c_1 ({\mathbf 1}_{x=x'} (K|y-y'|)^2 + {\mathbf 1}_{x\neq x'} ).
\end{align} 
Indeed, examining the proof of \cite[Lemma 5.3]{MR3433630}, 
we see that only the independence of the decomposition $\tilde \eta= \fineGFF + \coarsesKGauss$, the estimates in \cite[Lemma 3.1]{MR3433630}, which we use with $\Phi_0^\GFF$, and the estimates in \cite[Lemma 3.10]{MR3433630} for the coarse field enter, which are completely analogous to our estimates for $\coarsesKGauss$;
see Lemma~\ref{lem:Xc-cov}. 

We want to replace $\xi_1^*$ in \eqref{e:conv-p1} by a field of the same structure, but with a coarse field perturbation that is independent of $\PhisKSG$, so that Lemma \ref{lem:robust-maximum} applies. Hence, we define 
\begin{align}
\psi_1(x,y)&=\PhisKSG + C_1\bar Z(x,y)\\
\tilde \psi_1(x,y) &= \tilde \eta(x) + C_1\bar Z(x,y),
\end{align}
for $(x,y)\in V_{\bar \epsilon_K/K}^{\times 2}$,
where $C_1>0$ is some constant to be determined later and $\bar Z$ is as in \cite[page 114]{MR3433630} with
\begin{align}
\label{e:bar-Z-1}
\E[(\bar Z(x,y) - \bar Z(x',y') )^2] &\leq C_\delta K |y-y'| \text{~for~} y,y' \in V_i^\delta, \\
\label{e:bar-Z-2}\
\E[\bar Z (x,y) ^2]&\leq C_\delta K |x-y| \text{~for~} x,y \in V_i^\delta.
\end{align}
and $(\bar Z(x,y))_y$ and $(\bar Z(x',y))_y$ being independent for $x\neq x'$. 
As for $\xi_1$, the field $\psi_1$ is Gaussian conditional on $\Phi_s^\SG+X_s^h$ and 
\begin{equation}
\label{e:psi-tilde-psi}
\psi_1(x,y)= \tilde \psi_1(x,y) + \Phi_s^\SG(x) + X_s^h(x).
\end{equation}
Moreover, note that $\tilde \psi_1$ is the analogue to $\bar Y$ in \cite[page 114]{MR3433630}.
Using \eqref{e:bar-Z-1}--\eqref{e:bar-Z-2} together with \eqref{e:tilde-xi-estimate}, it is easy to see that we can choose $C_1$ large enough such that for $(x,y), (x,'y')\in V_{\bar \epsilon_K/K}^{\times 2}$, we have
\begin{equation}
\label{e:sud-fern-assumptions}
\E[(\tilde \xi_1(x,y) - \tilde \xi_1(x',y'))^2]\leq \E[(\tilde \psi_1(x,y) - \tilde \psi_1(x',y'))^2].
\end{equation}

By independence in \eqref{e:xi-tilde-xi} and \eqref{e:psi-tilde-psi} and basic properties of conditional expectation,
we have that for all $(x,y), (x',y')\in  V_{\bar \epsilon_K/K}^{\times 2}$
\begin{equation}
  \E[(\xi_1(x,y) - \xi_1(x',y'))^2|\Phi_s^\SG+X_s^h]\leq \E[(\psi_1(x,y) - \psi_1(x',y'))^2 | \Phi_s^\SG+X_s^h].
\end{equation}
Hence, by the Sudakov--Fernique inequality for non-centred Gaussian fields as stated in
\cite[Theorem~2.2.3]{MR2319516},
\begin{equation}
\E[ \xi_1^*| \Phi_s^\SG+X_s^h] \leq \E[\psi_1^* |\Phi_s^\SG+X_s^h].
\end{equation}
Thus, taking expectation on both sides, we obtain
\begin{equation}
\E[\xi_1^*] \leq \E[\psi_1^*].
\end{equation}

Going back to \eqref{e:conv-p1}, we see that it suffices to prove that
\begin{equation}
\E[\psi_1^*-\maxPhisKSG] \to 0
\end{equation}
as first $\epsilon \to 0$ and then $K\to 0$.
To this end, denote by $B_{\kappa}(x)$ the ball of size $\kappa$ around $x$ intersected with $\Omega_\epsilon^\delta$ and set 
\begin{equation}
\theta_1(x)= C_1 \max_{y\in B_{\bar \epsilon_K/K}(x)} \bar Z(x,y)
\end{equation}
for $x\in \Omega_\epsilon^\delta$.
Then, by \eqref{e:bar-Z-1}, we have for $y,y' \in B_{\bar \epsilon_K/K}(x)$
\begin{equation}
\E[(\bar Z(x,y)-\bar Z(x,y'))^2] 
\leq C_\delta \frac{|y-y'|}{\bar \epsilon_K/K} \bar \epsilon_K.
\end{equation}
Thus, the Fernique criterion as stated in \cite[Lemma 3.5]{MR3433630} applied to the Gaussian field $\bar Z(x,\cdot)/\sqrt{\bar \epsilon_K}$ in $B_{\bar \epsilon_K/K}(x)$ yields
\begin{equation}
\E[\theta_1(x)]\leq C \sqrt{\epsilon_K}
\end{equation}
for some constant $C$.
Moreover, by \eqref{e:bar-Z-2}, we have
$\E[\bar Z(x,y)^2] \leq C_\delta \bar \epsilon_K$ for all $y\in B_{\bar \epsilon_K/K}(x)$. Hence, it follows from the Borell-Tsirelson concentration inequality as stated in \cite[Lemma 3.4]{MR3433630} that
\begin{equation}
\P(\theta_x-\E[\theta_x]>y \sqrt{\bar \epsilon_K}) \leq 2 e^{-\frac{y^2}{2 C_\delta}}.
\end{equation}
Setting $\phi_x= \theta_x/ C \sqrt{\bar\epsilon_K}$ this implies
\begin{equation}
\P(\phi_x -1 > y) \leq 2 e^{-C'y^2}
\end{equation} 
for some absolute constant $C' >0$.
Now an application of Lemma \ref{lem:robust-maximum} (with the maximum over $\Omega_\epsilon \setminus \Omega_\epsilon^\delta$ instead of $\Omega_\epsilon$) allows us to deduce that
\begin{equation}
\E[\psi_1^*] \leq \E[\maxPhisKSG]  + \bar \epsilon_K^q
\end{equation}
for some power $q>0$ depending on $C'$. This completes the proof of $p_1 \to 0$.

To see that $p_2$ converges to $0$, when taking the said limits,
we use a similar argument but with the fields involved defined differently. 
Now, we set
\begin{align}
\xi_2(x,y)&=\PhisKSG(x) + \coarsesKGauss(y) - \coarsesKGauss(x)\\
\tilde \xi_2(x,y) &= \tilde \eta (x) + \coarsesKGauss(y) - \coarsesKGauss(x)
\end{align}
for $(x,y)\in V_{\bar \epsilon_K/K}^{\times 2}$ and define $\xi_2^*= \max_{V_{\bar \epsilon_K/K}^{\times 2}} \xi_2$ and similarly $\tilde \xi_2^*$.
Then, Markov's inequality implies
\begin{align}
p_2 &
\leq \P(\xi_2(z_i,z'_i) - M > \kappa) \leq \E[\xi_2^* - M]/\kappa
\label{e:conv-p2}
\end{align}
As above we use Proposition \ref{prop:fine-field-maximisers} to replace the random variable $M$ by the overall maximum $\maxPhisKSG$.
Hence, it suffices to prove that $\E[\xi^* - \maxPhisKSG]\to 0$ as first $\epsilon \to 0$ and then $K \to \infty$.

To this end, we follow the ideas in the proof of \cite[Lemma 10.3.3]{acostathesis},
 i.e.\ we subdivide  $\DiscTorusGrid$ into  subboxes of sidelength $\bar \epsilon_K/K$ 
and construct Gaussian fields $Z^l$ , $l=1,2$ on $\Omega^\delta$ independent of $\PhisKSG$ and independent of each other which satisfy
\begin{align}
\label{e:Z-1}
\var(Z^l(x))&= C\bar \epsilon_K \text{~for~} x\in V_i^\delta \\
\label{e:Z-2}
c_2 K \abs{x-y} \leq \E[(Z^l(x)&-Z^l(y))^2] \leq C_2 K \abs{x-y}
\end{align}
where $x,y$ in the same subbox of sidelength $\bar \epsilon_K/K$ and $c_2, C_2,C>0$ are absolute constants to be chosen later. 
Then we set for $(x,y) \in V_{\bar \epsilon_K/K}^{\times 2}\cap V_{1/K,\delta}^{\times 2}$
\begin{align}
\psi_2(x,y)= \PhisKSG(x) + Z^1(y) + Z^2(x) \\
\tilde \psi_2(x,y) = \tilde \eta (x) + Z^1(y) + Z^2(x)
\end{align}
and define $\psi_2^* = \max_{ V_{\bar \epsilon_K/K}^{\times 2}} \psi_2$, and similarly $\tilde \psi_2^*$.
As before we first replace the expectation of $\xi_2^*$ in \eqref{e:conv-p2} by that of $\psi_2^*$ conditioning on $\Phi_s^\SG+X_s^h$ and using the Sudakov-Fernique inequality. Here, the analogue of \eqref{e:sud-fern-assumptions} is proved in \cite[Lemma 10.3.3]{acostathesis} by distinguishing different cases depending on the locations of $x$ and $y$. The same calculations continue to hold for the fields $\tilde \xi_2$ and $\tilde \psi_2$,
as the only assumptions that enter are the estimates of Lemma~\ref{lem:Xc-cov}.

From there on, the rest of the argument is completely analogous: using the Borell-Tsirelson inequality together with the Fernique criterion and \eqref{e:Z-1}--\eqref{e:Z-2},
we see that Lemma \ref{lem:robust-maximum} applies, and hence that
\begin{equation}
\E[\psi_2^*]\leq \E[\maxPhisKSG] + \bar \epsilon_K^q
\end{equation}
for some $q>0$. This concludes the proof of Lemma~\ref{lem:coarse-field-perturbation}.
\end{proof}
  
\begin{proof}[Proof of Theorem \ref{thm:levy-convergence-psi}]
Let $\restmaxPhisKSG =\max\{\PhisKSG(z_i) \colon i=1,\dots,K^2, \; \fineGFF(z_i) \geq m_{\epsilon K} + g(K)\}$,
with $g(K)$ as in \eqref{e:GFF-fine-field-maximisers}.
Then $\restmaxPhisKSG \leq \PhisKSG(\bar z)$.
On the other hand,
\begin{equation}
\forall \kappa>0 \colon \exists \delta >0, K_0 \in \N, \epsilon_0 >0 \colon \forall K \geq K_0, \forall \epsilon \leq \epsilon_0 \colon \P(\maxPhisKSG > \restmaxPhisKSG + \kappa) \leq \kappa,
\label{e:proof-levy-dist-convergence}
\end{equation}
which follows using \eqref{e:Psi-fine-field-maximisers}, Proposition \ref{prop:maximiser-not-on-grid}, and \eqref{e:GFF-fine-field-maximisers} since
\begin{equation}
  \P(\maxPhisKSG> \restmaxPhisKSG+ \kappa)
 \leq \P(\max_{\DiscTorusGrid} \PhisKSG > \PhisKSG(\bar z) + \kappa)+ \P(\maxPhisKSG\neq \max_{\DiscTorusGrid} \PhisKSG) + \P(\PhisKSG(\bar z) > \restmaxPhisKSG).
\end{equation}

Let $\nuKdel$ be the law of $\restmaxPhisKSG - m_\epsilon$.
Then by \eqref{e:proof-levy-dist-convergence}
 and the definition of the L\'evy distance \eqref{e:Levy},
\begin{equation}
\forall \kappa >0 \colon \exists \delta>0,K_0 \in \N, \epsilon_0>0 \colon \forall K \geq K_0, \epsilon \leq \epsilon_0\colon  d(\maxPhisKSG-m_\epsilon, \nuKdel)< \kappa.
\label{e:proof-levy-dist-conv-2}
\end{equation}

Now, we use the coupling from Proposition \ref{prop:coupling-with-G*} and Lemma \ref{lem:coarse-field-perturbation} for each box $V_i$ to relate the various maxima of $\PhisKSG$ to the random variables $G_{s,K}^*$ from \eqref{e:approximation-maximum}.
Let $(\rho_K^i,Y_K^i,\bu_\delta^i)_i$ be independent and distributed as $(\rho, Y, \bu_\delta)$.
Recall that $\coarseSG \equiv \coarsesKGauss + X_s^h+ \Phi_s^\SG$ is the overall lattice coarse field of $\PhisKSG$ and let $\bnuKdel$ be the law of 
\begin{equation}
\max_{\{i\colon \rho_K^i=1\}} Y_K^i + \coarseSG(\bu_\delta^i) - (m_\epsilon-m_{\epsilon K}) +g(K).
\end{equation}
Here, by writing $\coarseSG(x)$ for $x\in \Omega$ we refer to the coarse field evaluated at the vertex $x^\epsilon$ defined as in section \ref{sec:C-limit}.
Then
\begin{equation}
\forall \kappa >0 \colon \exists \delta>0 ,  K_1 \in \N, \epsilon_1>0 \colon \forall K\geq K_1, \forall \epsilon \leq \epsilon_1 \colon d(\bnuKdel,\nuKdel) \leq \kappa.
\label{e:proof-levy-dist-conv-3}
\end{equation}
Indeed, recalling that $\rho_K^i =1 \iff \fineGFF(z_i) \geq m_{\epsilon K} + \alpha_{1,K, \epsilon}$ almost surely, it follows
from the monotonicity of the $\alpha_{j,K,\epsilon}$ that, up to an event of probability $0$,
\begin{equation}
\{\rho_K^i=0\}\subseteq \{\fineGFF(z_i) < m_{\epsilon K} + \alpha_{C^*k,K,\epsilon}\} \equiv A_i.
\end{equation}
Note that on $\{\rho_K^i =1\}\cap A_i$ we have by \eqref{e:coupling-distances}
\begin{equation}
\abs{g(K) +Y_K^i -( \fineGFF(z_i)-m_{\epsilon K})} \leq \bar \epsilon_K.
\end{equation}
Hence, for any open $B\subseteq \R$,
\begin{align}
\bnuKdel(B) &\leq \P(\max_{\{i\colon\rho_K^i=1\}} \fineGFF(z_i)-m_{\epsilon K} + \coarseSG(\bu_\delta^i) - (m_\epsilon-m_{\epsilon K}) \in B^{\bar \epsilon_K}, \cap_i A_i) + \P((\cap_i A_i)^c) \nnb
&\leq \P(\max_{\{i\colon \rho_K^i=1 \}} \fineGFF(z_i)  + \coarseSG(\bu_\delta^i) - m_\epsilon \in B^{\bar \epsilon_K}) + \P((\cap_i A_i)^c).
\label{e:estimate-nu}
\end{align}

By Lemma~\ref{lem:coarse-field-perturbation},
we may now replace $\coarseSG(\bu_\delta^i)$ by $\coarseSG(z_i)$,
and
we may also  replace $\{i \colon \rho_K^i=1\}$ by ${\{i\colon \fineGFF(z_i) \geq m_{\epsilon K} + g(K)\}}$ on the event $A\equiv \{\fineGFF(\bar z) \geq m_{\epsilon K} + g(K) +\epsilon_K\}$ with $\epsilon_K$ as in Lemma~\ref{lem:coupling-alpha}, as both random variables agree
on this event.
Thus
\begin{equation}
\bnuKdel(B) \leq \nuKdel(B^{\bar \epsilon_K + \kappa/2}) + \kappa/2 + \P((\cap_i A_i)^c) + \P(A^c).
\end{equation}
Using Lemma~\ref{lem:coupling-C*} and a union bound we see that 
\begin{align}
\P(\cup_i A_i^c) &= \P(\exists i \colon \fineGFF(z_i) \geq m_{\epsilon K} + \alpha_{C^*k,K, \epsilon}) \nnb
&\leq K^2  \P(\fineGFF(z_i) \geq m_{\epsilon K}+ C^*k)  \leq 1/K.
\label{e:union-Aic-small}
\end{align}
Thus, by \eqref{e:union-Aic-small}, \eqref{e:GFF-fine-field-maximisers} and the convergence of the sequences $(\epsilon_K)_K$ and $(\bar \epsilon_K)_K$,
\eqref{e:proof-levy-dist-conv-3} follows.

Next we observe that $d(\bnuKdel, \mu_{s,K})\to 0$ as $\epsilon\to 0$. 
Indeed, by Lemmas~\ref{lem:Xc-cov}--\ref{lem:coarse-smooth},
we have the weak convergence of $\coarseSG(x)$ to $\limcoarseSG(x)$ as $\epsilon\to 0$,
for every fixed $x\in \Omega_\epsilon$.
Using independence of $\rho_K^i$, $Y_K^i$ and $\bu_\delta^i$ from $\coarseSG$,
it follows that, for any open $B\subseteq \R$,

\begin{align}
\bnuKdel(B) &= \P(\max_{\{i\colon \rho_K^i =1\}} g(K) + Y_K^i + \coarseSG(\bu_\delta^i) - (m_\epsilon-m_{\epsilon K})\in B ) \nnb
  &\leq \P(\max_{\{i\colon \rho_K^i =1\}} g(K) + Y_K^i + \limcoarseSG(\bu_\delta^i) - (m_\epsilon-m_{\epsilon K})\in B^{\kappa} ) + \kappa, 
\end{align}
which, using \eqref{e:correction-approximation}, is the claimed convergence.

In summary, by
\eqref{e:proof-levy-dist-conv-2} and \eqref{e:proof-levy-dist-conv-3}, we conclude that for any $\kappa > 0$ there is $\delta=\delta(\kappa)$ such that
\begin{equation}
\limsup_{K\to\infty}\limsup_{\epsilon\to 0} d(\maxPhisKSG-m_\epsilon, \mu_{s,K}) < 2 \kappa,
\end{equation}
which proves \eqref{e:levy-convergence-psi}, when sending $\kappa \to 0$, $\delta \to 0$.
%
\end{proof}

\subsection{Limiting distribution of regularised field}

In this section, we apply the previous preparation to generalise
the proof of \cite[Theorem 2.5]{MR3433630} and prove the following theorem.
The proof is entirely analogous, but we include it for completeness.

\begin{theorem}[Analogue of {\cite[Theorem 2.5]{MR3433630}}]
  \label{thm:Gumbel-distribution}
  Let $\mu_{s,K}$ be the law of $G_{s,K}^*$, let $\ZDM_{s,K}$ be as in \eqref{e:cZdef},
  and let $\alpha^*$ be as in Proposition~\ref{prop:finefield-asymptotics-delta}.
  Then
\begin{equation}
\lim_{\delta \to 0} \lim_{K\to\infty} \frac{\mu_{s,K}((-\infty,x])}{\E[e^{-\alpha^* \ZDM_{s,K}e^{-\scaling x}}]}=1.
\label{e:limit-ratio-distr-functions}
\end{equation}
In particular, there is a probability measure $\tilde \mu_{s}$ on $\R$
 and a positive random variable $\ZDM_s$ such that $\mu_{s,K} \to \tilde \mu_s$ weakly and
\begin{equation}\label{e:mu-s-infty}
  \tilde \mu_{s}((-\infty,x])=\E[e^{-\alpha^* \ZDM_s e^{-\scaling x}}].
\end{equation}
\end{theorem}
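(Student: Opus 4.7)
The strategy, following \cite[Theorem~2.5]{MR3433630}, is to compute the Laplace transform of $G_{s,K}^*$ directly from \eqref{e:approximation-maximum} by exploiting the conditional independence structure. Conditional on $\limcoarseSG$ and the locations $(\bu_\delta^i)_i$, the variables $G_{s,K}^i$ are mutually independent across $i$, so
\[
  \P(G_{s,K}^* \leq x \mid \limcoarseSG, (\bu_\delta^i)_i) = \prod_{i=1}^{K^2} q_i, \qquad q_i := \P(G_{s,K}^i \leq x \mid \limcoarseSG, \bu_\delta^i).
\]
Setting $A_i = \limcoarseSG(\bu_\delta^i) - \scalinglog \log K$, averaging first over the Bernoulli $\rho_K^i$ with parameter $p_K = \alpha^* m_\delta g(K) e^{-\scaling g(K)}$ and then over $Y_K^i$ using the explicit tail below \eqref{e:approximation-maximum}, and invoking the elementary identity $\scaling \scalinglog = 4$, a direct calculation gives, on the event $\{A_i \leq x - g(K)\}$,
\[
  q_i = 1 - \alpha^* m_\delta \pa{x + \scalinglog \log K - \limcoarseSG(\bu_\delta^i)} K^{-4} e^{-\scaling x + \scaling \limcoarseSG(\bu_\delta^i)}.
\]

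Next I would restrict to the high-probability event $\cT := \{\max_i \limcoarseSG(\bu_\delta^i) \leq x + \scalinglog \log K - g(K)\}$ on which the explicit formula holds for every $i$; Gaussian tail bounds for $\coarsesKGauss$ (whose variance is $O(\log K)$ by Lemma~\ref{lem:Xc-cov}) together with the uniform bounds on $X_s^h + \Phi_s^\SG$ from Lemmas~\ref{lem:hoelder-expectation}, \ref{lem:coarse-smooth} and Theorem~\ref{thm:coupling-bd} show that $\P(\cT^c) \to 0$ as $K \to \infty$. Setting $b_i = 1 - q_i$ and expanding $\log(1-b_i) = -b_i + O(b_i^2)$, after a further truncation of the rare $i$ for which $b_i$ fails to be small, the product becomes
\[
  \log \prod_i q_i = -\alpha^* e^{-\scaling x}\pa{\ZDM_{s,K} + x \, M_K} + R_K,
\]
where the first summand in the parentheses matches \eqref{e:cZdef} exactly, the spurious $x$-correction is
\[
  M_K := m_\delta K^{-4} \sum_{i=1}^{K^2} e^{\scaling \limcoarseSG(\bu_\delta^i)},
\]
and the remainder $R_K$ is handled by second-moment estimates on truncated quantities.

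The decisive step is to show that $M_K \to 0$ in probability as $K \to \infty$; this is the vanishing of the critical Gaussian multiplicative chaos at $\scaling = \sqrt{8\pi}$ (without Seneta--Heyde renormalization the limit is degenerate). Writing $\limcoarseSG = \coarsesKGauss + (X_s^h + \Phi_s^\SG)$ with the latter two terms smooth and uniformly bounded, $e^{\scaling \limcoarseSG}$ factors as a bounded multiplier times $e^{\scaling \coarsesKGauss}$, and the claim reduces to the classical Gaussian case for the log-correlated field $\coarsesKGauss$, for which Lemma~\ref{lem:Xc-cov} provides exactly the required covariance estimates. Combining the above with dominated convergence yields
\[
  \P(G_{s,K}^* \leq x) = \E\qa{e^{-\alpha^* \ZDM_{s,K} e^{-\scaling x}}} + o(1) \qquad (K \to \infty),
\]
and since the right-hand side is bounded away from $0$ (because $\ZDM_{s,K}$ is tight in $K$, inherited from the tightness of $G_{s,K}^*$ via Theorem~\ref{thm:levy-convergence-psi}), the ratio in \eqref{e:limit-ratio-distr-functions} converges to $1$; the existence of $\tilde\mu_s$ and of the nonnegative limit variable $\ZDM_s$ then follows by L\'evy's continuity theorem for Laplace transforms applied along convergent subsequences in $K$ and in $\delta$, with strict positivity $\ZDM_s>0$ forced by the fact that $\tilde\mu_s$ is a genuine probability measure on $\R$. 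The main obstacle is the joint control of $R_K$ and $M_K$: since $\ZDM_{s,K}$ is a critical derivative martingale whose $L^2$ norm diverges, the second-moment arguments must be executed on truncated versions whose truncation levels are matched to the tail bounds used to establish $\P(\cT^c) \to 0$.
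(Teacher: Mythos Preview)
Your overall strategy---condition on the coarse data to factorise $\P(G_{s,K}^*\le x)$ as a product, compute each factor explicitly, and pass to the exponential---matches the paper's proof, and your formula for $q_i$ on $\{x-A_i\ge g(K)\}$ is the same as the paper's. The substantive difference is in how the polynomial prefactor $(x-A_i)$ is handled.

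The paper does not split $(x-A_i)$ into $-A_i$ plus $x$. Instead it observes that on the high--probability event $D_K=\{\min_i(x-A_i)\ge g(K)\}$, supplied by Lemma~\ref{lem:GrowthCoarseField}, one has $-A_i\ge g(K)-x\to\infty$, hence $(x-A_i)/(-A_i)=1+O(|x|/g(K))=1+o(1)$ uniformly in $i$. This gives $\sum_i b_i=(1\pm\epsilon_K)\,\alpha^*\ZDM_{s,K}e^{-\scaling x}$ directly, with no $M_K$--term to control. The paper also replaces your Taylor expansion $\log(1-b)=-b+O(b^2)$ by the deterministic sandwich $e^{-(1+\kappa)b}\le 1-b\le e^{-b}$, valid uniformly on $D_K$ since $\max_i b_i\le \alpha^* m_\delta g(K)e^{-\scaling g(K)}\to 0$; this eliminates the remainder $R_K$ entirely, so no second--moment or truncation machinery is needed.

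Your route via $M_K\to 0$ is not wrong---the critical chaos does degenerate---but the claim that this follows from the covariance estimates of Lemma~\ref{lem:Xc-cov} alone undersells what is required: the standard proofs of critical GMC degeneracy themselves hinge on a maximum bound of precisely the type in Lemma~\ref{lem:GrowthCoarseField}. In fact, once you have that lemma, the inequality $\ZDM_{s,K}\ge(\gamma\log\log K)\,M_K$ on the good event shows $M_K\to 0$ for free (given tightness of $\ZDM_{s,K}$), so your detour collapses back to the paper's input. Likewise, your worry about the diverging $L^2$ norm of $\ZDM_{s,K}$ is misplaced: on $D_K$ one has $\sum_i b_i^2\le(\max_i b_i)\sum_i b_i$ with $\max_i b_i\to 0$ deterministically, and the paper's sandwich avoids even this.
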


\begin{remark}
The constant $\alpha^*$ is independent of $s$. Indeed, it is characterised by Proposition~\ref{prop:finefield-asymptotics-delta} which only involves the fine field. Hence, this value is the same as in the corresponding GFF result.
\end{remark}

The following result will be needed in the course of the proof.

\begin{lemma}[Analogue of {\cite[Lemma 6.5]{MR3433630}}]
  \label{lem:GrowthCoarseField}
There exists $\gamma>0$ such that
\begin{equation}
\lim_{K \to \infty} \bbP(\max_{i} \limcoarseSG(\bu_\delta^i) \geq  \scalinglog \log K - \gamma \log \log K)=0.
\end{equation}
\end{lemma}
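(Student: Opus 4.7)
The plan is to split $\limcoarseSG$ into its Gaussian part and its (non-Gaussian) remainder, handle the remainder by the uniform H\"older bounds already available, and invoke the standard upper bound for the maximum of a log-correlated Gaussian field for the Gaussian part. Concretely, writing $\coarsesKGauss^{0}$, $X_s^{h,0}$, $\Phi_s^{\SG,0}$ for the $\epsilon\downarrow 0$ limits under the coupling of Section~\ref{sec:gffdecomp} and Theorem~\ref{thm:Phi-limit}, we have the decomposition $\limcoarseSG = \coarsesKGauss^{0} + X_s^{h,0} + \Phi_s^{\SG,0}$ into three independent summands. By Lemma~\ref{lem:hoelder-expectation} and Lemma~\ref{lem:coarse-smooth} the quantities $\sup_{\Omega_\epsilon}|X_s^h|$ and $\sup_{\Omega_\epsilon}|\Phi_s^\SG|$ have finite expectation uniformly in $\epsilon\geq 0$, and hence so do their $\epsilon=0$ limits, so that for any $\gamma>0$,
\begin{equation*}
\bbP\pa{\sup_{x\in\Omega}\big(|X_s^{h,0}(x)|+|\Phi_s^{\SG,0}(x)|\big) \geq \tfrac{\gamma}{2}\log\log K} \to 0 \qquad (K\to\infty),
\end{equation*}
and the lemma reduces to producing a single $\gamma>0$ with $\bbP(\max_i\coarsesKGauss^{0}(\bu_\delta^i) \geq \scalinglog\log K - \tfrac{\gamma}{2}\log\log K) \to 0$.

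Since the points $\bu_\delta^i$ lie in $\Omega^\delta$ and are independent of the Gaussian field $\coarsesKGauss^{0}$, I bound the maximum over the $\bu_\delta^i$ by $\max_{x\in\Omega^\delta}\coarsesKGauss^{0}(x)$. The covariance $C^{c,0}(x,y) = (-\Delta+m^2+1/s)^{-1}(x,y) - (-\Delta_\Gamma+m^2+1/s)^{-1}(x,y)$ of $\coarsesKGauss^0$ satisfies, by Lemma~\ref{lem:Xc-cov} and the heat-kernel asymptotics of Lemmas~\ref{lem:pt}--\ref{lem:pttorus}, the following: pointwise variance $(1/(2\pi))\log K + O_\delta(1)$, total covariance $\lesssim \log K$, and Lipschitz increments at macroscopic scale $1/K$ inside each box. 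These are exactly the hypotheses of the first-moment upper bound on the maximum of a log-correlated Gaussian field as used in \cite[Lemma~6.5]{MR3433630} (see also \cite[Proposition~1.1]{MR3729618}), which delivers
\begin{equation*}
\bbP\pa{\max_{x\in\Omega^\delta}\coarsesKGauss^{0}(x) \geq \scalinglog\log K - \gamma_0 \log\log K} \to 0 \qquad (K\to\infty)
\end{equation*}
for any $\gamma_0 < 3/(2\scaling)$. Choosing $\gamma<\gamma_0$ in the first step completes the proof.

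The hard part is the Gaussian upper bound in the last display: the naive union bound over $K^2$ points paired with the pointwise Gaussian tail at threshold $\scalinglog\log K - \gamma_0 \log\log K$ yields a bound of order $(\log K)^{\scaling\gamma_0}$, which does not vanish, so producing the $\log\log K$ correction requires the full multiscale/truncated-first-moment argument developed for the GFF. That argument depends only on the covariance estimates of Lemma~\ref{lem:Xc-cov}, whose form matches those in \cite{MR3433630}; the structural differences here (the additional mass $m^2+1/s$ and periodic, rather than Dirichlet, boundary conditions on $\Omega$) do not affect it.
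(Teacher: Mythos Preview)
Your approach is essentially the paper's: peel off the pieces that are $o(\log\log K)$ with high probability and reduce to a Gaussian coarse field for which the argument behind \cite[Lemma~6.5]{MR3433630} applies via the covariance estimates of Lemma~\ref{lem:Xc-cov}. The only cosmetic difference is the split: the paper removes the deterministically bounded $\Phi_s^{\Delta,0}$ (leaving the Gaussian $\coarsesKGauss^0+X_s^{h,0}+\Phi_s^{\GFF,0}$), whereas you remove the $K$-independent $X_s^{h,0}+\Phi_s^{\SG,0}$ (leaving $\coarsesKGauss^0$); either works.

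One small correction to your last paragraph: since the lemma only asks for \emph{some} $\gamma>0$, the union bound over $K^2$ points does in fact suffice once you retain the Mills-ratio prefactor $\sigma/t\asymp(\log K)^{-1/2}$ in the Gaussian tail. With variance $\sigma^2=\tfrac{1}{2\pi}\log K+O_\delta(1)$ the bound becomes $\asymp(\log K)^{\sqrt{8\pi}\gamma-1/2}$, which vanishes for $\gamma<1/(2\sqrt{8\pi})$; the multiscale machinery is only needed to push $\gamma$ up to $3/(2\sqrt{8\pi})$, which the statement does not require.
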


\begin{proof}
  This is the analogue of \cite[Lemma 6.5]{MR3433630} and the proof is
  identical since our fields are coupled up to order $1$ with those of the GFF.
\end{proof}

\begin{proof}[Proof of Theorem~\ref{thm:Gumbel-distribution}]
  In order to demonstrate \eqref{e:limit-ratio-distr-functions} we first establish the analogue of (6.9) in \cite{MR3433630}.
  Let $\cF^c=\sigma\left( \limcoarseSG(\bu_\delta^i)\colon i=1, \ldots, K^2\right)$
and denote $\limcoarseSGatMax=\limcoarseSG(\bu_\delta^i) - \scalinglog \log K$.
Then, for $x\in \R$,
\begin{equation}
\P(G_{s,K}^*\leq x) = \E\left[\prod_{i=1}^{K^2} \pbb{1- \P(\rho_K^i(Y_K^i+g(K))) > x- \limcoarseSGatMax \mid \cF^c ) } \right].
\label{e:distr-funct-max-via-expection}
\end{equation}
We will prove this equation by first conditioning on $\cF^c$ and then using independence of $(\rho_K^i,Y_K^i)_i$ and $\cF^c$. By the definition of $G_{s,K}^*$ we have
\begin{align}
\P(G_{s,K}^*\leq x)  
&= \E\left[\P\left(\forall i=1,\ldots, K^2 \colon \rho_{K}^i(Y_{K}^i+g(K)) \leq x  -\limcoarseSGatMax | \cF^c\right)  \right].
\label{e:cond-expectation-1}
\end{align}
Since $(\rho_K^i,Y_K^i)_i$ are independent of $\limcoarseSGatMax$, a standard result on conditional expectation allows us to write \eqref{e:cond-expectation-1} as
\begin{align}
\P\left(G_{s,K}^*\leq x \right)=\E\left[ \P\left(\forall i=1,\ldots,K^2\colon(Y_K^i+g(K))\rho_K^i \leq x - w_i\right)\big|_{w_i=\limcoarseSGatMax}\right].
\end{align}
Since the random variables $Y_K^i,\rho_K^i$ are independent for different $i$, we can write the probability as a product, i.e.\
\begin{align}
\P(G_{s,K}^*\leq x)&=\E\Big[\prod_{i=1}^{K^2} \P\left((Y_K^i+g(K))\rho_K^i \leq x - w_i\right) \Big|_{w_i=\limcoarseSGatMax}\Big]\nnb
&=\E\Big[\prod_{i=1}^{K^2}\left( 1- \P((Y_K^i+g(K))\rho_K^i >x - \limcoarseSGatMax \mid \cF^c )\right)\Big]
\label{e:distr-func-max-expectation-product}
\end{align}
which proves \eqref{e:distr-funct-max-via-expection}.

Next we exploit the independence of $\rho_K^i$ and $Y_K^i$ to further simplify the probabilities within the expectation.
To this end, we consider the events $D_K^i= \{x-\limcoarseSGatMax \geq g(K)\}$ and $D_K=\bigcap_{i=1}^{K^2} D_K^i$. By Lemma \ref{lem:GrowthCoarseField} we may assume that $D_K$ occurs.
Using the definition of the $\rho_K^i$ and $Y_K^i$ we have
\begin{align}
  {\mathbf 1}_{D_K^i}\P\left(\rho_K^i(Y_K^i+g(K))>x-\limcoarseSGatMax \mid \cF^c\right)
             &= {\mathbf 1}_{D_K^i}\P\left(\rho_K^i=1\right)\P\left(Y_K^i+g(K) >x-\limcoarseSGatMax \mid \cF^c\right)\nnb
&={\mathbf 1}_{D_K^i} \alpha^* m_\delta (x- \limcoarseSGatMax)e^{-\scaling (x-\limcoarseSGatMax)}.
\label{e:derivative-martingale-prototype}
\end{align}
By the definition of $D_K^i$ the right-hand side in \eqref{e:derivative-martingale-prototype} converges to $0$ as $K\to \infty$. Using $1-x < e^{-x}$ for $x>0$, we thus obtain that, on $D_K$,
\begin{align}
  1-\P\left(\rho_K^i(Y_K^i+g(K))>x -\limcoarseSGatMax \mid \cF^c\right)
  &\leq \exp\left(-\alpha^* m_\delta (x- \limcoarseSGatMax)e^{-\scaling (x-\limcoarseSGatMax)}\right)
     \nnb
&\leq \exp\left(-(1-\epsilon_K)\alpha^* m_\delta (- \limcoarseSGatMax)e^{-\scaling (x-\limcoarseSGatMax)}\right)
\end{align}
for an $x$-dependent sequence $(\epsilon_K)_K$ with $ \epsilon_K\to 0$ as $K\to \infty$.
On the other hand, using $1-x > e^{-(1+\kappa) x}$ for $\kappa>0$ and all $x>0$ small enough,
\begin{align}
  1-\P\left(\rho_K^i(Y_K^i+g(K))>x-\limcoarseSGatMax \mid \cF^c\right)
&\geq \exp\left(-(1+\varepsilon_K)\alpha^* m_\delta (-\limcoarseSGatMax)e^{-\scaling(x-\limcoarseSGatMax)}\right)
\end{align}
for a possibly different sequence $(\epsilon_K)_K$ with $\epsilon_K\to 0$ as $K\to\infty$. Hence, using these bounds together with \eqref{e:distr-funct-max-via-expection}, the limit in \eqref{e:limit-ratio-distr-functions} follows.
\end{proof}

\subsection{Proof of Theorem~\ref{thm:convergence-to-Gumbel}}
\label{ss:proof-main-result}

By collecting the previous results, we complete the proof of
Theorem~\ref{thm:convergence-to-Gumbel}.

\begin{proof}[Proof of Theorem~\ref{thm:convergence-to-Gumbel}] 
  By Lemma \ref{lem:reduction-to-phi-tilde},
  it suffices to show that $\max_{\Omega_\epsilon}\tilde\Phi_s^{\SG}-m_\epsilon \to \tilde\mu_s$, where
  \begin{equation}    \label{e:mu-s-infty-bis2}
    \tilde \mu_s((-\infty,x])=\E[e^{-\alpha^* \ZDM_s e^{-\scaling x}}]
  \end{equation}
  and $\ZDM_s$ are as in Theorem~\ref{thm:Gumbel-distribution}.
Note that \eqref{e:mu0-ZSG} can equivalently be stated as
\begin{equation}
  \mu_0 \stackrel{D}{=} \frac{1}{\scaling} (X + \log \ZDM^\SG + \log \alpha^*).
\end{equation}
where $X$ is an independent standard Gumbel variable.
%
  Moreover, by \eqref{e:PhisKSG-decomposition} and Lemma~\ref{lem:G-concentration},
  \begin{equation}
    \lim_{\delta \to 0} \limsup_{K\to \infty}\limsup_{\epsilon\to 0} d(\max_{\Omega_\epsilon} \tPhisSG-m_\epsilon, \maxPhisKSG-m_\epsilon) =0,
  \end{equation}
  by Theorem~\ref{thm:levy-convergence-psi},
  \begin{equation}
    \lim_{\delta \to 0} \limsup_{K\to \infty}\limsup_{\epsilon\to 0} d(\maxPhisKSG-m_\epsilon, G_{s,K}^*) =0,
  \end{equation}
   and by Theorem~\ref{thm:Gumbel-distribution},
  \begin{equation}
    \lim_{\delta\to 0}\lim_{K\to\infty} d(G_{s,K}^*, \tilde\mu_{s}) = 0,
  \end{equation}
  so that $d(\max_{\Omega_\epsilon}\tilde\Phi_s^{\SG}-m_\epsilon, \tilde\mu_s)\to 0$, as needed.
\end{proof}

\appendix
\section{Heat kernel estimates}
\label{app:pt}


We denote by $p_t^\epsilon(x)$ the heat kernel on $\epsilon\Z^d$
and by $p_t^0(x)$ the continuous heat kernel on $\R^d$:
\begin{equation}
  p_t^\epsilon(x) = \epsilon^{-d}\tilde p_{t/\epsilon^2}(x/\epsilon),
  \qquad
  p_t^0(x) = \frac{e^{-|x|^2/4t}}{(4 \pi t)^{d/2}}
  ,
\end{equation}
where $\tilde p_t$ is the heat kernel on the unit lattice $\Z^d$.
We will also write $\partial f = \partial_\epsilon f$ for the vector of lattice gradients
of a function $f: \epsilon\Z^d \to \R$. Thus if $\alpha= (\alpha_1,\dots,\alpha_{|\alpha|})$
is a sequence of $|\alpha|$ unit directions in $\Z^d$, i.e.,
$\alpha_i \in \{(\cdots, 0, \pm 1, 0, \cdots)\}$ then 
$\partial^\alpha=\prod_{i=1}^{|\alpha|} \partial^{\alpha_i}$
where $\partial^{\alpha_i} f(x) = \epsilon^{-1} (f(x+\alpha_i)-f(x))$.

\begin{lemma} \label{lem:pt}
The heat kernel $p_t^\epsilon$ on $\epsilon\Z^d$ satisfies the following upper bounds
for $t \geq \epsilon^2$, $x\in\Z^d$,
and all sequences of unit vectors $\alpha$:
\begin{equation}
  \label{e:ptbounds}
  |\partial^\alpha p_t^\epsilon(x)| \leq O_\alpha(t^{-d/2-|\alpha|/2}e^{-c|x|/\sqrt{t}}).
\end{equation}
Moreover, for all $x \in \epsilon\Z^d$ and $k \geq 4$,
\begin{align} \label{e:ptlimit}
   |p_t^\epsilon(x)- p_t^0(x)|
  &\leq O(\frac{1}{t^{d/2}}) \times \frac{\epsilon^2}{t}
  \qa{
    \pa{(\frac{|x|}{\sqrt{t}})^k+1} e^{-|x|^2/t}
    + (\frac{\epsilon^2}{t})^{(k-3)/2}
  }.
\end{align}
\end{lemma}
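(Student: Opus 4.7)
For \eqref{e:ptbounds}, I would first reduce to the unit lattice via the scaling $p_t^\epsilon(x) = \epsilon^{-d}\tilde p_{t/\epsilon^2}(x/\epsilon)$, under which each discrete gradient rescales by a factor $\epsilon^{-1}$ and the hypothesis $t \geq \epsilon^2$ becomes $s := t/\epsilon^2 \geq 1$. It then suffices to show $|\partial^\alpha \tilde p_s(y)| \lesssim s^{-d/2-|\alpha|/2} e^{-c|y|/\sqrt{s}}$ uniformly in $s \geq 1$ and $y \in \Z^d$. Starting from the Fourier representation $\tilde p_s(y) = (2\pi)^{-d}\int_{[-\pi,\pi]^d} e^{-s\hat L(k) + ik\cdot y}\,dk$ with $\hat L(k) = \sum_{i=1}^d(2-2\cos k_i) \geq c|k|^2$ on $[-\pi,\pi]^d$, each unit-lattice gradient $\partial^{\alpha_i}$ brings down a factor $e^{ik\cdot\alpha_i}-1$ whose modulus is bounded by $|k|$. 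The scaling factor $s^{-d/2-|\alpha|/2}$ follows after the substitution $k \mapsto k/\sqrt{s}$, and the Gaussian tail $e^{-c|y|/\sqrt s}$ is extracted by the standard contour shift $k \mapsto k - i\lambda$ with $\lambda$ parallel to $y$ and $|\lambda| \sim \min(|y|/s, 1)$, exploiting $\mathrm{Re}\,\hat L(k-i\lambda) \geq \hat L(k) - C|\lambda|^2$ for small $|\lambda|$.

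For \eqref{e:ptlimit} I would again pass to Fourier space: with $\hat L^\epsilon(k) = \epsilon^{-2}\sum_i (2-2\cos(\epsilon k_i))$,
\begin{equation*}
p_t^\epsilon(x) - p_t^0(x)
= \frac{1}{(2\pi)^d}\int_{(-\pi/\epsilon,\pi/\epsilon]^d} \bigl(e^{-t\hat L^\epsilon(k)} - e^{-t|k|^2}\bigr) e^{ik\cdot x}\,dk
- \frac{1}{(2\pi)^d}\int_{|k|_\infty > \pi/\epsilon} e^{-t|k|^2+ik\cdot x}\,dk.
\end{equation*}
The tail integral is bounded, for any $N \in \N$, by $O_N(t^{-d/2}(\epsilon^2/t)^N)$ by extracting the factor $e^{-ct/\epsilon^2}$; this is absorbed into the pure-error term $(\epsilon^2/t)^{(k-3)/2}$. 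For the main integral I would use the expansion $\hat L^\epsilon(k) - |k|^2 = -\tfrac{\epsilon^2}{12}\sum_i k_i^4 + O(\epsilon^4|k|^6)$, then expand $e^{-t(\hat L^\epsilon(k)-|k|^2)}$ around $1$. The $\ell$-th order term is the integral of a polynomial of degree at most $4\ell$ in $k$ with coefficients of size $(t\epsilon^2)^\ell$ against $e^{-t|k|^2}$; after inverse Fourier transform this produces a Hermite polynomial of degree $\leq 4\ell$ in $x/\sqrt{t}$ times $p_t^0(x)$, bounded by $t^{-d/2}(\epsilon^2/t)^\ell(1+(|x|/\sqrt t)^{4\ell})e^{-|x|^2/t}$. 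Truncating this expansion at order $\ell$ of order $k/4$ yields the Gaussian-decay term of the statement.

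\textbf{Main obstacle.} The delicate point is that the Taylor expansion of $e^{-t(\hat L^\epsilon(k)-|k|^2)}$ is not uniformly valid on the whole Brillouin zone $(-\pi/\epsilon,\pi/\epsilon]^d$: when $\epsilon|k|$ is of order $1$ the correction $t(\hat L^\epsilon(k)-|k|^2)$ is not small and term-by-term bounds degenerate. I would resolve this by splitting the first integral at the scale $|k| = \delta/\epsilon$ for $\delta$ small enough that $\hat L^\epsilon(k) \geq c|k|^2$ and the Taylor series converges on $|k|\leq \delta/\epsilon$ with a quantitative remainder. On the complementary high-frequency region $|k| > \delta/\epsilon$, both $e^{-t\hat L^\epsilon(k)}$ and $e^{-t|k|^2}$ are dominated by $e^{-ct/\epsilon^2}$, and the contribution is again absorbed into the pure-error tail $t^{-d/2}(\epsilon^2/t)^{(k-3)/2}$. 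Optimising the truncation order against the remainder to match the factor $(\epsilon^2/t)^{(k-3)/2}$ completes the argument.
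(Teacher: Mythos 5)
The paper's proof of this lemma is pure citation: \eqref{e:ptbounds} is taken from \cite[Lemma A.1]{1907.12308} after the rescaling $(t,x)\to(t/\epsilon^2,x/\epsilon)$, and \eqref{e:ptlimit} from \cite[Theorem~2.1.3]{MR2677157} (Lawler--Limic) after the same rescaling. Your proposal, by contrast, gives a self-contained Fourier-analytic argument, so the approaches are genuinely different. Your strategy is sound: the Fourier representation on the Brillouin zone, the factor $|e^{ik\cdot\alpha_i}-1|\leq|k|$ per discrete gradient, the contour shift $k\mapsto k-i\lambda$ using $\mathrm{Re}\,\hat L(k-i\lambda)\geq\hat L(k)-C|\lambda|^2$ to extract sub-exponential tails, and the low-/high-frequency split at $|k|=\delta/\epsilon$ to control the Taylor expansion of $e^{-t(\hat L^\epsilon(k)-|k|^2)}$ are exactly the right ideas and do not have a structural gap. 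Two small things to watch if you write it out in full. First, after the contour shift each gradient factor becomes $e^{i(k-i\lambda)\cdot\alpha_i}-1$, which contributes $O(|k|+|\lambda|)$ rather than just $O(|k|)$; with $|\lambda|\sim\min(|y|/s,1)$ this introduces polynomial corrections in $|y|/\sqrt s$, harmless but needing to be absorbed into the exponential by shrinking $c$. Second, your expansion naturally produces the Gaussian rate $e^{-|x|^2/4t}$ (the inverse transform of $e^{-t|k|^2}$), not the $e^{-|x|^2/t}$ appearing in \eqref{e:ptlimit}; that discrepancy in the constant most likely reflects the normalisation convention in the cited Lawler--Limic theorem and is inconsequential for all applications of the lemma in the paper, but your version should state the exponent it actually achieves. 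On balance, what the citation-based proof buys is brevity; what your direct argument buys is self-containedness and explicit control of all constants, at the cost of the extra bookkeeping above.
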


\begin{proof}
  For the upper bound \eqref{e:ptbounds},
  see for example \cite[Lemma A.1]{1907.12308}
  and rescale the statements there by $(t,x) \to (t/\epsilon^2,x/\epsilon)$.
  The convergence estimate \eqref{e:ptlimit} is \cite[Theorem~2.1.3]{MR2677157}
  again after the previous rescaling.
\end{proof}

The heat kernel on a torus of side length $L$ is given by
\begin{equation}
  p_t^{\epsilon,L}(x) = \sum_{y\in\Z^d} p_t^\epsilon(x+yL).
  \label{e:pttorus}
\end{equation}

\begin{lemma} \label{lem:pttorus}
The torus heat kernel satisfies, for $t\geq \epsilon^2$,  $|x|_\infty \leq L/2$, 
\begin{equation} 
  \label{e:pttorusbounds}
  \partial^\alpha p_t^{\epsilon,L}(x)
  = \partial^\alpha p_t^\epsilon(x)
  + O_\alpha(t^{-|\alpha|/2}  L^{-d} e^{-cL/\sqrt{t}} ).
\end{equation}
Moreover, for any $t>0$, as $\epsilon \to 0$,
\begin{equation}
  \label{e:pttoruslimit}
  \sup_{x\in \Omega_\epsilon} |p_t^{\epsilon,L}(x)-p_t^{0,L}(x)|
  \lesssim
    \p{t^{-d/2}+L^{-d}}
    \frac{\epsilon^2}{t} \log(\frac{\epsilon^2}{t})^d
  .
\end{equation}
\end{lemma}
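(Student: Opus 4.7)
The proof proceeds in two parts, both based on exploiting the representation $p_t^{\epsilon,L}(x) = \sum_{y \in \Z^d} p_t^\epsilon(x+yL)$ from \eqref{e:pttorus} together with the pointwise and convergence estimates of Lemma~\ref{lem:pt}.

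For \eqref{e:pttorusbounds}, I would isolate the $y=0$ term (which contributes exactly $\partial^\alpha p_t^\epsilon(x)$) and bound the remainder $\sum_{y\neq 0}\partial^\alpha p_t^\epsilon(x+yL)$. Since $|x|_\infty \leq L/2$, for $y\neq 0$ we have $|x+yL|\gtrsim |y|L$, and \eqref{e:ptbounds} gives $|\partial^\alpha p_t^\epsilon(x+yL)|\leq O(t^{-d/2-|\alpha|/2}e^{-c|y|L/\sqrt t})$. It then suffices to estimate $S := \sum_{y\neq 0} e^{-c|y|L/\sqrt t}$ in the two regimes: when $L/\sqrt t \leq 1$, a Riemann-sum comparison gives $S \lesssim (\sqrt t/L)^d$ while the target exponential $e^{-cL/\sqrt t}$ is of order $1$; when $L/\sqrt t > 1$, the sum concentrates on nearest-neighbour copies yielding $S\lesssim e^{-cL/\sqrt t}$, and the stray polynomial factor $(L/\sqrt t)^d$ produced by the comparison with $L^{-d}$ can be absorbed into the exponential at the price of slightly lowering the constant $c$.

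For \eqref{e:pttoruslimit}, I would write the difference as $\sum_y[p_t^\epsilon(x+yL)-p_t^0(x+yL)]$ and split at a threshold $|y|\leq y_0$ versus $|y|> y_0$, with $y_0 \sim (\sqrt t/L)\log(t/\epsilon^2)$ chosen so that $e^{-cy_0 L/\sqrt t}\lesssim \epsilon^2/t$. On the tail I use \eqref{e:ptbounds} (and its analogue for $p_t^0$) individually on each term; the choice of $y_0$ gives a tail contribution $O(t^{-d/2}\,\epsilon^2/t)$, which is already of the required form. On the inner block I apply the convergence estimate \eqref{e:ptlimit} term by term with some fixed $k\geq 4$. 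The polynomial-Gaussian part sums over $y\in\Z^d$ to $O(\max(1,(\sqrt t/L)^d))$, producing a contribution of order $(t^{-d/2}+L^{-d})\epsilon^2/t$. The residual $(\epsilon^2/t)^{(k-3)/2}$ is spatially constant, so it is multiplied by the cardinality $O(y_0^d) = O((\sqrt t/L)^d\,(\log(t/\epsilon^2))^d)$ of the retained block; after combining with the $t^{-d/2}$ prefactor this becomes $O\bigl(L^{-d}\,(\epsilon^2/t)\,(\log(t/\epsilon^2))^d\bigr)$, matching the target up to constants.

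The main obstacle is precisely the residual $(\epsilon^2/t)^{(k-3)/2}$ term in \eqref{e:ptlimit}, which has no spatial decay and would yield a divergent sum over $y\in\Z^d$. The truncation at $y_0$ is what produces the logarithmic factor in \eqref{e:pttoruslimit}, and balancing tail decay against the size of the retained block forces $y_0$ to scale as $\log(t/\epsilon^2)$ in units of $\sqrt t/L$. Taking $k$ large enough simultaneously ensures that $(\epsilon^2/t)^{(k-3)/2}$ is bounded (as $t\geq \epsilon^2$), so this term does not worsen the $\epsilon^2/t$ prefactor of the final estimate.
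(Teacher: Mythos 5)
Your proposal is correct and follows essentially the same route as the paper: represent $p_t^{\epsilon,L}$ as a sum of the full-space kernel over periodic copies, isolate $y=0$ for \eqref{e:pttorusbounds}, and for \eqref{e:pttoruslimit} split the $y$-sum at a threshold of order $(\sqrt t/L)\log(t/\epsilon^2)$, applying \eqref{e:ptlimit} on the near copies and \eqref{e:ptbounds} on the far tail (note the paper's prose has the two references transposed, but its displayed computation is the split you describe). The paper simply bounds each inner-block term by the uniform estimate $t^{-d/2}\epsilon^2/t$ and counts $N^d$ terms, whereas you separate the spatially decaying poly-Gaussian piece from the flat residual $(\epsilon^2/t)^{(k-3)/2}$ before summing; this extra care is not needed but yields the same answer, and your discussion of why the residual forces the truncation and produces the logarithmic factor is a fair account of the mechanism. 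One small imprecision: the tail contribution need not be $O(t^{-d/2}\epsilon^2/t)$ — in the regime $\sqrt t\gtrsim L$ the Riemann-sum prefactor is rather $L^{-d}$, so the correct uniform bound is $O\bigl((t^{-d/2}+L^{-d})\epsilon^2/t\bigr)$ — but since the target already contains both terms this does not affect the conclusion.
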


\begin{proof}
  The estimate \eqref{e:pttorusbounds} is straightforward from   \eqref{e:pttorus}
  and Lemma~\ref{lem:pt}. Moreover, with \eqref{e:ptbounds} for $|y|\leq N$
  and \eqref{e:ptlimit} for $|y|>N$, we obtain from \eqref{e:pttorus} that
  \begin{align} \label{e:pttoruslimit-pf}
    |p_t^{\epsilon,L}(x)-p_t^{0,L}(x)|
 &   
    \lesssim \inf_{N\geq 1} \qa{
    \sum_{0< |y| \leq N} t^{-d/2}\frac{\epsilon^2}{t}
   + \sum_{|y| > N} t^{-d/2} e^{-c|y|L/\sqrt{t}}}
   \nnb
&  \lesssim
    \inf_{N\geq 1} \qa{
      (\frac{N}{\sqrt{t}})^{d} \frac{\epsilon^2}{t}
    + (\frac{1}{L})^{d} e^{-cNL/\sqrt{t}}}
  \lesssim
    \p{t^{-d/2}+L^{-d}}
    \frac{\epsilon^2}{t} \log(\frac{\epsilon^2}{t})^d
  \end{align}
  where in the last inequality we have chosen
  $N = (1/c)\log(t/\epsilon^2) \sqrt{t}/L+1$.
\end{proof}

We also need the heat kernel on a square with Dirichlet boundary conditions.
Thus let $\Gamma \subset \R^2$ be the union of horizontal and vertical lines that form a
regular grid of spacing $1/K$,
centred so that $0$ lies at a vertex of this grid.
Let $\Delta_\Gamma$ be the Laplacian with Dirichlet boundary conditions on $\Gamma$.
We denote the square containing $0$ bounded by $\Gamma$
by $V$ and set $V^\delta = \{x \in V: \dist(x,\Gamma) \geq \delta/K\}$.
Moreover, if $\Gamma \subset \epsilon \Z^2$ these definitions have obvious $\epsilon$-dependent
versions.
In the following, we will often omit the index $\epsilon$ and write,
for example,
 $p^\Gamma_t(x,y)$ instead of $p^{\epsilon,\Gamma}_t(x,y)$
when either $\epsilon=0$ or $\epsilon>0$.

\begin{lemma} \label{lem:ptGamma}
  The Dirichlet heat kernel $p^\Gamma_t(x,y) = e^{t\Delta_\Gamma}(x,y)$ (where $\epsilon=0$ or $\epsilon>0$) satisfies
  \begin{equation} \label{e:pGammabd}
    \sup_{x,y \in V}|\partial^\alpha p^\Gamma_t(x,y)| \leq O_\alpha(t^{-d/2-|\alpha|/2} e^{-c tK^2})
  \end{equation}
  and, for any $t>0$, as $\epsilon \to 0$,
  \begin{equation}\label{e:ptGammalimit}
    \sup_{x,y \in V} |p_t^{\Gamma,\epsilon}(x,y)-p_t^{\Gamma,0}(x,y)| \to 0.
  \end{equation}
  Moreover,
  \begin{equation} \label{e:pGamma-inside}
    \max_{x,y \in V^\delta} |\partial^\alpha p^\Gamma_t(x,y) - \partial^\alpha p_t(x,y)|
    \leq O_\alpha(t^{-d/2-|\alpha|/2}e^{-c\delta/\sqrt{K^2t}})
    .
  \end{equation}
\end{lemma}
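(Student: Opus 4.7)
The plan is to prove all three estimates via the method of images, which expresses the Dirichlet heat kernel on $V$ as a signed sum of free heat kernels evaluated at reflected positions. Since $\Gamma$ is aligned with $\epsilon\Z^2$, this representation is valid both in the continuum ($\epsilon=0$) and on the lattice ($\epsilon>0$). Placing $V=[0,1/K]^d$ after translation, for each $y\in V$ one has
\begin{equation*}
  p_t^{\Gamma}(x,y)=\sum_{n\in\Z^d,\,\sigma\in\{-1,1\}^d}\Big(\prod_{i=1}^d\sigma_i\Big)\,p_t\big(x,\,\sigma\cdot y+2n/K\big),
\end{equation*}
with $p_t=p_t^\epsilon$ or $p_t=p_t^0$ and $\sigma\cdot y$ denoting componentwise multiplication. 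Absolute convergence of the series and the vanishing of the right-hand side on $\Gamma$ follow from \eqref{e:ptbounds} and a direct inspection of signs.

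For \eqref{e:pGamma-inside} I would subtract the identity term $(n,\sigma)=(\mathbf{0},\mathbf{1})$. For $x,y\in V^\delta$ every non-identity image $y^*=\sigma\cdot y+2n/K$ satisfies $|x-y^*|\gtrsim (\delta+|n|_\infty)/K$, since the closest non-identity image is a single-axis reflection whose perpendicular separation from $x$ is at least $2\delta/K$, and further images are obtained by adding unit-cell translations of size $2/K$. Combining with the decay bound in \eqref{e:ptbounds} yields
\begin{equation*}
  |\partial^\alpha(p_t^\Gamma-p_t)(x,y)|\leq C_\alpha\, t^{-d/2-|\alpha|/2}\,e^{-c\delta/(K\sqrt{t})}\sum_{(n,\sigma)\neq(\mathbf{0},\mathbf{1})}e^{-c|n|_\infty/(K\sqrt{t})}.
\end{equation*}
If $t\leq \delta^2/K^2$ then $K\sqrt{t}\leq \delta\leq 1$, so the remaining sum is $O_d(1)$ and the claim follows. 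If $t\geq \delta^2/K^2$ then $e^{-c\delta/(K\sqrt{t})}$ is bounded below by a positive constant and the estimate reduces to the trivial bound $|\partial^\alpha(p_t^\Gamma-p_t)|\leq |\partial^\alpha p_t^\Gamma|+|\partial^\alpha p_t|=O(t^{-d/2-|\alpha|/2})$, by \eqref{e:pGammabd} and \eqref{e:ptbounds}.

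For \eqref{e:pGammabd} I would again split at $t=1/K^2$. For $t\leq 1/K^2$ the factor $e^{-ctK^2}\gtrsim 1$ is trivial, and dominating the reflection sum by its identity term plus a geometric tail yields $|\partial^\alpha p_t^\Gamma|\leq O_\alpha(t^{-d/2-|\alpha|/2})$. For $t\geq 1/K^2$ I would invoke the spectral gap $\lambda_1(-\Delta_\Gamma)\gtrsim K^2$, an elementary consequence of Poincar\'e's inequality on a box of side $1/K$. Writing $p_{3t}^\Gamma(x,y)=\langle p_t^\Gamma(x,\cdot),\,e^{-t(-\Delta_\Gamma)}p_t^\Gamma(\cdot,y)\rangle_{L^2(V)}$, Cauchy--Schwarz combined with $\|e^{-t(-\Delta_\Gamma)}\|_{L^2\to L^2}\leq e^{-t\lambda_1}$ and $\|p_t^\Gamma(x,\cdot)\|_{L^2}^2=p_{2t}^\Gamma(x,x)\leq C t^{-d/2}$ gives $p_{3t}^\Gamma(x,y)\leq C t^{-d/2}e^{-ctK^2}$. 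The derivative bounds follow by placing $\partial^\alpha_x$ on the first $L^2$-factor and using the short-time gradient estimate just established. Finally, \eqref{e:ptGammalimit} follows by applying the reflection formula in parallel to $p_t^{\Gamma,\epsilon}$ and $p_t^{\Gamma,0}$: for each fixed image $p_t^\epsilon(x,y^*)\to p_t^0(x,y^*)$ by Lemma~\ref{lem:pt}, and the decay in \eqref{e:ptbounds} provides a summable $\epsilon$-uniform dominant, so dominated convergence justifies exchanging the limit with the sum.

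The main obstacle is the careful set-up of the method of images in the lattice case: one must verify that reflections across the grid lines of $\Gamma$ preserve $\epsilon\Z^2$ and that the signed combination vanishes on $\Gamma$, so that the representation indeed defines the Dirichlet heat kernel. Once this combinatorial bookkeeping is in place, the remaining analytic work is the routine summation of exponential decay bounds together with the spectral gap estimate.
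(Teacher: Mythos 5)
Your proof is correct and uses the method of images as its unifying tool, which matches the paper's argument for \eqref{e:ptGammalimit} and \eqref{e:pGamma-inside} (same separation estimate $|x-y_j|\gtrsim \delta/K$, same case split between small and large $K^2t$, with only an immaterial difference in the threshold). The genuine departure is \eqref{e:pGammabd}: the paper computes directly from the eigenfunction expansion $p_t^\Gamma(x,y)=\sum_{k\in V^*}e^{-\lambda^\epsilon(k)t}e_k(x)e_k(y)$ (with explicit sine eigenfunctions and $\lambda^\epsilon(k)\sim|k|^2$), which yields the full estimate including the exponential factor $e^{-ctK^2}$ in a single geometric sum. You instead split at $t\sim K^{-2}$, use the image representation for short times, and for long times invoke the spectral gap $\lambda_1\gtrsim K^2$ together with the semigroup factorisation $p_{3t}^\Gamma=\langle p_t^\Gamma,e^{-t(-\Delta_\Gamma)}p_t^\Gamma\rangle$ and Cauchy--Schwarz. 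Both routes work; the paper's is shorter and avoids the case split, while yours is the more generic ``heat semigroup + Poincar\'e'' argument and does not require knowing the eigenfunctions explicitly.

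One minor point to tighten: in the long-time derivative bound, as written you place $\partial^\alpha_x$ on $p_t^\Gamma(x,\cdot)$, but $t$ is itself in the long-time regime, so the ``short-time gradient estimate just established'' does not directly apply to that factor. The standard fix is to factor off a fixed short time $a\sim K^{-2}$, i.e.\ write $p_t^\Gamma=p_a^\Gamma\,e^{-(t-2a)(-\Delta_\Gamma)}\,p_a^\Gamma$, put $\partial^\alpha$ on $p_a^\Gamma$ (where the short-time bound does apply), and then absorb the resulting $K^{d+|\alpha|}$ prefactor into the exponential using $K^2t\geq 1$. With that adjustment the argument is complete.
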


\begin{proof}
  Restricted to $V$, the Dirichlet Laplacian $-\Delta_\Gamma^{\epsilon}$ has
  eigenfunctions and eigenvalues
  \begin{equation}
    e_k(x) = 2^{d/2} K^d \prod_{i=1}^d \sin(k_ix_i)
    ,
    \qquad
    \lambda^\epsilon(k) = \epsilon^{-2}\sum_{i=1}^d (2-2\cos(\epsilon k_i)) \sim |k|^2,
  \end{equation}
  where $k \in V^* = K\pi \{ l \in \Z^2: 0 <  l_i < 1/\epsilon\}$;
  see, for example, \cite[Section~9.5]{MR887102}.
  Therefore
  \begin{equation}
    p^\Gamma_t(x,y) = \sum_{k \in V^*} e^{-\lambda^\epsilon(k) t} e_k(x)e_k(y),
  \end{equation}
  and it follows that
  \begin{equation}
    |\partial^\alpha p^\Gamma_t(x,y)|
    \lesssim  K^d \sum_{k \in V^*} e^{-c|k|^2t} |k|^{|\alpha|}
    \lesssim  K^d \sum_{q\in \Z^2\setminus 0} e^{-cK^2|q|^2 t} (K|q|)^{|\alpha|}
    \lesssim t^{-d/2-|\alpha|/2} e^{-c K^2 t}
  \end{equation}
  which is \eqref{e:pGammabd}.

To see \eqref{e:ptGammalimit} and \eqref{e:pGamma-inside},
we start from the following alternative representation for the Dirichlet heat kernel.
For $y \in \R^2\setminus \Gamma$, let $\{y_j\}$ be the set of points obtained by
reflecting $y=y_0$ about the lines in $\Gamma$,
and denote by $\sigma_j$ the number of reflections
needed to obtain $y_j$ from $y_0$.
Then 
\begin{equation} \label{e:etDeltaGamma}
  e^{t\Delta_\Gamma}(x,y) = \begin{cases}
    \sum_{j=0}^\infty (-1)^{\sigma_j} e^{t\Delta}(x,y_j) & (\text{$x,y$ in the same connected component of $\R^2 \setminus \Gamma$})\\
    0 & \text{(otherwise)}.
  \end{cases}
\end{equation}
Indeed, the proof of \eqref{e:etDeltaGamma} is analogous to that
in \cite[Section~7.5]{MR887102} for the Green function:
Denoting the right-hand side of \eqref{e:etDeltaGamma} by $p^\Gamma_t(x,y)$,
one has $\partial_t p^\Gamma_t(x,y) = \Delta p^\Gamma_t(x,y)$,
and the reflections ensure that $p^\Gamma_t(x,y) = 0$ if $y \in \Gamma$.
Therefore, for $x$ and $y$ in the same connected component of $\R^2\setminus \Gamma$,
\begin{equation}
  p_t(x,y)-p_t^\Gamma(x,y) = -\sum_{j\neq 0} (-1)^{\sigma_j} p_t(x,y_j).
\end{equation}
The convergence \eqref{e:ptGammalimit} then follows analogously to \eqref{e:pttoruslimit-pf}.
To see \eqref{e:pGamma-inside}, notice that the condition
$\dist(x,\Gamma), \dist(y,\Gamma) \geq \delta/K$ implies that
$\dist(x,y_j) \geq 2\delta/K$ and hence that
$|\partial^\alpha p_t(x,y_j)| \lesssim t^{-d/2-|\alpha|/2} e^{-c\delta/\sqrt{K^2t}}$.
More generally, the sum over all reflected points is bounded by
\begin{align}
  \sum_{j\neq 0} |\partial^\alpha p_t(x,u_j)|
  \lesssim t^{-d/2-|\alpha|/2} \sum_{q \in \Z^2 \setminus 0} e^{-c\delta |q|/\sqrt{K^2t}}
  \lesssim t^{-d/2-|\alpha|/2} (\sqrt{K^2t}/\delta)^{d} e^{-c\delta/\sqrt{K^2 t}}.
\end{align}
This gives
\begin{equation} \label{e:pGamma-inside-pf1}
  |\partial^\alpha p^\Gamma_t(x,y) - \partial^\alpha p(x,y)|
    \leq O_\alpha(t^{-d/2-|\alpha|/2}(\sqrt{K^2t}/\delta)^de^{-c\delta/\sqrt{K^2t}}).
  \end{equation}
On the other hand, by \eqref{e:ptbounds} and \eqref{e:pGammabd} also
\begin{equation} \label{e:pGamma-inside-pf2}
  |\partial^\alpha p^\Gamma_t(x,y) - \partial^\alpha p_t(x,y)|
  \leq
  |\partial^\alpha p^\Gamma_t(x,y)| + |\partial^\alpha p_t(x,y)|
  \leq O_\alpha(t^{-d/2-|\alpha|/2}).
\end{equation}
Using \eqref{e:pGamma-inside-pf1} if $K^2t/\delta \leq 1$
and \eqref{e:pGamma-inside-pf2} if $K^2t/\delta > 1$ gives \eqref{e:pGamma-inside}.
\end{proof}

\section*{Acknowledgements}

We thank Marek Biskup and Pierre-Fran\c{c}ois Rodriguez for essential
discussions that inspired the maximum problem studied in this article
and the approach through a coupling,
and for important feedback on a preliminary version of this manuscript.
We thank Thierry Bodineau and Christian Webb for very helpful general discussions
related to the sine-Gordon model,
and we thank Ofer Zeitouni for a helpful discussion which 
brought~\cite{acostathesis} to our attention.
RB gratefully acknowledges funding from the European Research Council (ERC)
under the European Union's Horizon 2020 research and innovation programme
(grant agreement No.~851682 SPINRG).
MH  was partially supported by the UK EPSRC grant EP/L016516/1 
for the Cambridge Centre for Analysis.

\bibliography{all}
\bibliographystyle{plain}

\end{document}